\documentclass[11pt]{article}
\usepackage{macros}

\title{Noncommutative $L^p$-integrators and \\
stochastic differential equations in $L^p$}
\author[1]{David A.\ Jekel\thanks{Supported by NSF grant DMS-2002826 and the EU Horizon Marie Sk{\l}odowska Curie Action, FREEINFOGEOM, grant 101209517}}
\author[2]{Todd A.\ Kemp\thanks{Supported by NSF grants DMS-2400246, DMS-2055340, and DMS-1800733}}
\author[3]{Evangelos A.\ Nikitopoulos\thanks{Supported by NSF grant DGE-2038238 and partially supported by NSF grant DMS-2055340}}
\affil[1]{Department of Mathematics and Statistics, University of Ottawa\protect\\
150 Louis-Pasteur Private, Ottawa, ON K1N 6N5 (Canada)\protect\\
{Email: \tt \href{mailto:djekel@uottawa.ca}{djekel@uottawa.ca}}\vspace{2mm}}
\affil[2]{Department of Mathematics, University of California San Diego\protect\\
\noindent 9500 Gilman Drive, La Jolla, CA 92093-0112 (USA)\protect\\
{Email: \tt \href{mailto:tkemp@ucsd.edu}{tkemp@ucsd.edu}}\vspace{2mm}}
\affil[3]{Department of Mathematics, University of Michigan\protect\\
\noindent 530 Church Street, Ann Arbor, MI 48109-1043 (USA)\protect\\
{Email: {\tt \href{mailto:enikitop@umich.edu}{enikitop@umich.edu}}}}
\date{\vspace{-7ex}}

\begin{document}

\maketitle

\begin{abstract}
We propose a new framework for noncommutative stochastic integration in $L^p$, inspired by Bichteler's notion of $L^p$-integrators, that massively generalizes our previous theory of $L^2$-valued stochastic integration against $L^2$-decomposable processes.
Central to our framework is a noncommutative analog of a predictable integrand we call a predictable linear process.
We develop basic properties of stochastic integrals against ``noncommutative $L^p$-integrators'' and establish a useful criterion for a noncommutative stochastic process to be a noncommutative $L^p$-integrator.
This criterion applies, in particular, to a rich class of processes we call measured decomposable processes, which includes free Brownian motion and, more generally, the $q$-Brownian motions.
We also show that, when specialized appropriately, our framework recovers the classical notion of an $L^p$-integrator up to the fact that the noncommutative theory sees only the modification class of a classical stochastic process.

As an application, we use our theory to study noncommutative stochastic differential equations (SDEs) in $L^p$.
Under natural local-Lipschitz and boundedness assumptions, we establish the existence and uniqueness of maximal solutions and show that a maximal solution with finite lifetime must blow up in $L^p$ norm.
As corollaries, we obtain new existence and uniqueness results for free SDEs and SDEs driven by $q$-Brownian motion.
Notably, in the free case, our results apply to coefficients arising through the continuous functional calculus from merely locally Lipschitz scalar functions, as opposed to locally operator-Lipschitz functions.

\medskip

\noindent \textbf{Keyphrases:} free probability, noncommutative probability, noncommutative stochastic analysis, integrators, stochastic integrals, stochastic differential equations

\medskip

\noindent\textbf{MSC (2020):} 46L54, 60H10, 60H20
\end{abstract}

\tableofcontents

\section{Introduction}

Noncommutative probability theory abstracts the constructs of probability and measure, elevating random variables themselves to the fundamental objects so that the arena may be widened to include spaces of random variables that do not commute when multiplied.
Essentially born out of the mathematical physics community's efforts to make quantum field theory rigorous, this framework has been enormously productive to the theory of operator algebras.
In the last few decades, the rise in popularity of random matrix theory has provided a more accessible avenue to explore and understand the value of noncommutative probability.
A random $N\times N$ matrix can, of course, be treated as a random vector, but this ignores the algebraic structure of the state space of $N \times N$ matrices.
In particular, the most interesting questions about random matrices focus on their eigenvalues and eigenvectors, whose study demands viewing the state space as an algebra of linear operators.

Recently, there has been a good deal of progress on understanding the large-dimension asymptotics of the eigenvalues and other statistics of matrix-valued \emph{stochastic processes};
please see, e.g., \cite{Kemp2016,CDK2018,DGS2021,CGP2022,DHK2022,MP2022,Parraud2022,BCC2025,DHHKNNP2025,BCKP2025,JNP2026}.
A common feature of all this work is a combination of tools from stochastic calculus (applied to finite-rank matrices) and the \emph{free} stochastic calculus (applied in operator algebras that host the limits in noncommutative distribution of the matrix-valued processes) developed by Biane and Speicher in \cite{Biane1997,BS1998,BS2001}.
Their framework is built on a foundation developed initially in the 1980s in the quantum field theory world, notably starting with Hudson--Parthasarathy \cite{HP1984} and Applebaum--Hudson \cite{AH1984}, with progress by K\"ummerer and Speicher \cite{KS1992} leading to the more general treatment by Biane and Speicher a few years later.
Their development was the first to produce a theory of noncommutative stochastic integration built intrinsically for a specific integrator, rather than built from a concrete Fock-space model.
This integrator was \emph{free Brownian motion} $S=(S(t))_{t\ge 0}$, a special operator-valued process with freely independent increments.
Biane--Speicher defined stochastic integrals of the form $\into A(t)\,\d S(t)\,B(t)$ for operator-valued processes $A$ and $B$, and more generally for ``biprocesses'' taking values in a tensor product---the above integral is the special case $\into (A(t)\otimes B(t))\sh \d S(t)$.
Their development relied heavily on the free independence of the increments of $S$.
Some of their constructions were later generalized to other integrators with free increments--type properties (such as $q$-L\'evy processes);
please see, e.g., \cite{Anshelevich2000,Anshelevich2002,Anshelevich2004,DS2013,DS2018}.
Along a parallel track, philosophical descendants of the original framers of noncommutative stochastic calculus for constructive quantum field theory have continued development of those theories in hairier (non-tracial) contexts, even quite recently;
please see \cite{ABDVG2022,DVFGG2025,DVFG2025,CHP2025}.

The authors of the present work are undertaking the challenge of developing a \emph{general} theory of noncommutative stochastic calculus (in the tracial context), following as closely as possible the rich development in the classical world.
Our first paper in this direction \cite{JKN2026} developed a theory of stochastic integration and quadratic (co)variation for a class of integrators, inspired by semimartingales, we called \emph{$L^2$-decomposable processes}.
Key to our stochastic integration framework was to realize integrands as processes with values in a space of linear maps---\emph{linear processes}.
The usual biprocess $A \otimes B$, acting by $(A\otimes B)\# S = ASB$, becomes a special case of a linear process.
In the present paper, which is, at least spiritually, a sequel to \cite{JKN2026}, we push this valuable perspective much further by formulating a notion of a \emph{predictable linear process}.
With predictable linear processes as our integrands, we expand our stochastic integration theory to a much wider class of integrators: \emph{noncommutative $L^p$-integrators}.
We then apply our theory to develop an existence-and-uniqueness theory for noncommutative stochastic differential equations (SDEs).
As we explain in subsection \ref{subsec.NCSDEs}, our development vastly generalizes all known results about free SDEs, comports with classical It\^o SDE theory, and offers the first theory of It\^o-type SDEs driven by $q$-Brownian motion with $-1\le q<0$.

\bigskip

\noindent\textbf{Organization of paper.} Subsection \ref{subsec.clintegrators} discusses the notion of a classical $L^p$-integrator, which motivates our framework for noncommutative stochastic integration, as well as some important notation and terminology relating to bounded convergence.
Subsection \ref{subsec.NCLpintegrators} lays out our new framework for noncommutative stochastic integration, including our definition of a noncommutative $L^p$-integrator, and summarizes our results on examples of noncommutative $L^p$-integrators.
Subsection \ref{subsec.NCSDEs} discusses important special cases of our results on noncommutative SDEs.

Subsection \ref{subsec.generalities} develops general properties of predictable linear processes and the noncommutative stochastic integral, discusses the most common examples of predictable linear processes, and establishes a useful criterion for a noncommutative stochastic process to be a noncommutative $L^p$-integrator.
The remainder of section \ref{sec.NCstochint} is devoted to studying examples of noncommutative integrators---measured decomposable processes (subsection \ref{subsec.MD}), multidimensional free Brownian motion (subsection \ref{subsec.fBMintegrator}), and classical integrators (subsection \ref{subsec.classical})---and their associated integrals.

Section \ref{sec.SIEp} is devoted to formulating and proving our major result, Theorem \ref{thm.SIE}, on the existence and uniqueness of maximal solutions to a large, general class of noncommutative stochastic integral equations.
The result includes a finite-time blowup phenomenon new to the world of noncommutative probability.

Finally, section \ref{sec.SIEpexamples} uses results from subsections \ref{subsec.MD}--\ref{subsec.classical} and $L^p$-Lipschitz properties of the continuous functional calculus to study important examples of noncommutative stochastic integral equations covered by Theorem \ref{thm.SIE}.

\subsection{Classical \texorpdfstring{$L^p$}{}-integrators}\label{subsec.clintegrators}

Let us begin by explaining the inspiration for our proposed new framework for noncommutative stochastic integration, the fruitful notion of an $L^p$-integrator introduced by K.\ Bichteler in the late 1970s and early 1980s (vid.\ \cite{Bichteler1979,Bichteler1981}) and expanded on later in Bichteler's book \cite{Bichteler2002}.
Although we shall review the most important terminology and notation as we go, some familiarity with stochastic analysis will be important at various points;
please consult \cite{CW1990,RY1999,Protter2005} for the requisite background.

Let $(\Om,\sF,(\sF_t)_{t \geq 0},P)$ be a classical filtered probability space satisfying the \textbf{usual conditions}, i.e., $(\sF_t)_{t \geq 0}$ is complete ($\{A \subseteq \Om : \exists B \in \sF$ with $A \subseteq B$ and $P(B) = 0\} \subseteq \sF_0$) and right-continuous ($\sF_{t+} \coloneqq \bigcap_{u > t} \sF_u = \sF_t$ for all $t \geq 0$).\footnote{Much of the theory of stochastic integration, stochastic calculus, and stochastic differential equations survives unscathed if the filtration is only assumed to be complete.
We demand the usual conditions for convenience and to avoid technical headaches, as do many others.}
If $(S,\sS)$ is a measurable space, an (\textbf{adapted}) \textbf{$\boldsymbol{S}$-valued stochastic process} is a map $X \colon \R_+ \times \Om \to S$ such that $X_t \coloneqq X(t,\cdot) \colon \Om \to S$ is $\sF$--$\sS$ measurable ($\sF_t$--$\sS$ measurable) for all $t \geq 0$.
Two $S$-valued stochastic processes $X$ and $Y$ are \textbf{modifications} of each other if for each $t \geq 0$, $X_t = Y_t$ almost surely.
They are \textbf{indistinguishable} from each other if the statement, ``$X_t = Y_t$ for all $t \geq 0$,'' is true almost surely.
If $S$ is a topological space, $X$ is (\textbf{left-}/\textbf{right-})\textbf{continuous} if the statement, ``$t \mapsto X_t$ is (left-/right-)continuous,'' is true almost surely.
Also, $X$ is \textbf{RCLL} (\textbf{r}ight-\textbf{c}ontinuous with \textbf{l}eft \textbf{l}imits) if $X$ is right-continuous and the statement, ``$\lim_{s \nearrow t}X_s$ exists for every $t > 0$,'' is true almost surely.
Finally, let $\sP \subseteq 2^{\R_+ \times \Om}$ denote the \textbf{predictable $\boldsymbol{\sigma}$-algebra}, i.e., the $\sigma$-algebra on $\R_+ \times \Om$ generated by the family
\begin{equation}
    \sP_0 \coloneqq \{\{0\} \times F : F \in \sF_0\} \cup \{(s,t] \times F : 0 \leq s < t, \, F \in \sF_s\}.\label{eq.P0}
\end{equation}
An $S$-valued stochastic process $X \colon \R_+ \times \Om \to S$ is \textbf{predictable} if it is $\sP$--$\sS$ measurable.
For $(S,\sS) = (\C^n,\cB_{\C^n})$, the most important examples of predictable processes are adapted processes with left-continuous paths (i.e., $t \mapsto X_t$ is \emph{surely} left-continuous).

Loosely speaking, a stochastic process $X$ is an $L^p$-integrator if it is RCLL and there exists an $L^p$-valued stochastic integral against $X$ defined on all bounded predictable processes that satisfies a bounded convergence theorem--type continuity property.
We formulate the precise definition differently from Bichteler.
Up to modifications, our definition is equivalent to his;
the reader may consult \cite[Chs.\ 2--4]{Bichteler2002} for the details.

An \textbf{elementary predictable process} is a process of the form
\begin{equation}
    Y = 1_{\{0\} \times \Om}\,f_0 + \sum_{i=1}^k 1_{(s_i,t_i] \times \Om} \, f_i,\label{eq.EP}
\end{equation}
where $0 \leq s_i < t_i$ for all $i=1,\ldots,k$ and $f_i \colon \Om \to \C$ is a bounded, $\sF_{s_i}$-measurable random variable for all $i=0,\ldots,k$ (with $s_0 \coloneqq 0$).
Write $\sEP$ for the set of all elementary predictable processes.
For any stochastic process $X \colon \R_+ \times \Om \to \C$ and any $Y \in \sEP$ as in \eqref{eq.EP}, one can always define
\[
\int_0^t Y\,\d X = \int_0^t Y_s\,\d X_s \coloneqq \sum_{i=1}^k f_i(X_{t_i \wedge t} - X_{s_i \wedge t}) \qquad (t \geq 0),
\]
where $a \wedge b \coloneqq \min\left\{a,b\right\}$.
If $X$ is adapted, then so is
\[
\into Y\,\d X \coloneqq \left(\int_0^t Y\,\d X\right)_{t \geq 0}.
\]
If $X$ is RCLL (respectively, continuous), then so is $\into Y\,\d X$.

As hinted above, the definition of an $L^p$-integrator involves bounded convergence.
In fact, bounded convergence and related notions are essential to much of the paper.

\begin{definition}\label{def.bddconv}
Let $\cV$ be a normed vector space and $\Xi$ be a set or topological space, as needed.
Write $\ell^{\infty}(\Xi;\cV)$ for the space of bounded functions from $\Xi$ to $\cV$ with norm
\[
\norm{F}_{\ell^{\infty}(\Xi;\cV)} \coloneqq \sup_{\xi \in \Xi}\norm{F(\xi)}_{\cV}.
\]
If $\Xi$ is a topological space, we write $\ell_{\loc}^{\infty}(\Xi;\cV)$ for the set of locally bounded functions from $\Xi$ to $\cV$, i.e., the set of functions $F \colon \Xi \to \cV$ such that for all $\xi \in \Xi$, there exists an open neighborhood $U \subseteq \Xi$ of $\xi$ on which $F$ is bounded.
If $\cV=\C$, we suppress $\cV$ in the notation.
\begin{enumerate}[font=\normalfont, label=(\roman*)]
    \item A sequence of maps $(F_n \colon \Xi \to \cV)_{n \in \N}$ \textbf{converges boundedly} to $F \colon \Xi \to \cV$ if $(F_n)_{n \in \N}$ converges pointwise to $F$ and $\sup \big\{ \norm{F_n}_{\ell^{\infty}(\Xi;\cV)} : n \in \N\big\} < \infty$.
    A sequence of maps $(F_n \colon \Xi \to \cV)_{n \in \N}$ \textbf{converges locally boundedly} to $F \colon \Xi \to \cV$ if for every $\xi \in \Xi$, there exists an open neighborhood $U \subseteq \Xi$ of $\xi$ such that $(F_n|_U)_{n \in \N}$ converges boundedly to~$F|_U$.\label{item.(loc)bddconv}
    \item A subset $\cS$ of $\ell_{\text{(}\loc\text{)}}^{\infty}(\Xi;\cV)$ is \textbf{closed under} (\textbf{locally}) \textbf{bounded convergence} if whenever $(F_n)_{n \in \N}$ is a sequence in $\cS$ converging (locally) boundedly to a function $F \colon \Xi \to \cV$, we have that $F \in \cS$.\label{item.cl(loc)bddconv}
    \item Let $\cX$ be a topological space and $\cS \subseteq \ell_{\text{(}\loc\text{)}}^{\infty}(\Xi;\cV)$.
    A function $\Phi \colon \cS \to \cX$ is \textbf{continuous with respect to} (\textbf{locally}) \textbf{bounded convergence} if whenever $(F_n)_{n \in \N}$ is a sequence in $\cS$ converging (locally) boundedly to $F \in \cS$, we have that $\Phi(F_n) \to \Phi(F)$ in $\cX$ as $n \to \infty$.\label{item.cont(loc)bddconv}
\end{enumerate}
If $Y \colon \R_+ \times \Om \to \cV$ is a stochastic process that is bounded on compact time intervals, i.e., $\norm{Y}_{\ell^{\infty}([0,t] \times \Om;\cV)} < \infty$ for all $t \geq 0$, then we may view $Y$ as the locally bounded function $\R_+ \ni t \mapsto \tilde{Y}(t) \coloneqq Y_t \in \ell^{\infty}(\Om;\cV)$.
Taking this view, the above concepts involving locally bounded convergence apply;
when applying those concepts in this setting, we shall use the term \textbf{bounded convergence on compact time intervals} instead of locally bounded convergence.
For example, if $\cX$ is a topological space and $\sC$ is some collection of stochastic processes that are bounded on compact time intervals, a function $\Phi \colon \sC \to \cX$ is \textbf{continuous with respect to bounded convergence on compact time intervals} if $\Phi(Y_n) \to \Phi(Y)$ in $\cX$ as $n \to \infty$ whenever $Y \in \sC$ and $(Y_n)_{n \in \N}$ is a sequence in $\sC$ such that for all $t \geq 0$, $(1_{[0,t] \times \Om}Y_n)_{n \in \N}$ converges boundedly to $1_{[0,t] \times \Om}Y$.
\end{definition}

\begin{remark}\label{rem.notop}
Bounded convergence, even of sequences, does not come from any topology.
This is why (pseudo-)topological terminology must be carefully formulated when bounded convergence is the desired mode of convergence.
\end{remark}

Now, if $Y \colon \R_+ \times \Om \to \C$ is a stochastic process---or, more precisely, an indistinguishability class of stochastic processes---define
\[
|Y|_t^* \coloneqq \sup_{0 \leq s \leq t} |Y_s| \qquad ( t \geq 0).
\]
If $1 \leq p \leq \infty$, write $\cR_{a,p}$ (respectively, $\cC_{a,p}$) for the space of indistinguishability classes of adapted RCLL (respectively, adapted continuous) stochastic processes $Y \colon \R_+ \times \Om \to \C$ such that $\norm{|Y|_t^*}_p < \infty$ for all $t \geq 0$, where $\norm{\cdot}_p = \norm{\cdot}_{L^p(P)} = \norm{\cdot}_{L^p(\Om,\sF,P)}$.
We endow $\cR_{a,p}$ with the topology induced by the collection
\[
\{Y \mapsto \norm{|Y|_t^*}_p : t \geq 0\}
\]
of seminorms, which makes it into a complex Fr\'echet space with $\cC_{a,p}$ as a closed subspace.
Finally, write $\sP_{\mathrm{bct}}$ for the set of predictable processes $Y \colon \R_+ \times \Om \to \C$ that are bounded on compact time intervals.

\begin{definition}\label{def.Lpintegrator}
Suppose $1 \leq p < \infty$.
A stochastic process $X\colon \R_+ \times \Om \to \C$ is an \textbf{$\boldsymbol{L^p}$-integrator} if there exists a map $I_X^c \colon \sP_{\mathrm{bct}} \to \cR_{a,p}$ that is continuous with respect to~bounded convergence on compact time intervals and satisfies $I_X^c[Y] = \into Y\,\d X$ for all $Y \in \sEP$.
In this case, $I_X^c$ (is unique and) is called the (\textbf{classical}) \textbf{stochastic integral against $\boldsymbol{X}$}.
We write $\into Y\,\d X = \into Y_t\,\d X_t \coloneqq I_X^c[Y]$ for all $Y \in \sP_{\mathrm{bct}}$.
\end{definition}

Though the filtration $(\sF_t)_{t \geq 0}$ is suppressed in the definition of an $L^p$-integrator, we draw attention to the fact that it is featured in both the domain and codomain of the stochastic integral map.
It matters less in the codomain:
The notion of an $L^p$-integrator would remain the same if we replaced $\cR_{a,p}$ with the set of indistinguishability classes of RCLL stochastic processes $Y \colon \R_+ \times \Om \to \C$ such that $\norm{|Y|_t^*}_p < \infty$ for all $t \geq 0$.
The filtration's role is absolutely crucial, however, in the domain in that it determines the collection of processes eligible to be integrated.
More specifically, the filtration determines the collection $\sEP$ of building-block integrands, and then, by a standard application of the multiplicative system theorem (vid.\ \cite[Thm.\ I.21]{DM1975}), the full collection $\sP_{\mathrm{bct}}$ of integrands is the smallest set of processes that contains $\sEP$ and is closed under bounded convergence on compact time~intervals.

\subsection{Noncommutative \texorpdfstring{$L^p$}{}-integrators}\label{subsec.NCLpintegrators}

We now turn to the noncommutative setting.
We appeal frequently to standard objects and constructions in noncommutative probability and the theory of noncommutative $L^p$ spaces;
please see \cite[\S2.1 \& app.\ A]{JKN2026} and the references therein, as well as \cite{DdPS2023}, for the requisite background.
We shall review some of the important terminology and notation as we go.

A $\boldsymbol{\mathrm{C}^*}$\textbf{-probability space} is a pair $(\cA,\E)$, where $\cA$ is a unital $\mathrm{C}^*$-algebra and $\E \colon \cA \to \C$ is a faithful, tracial state.
Such a pair is a $\boldsymbol{\mathrm{W}^*}$\textbf{-probability space} if $\cA$ is a von Neumann algebra and $\E$ is also normal (i.e., continuous in the $\sigma$-weak operator topology).
A \textbf{filtration} of a $\mathrm{C}^*$-probability space $(\cA,\E)$ is a family $(\cA_t)_{t \geq 0}$ of unital $\mathrm{C}^*$-subalgebras of $\cA$ such that $\cA_s \subseteq \cA_t$ whenever $0 \leq s \leq t$.
In this case, $(\cA,(\cA_t)_{t \geq 0},\E)$ is a \textbf{filtered $\boldsymbol{\mathrm{C}^*}$-probability space}.
If $(\cA,\E)$ is a $\mathrm{W}^*$-probability space and $\cA_t$ is a $\mathrm{W}^*$-subalgebra of $\cA$ for all $t \geq 0$, then $(\cA,(\cA_t)_{t \geq 0},\E)$ is called a \textbf{filtered $\boldsymbol{\mathrm{W}^*}$-probability space}.
For example, if $(\Om,\sF,(\sF_t)_{t \geq 0},P)$ is a classical filtered probability space, then
\begin{equation}
    (\cA,(\cA_t)_{t \geq 0},\E) \coloneqq (L^{\infty}(\Om,\sF,P), (L^{\infty}(\Om,\sF_t,P))_{t \geq 0}, \E_P)\label{eq.classicalfWps}
\end{equation}
is a filtered $\mathrm{W}^*$-probability space.
For the rest of this subsection, let $(\cA,(\cA_t)_{t \geq 0},\E = \E_{\scA})$ and $(\cB,(\cB_t)_{t \geq 0},\E_{\scB})$ be filtered $\mathrm{W}^*$-probability spaces.

If $1 \leq p \leq \infty$, a \textbf{noncommutative $\boldsymbol{L^p}$ stochastic process}---an \textbf{$\boldsymbol{L^p}$ process} for short---is a function from $\R_+ = [0,\infty)$ to the noncommutative $L^p$ space $L^p(\E) = L^p(\cA,\E)$.
If $(\cA,(\cA_t)_{t \geq 0},\E)$ is a ``classical $\mathrm{W}^*$-probability space'' as in \eqref{eq.classicalfWps}, this object is the same thing as the modification class of a classical $L^p$ stochastic process.
An $L^p$ process $X \colon \R_+ \to L^p(\E)$ is \textbf{adapted} (\textbf{to $\boldsymbol{(\cA_t)_{t \geq 0}}$}) if $X(t) \in L^p(\cA_t,\E)$ for all $t \geq 0$.

So, in this setting, what is a reasonable analog of the definition of an $L^p$-integrator?
To prepare for our answer, let us reframe classical elementary predictable processes as linear map--valued processes in the style of our previous work \cite{JKN2026}.
Let $Y \colon \R_+ \times \Om \to \C$ be a classical $L^r$ stochastic process for some $r \in [1,\infty]$.
We saw in the previous paragraph that the modification class of $Y$ may be viewed as the noncommutative $L^r$ stochastic process $\R_+ \ni t \mapsto Y_t \in L^r(\E_P) = L^r(\Om,\sF,P)$.
There is another way to ``noncommutativize'' the modification class of $Y$ that is useful when thinking of $Y$ as a stochastic integrand.
Indeed, suppose $1 \leq q \leq p \leq \infty$ and $1/r+1/p=1/q$.
If $y \in L^r(\E_P)$, write $M_y \colon L^p(\E_P) \to L^q(\E_P)$ for the multiplication-by-$y$ operator:
\begin{equation}
    M_yx \coloneqq yx \in L^q(\E_P) \qquad (x \in L^p(\E_P)).\label{eq.Moperator}
\end{equation}
If $M \colon L^r(\E_P) \to B(L^p(\E_P);L^q(\E_P))$ is the map $y \mapsto M_y$, then $M$ is a linear isometry.
Thus, the modification class of $Y$ may be identified with the linear map--valued process
\begin{equation}
    \R_+ \ni t \mapsto H_Y(t) \coloneqq M_{Y_t}  \in B(L^p(\E_P);L^q(\E_P)).\label{eq.HY}
\end{equation}
Consequently, a linear map--valued process $H \colon \R_+ \to B(L^p(\E_P)) = B(L^p(\E_P);L^p(\E_P))$ ``is'' the modification class of a $Y \in \sEP$ if and only if there exist times $s_i < t_i$ ($i=1,\ldots,k$) and elements $y_i \in L^{\infty}(\Om,\sF_{s_i},P)$ ($i=0,\ldots,k$, with $s_0 \coloneqq 0$) such that
\[
H = 1_{\{0\}} M_{y_0} + \sum_{i=1}^k 1_{(s_i,t_i]} M_{y_i}.
\]
Note that if $i =0,\ldots,k$, $u \geq s_i$, and $x \in L^p(\E_P)$, then
\[
\E_P[M_{y_i}x \mid \sF_u] = y_i\E_P[x \mid \sF_u] = M_{y_i}\E_P[x \mid \sF_u].
\]
Thus, such an $H$ is an elementary predictable $(L^p;L^p)$-process in the sense of \cite[Def.\ 4.9]{JKN2026}.
However, it is a \emph{special kind} of elementary predictable $(L^p;L^p)$-process;
it is adapted to a smaller ``filtration'' of the space of bounded linear maps on $L^p$ than the one introduced in \cite{JKN2026}.
This insight leads us to our proposed definitions of predictability and, soon after, integrators in the noncommutative case.

\begin{definition}\label{def.predlin}
Suppose $1 \leq p,q \leq \infty$.
Write $B^{p;q} = B^{p;q}(\E_{\scA};\E_{\scB})$ for the space of bounded real-linear maps from $L^p(\E_{\scA})$ to $L^q(\E_{\scB})$ and $\norm{\cdot}_{p;q} = \norm{\cdot}_{L^p \to L^q}$ for the operator norm on~$B^{p;q}$.
\begin{enumerate}[label=(\roman*),font=\normalfont]
    \item A \textbf{normed filtration} of $B^{p;q}$ is a pair $(\G,\norm{\cdot})$, where $\G = (\cG_t)_{t \geq 0}$ is an increasing family of complex-linear subspaces of $B^{p;q}$ and $\norm{\cdot}$ is a norm on $\cG_{\infty} \coloneqq \bigcup_{t \geq 0} \cG_t$ such that $(\cG_t,\norm{\cdot})$ is a Banach space for each $t \geq 0$.
    Such a pair is \textbf{compatible} (with $(\cA_t)_{t \geq 0}$ and $(\cB_t)_{t \geq 0}$) if for all $t \geq 0$,
    \[
    \E_{\scB}[ Tx \mid \cB_u] = T\E_{\scA}[x \mid \cA_u] \qquad \big(u \geq t, \; T \in \cG_t, \; x \in L^p(\E_{\scA})\big).
    \]
    For the remainder of this definition, fix a normed filtration $(\G,\norm{\cdot})$ of $B^{p;q}$.\label{item.normedfiltration}
    \item $H \colon \R_+ \to \cG_{\infty}$ is a \textbf{$\boldsymbol{\G}$-adapted linear process} if $H(t) \in \cG_t$ for all $t \geq 0$.\label{item.Gadapt}
    \item $H \colon \R_+ \to \cG_{\infty}$ is called an \textbf{elementary $\boldsymbol{\G}$-predictable linear process} if there exist times  $t_i > s_i$ ($i=1,\ldots,k$) and elements $T_i \in \cG_{s_i}$ ($i=0,\ldots,k$, with $s_0 \coloneqq 0$) such~that
    \begin{equation}
        H = 1_{\{0\}} T_0 + \sum_{i=1}^k 1_{(s_i,t_i]} T_i.\label{eq.EPG}
    \end{equation}
    Write $\EP_{\G} \subseteq \ell_{\loc}^{\infty}(\R_+;\cG_{\infty})$ for the set of all elementary $\G$-predictable linear processes.\label{item.EPG}
    \item Write $\cI_{\G}$ for the smallest subset of $\ell_{\loc}^{\infty}(\R_+;\cG_{\infty})$ that contains $\EP_{\G}$ and is closed under locally bounded convergence with respect to $\norm{\cdot}$ (Definition \ref{def.bddconv}).
    The members of $\cI_{\G}$ are called (\textbf{locally bounded}) \textbf{$\boldsymbol{\G}$-predictable linear processes}.\label{item.IG}
\end{enumerate}
\end{definition}

\begin{remark}\label{rem.differentnorm}
The level of generality in Definition \ref{def.predlin}\ref{item.normedfiltration} may seem strange to the reader.
We shall see, e.g., in subsection \ref{subsec.fBMintegrator} and at the end of subsection \ref{subsec.classical}, that it is useful to have a great deal of flexibility in the choice of the norm on our filtrations;
please see also Notation \ref{nota.trpolyfiltration} and what follows it.
\end{remark}

\begin{example}[Classical case]\label{ex.clpred}
Let $(\Om,\sF,(\sF_t)_{t \geq 0},P)$ be a classical filtered probability space satisfying the usual conditions, take $(\cA,(\cA_t)_{t \geq 0},\E_{\scA})$ and $(\cB,(\cB_t)_{t \geq 0},\E_{\scB})$ both to be the classical filtered $\mathrm{W}^*$-probability space from \eqref{eq.classicalfWps}, and suppose $1 \leq r,p,q \leq \infty$ satisfy $1/r+1/p = 1/q$.
If
\[
\cM_t^{p;q} \coloneqq \{M_y : y \in L^r(\Om,\sF_t,P)\} \subseteq B^{p;q} \qquad (t \geq 0)
\]
and $\M^{p;q} \coloneqq (\cM_t^{p;q})_{t \geq 0}$, then $(\M^{p;q},\norm{\cdot}_{p;q})$ is a compatible normed filtration of $B^{p;q}$.
The $\M^{p;q}$-adapted linear processes are precisely the modification classes of classical adapted $L^r$ stochastic processes viewed as time-dependent multiplication operators.
One of the main results of subsection \ref{subsec.classical}, Theorem \ref{thm.clpred}, says:
(1) Every $H \in \cI_{\M^{p;q}}$ is of form $H=H_Y$ (vid.\ \eqref{eq.HY}) for some classical predictable $L^r$ stochastic process $Y \colon \R_+ \times \Om \to \C$, and (2) if $p > q$ (i.e., $r < \infty$) and $Y \colon \R_+ \times \Om \to \C$ is a classical predictable $L^r$ stochastic process, then $H_Y \in \cI_{\M^{p;q}}$.
Thus, Definition \ref{def.predlin}\ref{item.IG} is a reasonable ``noncommutativization'' of the notion of predictability from classical stochastic analysis.
\end{example}

\begin{example}[$(L^p;L^q)$-processes]\label{ex.LpLq}
Suppose $1 \leq p,q \leq \infty$.
If
\[
\cF_t^{p;q} = \cF_t^{p;q}(\E_{\scA};\E_{\scB})  \coloneqq \bigcap_{u \geq t} \{T \in B^{p;q} : \E_{\scB}[Tx \mid \cB_u] = T\E_{\scA}[x \mid \cA_u] \; \forall x \in L^p(\E_{\scA})\} \quad (t \geq 0)\pagebreak
\]
and $\F^{p;q} = \F^{p;q}(\E_{\scA};\E_{\scB}) \coloneqq (\cF_t^{p;q})_{t \geq 0}$, then $(\F^{p;q},\norm{\cdot}_{p;q})$ is a compatible normed filtration of $B^{p;q}$.
We write
\[
\EP^{p;q} = \EP^{p;q}(\E_{\scA};\E_{\scB}) \coloneqq \EP_{\F^{p;q}} \; \text{ and } \; \cI^{p;q} = \cI^{p;q}(\E_{\scA};\E_{\scB}) \coloneqq \cI_{\F^{p;q}}.
\]
An $\F^{p;q}$-adapted linear process is called an \textbf{adapted $\boldsymbol{(L^p;L^q)}$-process}.
An (elementary) $\F^{p;q}$-predictable linear process is called a(n \textbf{elementary}) \textbf{predictable $\boldsymbol{(L^p;L^q)}$-process}.
Adapted $(L^p;L^q)$-processes were introduced in \cite[Def.\ 3.3]{JKN2026}, and elementary predictable $(L^p;L^q)$-processes were introduced in \cite[Def.\ 4.9]{JKN2026}.
The notion of a predictable  $(L^p;L^q)$-process is new.
\end{example}

We study more examples and general properties of $\G$-predictable linear processes in subsection \ref{subsec.generalities}.

Next come the integrators.
If $X \colon \R_+ \to L^p(\E_{\scA})$ is any $L^p$ process, $(\G,\norm{\cdot})$ is a normed filtration of $B^{p;q}$, and $H \in \EP_{\G}$ is as in \eqref{eq.EPG}, then we define
\[
\int_0^t H[\d X] = \int_0^t H(s)[\d X(s)] \coloneqq \sum_{i=1}^k T_i[X(t_i \wedge t) - X(s_i \wedge t)] \qquad (t \geq 0).
\]
Also, write $R_{a,p} = R_{a,p}(\E_{\scA})$ (respectively, $C_{a,p} = C_{a,p}(\E_{\scA})$) for the space of adapted $L^p$ processes that are RCLL (respectively, continuous) with respect to $\norm{\cdot}_p$, the noncommutative $L^p$ norm.
We endow $R_{a,p}$ with the topology of uniform convergence on compact sets, which makes it into a complex Fr\'echet space with $C_{a,p}$ as a closed subspace.

\begin{definition}\label{def.NCintegrator}
Suppose $1 \leq p_0,p \leq \infty$ and $(\G,\norm{\cdot})$ is a compatible normed filtration of $B^{p_0;1}$.
An $L^{p_0}$ process $X \colon \R_+ \to L^{p_0}(\E_{\scA})$ is a (\textbf{noncommutative}) \textbf{$\boldsymbol{L^p}$-integrator with respect to $\boldsymbol{\G}$} if there exists a map $I_X = I_X^{\G} \colon \cI_{\G} \to R_{a,p}$ that is continuous with respect to locally bounded convergence (Definition \ref{def.bddconv}) and satisfies $I_X[H] = \into H[\d X]$ for all $H \in \EP_{\G}$.
In this case, the map $I_X$ is called the (\textbf{noncommutative}) \textbf{stochastic integral against $\boldsymbol{X}$}.
We write $\into H[\d X] = \into H(t)[\d X(t)] \coloneqq I_X[H]$ for all $H \in \cI_{\G}$.
\end{definition}

Suppose $X \colon \R_+ \to L^{p_0}(\E_{\scA})$ is an $L^p$-integrator with respect to $\G$.
We shall see in subsection \ref{subsec.generalities} (specifically, Theorem \ref{thm.abstractstochint}) that the map $I_X$ is unique, so calling it \emph{the} stochastic integral against $X$ is reasonable.
We shall also see that if $\into H[\d X] \in C_{a,p}$ for all $H \in \EP_{\G}$, then $\into H[\d X] \in C_{a,p}$ for all $H \in \cI_{\G}$.
In this case, we say $X$ is a \textbf{continuous $\boldsymbol{L^p}$-integrator with respect to $\boldsymbol{\G}$}.

\begin{example}[Classical case]\label{ex.clint}
Return to the context of Example \ref{ex.clpred}, and suppose $p < \infty$.
Let $X \colon \R_+ \times \Om \to \C$ be a classical $L^p$ stochastic process, and write $x \colon \R_+ \to L^p(\E)$ for its modification class, i.e., $x(t) \coloneqq X_t$ for all $t \geq 0$.
The other main result of subsection \ref{subsec.classical}, Theorem \ref{thm.clvsncstochint}, says that (i) if $x$ is a noncommutative $L^p$-integrator with respect to $\M^{p;p}$, then $X$ has a modification that is a classical $L^p$-integrator; and
(ii) if $X$ has a modification that is a classical $L^p$-integrator with the property that stochastic integrals against it depend only on the modification class of the integrand,\footnote{In general, if $Z$ is a classical $L^p$-integrator and $U,V \in \sP_{\mathrm{bct}}$ are indistinguishable, then $I_Z^c[U] = I_Z^c[V]$ (vid.\ \cite[Cor.\ 3.7.13]{Bichteler2002}).
This is not always true when $U$ and $V$ are only modifications of each other.}
then $x$ is a noncommutative $L^p$-integrator with respect to $\M^{p;p}$.
Consequently, when applied appropriately to the classical case, Definition \ref{def.NCintegrator} recovers the classical notion of an $L^p$-integrator modulo the fact that the noncommutative framework only sees the modification class of a stochastic process.
\end{example}

\begin{example}[Measured decomposable processes]\label{ex.MD}
In \cite{JKN2026}, we introduced the notion of an $L^p$-decomposable process and constructed $L^2$-valued stochastic integrals against $L^2$-decomposable processes.
In subsection \ref{subsec.MD}, we introduce the more restrictive notion of an $L^p$-measured decomposable process (Definition \ref{def.measured}) and prove that if $1 \leq p_0 \leq \infty$, $X \colon \R_+ \to L^{p_0}(\E_{\scA})$ is an $L^{p_0}$-measured decomposable process, and $2 \leq p < \infty$, then $X$ is a continuous $L^p$-integrator with respect to $\F^{p_0;p}$ (Theorem \ref{thm.MDass}).
In Example \ref{ex.2measured}, we observe that all $L^2$-decomposable processes are automatically $L^2$-measured decomposable processes, so the result in the previous sentence generalizes our stochastic integral construction from \cite{JKN2026}.
We also show that several familiar noncommutative stochastic processes, e.g., the $q$-Brownian motions (vid.\ \cite{BKS1997,DonatiMartin2003,DS2013,DS2018}), are measured decomposable processes.
Finally, as an application of the theory of stochastic integration against measured decomposable processes, we generalize our continuous-time noncommutative Burkholder--Davis--Gundy inequalities from \cite{JKN2026};
please see Theorem \ref{thm.contimeNCDG}.
\end{example}

\begin{example}[Free Brownian motion]\label{ex.scBM}
One of the most celebrated noncommutative stochastic processes is the free Brownian motion, the large-$N$ limit of (appropriately scaled) Brownian motion on the space of $N \times N$ Hermitian matrices;
please see \cite{Biane1997,BS1998} or Definition \ref{def.ndimfBM} for a proper definition.
In \cite{BS1998}, P.\ Biane and R.\ Speicher developed a free stochastic calculus, i.e., a stochastic calculus for ``It\^o processes'' driven by free Brownian motion.
One useful feature of this theory is the operator-norm boundedness of the free stochastic integral (vid.\ \cite[Thm.~3.2.1]{BS1998}).
We use this result to prove that a free Brownian motion $S \colon \R_+ \to L^{\infty}(\E_{\scA}) = \cA$ is a continuous $L^{\infty}$-integrator with respect to the ``sub-filtration''
\[
(\G,\norm{\cdot}) \coloneqq \left( \left(\overline{\spn\left\{ x \mapsto axb + \E[cx]\,d : a,b,c,d \in \cA_t\right\}} \right)_{t \geq 0},\norm{\cdot}_{2;2}\right)
\]
of $(\F^{2;2},\norm{\cdot}_{2;2}) = (\F^{2;2}(\E_{\scA};\E_{\scA}),\norm{\cdot}_{2;2})$, where the closure takes place in $B^{2;2}$.
A standard example of a $\G$-predictable linear process is a linear process $H \colon \R_+ \to B^{2;2}$ of the form
\[
H(t)[x] = \sum_{i=1}^k \left(a_i(t)\,x\,b_i(t) + \E[c_i(t)\,x]d_i(t)\right) \qquad \big(t \geq 0, \, x \in L^2(\E_{\scA})\big),
\]
where $a_i,b_i,c_i,d_i \colon \R_+ \to \cA$ are continuous and adapted ($i=1,\ldots,k$).
Please see subsection \ref{subsec.fBMintegrator}, especially Theorem \ref{thm.Linfbound}, for details.

\end{example}

We go through additional general properties of the noncommutative stochastic integral as well as a condition under which it can be constructed in subsection \ref{subsec.generalities}.

\subsection{Noncommutative stochastic differential equations}\label{subsec.NCSDEs}

Retain the filtered $\mathrm{W}^*$-probability spaces $(\cA,(\cA_t)_{t \geq 0},\E = \E_{\scA})$ and $(\cB,(\cB_t)_{t \geq 0},\E_{\scB})$ from the previous subsection.
Suppose $1 \leq p_0,p \leq \infty$, $(\G,\norm{\cdot})$ is a compatible normed filtration of $B^{p_0;1}$, and $X \colon \R_+ \to L^{p_0}(\E_{\scA})$ is a continuous $L^p$-integrator with respect to $\G$.
Given an appropriate linear map--valued coefficient function $F(t,y)$, one can consider the \textbf{noncommutative stochastic differential equation} (NCSDE)
\[
\begin{cases}
    \d Y(t) = F(t,Y(t))[\d X(t)] & \\
    \;\, Y(0) = y_0.
\end{cases}
\]
We shall say that the above NCSDE has a \textbf{unique maximal solution} if there exists a time $T_{\ell} \in (0,\infty]$ and a continuous adapted process $Y$ defined on $[0,T_{\ell})$ such that:
\begin{enumerate}[label=(\alph*)]
    \item $\displaystyle Y = y_0 + \into F(t,Y(t))[\d X(t)]$ on $[0,T_{\ell})$;
    \item if $I \subseteq \R_+$ is an interval containing $0$ and $Y_0$ is a continuous adapted process defined on $I$ such that $\displaystyle Y_0 = y_0 + \into F(t,Y_0(t))[\d X(t)]$ on $I$, then $I \subseteq [0,T_{\ell})$, and $Y|_I = Y_0$.
\end{enumerate}
In this case, $Y$ is called the NCSDE's \textbf{maximal solution}, and $T_{\ell}$ is called its \textbf{lifetime}.
(Please see subsection \ref{subsec.SIEpsetup} for more details and precise definitions.)

In section \ref{sec.SIEp}, we formulate and prove a general result, Theorem \ref{thm.SIE}, on the existence and uniqueness of maximal solutions to a large class of NCSDEs.
Then, in section \ref{sec.SIEpexamples}, we examine a number of corollaries of Theorem \ref{thm.SIE}.
The present subsection highlights those that most clearly illustrate the novelty of Theorem \ref{thm.SIE}.

We begin by considering the case of free stochastic differential equations (FSDEs), i.e., NCSDEs driven by free Brownian motion.
Even in this relatively well-studied case, our results are new.
For the duration of this subsection, fix a choice of $\beta \in \{\emptyset,\sa\}$ that selects between, e.g., $\cA_{\emptyset} = \cA$ and $\cA_{\sa} = \{a \in \cA : a^*=a\}$.

\begin{theorem}[Existence/uniqueness for FSDEs]\label{thm.SDESCBMdriver}
Let $S \colon \R_+ \to \cA_{\sa}$ be a free Brownian motion.
Suppose that $F \colon \R_+ \times \cA_{\beta} \to B^{2;2}$ and $G \colon \R_+ \times \cA_{\beta} \to \cA_{\beta}$ are continuous and~satisfy:
\begin{enumerate}[font=\normalfont,label=(\roman*)]
    \item for all $t \geq 0$ and $y \in (\cA_t)_{\beta}$, $F(t,y)$ belongs to the closure in $B^{2;2}$ of the span of $\left\{ x \mapsto axb + \E[c\,x]d : a,b,c,d \in \cA_t\right\} \subseteq B^{2;2}$, and $G(t,y) \in (\cA_t)_{\beta}$;\label{item.FGadapted}
    \item $\displaystyle \int_r^t F(s,Y(s))[\d S(s)] \in (\cA_t)_{\beta}$ for all $t \geq r \geq 0$ and continuous adapted $Y \colon [r,t] \to \cA_{\beta}$;\label{item.intFbetaintro}
    \item for all $R \geq 0$, there exists an increasing family $(c_{T,R})_{T \geq 0}$ of positive real numbers such that for all $y,z \in \cA_{\beta}$ with $\norm{y}_{\infty} \leq R$ and $\norm{z}_{\infty} \leq R$,\label{item.FGLip}
    \[
    \norm{F(t,y) - F(t,z)}_{\infty;2}+\norm{G(t,y)-G(t,z)}_2 \leq c_{T,R}\norm{y-z}_2 \qquad (0 \leq t \leq T);
    \]
    \item for all $T,R \geq 0$,\label{item.FGbdd}
    \[
    \sup\left\{\norm{F(t,y)}_{2;2} + \norm{G(t,y)}_{\infty} : 0 \leq t \leq T, \, y \in \cA_{\beta}, \, \norm{y}_{\infty} \leq R  \right\} < \infty.
    \]
\end{enumerate}
For each $y_0 \in (\cA_0)_{\beta}$, there exists a unique maximal solution $Y \colon [0,T_{\ell}) \to \cA_{\beta}$ to the FSDE
\[
\begin{cases}
    \d Y(t) = F(t,Y(t))[\d S(t)]  + G(t,Y(t))\,\d t & \\
    \;\, Y(0) = y_0.
\end{cases}
\]
In addition, if $T_{\ell} < \infty$, then $\norm{Y(t)}_{\infty} \to \infty$ as $t \nearrow T_{\ell}$.
Finally, if for each $T \geq 0$, there exists an $M_T <\infty$ such that 
\[
\norm{F(t,y)}_{2;2}+\norm{G(t,y)}_{\infty} \leq M_T\left(\norm{y}_{\infty}+1\right) \qquad (0 \leq t \leq T, \; y \in \cA_{\beta}),
\]
then $T_{\ell}=\infty$.
\end{theorem}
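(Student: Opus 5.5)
We would deduce this from the general existence/uniqueness result, Theorem \ref{thm.SIE}, combined with the fact (Theorem \ref{thm.Linfbound}) that free Brownian motion $S$ is a continuous $L^\infty$-integrator with respect to the sub-filtration $(\G,\norm{\cdot}_{2;2})$ of Example \ref{ex.scBM}. The drift term is handled by viewing $t\mapsto t1$ as a second integrator: the deterministic process $X_0(t)=t\cdot 1$ is an $L^\infty$-integrator (Lebesgue--Stieltjes integration, bounded by $\int_0^t\norm{H(s)[1]}_\infty\,\d s$) against multiplication-type integrands. We therefore recast the FSDE as a stochastic integral equation driven by the pair $(S,X_0)$, with coefficient $y\mapsto (F(t,y),\,M_{G(t,y)})$. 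Here $M_{G(t,y)}\colon x\mapsto G(t,y)x$ lies in a suitable filtration of bounded maps, and it is adapted because $G(t,y)\in(\cA_t)_\beta$ by \ref{item.FGadapted}. The state space is $\cA_\beta$ with $\norm{\cdot}_\infty$, and the continuous adapted processes live in $C_{a,\infty}$.

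The main work is checking the hypotheses of Theorem \ref{thm.SIE}.

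\textbf{Adaptedness/predictability.} For continuous adapted $Y$, the map $t\mapsto F(t,Y(t))$ is continuous into $\cG_\infty$ (by continuity of $F$) and $\G$-adapted by \ref{item.FGadapted}. It is therefore a locally uniform limit of elementary left-endpoint step processes, so it lies in $\cI_{\G}$. Hypothesis \ref{item.intFbetaintro} guarantees that the solution stays in $\cA_\beta$ in the self-adjoint case. For the $\beta=\sa$ drift we similarly need $G$ to map into $\cA_{\sa}$, which \ref{item.FGadapted} gives.

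\textbf{Local Lipschitz estimate.} Theorem \ref{thm.SIE} presumably asks for a bound on $\norm{\int(F(Y)-F(Z))[\d S]}$ in the norm in which we iterate. The key point, and the novelty over operator-Lipschitz settings, is that \ref{item.FGLip} only controls the differences in $\norm{\cdot}_{\infty;2}$ and $\norm{\cdot}_2$. So we would run the contraction in the $L^2$ sup-norm on a ball that is bounded in $L^\infty$. The $L^2$ BDG/Itô isometry for free stochastic integrals (measured decomposable case, Theorem \ref{thm.MDass}, or \cite{JKN2026}) gives
\[
\norm{\textstyle\int_0^t (F(s,Y)-F(s,Z))[\d S]}_2\lesssim \Big(\int_0^t\norm{F(s,Y(s))-F(s,Z(s))}_{\infty;2}^2\,\d s\Big)^{1/2}.
\]
This requires $Y(s)-Z(s)\in L^\infty$, which holds, and since $\norm{T}_{\infty;2}$ controls the integrand acting on the increments, the bound is $\le c_{T,R}\sqrt t\sup\norm{Y-Z}_2$. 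Meanwhile the $L^\infty$ bound of Theorem \ref{thm.Linfbound} together with \ref{item.FGbdd} keeps the Picard iterates in an $L^\infty$-ball of radius $R$ on a short interval. The plan is therefore a Picard iteration on $\{Y:\sup_{[0,\tau]}\norm{Y}_\infty\le R\}$ with the $L^2$-uniform metric. This set is closed under $L^2$ limits, because $L^\infty$-balls are $L^2$-closed, so the iterates converge in $L^2$ to a bounded limit. We then upgrade to $L^\infty$-continuity using the $L^\infty$ boundedness of the integral and continuity of $F,G$. I expect this mixed-norm closure argument to be the main obstacle, and I would cite it as the version of the abstract hypothesis used in Theorem \ref{thm.SIE}.

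\textbf{Maximality, blowup, and non-explosion.} Patching local solutions by uniqueness yields the maximal solution. Theorem \ref{thm.SIE} supplies the blowup alternative $\norm{Y(t)}_\infty\to\infty$ if $T_\ell<\infty$. Finally, under the linear growth bound, the $L^\infty$ integral estimate of Theorem \ref{thm.Linfbound} gives
\[
\sup_{s\le t}\norm{Y(s)}_\infty\le \norm{y_0}_\infty+C_T\Big(\int_0^t M_T^2\big(\sup_{r\le s}\norm{Y(r)}_\infty+1\big)^2\,\d s\Big)^{1/2}+\int_0^t M_T\big(\norm{Y(s)}_\infty+1\big)\,\d s.
\]
Squaring and applying Grönwall's inequality shows $\norm{Y}_\infty$ stays bounded on $[0,T_\ell\wedge T)$. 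This contradicts blowup, so $T_\ell=\infty$.
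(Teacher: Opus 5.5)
Your proposal is correct and is essentially the paper's argument. The paper obtains this theorem as a special case of Theorem~\ref{thm.SIESCBMdriver}, which it proves by applying Theorem~\ref{thm.SIE} and Corollary~\ref{cor.sublingrowth} with $q=2$ and $p=\infty$, using an $L^2$-level filtration normed by $\norm{\cdot}_{\infty;2}$ (controlled via Theorem~\ref{thm.MDass}) and an $L^\infty$-level filtration built from $\cT_t^{2;2}$ normed by $\norm{\cdot}_{2;2}$ (controlled via Theorem~\ref{thm.Linfbound}); your mixed-norm Picard iteration on an $L^\infty$-ball with the $L^2$-uniform metric is exactly the content of Theorem~\ref{thm.exist}, and your drift encoding via $t\cdot 1$ and $M_{G(t,y)}$ differs only cosmetically from the paper's integrator $(S(t),t)\in\cA\oplus\C$ with coefficient $\lambda\mapsto G(t,y)\lambda$.
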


This result is a special case of its ``multidimensional'' generalization, Theorem \ref{thm.SIESCBMdriver}.
Assumption \ref{item.FGadapted} enables us to make use of our operator-norm estimate on the stochastic integral against $S$ (Theorem \ref{thm.Linfbound}), which takes the form
\begin{equation}
    \norm{\int_0^t H[\d S]}_{\infty} \lesssim \left( \int_0^t \norm{H(s)}_{2;2}^2\,\d s \right)^\frac12 \qquad (t \geq 0).\label{eq.LinftybdFSI}
\end{equation}
With this in mind, it might seem more reasonable to replace $\norm{\cdot}_{\infty;2}$ with $\norm{\cdot}_{2;2}$ and $\norm{\cdot}_2$ with $\norm{\cdot}_{\infty}$ in assumption \ref{item.FGLip}, in which case assumption \ref{item.FGbdd} would be redundant.
In fact, if one makes these replacements, the resultant theorem is true and is still a corollary of Theorem \ref{thm.SIE}.
In practice, however, it is sometimes easier for a coefficient to satisfy the Lipschitz assumption in \ref{item.FGLip} than the one with the norms $\norm{\cdot}_{2;2}$ and $\norm{\cdot}_{\infty}$.
This is the case, for example, when the coefficients come from scalar functions via the continuous functional calculus (vid.\ \cite[Ch.\ VIII]{Conway1990}).
Indeed, using a celebrated result of D.\ Potapov and F.\ Sukochev from \cite{PS2011}, we are able to deduce the following from Theorem \ref{thm.SDESCBMdriver}.

\begin{corollary}\label{cor.funkycalcFSDE}
Suppose $f_i,g_i \colon \R_+ \times \R \to \C$ ($i=1,\ldots,k$) and $h \colon \R_+ \times \R \to \R$ are continuous functions that are locally Lipschitz in space, locally uniformly in time.\footnote{A function $f \colon \R_+ \times \R \to \C$ is \textbf{locally Lipschitz in space, locally uniformly in time} if for all $R \geq 0$ and $T \geq 0$, there exists a constant $c_{T,R} < \infty$ such that $|f(t,\lambda)-f(t,\mu)| \leq c_{T,R}|\lambda-\mu|$ for all $\lambda,\mu \in [-R,R]$ and all $t \in [0,T]$.}
If $S \colon \R_+ \to \cA_{\sa}$ is a free Brownian motion and $y_0 \in (\cA_0)_{\sa}$, then there exists a unique maximal solution $Y \colon [0,T_{\ell}) \to \cA_{\sa}$ to the FSDE
\[
\begin{cases}
    \displaystyle\d Y(t) =  \sum_{i=1}^k \left( f_i(t,Y(t))\,\d S(t)\, g_i(t,Y(t)) + \overline{g_i}(t,Y(t))\,\d S(t)\, \overline{f_i}(t,Y(t))\right) + h(t,Y(t))\,\d t & \\
    \;\, Y(0) = y_0,
\end{cases}
\]
where $h(t,Y(t))$, $f_i(t,Y(t))$, $g_i(t,Y(t))$, etc., are defined via the continuous functional calculus.
In addition, if $T_{\ell} < \infty$, then $\norm{Y(t)}_{\infty} \to \infty$ as $t \nearrow T_{\ell}$.
Finally, if for each $T \geq 0$, there exists an $M_T < \infty$ such that
\[
\sum_{i=1}^k\norm{f_i}_{\ell^{\infty}([0,T] \times [-R,R])}\norm{g_i}_{\ell^{\infty}([0,T] \times [-R,R])} + \norm{h}_{\ell^{\infty}([0,T] \times [-R,R])} \leq M_T (R+1) \quad (R \geq 0),
\]
then $T_{\ell} = \infty$.
Here and throughout, $\norm{\varphi}_{\ell^{\infty}(\Xi)} \coloneqq \sup\left\{|\varphi(\xi)| : \xi \in \Xi\right\}$ for a set $\Xi$ and a function $\varphi \colon \Xi \to \C$.
\end{corollary}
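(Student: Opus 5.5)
We obtain the corollary by applying Theorem \ref{thm.SDESCBMdriver} with $\beta = \sa$ and
\[
F(t,y)[x] \coloneqq \sum_{i=1}^k \left( f_i(t,y)\,x\,g_i(t,y) + \overline{g_i}(t,y)\,x\,\overline{f_i}(t,y)\right), \qquad G(t,y) \coloneqq h(t,y),
\]
for $t \geq 0$ and $y \in \cA_{\sa}$, all functions of $y$ being taken in the continuous functional calculus. The plan is to check hypotheses \ref{item.FGadapted}--\ref{item.FGbdd} and then the linear growth condition.

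\textbf{Adaptedness, self-adjointness and boundedness.} If $y \in (\cA_t)_{\sa}$, then $f_i(t,y), g_i(t,y) \in \cA_t$, because the continuous functional calculus stays inside the $\mathrm{C}^*$-algebra generated by $y$ and $1$. Hence $F(t,y)$ lies in the span of the maps $x \mapsto axb$ with $a,b \in \cA_t$, and $G(t,y) = h(t,y) \in (\cA_t)_{\sa}$ since $h$ is real-valued. This gives \ref{item.FGadapted}.

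For \ref{item.intFbetaintro}, we use that $F(s,y)[x]^* = F(s,y)[x^*]$. Since $S$ is self-adjoint, the elementary integrals of $s \mapsto F(s,Y(s))$ against $S$ are self-adjoint. Self-adjointness then passes to the limit defining $\int_r^t F(s,Y(s))[\d S(s)]$, because that limit is taken in $L^2$ and the adjoint is an isometry of $L^2$. Membership in $\cA_t$ comes from the $L^\infty$ bound \eqref{eq.LinftybdFSI} in Theorem \ref{thm.Linfbound}, via the paper's construction of the integral.

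For \ref{item.FGbdd}, take $\norm{y}_\infty \leq R$. Then $\norm{f_i(t,y)}_\infty \leq \norm{f_i}_{\ell^\infty([0,T]\times[-R,R])}$, and similarly for $g_i$ and $h$. H\"older's inequality gives $\norm{axb}_2 \leq \norm{a}_\infty\norm{x}_2\norm{b}_\infty$, so
\[
\norm{F(t,y)}_{2;2} \leq 2\sum_{i=1}^k \norm{f_i}_{\ell^\infty([0,T]\times[-R,R])}\norm{g_i}_{\ell^\infty([0,T]\times[-R,R])}.
\]
Under the growth hypothesis of the corollary, the same estimate gives $\norm{F(t,y)}_{2;2}+\norm{G(t,y)}_\infty \leq 2M_T(R+1)$ with $R = \norm{y}_\infty$, so the final clause of Theorem \ref{thm.SDESCBMdriver} yields $T_\ell = \infty$.

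\textbf{Continuity and the Lipschitz bound (main step).} Continuity of $F$ and $G$ in operator norm is standard. On $[0,T]\times[-R-1,R+1]$ each $f_i$ is uniformly continuous, so it can be approximated uniformly in $t$ by polynomials in $\lambda$ whose coefficients depend continuously on $t$. This shows $(t,y) \mapsto f_i(t,y)$ is norm-continuous on bounded sets.

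The key point is \ref{item.FGLip}, which needs an $L^2$-Lipschitz estimate for a function that is merely Lipschitz on the real line. First extend $f_i(t,\cdot)$ from $[-R,R]$ to all of $\R$ by making it constant outside $[-R,R]$. This gives a globally Lipschitz function with constant $c_{T,R}$, uniformly in $t \leq T$. The Potapov--Sukochev theorem \cite{PS2011} gives $\norm{\varphi(y)-\varphi(z)}_p \lesssim_p \mathrm{Lip}(\varphi)\norm{y-z}_p$ for $1<p<\infty$ and self-adjoint $y,z$. In this tracial $\mathrm{W}^*$ setting with $p=2$ the same bound follows more simply from the double operator integral estimate, with constant $1$. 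Either way,
\[
\norm{f_i(t,y)-f_i(t,z)}_2 \leq c_{T,R}\norm{y-z}_2,
\]
and likewise for $g_i$ and $h$. The last of these handles the $G$ term.

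For $F$, write the telescoping identity
\[
a x b - a' x b' = (a-a')\,x\,b + a'\,x\,(b-b').
\]
For $x \in \cA$, H\"older's inequality gives $\norm{(a-a')xb}_2 \leq \norm{a-a'}_2\norm{x}_\infty\norm{b}_\infty$, and similarly for the second term. Hence
\[
\norm{F(t,y)-F(t,z)}_{\infty;2} \leq C\,c_{T,R}\norm{y-z}_2,
\]
where $C$ depends only on the sup norms of the $f_i,g_i$ over $[0,T]\times[-R,R]$.

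We expect the only genuine obstacle to be this $L^2$ functional calculus estimate for merely Lipschitz functions, which is where \cite{PS2011} enters. Everything else is bookkeeping.
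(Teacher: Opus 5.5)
Your proposal is correct and follows essentially the same route as the paper's argument in Example~\ref{ex.funkycalcSCBMSIE}: you verify the hypotheses of Theorem~\ref{thm.SDESCBMdriver}, get the $\|\cdot\|_{\infty;2}$ local Lipschitz bound by modifying the scalar functions outside $[-R,R]$ and combining Potapov--Sukochev (with $p=2$) with the telescoping identity, and obtain $T_\ell=\infty$ from the growth bound with constant $2M_T$. The differences are cosmetic. You clamp the functions outside $[-R,R]$ rather than multiplying by a smooth cutoff $\varphi_R$. You also keep the second term complex linear ($x$ rather than the paper's $x^*$), which fits hypothesis \ref{item.FGadapted} of Theorem~\ref{thm.SDESCBMdriver} as literally stated and gives the same integral because $S$ is self-adjoint.
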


We explain in Example \ref{ex.funkycalcSCBMSIE} how to obtain Corollary \ref{cor.funkycalcFSDE} from Theorem \ref{thm.SDESCBMdriver}.
Note well that the Lipschitz assumptions in Corollary \ref{cor.funkycalcFSDE} are of the plain-old, scalar-function variety.
In particular, we emphasize the lack of operator Lipschitz--type assumptions, which were required in almost all previous work on similar FSDEs, e.g., \cite{BS2001,Kargin2011,DS2013,DS2018}.
Dropping these more stringent operator Lipschitz--type assumptions is possible precisely because we ``mix norms'' in the hypotheses of Theorem \ref{thm.SDESCBMdriver}.
To our knowledge, the only other existence and uniqueness result for FSDEs that avoids operator-Lipschitz assumptions comes from the preprint \cite{WY2026} of J.\ Wei and Z.\ Yin that was released during the final stages of the drafting of the present manuscript.
Specifically, their result implies the global existence and uniqueness \emph{of $L^2$ solutions} to FSDEs of the form $\d Y(t) = \d S(t) + f(Y(t))\,\d t$, where $f \colon \R \to \R$ is a globally Lipschitz function.
Corollary \ref{cor.funkycalcFSDE} says that the solutions to such equations, and much more general ones, are actually $L^{\infty}$ solutions.

Finally, we call special attention to the finite-time blow-up phenomenon in Theorem \ref{thm.SDESCBMdriver} (and Corollary \ref{cor.funkycalcFSDE}).
Though there are some local existence and uniqueness results for FSDEs in literature prior to the present work, e.g., \cite[Thm.\ 3.1]{Kargin2011}, none establish that a maximal solution must blow up if its lifetime is finite.

The second special class of NCSDEs we highlight consists of those driven by $q$-Brownian motion with $-1 \leq q < 1$ (vid.\ \cite{BKS1997,DonatiMartin2003,DS2013,DS2018}).
The family of $q$-Brownian motions interpolates between the fermionic ($q=-1$) and classical ($q=1$) Brownian motion, with the free Brownian motion in between ($q=0$).
If $-1 \leq q < 1$, then $q$-Brownian motion $X \colon \R_+ \to \cA_{\sa}$ is $L^{\infty}$-continuous and, as we show in subsection \ref{subsec.MD}, is a noncommutative $L^p$-integrator with respect to $\F^{\infty;p}$ for all finite $p$;
please see Example \ref{ex.LpLq} for the definition of $\F^{p_0;p} = (\cF_t^{p_0;p})_{t \geq 0}$.

\begin{theorem}[Existence/uniqueness for NCSDEs driven by $q$-BM]\label{thm.SDEqBM}
Suppose $-1 \leq q < 1$, $X \colon \R_+ \to \cA_{\sa}$ is a $q$-Brownian motion, and $2 \leq p < \infty$.
Assume $F \colon \R_+ \times L^p(\E_{\scB})_{\beta} \to B^{\infty;p}$ and $G \colon \R_+ \times L^p(\E_{\scB})_{\beta} \to L^p(\E_{\scB})_{\beta}$ are continuous and satisfy:
\begin{enumerate}[font=\normalfont,label=(\roman*)]
    \item $F(t,y) \in \cF_t^{\infty;p}$ and $G(t,y) \in L^p(\cB_t,\E_{\scB})_{\beta}$ whenever $t \geq 0$ and $y \in L^p(\cB_t,\E_{\scB})_{\beta}$;
    \item $\displaystyle \int_r^t F(s,Y(s))[\d X(s)] \in L^p(\cB_t,\E_{\scB})_{\beta}$ for all $t \geq r \geq 0$ and all continuous adapted processes $Y \colon [r,t] \to L^p(\E_{\scB})_{\beta}$;
    \item for all $R \geq 0$, there exists an increasing family $(c_{T,R})_{T \geq 0}$ of positive real numbers such that for all $y,z \in L^p(\E_{\scB})_{\beta}$ with $\norm{y}_p \leq R$ and $\norm{z}_p \leq R$,\label{item.qlocLip}
    \[
    \norm{F(t,y) - F(t,z)}_{\infty;p} + \norm{G(t,y) - G(t,z)}_p \leq c_{T,R}\norm{y-z}_p \qquad (0 \leq t \leq T).
    \]
\end{enumerate}
For each $y_0 \in L^p(\cB_0,\E_{\scB})_{\beta}$, there exists a unique maximal solution $Y \colon [0,T_{\ell}) \to L^p(\E_{\scB})_{\beta}$ to the NCSDE
\[
\begin{cases}
    \d Y(t) = F(t,Y(t))[\d X(t)] + G(t,Y(t))\,\d t & \\
    \;\, Y(0) = y_0.
\end{cases}
\]
In addition, if $T_{\ell} < \infty$, then $\norm{Y(t)}_p \to \infty$ as $t \nearrow T_{\ell}$.
Finally, if for each $T \geq 0$, there exists an $M_T <\infty$ such that 
\[
\norm{F(t,y)}_{\infty;p} +\norm{G(t,y)}_p \leq M_T\big(\norm{y}_p+1\big) \qquad \big(0 \leq t \leq T, \; y \in L^p(\E_{\scB})_{\beta}\big),
\]
e.g., if $c_{T,R}$ may be taken to be independent of $R$, then $T_{\ell}=\infty$.
\end{theorem}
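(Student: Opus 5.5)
Theorem \ref{thm.SDEqBM} will be obtained as a corollary of the general existence/uniqueness result, Theorem \ref{thm.SIE}, applied to the pair of integrators $(X,\mathrm{id})$: the $q$-Brownian motion $X$ and the time process $t\mapsto t$ (or $t\,1$). The plan has three steps. First, verify that $X$ is a continuous $L^p$-integrator with respect to $\F^{\infty;p}$ with a quantitative bound. This follows from Theorem \ref{thm.MDass} once we know, from subsection \ref{subsec.MD}, that $X$ is an $L^\infty$-measured decomposable process. The noncommutative Burkholder--Davis--Gundy estimate (Theorem \ref{thm.contimeNCDG}) should then give an inequality of the form
\[
\sup_{r\le s\le t}\Big\|\int_r^s H[\d X]\Big\|_p \le C_{p,q}\Big(\int_r^t \|H(u)\|_{\infty;p}^2\,\d u\Big)^{1/2}
\]
for predictable $H$ with values in $\cF^{\infty;p}$. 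The constant depends only on $p$ and $q$; it comes from the measure controlling the decomposition, which is Lebesgue measure for $q$-Brownian motion.

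Second, the drift term. Viewing $G(t,y)$ through the identification of the $\d t$ integral with a deterministic Bochner integral gives the trivial bound $\|\int_r^t G\,\d s\|_p \le \int_r^t\|G\|_p\,\d s$. Third, check the hypotheses of Theorem \ref{thm.SIE} in the present setting:
\begin{itemize}
\item Adaptedness of the coefficients is hypothesis (i).
\item The invariance of the $\beta$-part (self-adjointness) under the integral map is hypothesis (ii).
\item The local Lipschitz condition, measured in $\|\cdot\|_{\infty;p}$ for $F$ and $\|\cdot\|_p$ for $G$, is hypothesis (iii).
\end{itemize}
Because here the state-space norm $\|\cdot\|_p$ and the integrator norm are matched, no separate boundedness assumption (like (iv) in Theorem \ref{thm.SDESCBMdriver}) is needed. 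Local Lipschitz continuity together with continuity of $F,G$ already gives local boundedness on $\|y\|_p\le R$: bound $\|F(t,y)\|\le \|F(t,0)\|+c_{T,R}R$ and use continuity of $t\mapsto F(t,0)$ on $[0,T]$.

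One point must be checked carefully: that $s\mapsto F(s,Y(s))$ is actually an element of $\cI_{\F^{\infty;p}}$ when $Y$ is continuous and adapted. This follows from continuity of $F$ and $Y$ and the fact that continuous adapted linear processes are predictable, a general property from subsection \ref{subsec.generalities}. Granting this, Theorem \ref{thm.SIE} yields the unique maximal solution and the blow-up alternative $\|Y(t)\|_p\to\infty$ as $t\nearrow T_\ell<\infty$.

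For global existence under linear growth, argue by contradiction. Suppose $T_\ell<\infty$ and let $\tau_R$ be the first time $\|Y\|_p$ reaches $R$. Applying the BDG bound and the drift bound to $Y=y_0+\int F[\d X]+\int G\,\d t$ on $[0,\tau_R\wedge T]$ gives
\[
\phi(t)\coloneqq \sup_{s\le t}\|Y(s)\|_p^2 \le 3\|y_0\|_p^2+C\,M_T^2(1+T)\int_0^t(1+\phi(s))\,\d s.
\]
Gronwall's inequality then bounds $\phi$ independently of $R$, contradicting blow-up. If Theorem \ref{thm.SIE} already contains this linear-growth conclusion abstractly, we simply invoke it instead. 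The remark about $c_{T,R}$ independent of $R$ follows from the triangle inequality $\|F(t,y)\|\le\|F(t,0)\|+c_T\|y\|_p$.

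The main obstacle I expect is the first step: confirming that the measured-decomposable integral bound really controls the integrand in the $\|\cdot\|_{\infty;p}$ operator norm, matching the Lipschitz hypothesis (iii), rather than some stronger norm. If Theorem \ref{thm.MDass} is phrased with a different normed filtration, I would first show that $(\F^{\infty;p},\|\cdot\|_{\infty;p})$ embeds continuously into it. That would use the $L^\infty$ boundedness of the increments of $X$ and the fact that the measure for $q$-BM is $\d t$.
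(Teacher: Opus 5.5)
Your proposal is correct and follows the paper's route. The paper merges $X$ and time into the single integrator $Z(t)=(X(t),t)$ in $\cA\oplus\C$, which is an $L^\infty$-measured decomposable process (martingale part $(X,0)$ with $\mu=\frac{4}{1-q}\,\d t$, FV part $(0,t)$ with $\nu=\d t$), and applies Theorem \ref{thm.SIEMDdriver} (that is, Theorem \ref{thm.SIE} with $\H=\G=\F^{\infty;p}$, Remark \ref{rem.ass}, and Corollary \ref{cor.sublingrowth}) to the coefficient $z=(x,\lambda)\mapsto F(t,y)[x]+G(t,y)\lambda$.

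One small correction: the integrator estimate comes from Theorem \ref{thm.MDass}, which rests on the discrete Pisier--Xu inequalities (Theorem \ref{thm.NCBDG}), not on the continuous-time Theorem \ref{thm.contimeNCDG}. In the paper's packaging the drift bound is absorbed through $\nu$, so no separate Bochner estimate is needed.
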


This result is a consequence of Theorem \ref{thm.SIEMDdriver}, a similar-looking result about noncommutative stochastic integral equations driven by general measured decomposable processes.
Using once again results of Potapov and Sukochev, we obtain from Theorem \ref{thm.SDEqBM} a nice result about NCSDEs with functional-calculus coefficients.

\begin{corollary}\label{cor.funkycalcqSDE}
Suppose $f_i,g_i \colon \R_+ \times \R \to \C$ ($i=1,\ldots,k$) and $h \colon \R_+ \times \R \to \R$ are continuous functions that are bounded and Lipschitz in space, locally uniformly in time.\footnote{A function $f \colon \R_+ \times \R \to \C$ is \textbf{bounded in space, locally uniformly in time} if $\norm{f}_{\ell^{\infty}([0,T] \times \R)} < \infty$ for all $T \geq 0$.
A function $f \colon \R_+ \times \R \to \C$ is \textbf{Lipschitz in space, locally uniformly in time} if for all $T \geq 0$, there exists a $c_T < \infty$ such that $|f(t,\lambda)-f(t,\mu)| \leq c_T|\lambda-\mu|$ for all $\lambda,\mu \in \R$ and all $t \in [0,T]$.}
If $-1 \leq q < 1$, $X \colon \R_+ \to \cA_{\sa}$ is a $q$-Brownian motion, $2 \leq p < \infty$, and $y_0 \in L^p(\cA_0,\E_{\scA})_{\sa}$, then there exists a unique global solution $Y \colon \R_+ \to L^p(\E_{\scA})_{\sa}$ to the NCSDE
\[
\begin{cases}
    \displaystyle\d Y(t) = \sum_{i=1}^k \left( f_i(t,Y(t))\,\d X(t)\, g_i(t,Y(t)) + \overline{g_i}(t,Y(t))\,\d X(t)\, \overline{f_i}(t,Y(t))\right) +  h(t,Y(t))\,\d t & \\
    \;\, Y(0) = y_0.
\end{cases}
\]
\end{corollary}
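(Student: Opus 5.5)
We will deduce Corollary \ref{cor.funkycalcqSDE} from Theorem \ref{thm.SDEqBM} with $\cB = \cA$, $\beta = \sa$, and coefficients
\[
F(t,y)[x] \coloneqq \sum_{i=1}^k \left( f_i(t,y)\,x\,g_i(t,y) + \overline{g_i}(t,y)\,x\,\overline{f_i}(t,y)\right), \qquad G(t,y) \coloneqq h(t,y),
\]
for $y \in L^p(\E_{\scA})_{\sa}$. Here $f_i(t,y)$, $g_i(t,y)$ and $h(t,y)$ are defined by the Borel functional calculus of the (possibly unbounded) self-adjoint operator $y$ affiliated with $\cA$.

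\textbf{Well-definedness.} Since $f_i,g_i$ are bounded in space, $f_i(t,y),g_i(t,y) \in \cA$ with $\norm{f_i(t,y)}_\infty \le \norm{f_i}_{\ell^\infty([0,t]\times\R)}$. Hence $F(t,y) \in B^{\infty;p}$, indeed $F(t,y) \in B^{p;p}$ and $B^{\infty;\infty}$. Since $h$ is Lipschitz in space, $|h(t,\lambda)| \le |h(t,0)| + c_t|\lambda|$, so $h(t,y) \in L^p$ and is self-adjoint because $h$ is real. Adaptedness (hypothesis (i)) follows because the functional calculus of $y \in L^p(\cA_t)$ stays in $\cA_t$ (resp.\ $L^p(\cA_t)$), and maps $x \mapsto axb$ with $a,b \in \cA_t$ lie in $\cF_t^{\infty;p}$ by the bimodule property of conditional expectation. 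Self-adjointness of $F(t,y)[x]$ for $x=x^*$ is built into the symmetrized form, since $(f x g)^* = \overline{g}\, x\, \overline{f}$ when $f(t,y)^* = \overline f(t,y)$. Hypothesis (ii) should then follow by approximating $F(\cdot,Y(\cdot))$ by elementary predictable processes whose values map self-adjoints to self-adjoints and are in $\cF^{\infty;p}$, and passing to the limit via continuity of $I_X$ and closedness of $L^p(\cA_t)_{\sa}$. Continuity of $F$ and $G$ will follow from the Lipschitz estimates below together with continuity in $t$, using uniform continuity of $f_i$ on compacts and dominated convergence for the time variable.

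\textbf{Lipschitz estimates.} This is the main step and relies on Potapov--Sukochev \cite{PS2011}. A Lipschitz function $\varphi$ on $\R$ is operator Lipschitz in Schatten/noncommutative $L^p$ for $1<p<\infty$, with
\[
\norm{\varphi(y)-\varphi(z)}_p \le C_p \norm{\varphi}_{\mathrm{Lip}} \norm{y-z}_p .
\]
This directly gives $\norm{G(t,y)-G(t,z)}_p \le C_p c_T \norm{y-z}_p$. For $F$, write $a x b - a' x b' = (a-a')xb + a'x(b-b')$ and use Hölder:
\[
\norm{F(t,y)-F(t,z)}_{\infty;p} \le 2\sum_{i=1}^k \bigl(\norm{f_i(t,y)-f_i(t,z)}_p \norm{g_i}_\infty + \norm{f_i}_\infty\norm{g_i(t,y)-g_i(t,z)}_p\bigr).
\]
This is $\le c_T'\norm{y-z}_p$ by Potapov--Sukochev, applied to real and imaginary parts of $f_i,g_i$ separately. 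The constant is independent of $R$, so hypothesis (iii) holds globally.

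\textbf{Global existence.} Since the constants are independent of $R$, linear growth holds: $\norm{F(t,y)}_{\infty;p} \le 2\sum_i\norm{f_i}_\infty\norm{g_i}_\infty$ and $\norm{G(t,y)}_p \le |h(t,0)| + c_T\norm{y}_p$. The final clause of Theorem \ref{thm.SDEqBM} then yields $T_\ell=\infty$, i.e., a unique global solution. The expected obstacle is verifying hypothesis (ii) (self-adjointness of the integral) rigorously; the route is the approximation by elementary predictable processes described above.
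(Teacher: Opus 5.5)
Your proposal is correct and follows essentially the same route as the paper (Example~\ref{ex.funkycalc}): you get an $R$-independent $\norm{\cdot}_{\infty;p}$-Lipschitz bound for the functional-calculus coefficients from Potapov--Sukochev and H\"older, verify continuity and adaptedness, and then invoke the $q$-Brownian (measured-decomposable) existence theorem together with its sublinear-growth clause to get $T_{\ell}=\infty$. The differences are cosmetic: the paper folds the $h\,\d t$ term into the integrator $(X(t),t)$ on $\cA\oplus\C$, uses $x^*$ in the conjugate term, and defines $f(t,y)$ by $L^p$-continuous extension rather than by Borel calculus; moreover, the hypothesis (ii) you flag as the obstacle is already handled by Remark~\ref{rem.ass}, because your $F$ satisfies $F(t,y)[x]^*=F(t,y)[x^*]$ and $X^*=X$.
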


We explain in Example \ref{ex.funkycalc} how to obtain Corollary \ref{cor.funkycalcqSDE} from Theorem \ref{thm.SDEqBM}.

A few remarks are in order about Theorem \ref{thm.SDEqBM} and Corollary \ref{cor.funkycalcqSDE}.
Throughout, we assume $-1 \leq q < 1$.
First, the reason for the finiteness of $p$ in these two results is that it is presently not known how to control the $L^{\infty}$ norm of the stochastic integral against $q$-Brownian motion in the style of, e.g., inequality \eqref{eq.LinftybdFSI} when $q \neq 0$;
we use the noncommutative Burkholder--Davis--Gundy inequalities (Theorem \ref{thm.NCBDG} below) to control its $L^p$ norm when $p$ is finite.
If such $L^{\infty}$-norm control were possible, we would also be able to prove a result like Theorem \ref{thm.SDESCBMdriver} for $q$-Brownian motion.
Second, observe that Corollary \ref{cor.funkycalcqSDE} is only a global result while Corollary \ref{cor.funkycalcFSDE} contains both a local \emph{and} a global result.
The reason is that the Lipschitz condition in Theorem \ref{thm.SDEqBM} is enforced on balls in $L^p$ while that in Theorem \ref{thm.SDESCBMdriver} is enforced on balls in $L^{\infty}$, i.e., operator-norm balls.
Local assumptions on the scalar coefficient functions do not transfer conveniently to balls in $L^p$ the way they transfer to operator-norm balls.
(Ultimately, this too becomes an issue only because of the present lack of $L^{\infty}$-norm control of the stochastic integral against $q$-Brownian motion.)
Third, Theorem \ref{thm.SDEqBM} is the first general result about the existence and uniqueness of solutions to noncommutative It\^o-type NCSDEs driven by $q$-Brownian motion.
Previously, in \cite{DS2013,DS2018}, A.\ Deya and R.\ Schott studied rough differential equations (RDEs) driven by $q$-Brownian motion with $0 \leq q < 1$.
Deya and Schott's results were in $L^{\infty}$ but demanded the aforementioned operator Lipschitz--type assumptions on coefficients arising from the functional calculus.
As of quite recently, there is also the work \cite{CHP2025} of A.\ Chandra, M.\ Hairer, and M.\ Peev, which establishes, among other things, a local existence and uniqueness result for noncommutative NCSDEs---formulated in terms of regularity structures---driven by $q$-Brownian motion.
Chandra, Hairer, and Peev's result is also in $L^{\infty}$ but enforces a real-analyticity assumption on the coefficients.

\section{Noncommutative stochastic integration}\label{sec.NCstochint}

Let $(\cA,\E)$ be a $\mathrm{C}^*$-probability space.
A unital $\mathrm{C}^*$-subalgebra $\cB \subseteq \cA$ is \textbf{conditionable} if $\E[a \mid \cB] \in \cB$ for all $a \in \cA$;
please see \cite[Prop.\ 2.5]{JKN2026} and the surrounding discussion for information about the conditional expectation $\E[\cdot \mid \cB]$.
A filtered $\mathrm{C}^*$-probability space $(\cA,(\cA_t)_{t \geq 0},\E)$ is \textbf{conditionable} if $\cA_t$ is conditionable for each $t \geq 0$.
All $\mathrm{W}^*$-subalgebras of $\mathrm{W}^*$-probability spaces are conditionable, so filtered $\mathrm{W}^*$-probability spaces are conditionable.

For the duration of this section, let $(\cA,(\cA_t)_{t \geq 0},\E = \E_{\scA})$ and $(\cB,(\cB_t)_{t \geq 0},\E_{\scB})$ be conditionable filtered $\mathrm{C}^*$-probability spaces.
Though many of our results can be adjusted slightly to hold in the non-conditionable case, we avoid these adjustments to ease the exposition.

\subsection{Predictable linear processes and noncommutative stochastic integrals}\label{subsec.generalities}

For the duration of this subsection, suppose $1 \leq p_0,p,q \leq \infty$, and let $(\G,\norm{\cdot})$ be a normed filtration of $B^{p_0;1}$.
In Definitions \ref{def.predlin} and \ref{def.NCintegrator}, we introduced the notion of $\G$-adapted linear processes, the space $\cI_{\G}$ of $\G$-predictable linear processes, and the notion of a noncommutative $L^p$-integrator.
Here, we explore some general properties and examples of $\G$-predictable linear processes and noncommutative stochastic integrals.
We also formulate a criterion (Assumption \ref{ass.integ}) for an $L^{p_0}$ process to be an $L^p$-integrator with respect to a given normed filtration of $B^{p_0;1}$.
This sufficient condition is both common, as we shall see in subsections \ref{subsec.MD} and \ref{subsec.fBMintegrator}, and important for our results on noncommutative stochastic integral equations.

We begin by showing that (1) left-continuous, locally bounded $\G$-adapted linear processes are $\G$-predictable, and (2) while not always left-continuous, $\G$-predictable linear processes are always $\G$-adapted and strongly measurable.\footnote{Let $(\Om,\sF)$ be a measurable space and $\cV$ be a (real) normed vector space.
A function $f \colon \Om \to \cV$ is \textbf{strongly} (or \textbf{Bochner}) \textbf{measurable} if it satisfies any of the following equivalent conditions:
(i) There exists a sequence $(s_n)_{n \in \N}$ of $\sF$-simple functions from $\Om$ to $\cV$ converging pointwise to $f$;
(ii) $f$ is $\sF$/$\cB_{\cV}$-measurable, and $f(\Om) \subseteq \cV$ is separable;
and (iii) $\ell \circ f \colon \Om \to \R$ is $\sF$/$\cB_{\R}$ for all $\ell \in \cV^*$, and $f(\Om) \subseteq \cV$ is separable.
Please see \cite[\S{A.1}]{NikitopoulosDissertation} for a proof of this equivalence (or \cite[app.\ E]{Cohn2013} when $\cV$ is a Banach space).}

\begin{proposition}[Adapted \& LCLB $\Rightarrow$ predictable]\label{prop.LCLB}
If $H \colon \R_+ \to \cG_{\infty}$ is $\G$-adapted and ($\norm{\cdot}$-)\textbf{LCLB}, i.e., \textbf{l}eft-\textbf{c}ontinuous and \textbf{l}ocally \textbf{b}ounded (with respect to $\norm{\cdot}$), then $H \in \cI_{\G}$.
In particular, $\cI_{\G}$ is the smallest subset of $\ell_{\loc}^{\infty}(\R_+;\cG_{\infty})$ that is closed under locally bounded convergence and contains the LCLB $\G$-adapted linear processes.
\end{proposition}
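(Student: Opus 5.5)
Given $H$ $\G$-adapted and LCLB, I will approximate it by left-endpoint step processes. For $n \in \N$ define
\[
H_n \coloneqq 1_{\{0\}} H(0) + \sum_{j=0}^{n2^n-1} 1_{(j2^{-n},(j+1)2^{-n}]}\, H(j2^{-n}).
\]
Since $H$ is $\G$-adapted, $H(j2^{-n}) \in \cG_{j2^{-n}}$ and $H(0)\in\cG_0$, so each $H_n$ is of the form \eqref{eq.EPG} and lies in $\EP_{\G}$. Pointwise convergence holds at every $t \ge 0$. At $t=0$ we have $H_n(0)=H(0)$. For $t>0$ and $n$ large enough that $n \ge t$, write $t_n \coloneqq 2^{-n}\lceil 2^n t\rceil - 2^{-n}$, the left endpoint of the dyadic interval of length $2^{-n}$ containing $t$ in its right-closed form. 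Then $H_n(t)=H(t_n)$ with $t_n<t$ and $t_n \nearrow t$ (not necessarily monotonically, but $t-2^{-n}\le t_n<t$). Left-continuity of $H$ in $\norm{\cdot}$ therefore gives $\norm{H_n(t)-H(t)}\to 0$.

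For local boundedness, fix $T>0$ and the neighborhood $U=[0,T)$. For every $n$ and $t\in[0,T)$, $H_n(t)$ equals either $0$ or $H(s)$ for some $s\in[0,T]$. Hence $\sup_n\sup_{t<T}\norm{H_n(t)}\le \sup_{s\in[0,T]}\norm{H(s)}$. This last quantity is finite, because $[0,T]$ is compact and $H$ is locally bounded, so finitely many neighborhoods on which $H$ is bounded cover it. So $H_n\to H$ locally boundedly, and since $\cI_{\G}$ contains $\EP_{\G}$ and is closed under locally bounded convergence, $H\in\cI_{\G}$. One detail to check is that $H_n$ takes values in $\cG_\infty$ with the norm $\norm{\cdot}$ (not $\norm{\cdot}_{p_0;1}$). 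This is automatic, since the convergence and bounds above were taken in $\norm{\cdot}$, which is the norm in Definition \ref{def.predlin}\ref{item.IG}.

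For the ``in particular'' statement, let $\cJ$ be the smallest subset of $\ell^\infty_{\loc}(\R_+;\cG_\infty)$ that is closed under locally bounded convergence and contains the LCLB $\G$-adapted processes. By the first part, $\cI_{\G}$ is one such closed set, so $\cJ\subseteq\cI_{\G}$. Conversely, every element of $\EP_{\G}$ is $\G$-adapted and locally bounded. It is left-continuous because its indicators are of the form $1_{(s,t]}$ and $1_{\{0\}}$, which are left-continuous on $\R_+$ (at $0$ there is nothing to check from the left). Adaptedness holds because on $(s_i,t_i]$ the value $T_i\in\cG_{s_i}\subseteq\cG_t$, using that the family is increasing and the subspaces are closed under finite sums. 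So $\EP_{\G}\subseteq\cJ$, and minimality of $\cI_{\G}$ gives $\cI_{\G}\subseteq\cJ$.

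There is no serious obstacle here. The only points needing care are the choice of strictly-left dyadic endpoints, so that left-continuity (rather than right-continuity) delivers convergence, and verifying that the indicator $1_{\{0\}}$ term keeps $H_n(0)=H(0)$.
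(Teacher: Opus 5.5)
Your proof is correct and follows the paper's argument: approximate $H$ by dyadic left-endpoint step processes and use left-continuity and local boundedness to get locally bounded convergence. The differences are cosmetic. The paper keeps the untruncated infinite dyadic sum, shows it lies in $\cI_{\G}$ via the truncations $1_{[0,N]}H_n$, and cites \cite[Lem.\ 4.4(i)]{JKN2026} for the convergence. You instead truncate at time $n$ so that $H_n \in \EP_{\G}$ outright, check the convergence by hand, and also spell out the ``in particular'' clause, which the paper leaves implicit.
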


\begin{proof}
Suppose $H$ is $\G$-adapted and LCLB.
For each $n \in \N$, define
\[
H_n \coloneqq 1_{\{0\}}H(0) + \sum_{k = 0}^{\infty} 1_{(d_k^n,d_{k+1}^n]} H(d_k^n),
\]
where $d_k^n \coloneqq k/2^n$ for all $k \in \N_0$.
Since $H$ is $\G$-adapted, $1_{[0,t]}H_n \in \EP_{\G}$ for all $t \geq 0$ and $n \in \N$.
Also, $(1_{[0,N]}H_n)_{N \in \N}$ converges locally boundedly to $H_n$ because $H_n$ is locally bounded.
Thus, $H_n \in \cI_{\G}$.
Finally, since $H$ is LCLB, $(H_n)_{n \in \N}$ converges locally boundedly to $H$ by \cite[Lem.\ 4.4(i)]{JKN2026}.
Thus, $H \in \cI_{\G}$.
\end{proof}

\begin{notation}\label{nota.Gminus}
For each $t > 0$, write $\cG_{t-}$ to the closure (with respect to $\norm{\cdot}$) of $\bigcup_{0 \leq s < t} \cG_s$ in $\cG_t$ for $t > 0$.
Also, write $\cG_{0-} \coloneqq \cG_0$.
Finally, define $\G_- \coloneqq (\cG_{t-})_{t \geq 0}$.
\end{notation}

\begin{proposition}[Adaptedness \& strong measurability of predictable linear processes]\label{prop.GpredisGadaptandmeas}
If $H \colon \R_+ \to \cG_{\infty}$ is a $\G$-predictable linear process, then $H$ is $\G_-$-adapted and strongly measurable (with respect to $\norm{\cdot}$).
Moreover, $\cI_{\G} \subseteq \ell_{\loc}^{\infty}(\R_+;\cG_{\infty})$ is a complex-linear subspace.
\end{proposition}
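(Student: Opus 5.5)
The natural approach is a monotone-class argument: show that the set of processes with all three properties contains $\EP_{\G}$ and is closed under locally bounded convergence; since $\cI_{\G}$ is the smallest such set, it follows that $\cI_{\G}$ lies inside it. Concretely, let $\cS$ be the set of $H \in \ell_{\loc}^{\infty}(\R_+;\cG_{\infty})$ that are $\G_-$-adapted and strongly measurable.

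\emph{Elementary processes lie in $\cS$.} Take $H = 1_{\{0\}}T_0 + \sum_i 1_{(s_i,t_i]}T_i$ with $T_i \in \cG_{s_i}$. It is a Borel simple function, hence strongly measurable. At $t=0$ we have $H(0)=T_0 \in \cG_0 = \cG_{0-}$. For $t>0$, each nonzero summand has $s_i < t$, so $T_i \in \cG_{s_i} \subseteq \bigcup_{s<t}\cG_s \subseteq \cG_{t-}$. Since $\cG_{t-}$ is a linear subspace (the closure of an increasing union of subspaces), $H(t)\in\cG_{t-}$.

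\emph{Closure under locally bounded convergence.} Suppose $H_n \in \cS$ converges locally boundedly to $H$.
\begin{itemize}
\item Pointwise limits of strongly measurable functions are strongly measurable, by criterion (ii) or the standard Pettis-type argument; the limit is separably valued and Borel. The relevant space here is the normed space $(\cG_\infty,\norm{\cdot})$, which need not be complete. This is fine, because the footnote's equivalences hold for normed spaces.
\item Adaptedness: $H_n(t) \in \cG_{t-}$, which is $\norm{\cdot}$-closed in $\cG_t$, and $H_n(t)\to H(t)$ in $\norm{\cdot}$. One has to check the limit lands in $\cG_t$: the $H_n(t)$ form a Cauchy sequence in the Banach space $(\cG_t,\norm{\cdot})$, so they converge there. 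By uniqueness of limits in the normed space $\cG_\infty$, that limit is $H(t)$. Hence $H(t)\in\cG_{t-}$.
\item Local boundedness of $H$ is automatic from the definition of locally bounded convergence.
\end{itemize}
Therefore $\cS$ is closed under locally bounded convergence, and $\cI_{\G}\subseteq\cS$.

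\emph{Linearity.} For the subspace claim, fix $H_0 \in \cI_{\G}$ and a scalar $\lambda\in\C$, and set $\cL_{H_0}=\{H\in\cI_{\G} : \lambda H + H_0\in\cI_{\G}\}$. This is the standard two-step bootstrap.
\begin{enumerate}
\item Take first $H_0\in\EP_{\G}$. Then $\EP_{\G}\subseteq\cL_{H_0}$, because $\EP_{\G}$ is a complex vector space. To see this, refine the partitions so that both processes use common intervals; on a common piece $(s,t]$ the coefficients lie in $\cG_{s_i}\subseteq\cG_{s}$, with $s\ge s_i$ and $\cG$ increasing. Moreover, $\cL_{H_0}$ is closed under locally bounded convergence, since $\lambda H_n+H_0\to\lambda H+H_0$ locally boundedly. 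Hence $\cL_{H_0}=\cI_{\G}$.
\item By symmetry this gives $\EP_{\G}\subseteq\cL_{H_0}$ for every $H_0\in\cI_{\G}$. Repeating the closure argument yields $\cL_{H_0}=\cI_{\G}$ for all $H_0 \in \cI_{\G}$. Combining scalars (take $H_0=0$, which lies in $\EP_{\G}$) gives the subspace property.
\end{enumerate}

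\emph{Main obstacle.} The only delicate step is the adaptedness limit. The norm $\norm{\cdot}$ is only assumed complete on each $\cG_t$ separately, not on $\cG_\infty$, so one must use completeness of $\cG_t$ to place the limit inside $\cG_t$ and then inside the closed subspace $\cG_{t-}$. Strong measurability also needs care, because $\cG_\infty$ need not be Banach. If an issue arises there, I would instead note that the $H_n$ are valued in the separable closed span of countably many ranges and argue via criterion (i) with a diagonal approximation by simple functions.
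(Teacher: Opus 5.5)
Your proof is correct and follows essentially the same route as the paper. It uses a monotone-class argument to show that the $\G_-$-adapted, strongly measurable processes contain $\EP_{\G}$ and are closed under locally bounded convergence, and then the same two-step bootstrap for linearity. The only cosmetic difference is in that bootstrap: the paper uses the genuinely symmetric set $\{K \in \cI_{\G} : aH+bK\in\cI_{\G}\ \forall a,b\in\C\}$, whereas your ``by symmetry'' step implicitly applies Step 1 with scalar $1$ and elementary process $\lambda K$, which is valid because Step 1 holds for every $\lambda$.
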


\begin{proof}
Let $\cS \coloneqq \{H \in \cI_{\G} : H$ is $\G_-$-adapted and strongly measurable$\}$.
Since $\cG_{t-} \subseteq \cG_{\infty}$ is closed for all $t \geq 0$, the property of $\G_-$-adaptedness is stable under pointwise limits.
Also, the property of strong measurability is stable under pointwise limits of sequences.
Consequently, $\cS$ is closed under locally bounded convergence.
Since $\EP_{\G} \subseteq \cS$, we obtain the containment $\cI_{\G} \subseteq \cS$ from the definition of $\cI_{\G}$.
\pagebreak

To prove that $\cI_{\G}$ is a complex-linear subspace of $\cS$, let $H \in \cI_{\G}$, and define $\cI^H$ to be the set of $K \in \cI_{\G}$ such that $aH+bK \in \cI_{\G}$ for all $a,b \in \C$.
Suppose $(K_n)_{n \in \N}$ is a sequence in $\cI^H$ converging locally boundedly to $K$.
If $a,b \in \C$ and $n \in \N$, then $aH+bK_n \in \cI_{\G}$ by definition of $\cI^H$.
Since $(aH+bK_n)_{n \in \N}$ converges locally boundedly to $aH+bK$ and $\cI_{\G}$ is closed under locally bounded convergence, $aH+bK \in \cI_{\G}$, i.e., $K \in \cI^H$.
Thus, $\cI^H$ is closed under locally bounded convergence.
Next, note that if $H \in \cI_{\G}$ is such that $\EP_{\G} \subseteq \cI^H$, then $\cI^H = \cI_{\G}$ by the previous sentence and the definition of $\cI_{\G}$.
In particular, if $H \in \EP_{\G}$, then $\cI^H = \cI_{\G}$ because the containment $\EP_{\G} \subseteq \cI^H$ is obvious in this case.
For general $H \in \cI_{\G}$, due to the symmetry in the definition of $\cI^H$, we conclude from the previous sentence that $\EP_{\G} \subseteq \cI^H$.
Thus, $\cI^H = \cI_{\G}$ for all $H \in \cI_{\G}$, which completes the proof.
\end{proof}

Recall that $(L^p;L^q)$-processes were defined in Example \ref{ex.LpLq}.
Adapted or predictable $(L^p;L^q)$-processes frequently arise ``in the wild'' from the application of trace polynomials to adapted processes.
Such processes are adapted to a normed filtration smaller than $\F^{p;q}$, which we introduce now.

\begin{notation}\label{nota.trpolyfiltration}
Write $p'$ for the H\"older conjugate of $p$, i.e., $1/p+1/p'=1$.
\begin{enumerate}[font=\normalfont,label=(\roman*)]
    \item For a real-linear map $T \colon \cA \to \cB$, let
    \[
    \vertiii{T} \coloneqq \sup_{1 \leq p \leq \infty} \norm{T}_{p;p} = \sup_{1 \leq p \leq \infty} \sup_{\underset{\norm{x}_p \leq 1}{x \in \cA,}} \norm{Tx}_p \in [0,\infty],
    \]
    and write $\B(\cA;\cB)$ for the space of real-linear maps $T \colon \cA \to \cB$ such that $\vertiii{T} < \infty$.
    We shall occasionally abuse notation and view elements of $\B(\cA;\cB)$ as bounded real-linear maps from $L^p(\E_{\scA})$ to $L^q(\E_{\scB})$ whenever $p \geq q$.
    Also, $\B(\cA) \coloneqq \B(\cA;\cA)$.
    \item If $t \geq 0$, then $\cF_t = \cF_t(\E_{\scA};\E_{\scB}) \coloneqq \B(\cA;\cB) \cap \cF_t^{\infty;\infty}$, where $\cF_t^{p;q} = \cF_t^{p;q}(\E_{\scA};\E_{\scB})$ is as in Example \ref{ex.LpLq}.
    Also, $\F = \F(\E_{\scA};\E_{\scB}) \coloneqq (\cF_t)_{t \geq 0}$.
    \item Assume that $(\cA,(\cA_t)_{t \geq 0},\E) = (\cB,(\cB_t)_{t \geq 0},\E_{\scB})$ and $p \geq q$.
    For $r_1,r_2 \in [1,\infty]$ satisfying $1/r_1+1/p+1/r_2 = 1/q$, $(a,b,c,d) \in L^{r_1}(\E) \times L^{r_2}(\E) \times  L^{p'}(\E) \times L^q(\E)$, and $\e,\delta \in \{1,\ast\}$, define $T_{\e,\delta}(a,b,c,d) \in B^{p;q}$ by
    \[
    T_{\e,\delta}(a,b,c,d)[x] \coloneqq ax^{\e}b + \E\big[cx^{\delta}\big]\,d \in L^q(\E) \qquad (x \in L^p(\E)).
    \]
    Now, for $t \geq 0$, define $\cT_{0,t}^{p;q} \subseteq B^{p;q}$ to be the complex span of
    \begin{align*}
        \bigg\{T_{\e,\delta}(a&,b,c,d) : \e,\delta \in \{1,\ast\}, \; r_1,r_2 \in [1,\infty] \text{ with } \frac1r_1+\frac1p+\frac1r_2 = \frac1q,\\
        & (a,b,c,d) \in L^{r_1}(\cA_t,\E) \times L^{r_2}(\cA_t,\E) \times  L^{p'}(\cA_t,\E) \times L^q(\cA_t,\E)\bigg\}
    \end{align*}
    and $\cT_t^{p;q}$ to be the closure of $\cT_{0,t}^{p;q}$ in $B^{p;q}$.
    Upon observing $\cT_t^{\infty;\infty} \subseteq \B(\cA)$, we also write $\cT_t$ for the closure of $\cT_t^{\infty;\infty}$ in $\B(\cA)$.
    Finally, $\T^{p;q} \coloneqq (\cT_t^{p;q})_{t \geq 0}$, and $\T \coloneqq (\cT_t)_{t \geq 0}$.
\end{enumerate}
\end{notation}

Suppose $p \geq q$.
Note that $(\F,\vertiii{\cdot})$ is a compatible normed filtration of $B^{p;q}$.
Also, since the conditional expectation $\E[\cdot \mid \cA_t]$ is a trace-preserving, $\ast$-respecting bimodule map, $\cT_t^{p;q} \subseteq \cF_t^{p;q}$ for all $t \geq 0$, i.e., $(\T^{p;q},\norm{\cdot}_{p;q})$ is a compatible normed filtration of $B^{p;q}$.
Finally, $\cT_t \subseteq \cF_t$ for all $t \geq 0$, so $(\T,\norm{\cdot})$ is a compatible normed filtration of $B^{p;q}$.

\begin{definition}\label{def.trbip}
Suppose $(\cA,(\cA_t)_{t \geq 0},\E) = (\cB,(\cB_t)_{t \geq 0},\E_{\scB})$ and $p \geq q$.
Write 
\[
\EP_{\cT}^{p;q} \coloneqq \EP_{\T^{p;q}} \; \text{ and } \; \cI_{\cT}^{p;q} \coloneqq \cI_{\T^{p;q}}.
\]
A $\T^{p;q}$-adapted linear process is called an \textbf{$\boldsymbol{(L^p;L^q)}$-trace biprocess},\footnote{The term ``trace biprocess'' is inspired by the term ``biprocess'' used by Biane--Speicher \cite{BS1998} for their stochastic integrands.} and a(n elementary) $\T^{p;q}$-predictable linear process is called a(n \textbf{elementary}) \textbf{predictable $\boldsymbol{(L^p;L^q)}$-trace biprocess}.
We shall shorten $(L^p;L^p)$ to $L^p$ in these terms.
\end{definition}

\begin{example}\label{ex.trbip}
Assume $(\cA,(\cA_t)_{t \geq 0},\E) = (\cB,(\cB_t)_{t \geq 0},\E_{\scB})$ and $p \geq q$.
Here is a motivating example of a trace biprocess.
Let
\[
L^{\infty-}(\E) \coloneqq \bigcap_{1 \leq r < \infty} L^r(\E),
\]
endowed with the Fr\'echet-space topology induced by the family $\{\norm{\cdot}_r : 1 \leq r < \infty\}$ of norms.
If $p > q$ and $X_1,X_2,X_3 \colon \R_+ \to L^{\infty-}(\E)$ are adapted processes, then
\[
H(t)[x] \coloneqq \E\big[X_1(t)^3\big] X_3(t)^*X_1(t)^2x^*X_2(t)^4X_1(t)^3 + \E[X_1(t)X_3(t)x]X_2(t)^*
\]
defines an $(L^p;L^q)$-trace biprocess.
If $X_i(t) \in L^{\infty}(\cA_t,\E) = \cA_t$ for all $t \geq  0$ and $i=1,2,3$, then $H$ is an $L^p$-trace biprocess;
actually, $H$ is $\T$-adapted in this case.

The example in the previous paragraph generalizes significantly.
To state the generalization properly requires the language of trace $\ast$-polynomials;
please see \cite[\S2.3]{JKN2026} for the requisite terminology.
Suppose $P(x_1,\ldots,x_n,y)$ is a trace $\ast$-polynomial that is real-linear in $y$.
If $X_1,\ldots,X_n \colon \R_+ \to L^{\infty-}(\E)$ are adapted and $p > q$, then
\begin{equation}
    H(t) \coloneqq P(X_1(t),\ldots,X_n(t),\cdot) \in B^{p;q} \qquad (t \geq 0)\label{eq.trbip}
\end{equation}
defines an $(L^p;L^q)$-trace biprocess.
If $X_i(t) \in \cA_t$ for all $t \geq 0$ and $i=1,\ldots,n$, then $H$ is an $L^p$-trace biprocess;
actually, $H$ is $\T$-adapted in this case.
Such trace biprocesses were introduced in \cite[Def.\ 3.7]{JKN2026}.

By Proposition \ref{prop.LCLB}, if $p > q$, $X_1,\ldots,X_n \colon \R_+ \to L^{\infty-}(\E)$ are adapted and continuous, and $H$ is as in \eqref{eq.trbip}, then $H \in \cI_{\cT}^{p;q}$.
If $p=q$, $X_1,\ldots,X_n \colon \R_+ \to L^{\infty}(\E) = \cA$ are adapted and continuous, and $H$ is as in \eqref{eq.trbip}, then $H \in \cI_{\T}$.
\end{example}

\begin{remark}[Classical processes as trace biprocesses]
Many classical stochastic processes may be viewed as special kinds of trace biprocesses.
Let $(\Om,\sF,(\sF_t)_{t \geq 0},P)$ be a classical filtered probability space, take
\[
(\cA,(\cA_t)_{t \geq 0},\E) = (\cB,(\cB_t)_{t \geq 0},\E_{\scB}) = (L^{\infty}(\Om,\sF,P),(L^{\infty}(\Om,\sF_t,P))_{t \geq 0},\E_P),
\]
and suppose $p \geq q$.
Since $\cA$ is commutative, the definition of $\cT_t^{p;q}$ simplifies slightly.
Let $J_p \colon L^p(\E) \to L^p(\E)$ be the complex-conjugation operator, $r = r(p,q) \in [1,\infty]$ be such that $1/r+1/p = 1/q$, and $M_y \colon L^p(\E) \to L^q(\E)$ be as in \eqref{eq.Moperator}.
Then $\cT_t^{p;q}$ is the closure in $B^{p;q}$ of
\[
\left\{M_a + M_bJ_p + \E[c(\boldsymbol{\cdot})]\,u + \E[dJ_p(\boldsymbol{\cdot})]\,v : a,b \in L^r(\cA_t,\E), c,d \in L^{p'}(\cA_t,\E), u,v \in L^q(\cA_t,\E)\right\}.
\]
Thus, $\cM_t^{p;q} \subseteq \cT_t^{p;q}$ for all $t \geq 0$, where $\cM_t^{p;q}$ is as in Example \ref{ex.clpred}.
In particular, if $Y \colon \R_+ \times \Om \to \C$ is an adapted $L^r$ stochastic process, then $H_Y$ is an $(L^p;L^q)$-trace biprocess, where $H_Y$ is (the modification class of $Y$) as in \eqref{eq.HY}.
In this way, an adapted $L^r$ stochastic process ``is'' an $(L^p;L^q)$-trace biprocess.
\end{remark}
\pagebreak

We now turn to the noncommutative stochastic integral.

\begin{theorem}[Properties of NC stochastic integral]\label{thm.abstractstochint}
Suppose $(\G,\norm{\cdot})$ is compatible and $X \colon \R_+ \to L^{p_0}(\E_{\scA})$ is an $L^p$-integrator with respect to $\G$.
\begin{enumerate}[font=\normalfont,label=(\roman*)]
    \item The stochastic integral map $I_X = I_X^{\G} \colon \cI_{\G} \to R_{a,p} = R_{a,p}(\E_{\scB})$ from {\rm Definition \ref{def.NCintegrator}} is unique.
    Furthermore, $I_X$ is complex linear, and for all $H \in \cI_{\G}$,\label{item.uniquenessofint}
    \begin{equation}
    I_X[1_{(s,t]}H](T) = I_X[H](t) - I_X[H](s) \qquad (0 \leq s \leq t \leq T).\label{eq.stottoT}
    \end{equation}
    \item If $X$ is an $L^{p_0}$-martingale, i.e., $\E_{\scA}[X(t) \mid \cA_s] = X(s)$ whenever $0 \leq s \leq t$, then $I_X[H]$ is an $L^p$-martingale for all $H \in \cI_{\G}$.\label{item.martingale}
    \item If $\into H[\d X] \in C_{a,p}$ for all $H \in \EP_{\G}$, then $\into H[\d X] \in C_{a,p}$ for all $H \in \cI_{\G}$.\label{item.contint}
\end{enumerate}
\end{theorem}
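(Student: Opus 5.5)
All three parts will be proved by the same monotone-class device used in the proof of Proposition \ref{prop.GpredisGadaptandmeas}. We verify each desired property on $\EP_{\G}$ by direct computation. We then show that the set of $H \in \cI_{\G}$ having the property is closed under locally bounded convergence, and conclude by minimality of $\cI_{\G}$. Throughout we use that $R_{a,p}$ carries the topology of uniform convergence on compacts. Hence continuity of $I_X$ with respect to locally bounded convergence gives, for each $t$, $I_X[H_n](t) \to I_X[H](t)$ in $L^p$, uniformly on $[0,T]$.

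\emph{Part \ref{item.uniquenessofint}.} For uniqueness, suppose $I_X$ and $I_X'$ both qualify and let $\cS = \{H \in \cI_{\G} : I_X[H] = I_X'[H]\}$. This set contains $\EP_{\G}$ and is closed under locally bounded convergence by the continuity of both maps, so $\cS = \cI_{\G}$.

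For linearity, fix $K \in \EP_{\G}$ and $a,b \in \C$. Let $\cS_K = \{H \in \cI_{\G} : I_X[aH+bK] = aI_X[H]+bI_X[K]\}$; this is meaningful because $\cI_{\G}$ is a linear space by Proposition \ref{prop.GpredisGadaptandmeas}. On $\EP_{\G}$, linearity of the elementary integral is immediate, since a sum of two elementary processes can be rewritten over a common refinement of their time partitions. Moreover, if $H_n \to H$ locally boundedly, then $aH_n+bK \to aH+bK$ locally boundedly. Hence $\cS_K$ is closed and equals $\cI_{\G}$. We then repeat the argument with $K \in \cI_{\G}$ arbitrary, as in the symmetrization step of Proposition \ref{prop.GpredisGadaptandmeas}.

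For \eqref{eq.stottoT}, we first need $1_{(s,t]}H \in \cI_{\G}$. The set of $H$ with $1_{(s,t]}H \in \cI_{\G}$ contains $\EP_{\G}$, because $1_{(s,t]}1_{(s_i,t_i]}T_i = 1_{(s\vee s_i, t\wedge t_i]}T_i$ and $T_i \in \cG_{s_i} \subseteq \cG_{s \vee s_i}$. This set is also closed under locally bounded convergence, so it is all of $\cI_{\G}$. The identity itself is a direct computation for elementary $H$, again using $X(t_i \wedge \cdot)$ truncations, and it passes to limits by continuity evaluated at the times $s, t, T$.

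\emph{Part \ref{item.martingale}.} For elementary $H$, the martingale property follows from compatibility. Take $u \le v$ and a term $T_i[X(t_i\wedge v)-X(s_i\wedge v)]$. If $u \ge s_i$, then
\[
\E_{\scB}[T_i x \mid \cB_u] = T_i\E_{\scA}[x\mid\cA_u],
\]
and the martingale property of $X$ gives $T_i[X(t_i\wedge u)-X(s_i\wedge u)]$. If $u < s_i$, we first condition on $\cB_{s_i}$ using the tower property, which gives $T_i$ applied to $\E_{\scA}[X(t_i \wedge v)-X(s_i\wedge v)\mid \cA_{s_i}] = 0$, and this matches the value at time $u$, namely $0$. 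We then pass to limits, using that $\E_{\scB}[\cdot\mid\cB_u]$ is an $L^p$ contraction and that $I_X[H_n](v) \to I_X[H](v)$ in $L^p$. Since martingales are closed under pointwise $L^p$ limits, the class is closed. One mild point is that the compatibility identity is stated for $L^p(\E_{\scA})$ with $p=p_0$ while the output lies in $L^1$. Conditional expectation is still defined and contractive on $L^1$, so the argument goes through.

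\emph{Part \ref{item.contint}.} $C_{a,p}$ is closed in $R_{a,p}$, and $I_X[H_n] \to I_X[H]$ in $R_{a,p}$, so the class of $H$ with continuous integral is closed under locally bounded convergence and contains $\EP_{\G}$.

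The only real obstacle is the bookkeeping in (i): ensuring that each class we build lies inside $\cI_{\G}$, which is where Proposition \ref{prop.GpredisGadaptandmeas} and the indicator-stability step are needed. Everything else is routine limit passage.
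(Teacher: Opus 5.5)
Your proposal is correct and follows the paper's proof essentially verbatim. In each part you verify the property on $\EP_{\G}$, show that the set of $H \in \cI_{\G}$ with that property is closed under locally bounded convergence, and conclude by minimality of $\cI_{\G}$; for linearity you use the same symmetrization step as the paper. The only differences are small additions: you explicitly check that $1_{(s,t]}H \in \cI_{\G}$, which the paper leaves implicit, and you verify the elementary martingale case by a direct compatibility computation rather than citing the earlier lemma.
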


\begin{proof}
We take each item in turn.

\ref{item.uniquenessofint} If $I_X,J_X \colon \cI_{\G} \to R_{a,p}$ are continuous with respect to locally bounded convergence and satisfy $I_X[H] = \into H[\d X] = J_X[H]$ for all $H \in \EP_{\G}$, then $\cU \coloneqq \{H \in \cI_{\G} : I_X[H] = J_X[H]\}$ contains $\EP_{\G}$ and is closed under locally bounded convergence.
Thus, $\cU = \cI_{\G}$, i.e., $I_X = J_X$.

Next, let $H \in \cI_{\G}$, and define
\[
\cS_H \coloneqq \{K \in \cI_{\G} : \forall c,d \in \C, \, I_X[cH+dK] = cI_X[H]+dI_X[K]\}.
\]
Since $I_X$ is continuous with respect to locally bounded convergence, $\cS_H \subseteq \cI_{\G}$ is closed under locally bounded convergence.
If $H \in \EP_{\G}$, then it is obvious from the complex linearity of $\EP_{\G} \ni H \mapsto \into H[\d X] \in R_{a,p}$ that $\EP_{\G} \subseteq \cS_H$;
consequently, $\cS_H = \cI_{\G}$ in this case.
By squinting at the definition of $\cS_H$, it follows that $\EP_{\G} \subseteq \cS_H$ for all $H \in \cI_{\G}$.
Thus, $\cS_H = \cI_{\G}$ for all $H \in \cI_{\G}$, i.e., $I_X$ is complex linear.

Finally, if $\cS \coloneqq \{H \in \cI_{\G} : \text{\eqref{eq.stottoT} holds}\}$, then $\EP_{\G} \subseteq \cS$ by basic properties of the elementary integral, and $\cS$ is closed under locally bounded convergence because $I_X$ is continuous with respect to locally bounded convergence.
Thus, $\cS = \cI_{\G}$, i.e., \eqref{eq.stottoT} holds for all $H \in \cI_{\G}$.

\ref{item.martingale} Suppose $X$ is also an $L^{p_0}$-martingale, and define
\[
\cM \coloneqq \left\{H \in \cI_{\G} : I_X[H] \text{ is an } L^p\text{-martingale}\right\}.
\]
By \cite[Lem.\ 4.12(ii)]{JKN2026} and the compatibility of $(\G,\norm{\cdot})$, $\EP_{\G} \subseteq \cM$.
Since the pointwise $L^p$ limit of a sequence of $L^p$-martingales is an $L^p$-martingale and $I_X$ is continuous with respect to locally bounded convergence, $\cM$ is closed under locally bounded convergence.
Thus, $\cM = \cI_{\G}$, as desired.

\ref{item.contint} If $\cC \coloneqq \{H \in \cI_{\G} : I_X[H] \in C_{a,p}\}$, then $\cC$ is closed under locally bounded convergence because $I_X$ is continuous with respect to locally bounded convergence and $C_{a,p} \subseteq R_{a,p}$ is a closed subspace.
Consequently, if $\EP_{\G} \subseteq \cC$, then we obtain $\cC = \cI_{\G}$, as desired.
\end{proof}

Owing to \eqref{eq.stottoT}, we write
\[
\int_s^t H[\d X] = \int_s^t H(r)[\d X(r)] \coloneqq \int_0^t H[\d X] - \int_0^s H[\d X] = \int_0^t (1_{(s,t]}H)[\d X]
\]
whenever $H \in \cI_{\G}$ and $0 \leq s \leq t$.

We now prove that if $X$ is an $L^p$-integrator with respect to $\G$ and $H \colon \R_+ \to \cG_{\infty}$ is an LCLB $\G$-adapted linear process, then $I_X[H]$ may be computed as a limit of left-endpoint Riemann--Stieltjes sums.
Owing to Example \ref{ex.MD} (really, subsection \ref{subsec.MD}), this represents a massive generalization of \cite[Prop.\ 4.26]{JKN2026}.
\pagebreak

\begin{notation}[Partitions, etc.]\label{nota.part}
Suppose $-\infty < a  < b \leq \infty$, and write $I \coloneqq [a,b] \cap \R$.
Also, let $\cV$ be a real vector space and $F \colon I \to \cV$ be a function.
\begin{enumerate}[label=(\roman*),font=\normalfont]
    \item If $t \in I$, then $F^t \colon [a,\infty) \to \cV$ is the function $F^t(s) \coloneqq F(s \wedge t)$ ($s \geq a$).\label{item.stoppedfunc}
    \item If $b < \infty$, then a \textbf{partition} of $I$ is a finite subset $\Pi = \{a = t_0 < \cdots < t_n = b\} \subseteq I$.
    A \textbf{partition} of $[a,\infty)$ is a collection $\Pi = \{t_n : n \in \N_0\}$ such that $t_0 = a$, $t_n < t_{n+1}$ for all $n \in \N_0$, and $t_n \to \infty$ as $n \to \infty$.
    Write $\cP_I$ for the set of partitions of $I$.\label{item.part}
    \item Let $\Pi \in \cP_I$.
    If $t \in \Pi$, then $t_- \in \Pi$ is the member of $\Pi$ to the left of $t$;
    precisely, $a_- \coloneqq a$, and $t_- \coloneqq \max\left\{s \in \Pi : s < t \right\}$ for all $t \in \Pi \setminus \{a\}$.
    Also, $\Delta t \coloneqq t-t_-$, $\Delta_tF \coloneqq F(t) - F(t_-)$, and $|\Pi| \coloneqq \sup\left\{ \Delta s : s \in \Pi\right\}$ is the mesh of $\Pi$.\label{item.Delta}
    \item Suppose $\cV$ has a Hausdorff topology.
    If $t \in I$, then $F(t-) \coloneqq \lim_{s \nearrow t} F(s)$, with the convention $F(a-) \coloneqq F(a)$.
    Also, $F_- \colon I \to \cV$ is the function $t \mapsto F(t-)$.\label{item.ll}
\end{enumerate}
\end{notation}

\begin{proposition}\label{prop.IXRS}
Suppose $\G$ is compatible and $X \colon \R_+ \to L^{p_0}(\E_{\scA})$ is an $L^p$-integrator with respect to $\G$.
If $H \colon \R_+ \to \cG_{\infty}$ is $\G$-adapted and ($\norm{\cdot}$-)\textbf{LLLB}, i.e., \textbf{l}eft \textbf{l}imited and \textbf{l}ocally \textbf{b}ounded (with respect to $\norm{\cdot}$), then
\[
\sum_{t \in \Pi} H(t_-)[X(t \wedge \cdot) - X(t_- \wedge \cdot)] \xrightarrow[\Pi \in \cP_{\R_+}]{|\Pi| \to 0}\into H(t-)[\d X(t)]
\]
in the space $R_{a,p} = R_{a,p}(\E_{\scB})$.
In particular, if $t \geq 0$, then
\[
\int_0^t H(s-)[\d X(s)] = L^p\text{-}\lim_{\Pi \in \cP_{\R_+}}\sum_{s \in \Pi} H(s_-)\big[\Delta_s X^t\big] = L^p\text{-}\lim_{\Pi \in \cP_{[0,t]}}\sum_{s \in \Pi} H(s_-)[\Delta_s X].
\]
Note that $H(t-) = H(t)$ for all $t \in I$ if $H$ is left-continuous.
\end{proposition}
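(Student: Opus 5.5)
The plan is to write each Riemann sum as the stochastic integral of an elementary-type predictable process, and then let the continuity of $I_X$ with respect to locally bounded convergence do the work. Fix a partition $\Pi \in \cP_{\R_+}$ and set
\[
H_\Pi \coloneqq 1_{\{0\}}H(0) + \sum_{t \in \Pi\setminus\{0\}} 1_{(t_-,t]} H(t_-).
\]
Because $H$ is $\G$-adapted, $H(t_-) \in \cG_{t_-}$. For every $T \ge 0$ the truncation $1_{[0,T]}H_\Pi$ agrees, on each bounded window, with a member of $\EP_{\G}$. Also, $1_{[0,N]}H_\Pi \to H_\Pi$ locally boundedly as $N \to \infty$, so $H_\Pi \in \cI_{\G}$, exactly as in the proof of Proposition \ref{prop.LCLB}.

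Next I identify the integral of $H_\Pi$ with the Riemann sum. By the definition of the elementary integral together with \eqref{eq.stottoT} and linearity (Theorem \ref{thm.abstractstochint}\ref{item.uniquenessofint}), we get
\[
I_X[H_\Pi](s) = \sum_{t\in\Pi} H(t_-)[X(t\wedge s)-X(t_-\wedge s)]
\]
for every $s$; only finitely many terms are nonzero for $s$ in a compact set. The same argument shows $H_- \in \cI_{\G}$. Indeed, $H_-$ is left-continuous and locally bounded, since its values are limits of the locally bounded $H$. It is $\G$-adapted only in the sense $H(t-) \in \cG_{t-} \subseteq \cG_t$. That is enough, because $H_{\Pi}\to H_-$ pointwise, which I check next.

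The key step is to show $H_\Pi \to H_-$ locally boundedly as $|\Pi| \to 0$. Boundedness on $[0,T]$ is immediate: $\sup_\Pi \norm{1_{[0,T]}H_\Pi} \le \sup_{[0,T]}\norm{H} < \infty$. For pointwise convergence at $s > 0$, let $t \in \Pi$ be the point with $s \in (t_-,t]$. Then $t_- < s$ and $s - t_- \le |\Pi| \to 0$, so $H(t_-) \to H(s-)$ by the existence of left limits. At $s = 0$ both sides equal $H(0)$.

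The main obstacle is that locally bounded convergence is formulated for sequences, while the statement concerns a net indexed by $|\Pi| \to 0$. I handle this by a subsequence argument. Suppose the Riemann sums did not converge to $I_X[H_-]$ in the Fréchet space $R_{a,p}$. Then some seminorm $\sup_{[0,T]}\norm{\cdot}_p$ would stay above some $\varepsilon > 0$ along a sequence $\Pi_n$ with $|\Pi_n| \to 0$. Along that sequence, $H_{\Pi_n} \to H_-$ locally boundedly by the previous step, so continuity of $I_X$ gives $I_X[H_{\Pi_n}] \to I_X[H_-]$ uniformly on compacts in $L^p$, a contradiction.

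For the \emph{in particular} claims, fix $t \ge 0$ and evaluate at time $t$; this yields the first $L^p$ limit. For the version over $\cP_{[0,t]}$, extend a partition of $[0,t]$ by the points $t+k$, $k \in \N$. The terms beyond $t$ then contribute $H(\cdot)[X(t)-X(t)] = 0$ at time $t$, and the mesh on $[0,t]$ governs convergence at time $t$, since only the restriction of $H_\Pi$ to $[0,t]$ affects $I_X[H_\Pi](t)$ by \eqref{eq.stottoT}. Finally, $H(t-) = H(t)$ when $H$ is left-continuous, by definition.
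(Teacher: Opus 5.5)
Your proposal is correct and follows essentially the same route as the paper: approximate by the step processes $H_\Pi$, identify $I_X[H_\Pi]$ with the Riemann sum, show $H_\Pi\to H_-$ locally boundedly, apply continuity of $I_X$, and pass from sequences to the net $|\Pi|\to 0$. The paper cites \cite[Lem.\ 4.4(i)]{JKN2026} for the locally bounded convergence, which you verify by hand, and \cite[Fact 4.2]{JKN2026} for the sequences-to-net step, which you do by a subsequence argument. One small fix: padding a partition of $[0,t]$ with the points $t+k$ gives mesh at least $1$, so the first limit does not apply directly; either make your $1_{(0,t]}$-localization explicit, or pad with the points $t+k|\Pi|$ instead, which reduces the $\cP_{[0,t]}$ limit immediately to the first one.
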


\begin{proof}
Let $(\Pi_n)_{n \in \N}$ be a sequence in $\cP_{\R_+}$ such that $|\Pi_n| \to 0$ as $n \to \infty$, and define
\[
H_n \coloneqq 1_{\{0\}} H(0) + \sum_{t \in \Pi_n} 1_{(t_-,t]} H(t_-) \qquad (n \in \N).
\]
If $n,N \in \N$, then $1_{[0,N]}H_n \in \EP_{\G}$ because $H$ is $\G$-adapted.
Also, if $n \in \N$, then $(1_{[0,N]}H_n)_{N \in \N}$ converges locally boundedly to $H_n$ because $H_n$ is locally bounded.
Thus, $H_n \in \cI_{\G}$.
Now, by \cite[Lem.\ 4.4(i)]{JKN2026}, $H_n \to H_-$ locally boundedly as $n \to \infty$.
Consequently, $H_- \in \cI_{\G}$, and $I_X[H_n] \to I_X[H_-]$ in $R_{a,p}$ as $n \to \infty$ by the continuity of $I_X$ with respect to locally bounded convergence.
Since it is easy to see that
\[
I_X[H_n] = \sum_{t \in \Pi_n} H(t_-)[X(t \wedge \cdot) - X(t_- \wedge \cdot)] \qquad (n \in \N),
\]
the result follows from the standard sequential characterization of convergence as $|\Pi| \to 0$ (vid.\ \cite[Fact 4.2]{JKN2026}).
\end{proof}

We end this subsection with a sufficient condition for a noncommutative stochastic process to be an $L^p$-integrator.
Suppose $X \colon \R_+ \to L^{p_0}(\E_{\scA})$ is an $L^{p_0}$ process and $(\G,\norm{\cdot})$ is compatible.
If it happens to be that $\into H[\d X] \in R_{a,p} = R_{a,p}(\E_{\scB})$ for all $H \in \EP_{\G}$, then we shall write
\[
\EP_{\G} \ni H \mapsto I_X^0[H] \coloneqq \into H[\d X] \in R_{a,p}
\]
for the \textbf{elementary stochastic integral against $\boldsymbol{X}$}.
\pagebreak

Note that ``$\into H[\d X] \in R_{a,p}$ for all $H \in \EP_{\G}$,'' is not such a far-fetched condition.
Indeed, if $(\G,\norm{\cdot})$ is a compatible normed filtration of $B^{p_0;p}$---not just of $B^{p_0;1}$---and if $X \in R_{a,p_0}(\E_{\scA})$, then $\into H[\d X] \in R_{a,p}(\E_{\scB})$ for all $H \in \EP_{\G}$, as the reader should verify.
If, in addition, $X \in C_{a,p_0}(\E_{\scA})$, then $I_X[H] \in C_{a,p}(\E_{\scB})$ for all $H \in \EP_{\G}$.

\begin{assumption}\label{ass.integ}
Assume $\G$ is compatible and $X \colon \R_+ \to L^{p_0}(\E_{\scA})$ satisfies:
\begin{enumerate}[font=\normalfont,label=(\roman*)]
    \item $X \colon \R_+ \to L^{p_0}(\E_{\scA})$ is adapted;
    \item $\displaystyle \int_0^t H(s)[\d X(s)] \in L^p(\E_{\scB})$ for all $H \in \EP_{\G}$ and $t \geq 0$;
    \item there exists a locally finite Borel measure $\rho$ on $[0,\infty)$ and a family $(a_T)_{T \geq 0}$ of finite, positive constants such that for all $H \in \EP_{\G}$,\label{item.rhoass}
    \[
    \sup_{0 \leq s \leq t} \norm{\int_0^s H(r)[\d X(r)]}_p \leq a_T\left(\int_{(0,t]} \norm{H(s)}^2 \,\rho(\d s) \right)^\frac12 \qquad (0 \leq t \leq T).
    \]
\end{enumerate}
We shall occasionally write $\int_s^t \boldsymbol{\cdot} \,\d\rho \coloneqq \int_{(s,t]} \boldsymbol{\cdot} \,\d\rho$ ($0 \leq s \leq t$).
\end{assumption}

\begin{remark}\label{rem.Assintegstuff}
Let $H \in \EP_{\G}$.
Since $\G$ is compatible, the first two items in Assumption \ref{ass.integ} guarantee, via \cite[Lem.\ 4.12(i)]{JKN2026}, that $\into H[\d X] \colon \R_+ \to L^p(\E_{\scB})$ is adapted.
Also, by replacing $H$ with $1_{(r,\infty)}H$, the third item in Assumption \ref{ass.integ} may be upgraded to 
\[
\sup_{r \leq s \leq t} \norm{\int_r^s H[\d X]}_p \leq a_T\left(\int_{(r,t]} \norm{H}^2 \,\d\rho \right)^\frac12 \qquad (0 \leq r \leq t \leq T).
\]
Since $\int_r^t H[\d X] = \int_0^t H[\d X] - \int_0^r H[\d X]$, it follows that $\into H[\d X] \in R_{a,p}$.
If $\rho$ were atomless, then we would have $\into H[\d X] \in C_{a,p}$.
\end{remark}

\begin{theorem}[Construction of the NC stochastic integral]\label{thm.constructNCstochint}
Under {\rm Assumption \ref{ass.integ}}, $X$ is an $L^p$-integrator with respect to $\G$ with the property that
\[
\sup_{0 \leq s \leq t} \norm{I_X[H](s)}_p \leq a_T \left(\int_0^t \norm{H}^2\,\d\rho\right)^\frac12 \qquad (0 \leq t \leq T)
\]
for all $H \in \cI_{\G}$ and $T \geq 0$.
\end{theorem}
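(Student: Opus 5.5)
The plan is to extend the elementary integral $I_X^0$ from $\EP_{\G}$ to all of $\cI_{\G}$ by the usual monotone-class argument, using the $\rho$-weighted $L^2$ bound as the engine. Fix $T$ and work with the seminorm
\[
\norm{H}_{\rho,t} \coloneqq \Big(\int_{(0,t]}\norm{H}^2\,\d\rho\Big)^{1/2},
\]
which makes sense for every $H\in\cI_{\G}$. Indeed, $H$ is strongly measurable and locally bounded by Proposition \ref{prop.GpredisGadaptandmeas}, and $\rho$ is locally finite. The key point is that, by dominated convergence, if $H_n\to H$ locally boundedly then $\norm{H_n-H}_{\rho,t}\to 0$ for every $t$. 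Conversely, the estimate in Assumption \ref{ass.integ}\ref{item.rhoass} says that $I_X^0$ is uniformly continuous from $(\EP_{\G},\{\norm{\cdot}_{\rho,t}\}_t)$ into the Fr\'echet space $R_{a,p}$; by Remark \ref{rem.Assintegstuff}, $I_X^0$ does land in $R_{a,p}$.

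Next I will identify the class $\cD$ of $H\in\cI_{\G}$ that admit an approximating sequence. Let $\cD$ consist of those $H\in\cI_{\G}$ for which there exist $H_n\in\EP_{\G}$ with $\norm{H_n-H}_{\rho,t}\to0$ for all $t\ge0$. For such $H$, the estimate makes $(I_X^0[H_n])_n$ Cauchy in $R_{a,p}$. I then define $I_X[H]$ as the limit. This limit is independent of the approximating sequence: interleaving two approximating sequences gives another approximating sequence, so the two limits agree. The limit lies in $R_{a,p}$ because $R_{a,p}$ is complete. The bound
\[
\sup_{s\le t}\norm{I_X[H](s)}_p\le a_T\norm{H}_{\rho,t}
\]
passes to the limit.

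The main step is to show $\cD=\cI_{\G}$, which I will do by proving that $\cD$ is closed under locally bounded convergence; since $\cD\supseteq\EP_{\G}$, this suffices. Let $K_m\in\cD$ with $K_m\to K$ locally boundedly. By dominated convergence, $\norm{K_m-K}_{\rho,t}\to0$ for each $t$. Choose $H_m\in\EP_{\G}$ with $\norm{H_m-K_m}_{\rho,m}<1/m$. Then $\norm{H_m-K}_{\rho,t}\to0$ for every $t$, because the seminorms increase in $t$. Hence $K\in\cD$. This is the step I expect to require the most care, mainly in checking measurability so that dominated convergence applies. It is supplied by the strong measurability in Proposition \ref{prop.GpredisGadaptandmeas}, together with $\rho$-integrability of the local bounds on compact intervals.

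Finally, continuity with respect to locally bounded convergence follows from the same estimate. I first extend it to all of $\cI_{\G}$ by density, using linearity of $I_X$ on $\cD$, which is clear from the construction, and of $\cI_{\G}$, which is Proposition \ref{prop.GpredisGadaptandmeas}. Then, if $H_n\to H$ locally boundedly, $\sup_{s\le t}\norm{I_X[H_n-H](s)}_p\le a_T\norm{H_n-H}_{\rho,t}\to0$, which is exactly convergence in $R_{a,p}$. Agreement of $I_X$ with $I_X^0$ on $\EP_{\G}$ holds by construction. Hence $X$ is an $L^p$-integrator with respect to $\G$, and the stated bound holds for all $H\in\cI_{\G}$.
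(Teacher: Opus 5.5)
Your proof is correct and takes essentially the same route as the paper. The paper extends $I_X^0$ by continuity to the closure $\cI_{\G,\rho}$ of $\EP_{\G}$ in the space of strongly measurable functions with finite seminorms $\norm{\cdot}_{\rho,t}$ (via the Hausdorff quotient and a general extension theorem), then uses dominated convergence to get $\cI_{\G}\subseteq\cI_{\G,\rho}$ and continuity under locally bounded convergence; you carry out the same extension by hand with Cauchy sequences and a diagonal argument inside $\cI_{\G}$, which is equivalent here because the topology from the increasing seminorms $\norm{\cdot}_{\rho,t}$ is metrizable.
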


To prove Theorem \ref{thm.constructNCstochint}, we extend the elementary stochastic integral against $X$ to a map defined on a $\rho$-dependent space containing $\cI_{\G}$.

\begin{notation}\label{nota.IGrho}
Let $\overline{\cG}_{\infty}$ be the $\norm{\cdot}$-completion of $\cG_{\infty}$.
(Recall that $\cG_{\infty}$ is not assumed~to~be complete.)
Write $\cL_{\G,\rho}$ for the set of strongly measurable functions $H \colon \R_+ \to \overline{\cG}_{\infty}$ such that
\[
\norm{H}_{\rho,t} \coloneqq \left(\int_{(0,t]} \norm{H}^2 \,\d\rho\right)^{\frac{1}{2}} < \infty \qquad (t \geq 0).
\]
Endow $\cL_{\G,\rho}$ with the locally convex topology generated by the collection $\{\norm{\cdot}_{\rho,t} : t \geq 0\}$ of seminorms, and write $\cI_{\G,\rho}$ for the closure of $\EP_{\G}$ in $\cL_{\G,\rho}$.
\end{notation}

\begin{proposition}\label{prop.IGrhoextension}
Under {\rm Assumption \ref{ass.integ}}, $I_X^0$ extends uniquely to a continuous complex-linear map $I_X^{\G,\rho} \colon \cI_{\G,\rho} \to R_{a,p}(\E_{\scB})$.
Furthermore, for all $H \in \cI_{\G,\rho}$ and $T \geq 0$,
\begin{equation}
    \sup_{0 \leq s \leq t}\norm{I_X^{\G,\rho}[H](s)}_p  \leq a_T\norm{H}_{\rho,t} \qquad (0 \leq t \leq T).\label{eq.IGrhobound}
\end{equation}
\end{proposition}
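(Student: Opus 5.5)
The plan is the standard bounded linear extension to a closure. The only point needing care is that the bound \eqref{eq.IGrhobound} must be established as a single estimate on the closure, one $T$ at a time.

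First, observe that $I_X^0 \colon \EP_{\G} \to R_{a,p}$ is complex linear. It is well defined by Remark \ref{rem.Assintegstuff}, which also gives $I_X^0[H] \in R_{a,p}$. By Assumption \ref{ass.integ}\ref{item.rhoass},
\[
\sup_{0 \leq s \leq t}\norm{I_X^0[H](s)}_p \leq a_T\norm{H}_{\rho,t} \qquad (H \in \EP_{\G}, \; 0 \leq t \leq T).
\]
Taking $t = T$, each defining seminorm $\sup_{[0,T]}\norm{\cdot}_p$ of $R_{a,p}$ is dominated on $\EP_{\G}$ by $a_T\norm{\cdot}_{\rho,T}$. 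Hence $I_X^0$ is continuous from $\EP_{\G}$, with the topology inherited from $\cL_{\G,\rho}$, into the Fr\'echet space $R_{a,p}$.

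Next, we extend $I_X^0$. Both $\cL_{\G,\rho}$ and $R_{a,p}$ have countable defining families of seminorms: for $\cL_{\G,\rho}$ take $t \in \N$, using that $\norm{\cdot}_{\rho,t}$ is increasing in $t$. So $\cL_{\G,\rho}$ is (pseudo)metrizable, and every $H \in \cI_{\G,\rho}$ is the limit of a sequence $(H_n)$ in $\EP_{\G}$. The domination estimate makes $(I_X^0[H_n])$ Cauchy in each seminorm of $R_{a,p}$. By completeness of $R_{a,p}$ it converges, and we set $I_X^{\G,\rho}[H]$ equal to the limit. The limit is independent of the approximating sequence: interleaving two such sequences gives another approximating sequence, whose image is again Cauchy. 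This yields linearity, continuity and the bound, since the estimate passes to the limit: for each $s \leq t$, $\norm{I_X^{\G,\rho}[H](s)}_p = \lim_n \norm{I_X^0[H_n](s)}_p$ and $\norm{H_n}_{\rho,t} \to \norm{H}_{\rho,t}$. Uniqueness holds because two continuous extensions agree on the dense set $\EP_{\G}$ and $R_{a,p}$ is Hausdorff.

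The main subtlety is not analytic. One must check that $\cL_{\G,\rho}$ is only a pseudometrizable space, since $\norm{\cdot}_{\rho,t}$ does not separate $\rho$-null modifications. This is harmless. If $\norm{H - K}_{\rho,t} = 0$ for all $t$, the bound forces $I_X^{\G,\rho}[H] = I_X^{\G,\rho}[K]$, because $R_{a,p}$ is Hausdorff. So the extension is well defined on elements of $\cI_{\G,\rho}$ themselves and not merely on equivalence classes. The other points to confirm are that $R_{a,p}$ is indeed complete, which is asserted in the paper, and that $\EP_{\G} \subseteq \cL_{\G,\rho}$. The latter holds because elementary processes are simple, and hence strongly measurable and locally bounded, while $\rho$ is locally finite.
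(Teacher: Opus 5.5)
Your proof is correct and follows the same strategy as the paper: density of $\EP_{\G}$, the domination estimate from Assumption \ref{ass.integ}\ref{item.rhoass}, completeness of $R_{a,p}$, and uniqueness because $R_{a,p}$ is Hausdorff. The only difference is in execution: the paper passes to the Hausdorff quotient $\cI_{\G,\rho}/\cN$ and invokes the abstract extension theorem \cite[Thm.\ 5.1]{Treves1967}, whereas you use the countability of the seminorm families to carry out the Cauchy-sequence extension by hand, which makes the quotient unnecessary.
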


\begin{proof}
Since $\EP_{\G}$ is dense in $\cI_{\G,\rho}$ and $R_{a,p}(\E_{\scB})$ is Hausdorff, there is at most one continuous extension $\cI_{\G,\rho} \to R_{a,p}$ of $I_X^0$.
To construct the extension, let
\[
\cN \coloneqq \big\{H \in \cI_{\G,\rho} : \norm{H}_{\rho,t} = 0 \; \forall t \geq0\big\}, \; \tilde{\cI}_{\G,\rho} \coloneqq \cI_{\G,\rho}/\cN, \; \text{ and } \; \EP_{\G,\rho} \coloneqq \EP_{\G}/\cN.
\]
By Assumption \ref{ass.integ}\ref{item.rhoass}, $I_X^0 \colon \EP_{\G} \to R_{a,p}$ descends to a continuous complex-linear map $J \colon \EP_{\G,\rho} \to R_{a,p}$.
Since $\tilde{\cI}_{\G,\rho}$ and $R_{a,p}$ are Hausdorff, $R_{a,p}$ is complete, and $\EP_{\G,\rho}$ is dense in $\tilde{\cI}_{\G,\rho}$, $J$ extends to a continuous complex-linear map $\tilde{J} \colon \tilde{\cI}_{\G,\rho} \to R_{a,p}$ by \cite[Thm.\ 5.1]{Treves1967}.
If $\pi \colon \cI_{\G,\rho} \to \tilde{\cI}_{\G,\rho}$ is the natural projection, then $I_X^{\G,\rho} \coloneqq \tilde{J} \circ \pi$ is the desired extension.
Inequality \eqref{eq.IGrhobound} follows easily from Assumption \ref{ass.integ}\ref{item.rhoass} and our construction of $I_X^{\G,\rho}$.
\end{proof}

\begin{remark}\label{rem.IGrhoLagniappe}
The reader may find it useful to note that $I_X^{\G,\rho}$ also satisfies the extra properties from Theorem \ref{thm.abstractstochint}.
Specifically, if $H \in \cI_{\G,\rho}$, then
\[
I_X^{\G,\rho}[1_{(s,t]}H](T) = I_X^{\G,\rho}[H](t) - I_X^{\G,\rho}[H](s) \qquad (0 \leq s \leq t \leq T),
\]
$I_X^{\G,\rho}[H]$ is an $L^p$-martingale whenever $X$ is an $L^{p_0}$-martingale, and $I_X^{\G,\rho}[H] \in C_{a,p}$ whenever $\into K[\d X] \in C_{a,p}$ for all $K \in \EP_{\G}$.
Also, it is worth noting that Proposition \ref{prop.IGrhoextension} is a generalization of \cite[Thm.\ 4.19(i)]{JKN2026}.
\end{remark}

\begin{proof}[Proof of Theorem \ref{thm.constructNCstochint}]
First, observe that $\cI_{\G,\rho}$ is closed under locally bounded convergence by the dominated convergence theorem.
Since $\EP_{\G} \subseteq \cI_{\G,\rho}$ by definition, we conclude that $\cI_{\G} \subseteq \cI_{\G,\rho}$.
Define $I_X[H] \coloneqq I_X^{\G,\rho}[H]$ for all $H \in \cI_{\G}$.
By \eqref{eq.IGrhobound} and the dominated convergence theorem, $I_X$ is continuous with respect to locally bounded convergence.
Thus, $X$ is an $L^p$-integrator with respect to $\G$.
The claim bound on $I_X$ follows immediately from \eqref{eq.IGrhobound} and the fact that $I_X = I_X^{\G,\rho}|_{\cI_{\G}}$.
\end{proof}

\subsection{Measured decomposable processes}\label{subsec.MD}

We now introduce and study a large class of noncommutative integrators satisfying Assumption \ref{ass.integ} with $(\G,\norm{\cdot}) = (\F^{p_0;p},\norm{\cdot}_{p_0;p})$:
measured martingales and decomposable processes.
(Recall that $\F^{p_0;p}$ was defined in Example \ref{ex.LpLq}.)
We begin by recording some material related to functions of bounded $\alpha$-variation.

\begin{definition}[Bounded $\alpha$-variation]\label{def.alphavar}
Fix $a \in \R$ and $b \in \R \cup \{\infty\}$ such that $a \leq b$, $\alpha \in [1,\infty)$, and a normed vector space $\cV$.
Let $I \coloneqq [a,b] \cap \R$ and $F \colon I \to \cV$ be a function.
If $s,t \in I$ and $s \leq t$, then
\[
V_{\alpha}(F : [s,t]) = V_{\alpha,\cV}(F : [s,t]) \coloneqq \sup_{\Pi \in \cP_{[s,t]}}\Bigg(\sum_{r \in \Pi}\norm{\Delta_rF}_{\cV}^{\alpha}\Bigg)^{\frac1\alpha} \in [0,\infty].
\]
If $b = \infty$, then $V_{\alpha}(F : I) \coloneqq \sup\left\{V_{\alpha}(F : [a,c]) : c \in I \right\}$.
If $V_{\alpha}(F:I) < \infty$, then $F$ has \textbf{bounded $\boldsymbol{\alpha}$-variation};
if $V_{\alpha}(F : [a,c]) < \infty$ for all $c \in I$, then $F$ has \textbf{locally bounded $\boldsymbol{\alpha}$-variation}.
We often drop $\alpha$ in the notation and terminology when $\alpha=1$.
\end{definition}

A function of locally bounded $\alpha$-variation comes with a measure that we shall use in a crucial way later.

\begin{lemma}\label{lem.control}
Retain the setup of {\rm Definition \ref{def.alphavar}}.
If $F \colon I \to \cV$ is right-continuous and has locally bounded $\alpha$-variation, then there exists a unique Borel measure $\mu$ on $I$ such that
\[
\mu(\{a\}) = 0 \; \text{ and } \; \mu((s,t]) = V_{\alpha}^{\alpha}(F : [a,t]) - V_{\alpha}^{\alpha}(F : [a,s]) \qquad (s,t \in I, \; s < t),
\]
where $V_{\alpha}^{\alpha}(F : [a,t]) \coloneqq V_{\alpha}(F : [a,t])^{\alpha}$.
The measure $\mu$ satisfies 
\[
\norm{F(t) - F(s)}_{\cV}^{\alpha} \leq \mu((s,t]) \qquad (s,t \in I, \; s \leq t)
\]
and is atomless if $F$ is continuous.
\end{lemma}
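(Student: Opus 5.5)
The plan is to realize $\mu$ as the Lebesgue--Stieltjes measure of the increasing function $v(t) \coloneqq V_\alpha^\alpha(F:[a,t])$ on $I$. The two things to check are that $v$ is right-continuous and that $\|F(t)-F(s)\|^\alpha \le v(t)-v(s)$. Atomlessness when $F$ is continuous will come from continuity of $v$.

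First, $v$ is finite by hypothesis and increasing. I will establish \emph{superadditivity}:
\[
V_\alpha^\alpha(F:[a,s]) + V_\alpha^\alpha(F:[s,t]) \le V_\alpha^\alpha(F:[a,t]) \qquad (a \le s \le t).
\]
To see this, concatenate a partition of $[a,s]$ with one of $[s,t]$; the result is a partition of $[a,t]$ whose $\alpha$-sum is the sum of the two $\alpha$-sums, and then take suprema. Superadditivity, together with the one-point partition $\{s<t\}$ of $[s,t]$, immediately gives
\[
\|F(t)-F(s)\|^\alpha \le V_\alpha^\alpha(F:[s,t]) \le v(t)-v(s).
\]
Note that for $\alpha>1$ we only get superadditivity, not additivity, but that direction is all we need.

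The main obstacle is right-continuity of $v$, and continuity of $v$ when $F$ is continuous. For right-continuity at $s$, I fix $\varepsilon>0$ and $u>s$, and choose a partition $\Pi$ of $[a,u]$ whose $\alpha$-sum exceeds $v(u)-\varepsilon$. Refining, I may assume $s\in\Pi$. Let $s^+$ be the successor of $s$ in $\Pi$. For $s<r<s^+$, I modify $\Pi$ by inserting $r$ and deleting the term $\|F(s^+)-F(s)\|^\alpha$, replacing it with $\|F(r)-F(s)\|^\alpha+\|F(s^+)-F(r)\|^\alpha$. By right-continuity of $F$ at $s$, as $r\downarrow s$ this loses an amount tending to $0$.

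Splitting the modified partition at $r$ then gives
\[
v(s) + V_\alpha^\alpha(F:[r,u]) \ge v(u) - 2\varepsilon
\]
for $r$ close to $s$. Combined with superadditivity,
\[
v(r) \le v(u) - V_\alpha^\alpha(F:[r,u]) \le v(s) + 2\varepsilon.
\]
Hence $v(r)\downarrow v(s)$. If $F$ is also left-continuous, the mirror argument (inserting a point just left of $s$) gives left-continuity of $v$.

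With $v$ increasing and right-continuous, the standard Lebesgue--Stieltjes construction (Carathéodory extension from the semiring of half-open intervals $(s,t]\subseteq I$) yields a unique Borel measure $\mu$ on $I$ with $\mu(\{a\})=0$ and $\mu((s,t])=v(t)-v(s)$. Uniqueness follows from the $\pi$-$\lambda$ theorem, since these intervals together with $\{a\}$ form a $\pi$-system generating $\mathcal B_I$ and $\mu$ is $\sigma$-finite. The bound $\|F(t)-F(s)\|^\alpha\le\mu((s,t])$ was shown above. Finally, $\mu(\{t\}) = v(t)-v(t-)=0$ when $v$ is continuous, so $\mu$ is atomless when $F$ is continuous.
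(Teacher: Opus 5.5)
Your overall plan matches the paper's proof: $\mu$ is the Lebesgue--Stieltjes measure of $v$, superadditivity gives $\|F(t)-F(s)\|^\alpha\le v(t)-v(s)$, and uniqueness follows from the $\pi$--$\lambda$ theorem. The paper does not prove right-continuity of $v$; it cites Friz--Victoir. Your own proof of that step has a genuine gap.

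The step ``Refining, I may assume $s\in\Pi$'' is false for $\alpha>1$. Inserting a point can strictly decrease an $\alpha$-sum, because $\|x+y\|^\alpha$ may exceed $\|x\|^\alpha+\|y\|^\alpha$; this is exactly why $V_\alpha^\alpha$ is only superadditive. In fact the inequality you derive cannot hold in general. It reads $v(s)+V_\alpha^\alpha(F:[r,u])\ge v(u)-2\varepsilon$ for $r$ near $s$. Since $V_\alpha^\alpha(F:[r,u])\le V_\alpha^\alpha(F:[s,u])$, it would force $v(u)\le v(s)+V_\alpha^\alpha(F:[s,u])$, i.e.\ additivity. Concretely, take real-valued $F(t)=t$, $a=0$, $\alpha=2$. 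Then $v(t)=t^2$, and every partition of $[0,2]$ containing $1$ has $2$-sum at most $2<4=v(2)$. So no near-optimal partition of $[0,2]$ contains $s=1$, and $v(1)+V_2^2(F:[r,2])\le 2$ for $r\in(1,2)$. Your argument is valid for $\alpha=1$. Left-continuity is also easy to repair: take a near-optimal partition of $[a,s]$, which contains $s$ automatically, and replace $s$ by a nearby $r<s$.

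To repair right-continuity you need an extra ingredient: $\lim_{u\downarrow s}V_\alpha^\alpha(F:[s,u])=0$. This follows from right-continuity of $F$ and finite variation, as follows.
\begin{itemize}
\item Suppose the (monotone) limit $L$ were positive. Choose $u_1>s$ with $\|F(\sigma)-F(s)\|^\alpha<L/4$ on $(s,u_1]$.
\item Take a partition of $[s,u_1]$ with sum $>3L/4$ and discard its first increment. This yields $\sigma_1\in(s,u_1)$ with $V_\alpha^\alpha(F:[\sigma_1,u_1])>L/2$.
\item Iterate inside $(s,\sigma_1)$. This produces disjoint intervals each carrying more than $L/2$, contradicting superadditivity and $V_\alpha(F:[s,u_1])<\infty$.
\end{itemize}

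Now let $r>s$ and consider a partition of $[a,r]$ that straddles $s$ via $t_j<s<t_{j+1}$ (the case $s$ in the partition is easier).
\begin{itemize}
\item Bound $\|F(t_{j+1})-F(t_j)\|\le\|F(s)-F(t_j)\|+\eta(r)$, where $\eta(r)$ denotes $\sup_{s<\sigma\le r}\|F(\sigma)-F(s)\|$, which tends to $0$ by right-continuity.
\item Use uniform continuity of $x\mapsto x^\alpha$ on $[0,v(s)^{1/\alpha}+1]$ to absorb $\eta(r)$ into a term tending to $0$.
\item Bound the part of the sum over $[t_{j+1},r]$ by $V_\alpha^\alpha(F:[s,r])$.
\end{itemize}
This gives $v(r)\le v(s)+o(1)+V_\alpha^\alpha(F:[s,r])\to v(s)$ as $r\downarrow s$. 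This is essentially the Friz--Victoir argument the paper invokes.
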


\begin{proof}
Define $f(t) \coloneqq V_{\alpha}^{\alpha}(F : [a,t]) = V_{\alpha}(F : [a,t])^{\alpha}$ for all $t \in I$.
Clearly, $f \colon I \to \R_+$ is increasing.
Since $F$ is right-continuous, $f$ is right-continuous (and $f$ is continuous if $F$ is), as can be seen by examining the proof of \cite[Prop.\ 5.8]{FV2010}.
The existence and uniqueness of $\mu$, as well as the fact that $\mu$ is atomless if $F$ is continuous, follow.
Finally, if $s,t \in I$ and $s < t$, then
\begin{align*}
    f(s) + \norm{F(t) - F(s)}_{\cV}^{\alpha} & = V_{\alpha}^{\alpha}(F : [a,s]) + \norm{F(t) - F(s)}_{\cV}^{\alpha} \\
    & \leq  V_{\alpha}^{\alpha}(F : [a,s]) +  V_{\alpha}^{\alpha}(F : [s,t]) \\
    & \leq  V_{\alpha}^{\alpha}(F : [a,t]) = f(t),
\end{align*}
which rearranges to the desired inequality.
\end{proof}

\begin{example}\label{ex.alpha=1}
If $\alpha=1$, then $\mu$ is determined by $\mu(\{a\}) = 0$ and $\mu((s,t]) = V(F : [s,t])$ because the $1$-variation is additive:
$V(F : [s,u]) = V(F : [s,t]) + V(F : [t,u])$ whenever $a \leq s \leq t \leq u \in I$.
In this case, we write $\norm{\d F(t)}_{\cV} \coloneqq \mu(\d t)$.
\end{example}

Next, we review some terminology and notation from \cite[\S3]{JKN2026}.
Suppose $1 \leq p,q \leq \infty$.
A process $X \colon \R_+ \to L^p(\E)$ is \textbf{$\boldsymbol{L^p}$-FV} if it is adapted and has locally bounded variation with respect to $\norm{\cdot}_p$.
Write $\M^p = \M_{\scA}^p \subseteq C_{a,p}(\E)$ for the closed subspace of continuous $L^p$-martingales and $\FV^p = \FV_{\scA}^p$ for the space of continuous $L^p$-FV processes with the topology induced by the collection
\[
\{X \mapsto \norm{X(0)}_p + V_{1,L^p}(X : [0,t]) : t \geq 0\}
\]
of seminorms.
(Note that the inclusion of $\FV^p$ into $C_{a,p}(\E)$ is continuous.)
Finally, a process $X \colon \R_+ \to L^{p \wedge q}(\E)$ is \textbf{$\boldsymbol{(L^p,L^q)}$-decomposable} if $X = X(0)+ M+A$ for some $M \in \M^p$ and some $A \in \FV^q$ with $M(0) = A(0)=0$.
By \cite[Cor.\ 3.22]{JKN2026}, if $1/p+1/q \leq 1$, then the decomposition $X = X(0) + M+A$ is unique;
in this case, we call $X^{\mathrm{m}} \coloneqq M$ the \textbf{martingale part} of $X$ and $X^{\mathrm{fv}} \coloneqq A$ the \textbf{FV part} of $X$.

\begin{definition}\label{def.measured}
Suppose $1 \leq p,q \leq \infty$.
A process $M \colon \R_+ \to L^p(\E)$ is an \textbf{$\boldsymbol{L^p}$-measured martingale} if $M \in \M^p$ and $V_{2,L^p}(M : [0,t]) < \infty$ for all $t \geq 0$.
An \textbf{$\boldsymbol{(L^p,L^q)}$-measured decomposable process} is a process $X \colon \R_+ \to L^{p \wedge q}(\E)$ such that there exists an $L^p$-measured martingale $M$ with $M(0) = 0$ and $X-X(0)-M \in \FV^q$.
We shorten ``$(L^p,L^p)$-'' to ``$L^p$-'' in this term.
\end{definition}

When $1/p+1/q \leq 1$, an $(L^p,L^q)$-decomposable process is $(L^p,L^q)$-measured if and only if its martingale part is an $L^p$-measured martingale.
\pagebreak

\begin{example}[Measured is automatic when $p=2$]\label{ex.2measured}
If $M$ is an $L^2$-martingale, then
\[
V_{2,L^2}(M : [s,t]) = \norm{M(t)-M(s)}_2 < \infty \qquad ( 0 \leq s < t)
\]
by \cite[Lem.\ 3.20]{JKN2026}.
Thus, every continuous $L^2$-martingale is an $L^2$-measured martingale, and every $L^2$-decomposable process is an $L^2$-measured decomposable process.
Also, we note for later use that if we take $(r,\cV) = (2,L^2(\E))$ and $F = M \colon \R_+ \to L^2(\E)$ to be a right-continuous $L^2$-martingale in Lemma \ref{lem.control}, then the measure $\mu$ is determined by
\[
\mu(\{0\}) = 0\; \text{ and } \; \mu((s,t]) = \norm{M(t) - M(s)}_2^2 = \norm{M(t)}_2^2 - \norm{M(s)}_2^2 \qquad (0 \leq s < t).
\]
In \cite{JKN2026}, this measure $\mu$ was denoted by $\kappa_M$ and was used in an essential way to construct the $L^2$-valued stochastic integral against an $L^2$-decomposable process.
\end{example}

\begin{example}[Classical Brownian motion]
Let $(\Om,\sF,(\sF_t)_{t \geq 0},P)$ be a classical filtered probability space and $(\cA,(\cA_t)_{t \geq 0},\E)$ be as in \eqref{eq.classicalfWps}.
If $X \colon \R_+ \times \Om \to \R$ is a classical Brownian motion and $1 \leq p < \infty$, then the noncommutative process $\R_+ \ni t \mapsto x(t) \coloneqq X_t \in L^p(\E)$ is an $L^p$-martingale.
We claim that $x$ is an $L^p$-measured martingale.
Indeed, if $0 \leq s  < t$, then $X_t  - X_s \sim N(0,t-s)$, so
\[
\norm{x(t) - x(s)}_{L^p(\E)} = \norm{X_t - X_s}_{L^p(\E_P)} = \norm{N(0,1)}_{L^p(P)}\sqrt{t-s} \qquad (1 \leq p < \infty).
\]
Consequently, if $1 \leq p < \infty$, then $x$ is $L^p$-continuous, and
\[
V_{2,L^p}(x : [0,t]) \leq \norm{N(0,1)}_{L^p(P)}\sqrt{t} < \infty \qquad (t \geq 0).
\]
In particular, $x$ is an $L^p$-measured martingale, as claimed.
\end{example}

\begin{example}[$q$-Brownian motion]\label{ex.qBm}
If $X \colon \R_+ \to \cA_{\sa}$ is a free Brownian motion (Definition \ref{def.ndimfBM}), then $X$ is a martingale, and $\norm{X(t) - X(s)}_{\infty}^2 = 4|t-s|$ for all $s, t \geq 0$.
In particular, $X$ is $L^{\infty}$-continuous, and $V_{2,L^{\infty}}(X : [0,t]) = 2\sqrt{t} < \infty$ for all $t \geq 0$.
Thus, $X$ is an $L^{\infty}$-measured martingale.
More generally, if $-1 \leq q < 1$ and $X \colon \R_+ \to \cA_{\sa}$ is a $q$-Brownian motion (vid.\ \cite{BKS1997,DonatiMartin2003,DS2013,DS2018}), then $X$ is a martingale, and
\[
\norm{X(t) - X(s)}_{\infty} = 2\sqrt{\frac{|t-s|}{1-q}} \implies V_{2,L^{\infty}}(X : [0,t]) = 2\sqrt{\frac{t}{1-q}} < \infty \qquad (s,t \geq 0).
\]
Thus, $X$ is an $L^{\infty}$-measured martingale.
\end{example}

\begin{example}[$q$-Hermite martingales]\label{ex.qHermitemart}
Suppose $- 1 \leq q < 1$ and $X \colon \R_+ \to \cA_{\sa}$ is a $q$-Brownian motion.
If $n \in \N_0$, $H_n^{(q)}$ is the $n^{\text{th}}$ $q$-Hermite polynomial (vid.\ \cite[Def.\ 1.9]{BKS1997}), and $M_n \colon \R_+ \to \cA_{\sa}$ is defined by $M_0 \equiv 1$ and for $n \in \N$, 
\[
M_n(0) \coloneqq 0 \; \text{ and } \; M_n(t) \coloneqq t^{\frac{n}{2}}H_n^{(q)}\left(t^{-\frac12}X(t)\right) \qquad (t > 0),
\]
then $M_n$ is a martingale (vid.\ \cite[Cor.\ 4.7]{BKS1997}).
We claim that $M_n$ is an $L^{\infty}$-measured martingale.
To see this, we prove by induction on $n$ that if $T \geq 0$, then there exists a constant $C_{n,T}$ such that
\begin{equation}
    \norm{M_n(t) - M_n(s)}_{\infty}^2 \leq C_{n,T}|t-s| \qquad (0 \leq s, t \leq T).\label{eq.Mnmeasured}
\end{equation}
Since $H_0^{(q)}(x) = 1$, the $n=0$ case is obvious;
since $H_1^{(q)}(x) = x$, the $n=1$ case was the content of Example \ref{ex.qBm}.
Now, let $n \geq 1$, and assume the desired conclusion for $M_n$ and $M_{n-1}$.
By definition of the $q$-Hermite polynomials,
\[
H_{n+1}^{(q)}(x) = xH_n^{(q)}(x) - [n]_qH_{n-1}^{(q)}(x),
\]
where $[n]_q = (1-q^n)/(1-q)$.
This translates into the relation
\[
M_{n+1}(t) = X(t)\,M_n(t) - [n]_q\,t\,M_{n-1}(t) \qquad (t \geq 0).
\]
Consequently, if $0 \leq s,t \leq T$ and $a_{i,T} \coloneqq \sup\left\{ \norm{M_i(r)}_{\infty} : 0 \leq r \leq T\right\}$ ($i=0,\ldots,n$), then
\begin{align*}
    \|M_{n+1}(t) - M_{n+1} &(s)\|_{\infty} \leq \norm{M_n(t)}_{\infty}\norm{X(t) - X(s)}_{\infty} + \norm{X(s)}_{\infty} \norm{M_n(t) - M_n(s)}_{\infty} \\
    & \hspace{20mm} + [n]_q|t-s|\norm{M_{n-1}(t)}_{\infty} + [n]_qs\norm{M_{n-1}(t) - M_{n-1}(s)}_{\infty} \\
    & \leq \left(\frac{2a_{n,T}}{\sqrt{1-q}} + 2\sqrt{\frac{TC_{n,T}}{1-q}} + [n]_qa_{n-1,T}\sqrt{T} + [n]_qT\sqrt{C_{n-1,T}}\right)\sqrt{|t-s|}.
\end{align*}
Therefore, we may take
\[
C_{n+1,T} \coloneqq \left(\frac{2a_{n,T}}{\sqrt{1-q}} + 2\sqrt{\frac{TC_{n,T}}{1-q}} + [n]_qa_{n-1,T}\sqrt{T} + [n]_qT\sqrt{C_{n-1,T}}\right)^2.
\]
This completes the induction and therefore the proof of \eqref{eq.Mnmeasured}.
Since \eqref{eq.Mnmeasured} implies that $M_n$ is $L^{\infty}$-continuous and $V_{2,L^{\infty}}(M_n : [0,T]) \leq \sqrt{C_{n,T}T} < \infty$ for all $T \geq 0$, we conclude that $M_n$ is an $L^{\infty}$-measured martingale, as claimed.
\end{example}

Next, we endow the space of $(L^p,L^q)$-measured decomposable processes with a useful topology when $1/p+1/q \leq 1$.

\begin{notation}[Space of measured decomposable processes]\label{nota.GD}
Suppose $1 \leq p,q \leq \infty$ and $1/p + 1/q \leq 1$.
Write $\GD^{p,q} = \GD_{\scA}^{p,q} \subseteq C_{a, p \wedge q}$ for the space of $(L^p,L^q)$-measured decomposable processes.
Endow $\GD^{p,q}$ with the topology induced by the collection
\[
\Big\{X \mapsto \norm{X}_{p,q,t} \coloneqq \norm{X(0)}_{p \wedge q} + V_{2,L^p}\big(X^{\mathrm{m}} : [0,t]\big) + V_{1,L^q}\big(X^{\mathrm{fv}} : [0,t]\big) : t \geq 0\Big\}
\]
of seminorms.
As in other such situations, we write $\GD^p \coloneqq \GD^{p,p}$ and $\norm{\cdot}_{p,t} \coloneqq \norm{\cdot}_{p,p,t}$.
\end{notation}

\begin{proposition}\label{prop.GD}
Suppose $1 \leq p,q \leq \infty$ and $1/p + 1/q \leq 1$.
\begin{enumerate}[label=(\roman*),font=\normalfont]
\setlength{\itemsep}{2pt}
    \item If $X \in \GD^{p,q}$, then
    \[
    \norm{X}_{p,q,t} = \norm{X(0)}_{p \wedge q} + \norm{X^{\mathrm{m}}}_{p,\infty,t} + \big\|X^{\mathrm{fv}}\big\|_{\infty, q,t} \qquad (t \geq 0).
    \]
    In particular, the martingale-part map $\GD^{p,q} \ni X \mapsto \mathrm{m}(X) \coloneqq X^{\mathrm{m}} \in \GD^{p,\infty}$ and the FV-part map $\GD^{p,q} \ni X \mapsto \mathrm{fv}(X) \coloneqq X^{\mathrm{fv}} \in \GD^{\infty,q}$ are continuous.\label{item.martpartFVpart}
    \item If $X \in \GD^{p,q}$, then
    \[
    \sup_{0 \leq s \leq t}\norm{X(s)}_{p \wedge q} \leq \norm{X}_{p,q,t} \qquad (t \geq 0).
    \]
    In particular, the inclusion $\GD^{p,q} \hookrightarrow C_{a,p \wedge q}$ is continuous.\label{item.incl}
    \item $\GD^{p,q}$ is a complex Fr\'{e}chet space.\label{item.Frechet}
\end{enumerate}
\end{proposition}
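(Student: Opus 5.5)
Items (i) and (ii) are direct computations from the definitions, and item (iii) is a completeness argument that reduces to completeness of the two factor spaces.

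For item (i), observe first that if $X \in \GD^{p,q}$, then $X^{\mathrm{m}} \in \GD^{p,\infty}$ and $X^{\mathrm{fv}} \in \GD^{\infty,q}$. Indeed, $X^{\mathrm{m}} = 0 + X^{\mathrm{m}} + 0$ is a decomposition with zero FV part, and $X^{\mathrm{fv}} = 0 + 0 + X^{\mathrm{fv}}$ is one with zero martingale part. Since $1/p + 1/\infty \leq 1$ and $1/\infty + 1/q \leq 1$, these decompositions are unique by \cite[Cor.\ 3.22]{JKN2026}. Hence $(X^{\mathrm{m}})^{\mathrm{m}} = X^{\mathrm{m}}$, $(X^{\mathrm{m}})^{\mathrm{fv}} = 0$, and symmetrically for $X^{\mathrm{fv}}$. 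Plugging these into the definition of $\norm{\cdot}_{p,\infty,t}$ and $\norm{\cdot}_{\infty,q,t}$, and using $X^{\mathrm{m}}(0) = X^{\mathrm{fv}}(0) = 0$, gives the displayed identity. Continuity of $\mathrm{m}$ and $\mathrm{fv}$ is then immediate: they are linear (by uniqueness of the decomposition), and each seminorm of the image is dominated by the corresponding seminorm $\norm{X}_{p,q,t}$.

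For item (ii), fix $0 \leq s \leq t$ and write $X(s) = X(0) + X^{\mathrm{m}}(s) + X^{\mathrm{fv}}(s)$. Since $p \wedge q \leq p$ and $p \wedge q \leq q$ on a probability space, we have $\norm{\cdot}_{p\wedge q} \leq \norm{\cdot}_p, \norm{\cdot}_q$. Using the one-interval partition $\{0,s\}$ together with monotonicity of variation in the interval,
\[
\norm{X^{\mathrm{m}}(s)}_p = \norm{X^{\mathrm{m}}(s) - X^{\mathrm{m}}(0)}_p \leq V_{2,L^p}(X^{\mathrm{m}} : [0,t]),
\]
and likewise $\norm{X^{\mathrm{fv}}(s)}_q \leq V_{1,L^q}(X^{\mathrm{fv}} : [0,t])$. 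The triangle inequality then gives the bound, and continuity of the inclusion follows.

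For item (iii), the topology is given by countably many seminorms (take $t \in \N$ by monotonicity in $t$), and it is Hausdorff: if all seminorms vanish, then $X \equiv 0$ by item (ii). The main work is completeness. Let $(X_n)$ be Cauchy. By item (i), $(X_n(0))$ is Cauchy in $L^{p\wedge q}$, $(X_n^{\mathrm{m}})$ is Cauchy for the seminorms $V_{2,L^p}(\cdot : [0,t])$, and $(X_n^{\mathrm{fv}})$ is Cauchy for $V_{1,L^q}(\cdot : [0,t])$.
\begin{itemize}
\item[(1)] \emph{FV part.} Completeness of $\FV^q$ in the variation seminorms is standard (or recalled from \cite{JKN2026}), so $X_n^{\mathrm{fv}} \to A$ in $\FV^q$ with $A(0) = 0$.
\item[(2)] \emph{Martingale part.} Since $V_{2,L^p}$ dominates the sup norm (as in item (ii)), $X_n^{\mathrm{m}} \to M$ uniformly on compacts in $L^p$. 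Because $\M^p$ is closed in $C_{a,p}$, $M$ is a continuous $L^p$-martingale with $M(0) = 0$.
\item[(3)] \emph{Convergence in $2$-variation.} This is the expected main obstacle. For any partition $\Pi$ of $[0,t]$ and each fixed $n$, the finite sum $\big(\sum_{r\in\Pi}\norm{\Delta_r(X_n^{\mathrm{m}} - M)}_p^2\big)^{1/2}$ is the pointwise limit as $k \to \infty$ of the same sum with $M$ replaced by $X_k^{\mathrm{m}}$. Each such sum is bounded by $V_{2,L^p}(X_n^{\mathrm{m}} - X_k^{\mathrm{m}} : [0,t])$. Taking $\limsup_k$ and then the supremum over $\Pi$ (a Fatou-type lower semicontinuity of $V_2$ under pointwise convergence) yields $V_{2,L^p}(X_n^{\mathrm{m}} - M : [0,t]) \to 0$. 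In particular $V_{2,L^p}(M : [0,t]) < \infty$, since $V_2$ is a seminorm by Minkowski's inequality in $\ell^2$, so $M$ is an $L^p$-measured martingale.
\end{itemize}
Finally, set $X := x_0 + M + A$, where $x_0$ is the limit of $(X_n(0))$ in $L^{p\wedge q}$. Then $X \in \GD^{p,q}$, and by uniqueness of the decomposition $X^{\mathrm{m}} = M$ and $X^{\mathrm{fv}} = A$, so $\norm{X_n - X}_{p,q,t} \to 0$ for every $t$.
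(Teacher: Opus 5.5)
Your proof is correct and follows the paper's argument. Items (i) and (ii) are the same computations: uniqueness of the decomposition gives $(X^{\mathrm{m}})^{\mathrm{fv}} = (X^{\mathrm{fv}})^{\mathrm{m}} = 0$, and the triangle inequality plus the one-step partition handles the sup bound. For (iii), the paper only invokes ``standard arguments'' together with the well-known fact that continuous $\cV$-valued functions of locally finite $\alpha$-variation form a Fr\'echet space under the seminorms $\norm{F(0)}_{\cV} + V_{\alpha}(F : [0,t])$. Your treatment spells those standard arguments out: you cite the $\alpha=1$ case for the FV part, prove the $\alpha=2$ case via lower semicontinuity of $V_2$ under pointwise convergence, and use closedness of $\M^p$ in $C_{a,p}$. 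The one step you leave implicit is that $x_0 \in L^{p \wedge q}(\cA_0,\E)$, which holds because that subspace is closed.
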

\pagebreak

\begin{proof}
Let $X \in \GD^{p,q}$.
By definition of the martingale and FV parts, $X^{\mathrm{m}}(0) = X^{\mathrm{fv}}(0) = 0$, $(X^{\mathrm{m}})^{\mathrm{m}} = X^{\mathrm{m}}$, $(X^{\mathrm{fv}})^{\mathrm{fv}} = X^{\mathrm{fv}}$, and $(X^{\mathrm{m}})^{\mathrm{fv}} = (X^{\mathrm{fv}})^{\mathrm{m}} \equiv 0$.
Item \ref{item.martpartFVpart} follows easily from these observations and the definition of $\norm{\cdot}_{p,q,t}$.

\ref{item.incl} If $t \geq 0$, then
\begin{align*}
    \norm{X(t)}_{p \wedge q}  & = \norm{X(0) + X(t) - X(0)}_{p \wedge q} = \big\|X(0) + X^{\mathrm{m}}(t) + X^{\mathrm{fv}}(t)\big\|_{p \wedge q} \\
    & = \big\|X(0) + X^{\mathrm{m}}(t) - X^{\mathrm{m}}(0) + X^{\mathrm{fv}}(t) - X^{\mathrm{fv}}(0)\big\|_{p \wedge q} \\
    & \leq \norm{X(0)}_{p \wedge q} + \norm{X^{\mathrm{m}}(t) - X^{\mathrm{m}}(0)}_{p \wedge q} + \big\|X^{\mathrm{fv}}(t) - X^{\mathrm{fv}}(0)\big\|_{p \wedge q} \\
    & \leq \norm{X(0)}_{p \wedge q} + \sqrt{\norm{X^{\mathrm{m}}(t) - X^{\mathrm{m}}(0)}_p^2} + \big\|X^{\mathrm{fv}}(t) - X^{\mathrm{fv}}(0)\big\|_q \\
    & \leq \norm{X(0)}_{p \wedge q} + V_{2,L^p}\big(X^{\mathrm{m}} : [0,t]\big) + V_{1,L^q}\big(X^{\mathrm{fv}} : [0,t]\big) = \norm{X}_{p, q,t}.
\end{align*}
Since $\norm{X}_{p,q,t}$ increases in $t$, the result follows.

\ref{item.Frechet} The only nontrivial claim is that $\GD^{p,q}$ is complete.
This may be deduced by standard arguments from the first two items and the following well-known fact, which we leave as an exercise for the unfamiliar reader:
If $\cV$ is a Banach space and $1 \leq \alpha < \infty$, then the vector space $\{F \in C(\R_+;\cV) : V_{\alpha}(F : [0,t]) < \infty \; \forall t \geq 0\}$ is a Fr\'{e}chet space with respect to the topology generated by the collection $\{F \mapsto \norm{F(0)}_{\cV} + V_{\alpha}(F : [0,t]) : t \geq 0\}$ of seminorms.
\end{proof}

We now move toward the main result of this subsection:
$L^{p_0}$-measured decomposable processes satisfy Assumption \ref{ass.integ} with $(\G,\norm{\cdot}) = (\F^{p_0;p},\norm{\cdot}_{p_0;p})$ whenever $2 \leq p < \infty$.
The key to this result is to control the $L^p$ norm of the integral against the martingale part using the discrete-time noncommutative (NC) Burkholder--Davis--Gundy (BDG) inequalities of G.\ Pisier and Q.\ Xu (partially) recalled below.

\begin{theorem}[NC BDG inequalities, \cite{PX1997}]\label{thm.NCBDG}
There exist families of positive constants $(\alpha_p)_{p \geq 2}$ and $(\beta_p)_{p \geq 2}$ such that for each $C^*$-probability space $(\cM,\tau)$, filtration $(\cM_n)_{n=0}^N$, $p \in [2,\infty)$, and discrete-time $(\cM_n)_{n=0}^N$-martingale $m = (m_n)_{n=0}^N \colon \{0,\ldots,N\} \to L^p(\tau)$,
\[
\alpha_p\norm{m}_{\cH_p(\tau)} \leq \norm{m_N}_p = \max_{0 \leq n \leq N}\norm{m_n}_p \leq \beta_p\norm{m}_{\cH_p(\tau)},
\]
where
\begin{align*}
    \norm{m}_{\cH_p(\tau)} & \coloneqq \max\Biggr\{\Bigg\|m_0^*m_0 + \sum_{n=1}^N(m_n-m_{n-1})^*(m_n-m_{n-1})\Bigg\|_{\frac{p}{2}}^{\frac12}, \\
    & \hspace{25mm} \Bigg\|m_0m_0^* + \sum_{n=1}^N(m_n-m_{n-1})(m_n-m_{n-1})^*\Bigg\|_{\frac{p}{2}}^{\frac12}  \Biggr\}.
\end{align*}
Also, $\alpha_2=\beta_2 = 1$.
\end{theorem}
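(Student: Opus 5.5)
This result is quoted from \cite{PX1997}, so I would not reprove the core estimate. The plan is to reduce the statement to the setting of \cite{PX1997}, verify the elementary parts directly, and sketch the mechanism behind the constants.

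\textbf{Reduction to the von Neumann setting.} Since $(\cM,\tau)$ is only a $\mathrm{C}^*$-probability space, I would first pass to the GNS representation of $\tau$. The weak closure $\overline{\cM}$ carries a normal faithful tracial extension of $\tau$, and each $\cM_n$ generates a $\mathrm{W}^*$-subalgebra $\overline{\cM_n}$. The conditional expectations onto $\overline{\cM_n}$ restrict to $\tau(\cdot\mid\cM_n)$ by conditionability and density. Moreover, $L^p(\cM,\tau)=L^p(\overline{\cM},\tau)$ isometrically. Hence a martingale for $(\cM_n)$ is a martingale in the Pisier--Xu setting, and both sides of the inequality are unchanged.

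\textbf{Elementary parts.}
\begin{itemize}
\item \emph{The equality $\norm{m_N}_p=\max_n\norm{m_n}_p$.} This holds because $m_n=\tau[m_N\mid\cM_n]$ and conditional expectations are $L^p$-contractions.
\item \emph{The case $p=2$.} The increments $dm_n\coloneqq m_n-m_{n-1}$ (with $dm_0\coloneqq m_0$) are mutually orthogonal in $L^2$, since $\tau[dm_j^*dm_n]=\tau[\tau[dm_j^*dm_n\mid\cM_{n-1}]]=0$ for $j<n$. Therefore
\[
\norm{m_N}_2^2=\sum_n\tau[dm_n^*dm_n]=\Big\|\sum_n dm_n^*dm_n\Big\|_1=\Big\|\sum_n dm_ndm_n^*\Big\|_1 .
\]
The last equality uses traciality. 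So both the column and row quantities equal $\norm{m_N}_2^2$, and $\alpha_2=\beta_2=1$.
\end{itemize}

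\textbf{General $p\ge 2$: upper bound.} The upper bound $\norm{m_N}_p\le\beta_p\norm{m}_{\cH_p}$ would go by the Pisier--Xu doubling argument, an induction from exponent $p/2$ to $p$. I would expand
\[
m_N^*m_N=\sum_n dm_n^*dm_n+\sum_n\big(m_{n-1}^*dm_n+dm_n^*m_{n-1}\big).
\]
The second sum is a martingale in $L^{p/2}$ whose square function involves $m_{n-1}^*dm_n$. Its column and row square functions are controlled by H\"older's inequality and the noncommutative Stein inequality: $\big\|\sum_n\tau[a_n\mid\cM_{n-1}]^*\tau[a_n\mid\cM_{n-1}]\big\|_{r}\lesssim\big\|\sum_n a_n^*a_n\big\|_r$, applied to $\sup$-type bounds $\norm{m_N}_p$. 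This yields a quadratic inequality $\norm{m_N}_p^2\le\|S_c\|_{p/2}+C\norm{m_N}_p\norm{m}_{\cH_p}$, which rearranges to the claim.

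\textbf{General $p\ge 2$: lower bound.} The lower bound then follows by duality with $p'\le 2$. One pairs $m_N$ against martingales in $L^{p'}$ and uses the Stein inequality again to project onto martingale differences.

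\textbf{Main obstacle.} I expect the difficulty to be this upper estimate, specifically the bookkeeping needed to make the induction close with constants depending only on $p$. Since the statement only asserts existence of such constants, I would ultimately defer to \cite[Thm.~2.1]{PX1997} for this step and contribute only the reduction, the equality, and the $p=2$ computation.
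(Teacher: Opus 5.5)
Your plan matches the paper, which gives no proof of this theorem and simply quotes it from \cite{PX1997}. Your additions are all correct: the reduction to the $\mathrm{W}^*$-setting via the GNS weak closure, the contraction argument for $\norm{m_N}_p=\max_n\norm{m_n}_p$, and the orthogonality-plus-traciality computation giving $\alpha_2=\beta_2=1$. Your sketches for general $p$ are only heuristic, so that case rests entirely on the citation, exactly as in the paper.
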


If the reader has not already done so, now would be a good time to review the notation and terminology in Example \ref{ex.LpLq}.

\begin{theorem}\label{thm.MDass}
Suppose $1 \leq p_0 \leq \infty$ and $2 \leq p < \infty$.
Let $X \colon \R_+ \to L^{p_0}(\E_{\scA})$ be an $L^{p_0}$-measured decomposable process with decomposition $X = X(0) + M + A$ as in {\rm Definition~\ref{def.measured}}.
Let $\nu(\d t) \coloneqq \norm{\d A(t)}_{p_0}$ and $\mu$ be as in {\rm Lemma \ref{lem.control}} with $(\alpha,\cV,F) = (2,L^{p_0}(\E_{\cA}),M)$.
If $H \in \EP^{p_0;p}$, then $I_X^0[H]$ is an $L^p$-measured decomposable process, $I_X^0[H]^{\mathrm{m}} = I_M^0[H]$, $I_X^0[H]^{\mathrm{fv}} = I_A^0[H]$, and
\begin{align}
    \norm{I_X^0[H]}_{p,t} & \leq \beta_p \left( \int_{(0,t]} \norm{H}_{p_0;p}^2 \,\d\mu\right)^{\frac12} + \int_{(0,t]}\norm{H}_{p_0;p}\,\d \nu \label{eq.elemintbd} \\
    & \leq \left(\beta_p+\nu((0,t])^{\frac12}\right) \left( \int_{(0,t]} \norm{H}_{p_0;p}^2 \,\d(\mu+\nu)\right)^{\frac12}, \label{eq.elemintbd2}
\end{align}
where $\beta_p$ is as in {\rm Theorem \ref{thm.NCBDG}}.
In particular, as a result of {\rm Proposition \ref{prop.GD}\ref{item.incl}}, $X$ satisfies {\rm Assumption \ref{ass.integ}} with $(\G,\norm{\cdot}) = (\F^{p_0;p},\norm{\cdot}_{p_0;p})$, $\rho = \mu+\nu$, and $a_T = \beta_p+\nu((0,T])^{1/2}$ for all $T \geq 0$.
(Note that $\rho$ is atomless in this case.)
Consequently, by {\rm Theorem \ref{thm.constructNCstochint}}, $X$ is an $L^p$-integrator with respect to $\F^{p_0;p}$.
\end{theorem}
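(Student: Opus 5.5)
By linearity of $H \mapsto I_X^0[H]$ we have $I_X^0[H] = I_M^0[H] + I_A^0[H]$; note that $I_{X(0)}^0[H]=0$ because constants have zero increments. The plan is to show three things: $I_M^0[H]$ is an $L^p$-measured martingale vanishing at $0$, $I_A^0[H]\in\FV^p$ vanishing at $0$, and the two bounds on $V_{2,L^p}$ and $V_{1,L^p}$. Once this is done, uniqueness of the decomposition (\cite[Cor.\ 3.22]{JKN2026}, valid since $1/p+1/p\le 1$ for $p\ge 2$) identifies the martingale and FV parts. Write $H=1_{\{0\}}T_0+\sum_{i=1}^k 1_{(s_i,t_i]}T_i$ with $T_i\in\cF^{p_0;p}_{s_i}$. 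After refining to a common partition, we may assume the intervals $(s_i,t_i]$ are disjoint and consecutive, so $H$ is constant equal to $T_i$ on $(s_i,t_i]$.

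\textbf{FV part.} The continuity of $A$ in $L^{p_0}$ and the boundedness of each $T_i$ make $I_A^0[H]$ continuous in $L^p$. Compatibility (via \cite[Lem.\ 4.12(i)]{JKN2026}) makes it adapted. For any partition of $[0,t]$, refined to include the $s_i,t_i$, every increment has the form $T_i[A(v)-A(u)]$ with $(u,v]\subseteq(s_i,t_i]$. Hence
\[
\sum\norm{\Delta I_A^0[H]}_p\le\sum\norm{H(v)}_{p_0;p}\,\nu((u,v])=\int_{(0,t]}\norm{H}_{p_0;p}\,\d\nu .
\]

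\textbf{Martingale part (main step).} By Theorem \ref{thm.abstractstochint}\ref{item.martingale} (or directly from \cite[Lem.\ 4.12(ii)]{JKN2026}), $N\coloneqq I_M^0[H]$ is a continuous $L^p$-martingale with $N(0)=0$. Fix $0\le u<v\le t$ and a partition $u=r_0<\dots<r_n=v$ refining the jump times of $H$. Then $(N(r_j)-N(u))_{j}$ is a discrete martingale with increments $d_j=H(r_j)[M(r_j)-M(r_{j-1})]$. Apply Theorem \ref{thm.NCBDG} in the $C^*$-probability space $(\cB,\E_\scB)$ with the filtration $(\cB_{r_j})_j$. For the column square function, the triangle inequality in $L^{p/2}$ (valid since $p\ge2$) gives
\[
\Big\|\sum_j d_j^*d_j\Big\|_{p/2}\le\sum_j\norm{d_j}_p^2\le\sum_j\norm{H(r_j)}_{p_0;p}^2\norm{M(r_j)-M(r_{j-1})}_{p_0}^2\le\sum_j\norm{H(r_j)}_{p_0;p}^2\,\mu((r_{j-1},r_j]),
\]
using Lemma \ref{lem.control} with $\alpha=2$ in the last step; the row square function is handled identically. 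Since $H$ is constant on each cell of the refinement, the last sum equals $\int_{(u,v]}\norm{H}_{p_0;p}^2\,\d\mu$. This gives $\norm{N(v)-N(u)}_p^2\le\beta_p^2\int_{(u,v]}\norm{H}^2_{p_0;p}\,\d\mu$. This is just the crude triangle-inequality bound, not full BDG strength, but it is all we need.

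Now let $0=u_0<\dots<u_m=t$ be an arbitrary partition. Summing the squared increments and using additivity of the integral over disjoint intervals gives
\[
V_{2,L^p}(N:[0,t])\le\beta_p\Big(\int_{(0,t]}\norm{H}_{p_0;p}^2\,\d\mu\Big)^{1/2}.
\]
In particular $N$ is an $L^p$-measured martingale, which together with the FV step proves \eqref{eq.elemintbd}. Note that $\norm{I_X^0[H](0)}_p=0$.

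\textbf{Remaining claims.} For \eqref{eq.elemintbd2}, apply Cauchy--Schwarz to get $\int\norm{H}\,\d\nu\le\nu((0,t])^{1/2}(\int\norm{H}^2\,\d\nu)^{1/2}$, then bound both integrals by the one against $\mu+\nu$. Next, Proposition \ref{prop.GD}\ref{item.incl} gives $\sup_{s\le t}\norm{I_X^0[H](s)}_p\le\norm{I_X^0[H]}_{p,t}$. Since $\nu((0,t])\le\nu((0,T])$ for $t\le T$, Assumption \ref{ass.integ} holds with the stated $\rho$ and $a_T$. The measure $\rho$ is atomless because $M$ and $A$ are continuous (Lemma \ref{lem.control} and Example \ref{ex.alpha=1}). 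The final claim then follows from Theorem \ref{thm.constructNCstochint}. The main care point is the BDG application: checking that the increments form a martingale difference sequence in $L^p(\cB)$. This needs compatibility, which gives $\E_\scB[T_i(M(r_j)-M(r_{j-1}))\mid\cB_{r_{j-1}}]=T_i\,\E_\scA[M(r_j)-M(r_{j-1})\mid\cA_{r_{j-1}}]=0$.
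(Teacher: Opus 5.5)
Your proof is correct and is essentially the paper's. You get the martingale property and adaptedness from \cite[Lem.\ 4.12]{JKN2026}, then combine the NC BDG upper bound, the crude estimate $\|\sum_j d_j^*d_j\|_{p/2}\le\sum_j\|d_j\|_p^2$ and Lemma~\ref{lem.control} to obtain $\|N(v)-N(u)\|_p^2\le\beta_p^2\int_{(u,v]}\|H\|_{p_0;p}^2\,\mathrm{d}\mu$; the paper reaches the same bound by applying its estimate to $1_{(r,s]}H$. You also check the FV estimate directly where the paper cites \cite[Cor.\ 4.8]{JKN2026}.

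Two small slips. The splitting $I_X^0[H]=I_M^0[H]+I_A^0[H]$ comes from linearity in the integrator, not in $H$. And you should rely only on \cite[Lem.\ 4.12(ii)]{JKN2026}, not on Theorem~\ref{thm.abstractstochint}\ref{item.martingale}, because the latter presupposes the integrator property you are trying to establish.
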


\begin{proof}
Let $H \in \EP^{p_0;p}$, $N \coloneqq I_M^0[H]$, and $B \coloneqq I_A^0[H]$, in which case $I_X^0[H] = N+B$.
By \cite[Lem.\ 4.12(ii)]{JKN2026}, $N \in \M_{\scB}^p$.
By \cite[Cor.\ 4.8]{JKN2026}, $B \in \FV_{\scB}^p$, and
\[
V_{1,L^p}(B : [0,t]) \leq \int_{(0,t]}\norm{H}_{p_0;p} \,\d\nu \qquad (t \geq 0).
\]
Now, we claim that
\begin{equation}
    V_{2,L^p}(N : [0,t]) \leq \beta_p \left( \int_{(0,t]} \norm{H}_{p_0;p}^2 \,\d\mu\right)^{\frac12} \qquad (t \geq 0),\label{eq.V2elembd}
\end{equation}
which will complete the proof of \eqref{eq.elemintbd}.
By the Cauchy--Schwarz inequality and the fact that $\max\left\{\mu,\nu\right\} \leq \mu+\nu$, \eqref{eq.elemintbd2} follows from \eqref{eq.elemintbd}.

To prove \eqref{eq.V2elembd}, note that since $H \in \EP^{p_0;p}$, there exist times $0 = t_0 < \cdots < t_k < \infty$ and $H_i \in \cF_{t_{i-1}}^{p_0;p}$ ($i=1,\ldots,k$, with $t_{-1} \coloneqq 0$) such that
\[
H = 1_{\{0\}} H_0 + \sum_{i=1}^k 1_{(t_{i-1},t_i]}H_i.
\]
Write
\[
\int_0^{\infty} H[\d M] \coloneqq \sum_{i=1}^k H_i[M(t_i) - M(t_{i-1})] = \int_0^T H[\d M] \qquad (T \geq t_k).
\]
Now, define $x_0 \coloneqq 0$, $\Delta_i M \coloneqq M(t_i) - M(t_{i-1})$ ($i=1,\ldots,k$), and
\[
x_i \coloneqq \sum_{j=1}^i H_j[\Delta_jM] = \int_0^{t_i} H[\d M] = N(t_i) \qquad (i=1,\ldots,k).
\]
Then $x = (x_i)_{i=0}^k$ is a discrete-time $L^p$-martingale with respect to $(\cB_{t_i})_{i=0}^k$.
Also, by construction, $x_k = \int_0^{\infty} H[\d M]$, and $x_i - x_{i-1} = H_i[\Delta_iM]$ for all $i = 1,\ldots,k$.
Therefore, by the NC BDG inequalities and our choice of $\mu$,
\begin{align*}
    \norm{\int_0^{\infty} H[\d M]}_p^2 & \leq \beta_p^2 \max\left\{\norm{\sum_{i=1}^k H_i[\Delta_iM]^*H_i[\Delta_iM]}_{\frac{p}{2}}, \norm{\sum_{i=1}^k H_i[\Delta_iM]\,H_i[\Delta_iM]^*}_{\frac{p}{2}}\right\} \\
    & \leq \beta_p^2 \max\left\{\sum_{i=1}^k \norm{H_i[\Delta_iM]^*H_i[\Delta_iM]}_{\frac{p}{2}}, \sum_{i=1}^k \norm{H_i[\Delta_iM]\,H_i[\Delta_iM]^*}_{\frac{p}{2}}\right\} \\
    & = \beta_p^2\sum_{i=1}^k \norm{H_i[\Delta_iM]}_p^2 \leq \beta_p^2\sum_{i=1}^k \norm{H_i}_{p_0;p}^2 \norm{\Delta_iM}_{p_0}^2\\
    & \leq \beta_p^2\sum_{i=1}^k \norm{H_i}_{p_0;p}^2 \, \mu((t_{i-1},t_i]) = \beta_p^2\int_{(0,\infty)} \norm{H}_{p_0;p}^2 \,\d\mu.
\end{align*}
Applying this bound to $1_{(r,s]}H$ in place of $H$ and using $\int_r^s H[\d M] = \int_0^s H[\d M] - \int_0^r H[\d M]$, we obtain \eqref{eq.V2elembd}.
\end{proof}

Consequently, if $X \colon \R_+ \to L^{p_0}(\E_{\scA})$ is an $L^{p_0}$-decomposable process and $2 \leq p < \infty$, then we may speak of the stochastic integral $I_X \colon \cI^{p_0;p} \to C_{a,p}$.
This noncommutative stochastic integral has a few more notable properties than a completely general one.
Before going through them, we record an application not requiring those additional properties:
continuous-time NC BDG inequalities generalizing \cite[Thm.\ 1.11]{JKN2026}.
We keep the arguments brief because they are very similar to those used to prove \cite[Thm.\ 1.11]{JKN2026}.

\begin{notation}\label{nota.dXdY}
Suppose $1 \leq p,q \leq \infty$ and $1/p+1/q \leq 1$.
For processes $X \colon \R_+ \to L^p(\E)$ and $Y \colon \R_+ \to L^q(\E)$ and $t \geq 0$, write
\[
\int_0^t \d X(s)\,\d Y(s) \coloneqq L^1\text{-}\lim_{\Pi \in \cP_{[0,t]}} \sum_{s \in \Pi} \Delta_s X\,\Delta_s Y
\]
if this limit exists.
\end{notation}

\begin{proposition}\label{prop.dXdY}
Suppose $2 \leq p \leq p_0 \leq \infty$, $2 \leq q \leq q_0 \leq \infty$, $1/p_0+1/q \leq 1/2$, and $1/p+1/q_0 \leq 1/2$.
If $X \colon \R_+ \to L^{p_0}(\E)$ is an $L^{p_0}$-continuous $L^p$-measured decomposable process and $Y \colon \R_+ \to L^{q_0}(\E)$ is an $L^{q_0}$-continuous $L^q$-measured decomposable process, then
\begin{align*}
    \sum_{t \in \Pi} (X(t \wedge \cdot) - X&(t_- \wedge \cdot))(Y(t \wedge \cdot) - Y(t_- \wedge \cdot)) \\
    & \xrightarrow[\Pi \in \cP_{\R_+}]{|\Pi| \to 0} X\,Y - X(0)\,Y(0) - \into \d X(t)\,Y(t) - \into X(t)\,\d Y(t)
\end{align*}
in the space $C_{a,2} = C_a(\R_+;L^2(\E))$.
In particular,
\[
\into \d X(t)\,\d Y(t) = X\,Y - X(0)\,Y(0) - \into \d X(t)\,Y(t) - \into X(t)\,\d Y(t).
\]
\end{proposition}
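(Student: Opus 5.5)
The plan is a noncommutative product rule obtained from the algebraic identity for discrete partitions, together with the Riemann--Stieltjes approximation of Proposition \ref{prop.IXRS}. For a partition $\Pi$ and $t\ge0$, telescoping gives
\[
X(t)Y(t)-X(0)Y(0)=\sum_{s\in\Pi}\Big(\Delta_sX^t\,Y(s_-\wedge t)+X(s_-\wedge t)\,\Delta_sY^t+\Delta_sX^t\,\Delta_sY^t\Big).
\]
It therefore suffices to show two things, uniformly on compact time intervals in $L^2$. First, $\sum_s \Delta_sX^t\,Y(s_-)\to\into \d X\,Y$. Second, $\sum_s X(s_-)\,\Delta_sY^t\to\into X\,\d Y$. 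The quadratic sum then converges to the stated expression, which also proves its existence as an $L^1$ (indeed $L^2$) limit in the sense of Notation \ref{nota.dXdY}.

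First I would interpret the integrals. Define $H_Y(s)[x]\coloneqq x\,Y(s)$ and $K_X(s)[y]\coloneqq X(s)\,y$. By H\"older with $1/p+1/q_0\le1/2$, $H_Y(s)\in B^{p;2}$ with $\norm{H_Y(s)}_{p;2}\le\norm{Y(s)}_{q_0}$. Since $Y$ is adapted and conditional expectation is a bimodule map, $H_Y$ is an adapted $(L^p;L^2)$-process. Because $Y$ is $L^{q_0}$-continuous, $H_Y$ is norm-continuous and locally bounded, hence LCLB. By Proposition \ref{prop.LCLB}, $H_Y\in\cI^{p;2}$, and symmetrically $K_X\in\cI^{q;2}$ using $1/p_0+1/q\le1/2$. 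Theorem \ref{thm.MDass} applies with $p_0$ replaced by $p$ and target exponent $2$: $X$ is an $L^p$-measured decomposable process, so it is a continuous $L^2$-integrator with respect to $\F^{p;2}$. Hence $\into \d X\,Y\coloneqq I_X[H_Y]$ and $\into X\,\d Y\coloneqq I_Y[K_X]$ make sense in $C_{a,2}$.

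Next I would apply Proposition \ref{prop.IXRS} with $H=H_Y$, which is continuous, so $H(t-)=H(t)$. This gives
\[
\sum_{s\in\Pi}H_Y(s_-)\big[X(s\wedge\cdot)-X(s_-\wedge\cdot)\big]\longrightarrow\into \d X\,Y
\]
in $R_{a,2}$, and the summand is exactly $\Delta_sX^t\,Y(s_-\wedge t)$. The same argument applied to $K_X$ handles the second sum. Subtracting both limits from the telescoped identity yields convergence of the quadratic sum in $C_{a,2}$, uniformly on compact sets. The limit is continuous because each term is continuous: $X$ and $Y$ are $L^{p_0}$- and $L^{q_0}$-continuous, and $XY\in C_{a,2}$ by H\"older since $1/p_0+1/q_0\le1/2$.

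The main obstacle is the detail about stopped times in the telescoping identity. With $s_-\wedge t$ in place of $s_-$, the identity must agree with the Riemann sum from Proposition \ref{prop.IXRS}, which uses $H(s_-)$ rather than $H(s_-\wedge t)$. These coincide whenever the increment $\Delta_sX^t$ is nonzero, since then $s_-<t$, so there is no actual discrepancy. The other point to check carefully is that $H_Y$ and $K_X$ really land in the compatible filtrations $\F^{p;2}$ and $\F^{q;2}$, respectively; this follows from the $\cB_u$-bimodule property of $\E[\cdot\mid\cB_u]$.
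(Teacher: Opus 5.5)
Your proposal is correct and is essentially the paper's own argument. It uses the same telescoping identity for $\Delta_s(XY)^t$, followed by two applications of Proposition~\ref{prop.IXRS}, with Theorem~\ref{thm.MDass} (applied with exponents $(p,2)$ and $(q,2)$) supplying the integrator property. You also make explicit what the paper's sketch leaves implicit: the continuous adapted linear processes $x\mapsto x\,Y(s)$ in $B^{p;2}$ and $y\mapsto X(s)\,y$ in $B^{q;2}$, and the H\"older bookkeeping behind them.
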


\begin{proof}[Sketch of proof]
If $\Pi$ is a partition of $\R_+$ and $t \geq 0$, then
\begin{align*}
    X(t)\,Y(t) - X(0)\,Y(0) & = \sum_{s \in \Pi} \Delta_s(XY)^t = \sum_{s \in \Pi} \left(\Delta_sX^t\, Y(s_-) + X(s_-) \, \Delta_s Y^t + \Delta_sX^t\,\Delta_sY^t\right).
\end{align*}
The result then follows from two applications of Proposition \ref{prop.IXRS} (and Theorem \ref{thm.MDass}).
\end{proof}

\begin{corollary}\label{cor.epsilon/5}
Suppose the indices $p$, $p_0$, $q$, and $q_0$ are as in {\rm Proposition \ref{prop.dXdY}} and $M,N \colon \R_+ \to L^2(\E)$ are $L^2$-martingales.
If $M$ is the limit in $C_{a,2}$ of a sequence of $L^{p_0}$-continuous $L^p$-measured martingales and $N$ is the limit in $C_{a,2}$ of a sequence of $L^{q_0}$-continuous $L^q$-measured martingales, then $\int_0^t \d M(s)\,\d N(s)$ exists for all $t \geq 0$.
\end{corollary}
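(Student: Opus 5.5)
The approach is an $\varepsilon/3$ (Cauchy) argument: approximate $M$ and $N$ by measured martingales, for which Proposition~\ref{prop.dXdY} gives existence of the quadratic covariation limit, and control the approximation error with the $L^2$ Cauchy--Schwarz bound on sums of products of increments. Fix $t \geq 0$. Choose sequences $(M_n)$ of $L^{p_0}$-continuous $L^p$-measured martingales and $(N_n)$ of $L^{q_0}$-continuous $L^q$-measured martingales with $M_n \to M$ and $N_n \to N$ in $C_{a,2}$. Martingales are measured decomposable processes with zero FV part, so Proposition~\ref{prop.dXdY} applies to each pair $(M_n,N_n)$. Hence the sums
\[
S_\Pi(M_n,N_n) \coloneqq \sum_{s \in \Pi} \Delta_s M_n\,\Delta_s N_n
\]
converge in $L^2$, and therefore in $L^1$, as $|\Pi| \to 0$ with $\Pi \in \cP_{[0,t]}$. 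Since $L^1(\E)$ is complete, it suffices to show that $(S_\Pi(M,N))_\Pi$ is Cauchy in $L^1$ as $|\Pi| \to 0$.

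The key estimate is as follows. For any $L^2$ processes $U, V$ and any partition $\Pi$ of $[0,t]$, use Cauchy--Schwarz for the trace and then in the sum:
\[
\norm{S_\Pi(U,V)}_1 \leq \sum_{s \in \Pi} \norm{\Delta_s U}_2 \norm{\Delta_s V}_2 \leq \Big(\sum_{s\in\Pi}\norm{\Delta_sU}_2^2\Big)^{\frac12}\Big(\sum_{s\in\Pi}\norm{\Delta_sV}_2^2\Big)^{\frac12}.
\]
When $U$ and $V$ are $L^2$-martingales, increments over disjoint intervals are orthogonal in $L^2$ (Example~\ref{ex.2measured}, i.e.\ \cite[Lem.\ 3.20]{JKN2026}). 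So
\[
\sum_{s\in\Pi}\norm{\Delta_sU}_2^2 = \norm{U(t)-U(0)}_2^2 \leq 4\sup_{[0,t]}\norm{U}_2^2,
\]
independently of $\Pi$. Consequently $\norm{S_\Pi(U,V)}_1 \leq 4\sup_{[0,t]}\norm{U}_2\,\sup_{[0,t]}\norm{V}_2$ uniformly in $\Pi$.

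Apply this bilinear estimate to the decomposition
\[
S_\Pi(M,N)-S_\Pi(M_n,N_n)=S_\Pi(M-M_n,N)+S_\Pi(M_n,N-N_n).
\]
Note that $M-M_n$ and $N-N_n$ are $L^2$-martingales, and $\sup_n \sup_{[0,t]}\norm{M_n}_2<\infty$ by convergence in $C_{a,2}$. Thus, given $\varepsilon>0$, pick $n$ with $\sup_\Pi\norm{S_\Pi(M,N)-S_\Pi(M_n,N_n)}_1<\varepsilon/3$. Then pick $\delta>0$ so that $\norm{S_\Pi(M_n,N_n)-S_{\Pi'}(M_n,N_n)}_1<\varepsilon/3$ whenever $|\Pi|,|\Pi'|<\delta$. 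The triangle inequality gives $\norm{S_\Pi(M,N)-S_{\Pi'}(M,N)}_1<\varepsilon$, so the limit exists in $L^1$; this is $\int_0^t \d M\,\d N$.

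The only potential obstacle is justifying that the approximating martingales fit Proposition~\ref{prop.dXdY}, specifically that an $L^p$-measured martingale counts as an $L^p$-measured decomposable process with $A\equiv 0\in\FV^p$. This is immediate from Definition~\ref{def.measured}. Everything else is the uniform-in-$\Pi$ orthogonality bound above, which needs only the $L^2$-martingale property.
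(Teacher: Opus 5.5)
Your proposal is correct and follows the paper's own argument. Your Cauchy--Schwarz-plus-orthogonality estimate $\|\sum_{s\in\Pi}\Delta_sU\,\Delta_sV\|_1\le\|U(t)-U(0)\|_2\,\|V(t)-V(0)\|_2$ is exactly the bound the paper imports from \cite[Lem.\ 5.6]{JKN2026}, and the paper likewise combines it with Proposition~\ref{prop.dXdY} for the approximating measured martingales in a Cauchy argument; the only difference is that you group the error terms into an $\varepsilon/3$ split rather than the paper's $\varepsilon/5$, which is immaterial.
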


\begin{proof}[Sketch of proof]
Suppose $M_1,M_2 \colon \R_+ \to L^2(\E)$ are $L^2$-martingales.
If $t \geq 0$ and $\Pi$ is a partition of $[0,t]$, define 
\[
\RS_{\Pi}^{M_1,M_2} \coloneqq \sum_{s \in \Pi} \Delta_sM_1\,\Delta_s M_2.
\]
By \cite[Lem.\ 5.6]{JKN2026},
\[
\norm{\RS_{\Pi}^{M_1,M_2}}_1 \leq \norm{M_1(t) - M_1(0)}_2\norm{M_2(t) - M_2(0)}_2.
\]
Using this inequality and the hypotheses on $M$ and $N$, the fact that $\big(\RS_{\Pi}^{M,N}\big)_{\Pi \in \cP_{[0,t]}}$ is Cauchy and therefore convergent in $L^1(\E)$ then follows from Proposition \ref{prop.dXdY} via a standard ``$\e/5$ argument.''
\end{proof}

Corollary \ref{cor.epsilon/5} and \cite[Thm.\ 5.23]{JKN2026} together directly yield the following generalization of \cite[Thm.\ 1.11]{JKN2026}, which is the $(q,q_0) = (2,\infty)$ case.

\begin{theorem}[Continuous-time NC BDG inequalities]\label{thm.contimeNCDG}
Suppose $2 \leq p,q_0,q \leq \infty$, $p \neq \infty$, and $1/q+1/q_0 \leq 1/2$.
If $M \colon \R_+ \to L^p(\E)$ is an $L^p$-martingale that is the limit in $C_{a,2}$ of a sequence $L^{q_0}$-continuous $L^q$-measured decomposable martingales, then
$\int_0^t \d M^*(s)\,\d M(s)$ and $\int_0^t \d M(s)\,\d M^*(s)$ belong to $L^{p/2}(\E)$, and
\[
\alpha_p^{-1}\norm{M}_{\cH_t^p(\cA)} \leq \norm{M(t)}_p = \sup_{0 \leq s \leq t} \norm{M(s)}_p \leq \beta_p\norm{M}_{\cH_t^p(\cA)},
\]
where
\begin{align*}
    \norm{M}_{\cH_t^p(\cA)} & \coloneqq \max\Biggr\{\Bigg\|M(0)^*M(0) + \int_0^t \d M^*(s)\,\d M(s)\Bigg\|_{\frac{p}{2}}^{\frac12}, \\
    & \hspace{25mm} \Bigg\|M(0)M(0)^* + \int_0^t \d M(s)\,\d M^*(s)\Bigg\|_{\frac{p}{2}}^{\frac12}  \Biggr\}
\end{align*}
and $\alpha_p$ and $\beta_p$ are as in {\rm Theorem \ref{thm.NCBDG}}. \qed
\end{theorem}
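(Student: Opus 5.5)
The plan is to reduce the statement to \cite[Thm.\ 5.23]{JKN2026}, which (as the paper indicates) already contains the continuous-time NC BDG inequalities for any $L^p$-martingale $M$ such that the quadratic variations $\int_0^t \d M^*(s)\,\d M(s)$ and $\int_0^t \d M(s)\,\d M^*(s)$ exist as $L^1$-limits of Riemann--Stieltjes sums. The existence of these two limits is then exactly what Corollary \ref{cor.epsilon/5} supplies.

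\textbf{Step 1.} Apply Corollary \ref{cor.epsilon/5} twice, once to the pair $(M^*,M)$ and once to the pair $(M,M^*)$. Let $M_n$ be $L^{q_0}$-continuous $L^q$-measured martingales with $M_n \to M$ in $C_{a,2}$. Then $M_n^*$ is again an $L^{q_0}$-continuous $L^q$-measured martingale, because the adjoint is an isometry on every $L^r$ and commutes with conditional expectation. Also $M_n^* \to M^*$ in $C_{a,2}$. So both pairs satisfy the corollary's hypotheses with $(p,p_0) = (q,q_0)$. The index conditions of Proposition \ref{prop.dXdY} then read $1/q_0+1/q \le 1/2$, which is exactly the assumption of the theorem, and $2 \le q \le q_0$ holds in the intended regime. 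Hence $\int_0^t \d M^*\,\d M$ and $\int_0^t \d M\,\d M^*$ exist in $L^1(\E)$.

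\textbf{Step 2.} Feed this existence into \cite[Thm.\ 5.23]{JKN2026}. That result would give membership of the quadratic variations in $L^{p/2}(\E)$ and the two-sided inequality, with the constants $\alpha_p,\beta_p$ of Theorem \ref{thm.NCBDG}. The identity $\norm{M(t)}_p = \sup_{s \le t}\norm{M(s)}_p$ is the contractivity of conditional expectation on $L^p$.

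\textbf{Step 3 (fallback).} If \cite[Thm.\ 5.23]{JKN2026} only yields the quadratic variation as an element of $L^1$, I would prove the bounds directly. Apply Theorem \ref{thm.NCBDG} to the discrete martingale $(M(s))_{s\in\Pi}$, with $\Pi$ a partition of $[0,t]$; this bounds the $L^{p/2}$ norms of $\sum_\Pi \Delta_sM^*\Delta_sM$ uniformly in $\Pi$, by $\alpha_p^{-2}\norm{M(t)}_p^2$. Take $L^1$-limits along $|\Pi|\to0$. Lower semicontinuity of the $L^{p/2}$ norm under $L^1$ convergence (duality with $L^{(p/2)'}\cap\cA$) then gives both membership in $L^{p/2}$ and the left inequality. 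For the right inequality, bound $\norm{M(t)}_p$ by $\beta_p$ times the discrete square functions. To pass these to the limit, I need $L^{p/2}$ (not just $L^1$) convergence, or a uniform integrability or interpolation argument.

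\textbf{Main obstacle.} I expect the main obstacle to be this last limit passage in the fallback, namely controlling the discrete square functions in $L^{p/2}$ norm. I would handle it the way \cite{JKN2026} did, approximating through $M_n$ and using Theorem \ref{thm.MDass} to get $L^p$ control of the Riemann sums.
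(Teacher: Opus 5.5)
Your proposal is correct and follows the paper's route. The paper also applies Corollary~\ref{cor.epsilon/5} to the pairs $(M^*,M)$ and $(M,M^*)$ to get existence of the two quadratic variations, then invokes \cite[Thm.\ 5.23]{JKN2026} for the $L^{p/2}$ membership and both inequalities, so your Step~3 fallback is not needed. On your ``intended regime'' remark: if $q>q_0$, the hypothesis $1/q+1/q_0\le 1/2$ forces $q>4$, so you may replace $q_0$ by $q$ and get $2\le q\le q_0$ with the index conditions of Proposition~\ref{prop.dXdY} satisfied.
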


We now turn back to general properties of the stochastic integral against measured decomposable processes.

\begin{theorem}\label{thm.stochint}
Retain the setup of {\rm Theorem \ref{thm.MDass}}.
\begin{enumerate}[label=(\roman*),font=\normalfont]
    \item If $H \in \cI^{p_0;p}$, then $I_X[H] \in \GD_{\scB}^p$, $I_X[H]^{\mathrm{m}} = I_M[H]$, $I_X[H]^{\mathrm{fv}} = I_A[H]$, and\label{item.MDp0p}
    \[
    \norm{I_X[H]}_{p,t} \leq \beta_p\left( \int_{(0,t]} \norm{H}_{p_0;p}^2 \,\d \mu\right)^{\frac12} + \int_{(0,t]} \norm{H}_{p_0;p} \,\d\nu \qquad (t \geq 0).
    \]
    \item $I_X^{p_0;p} \coloneqq I_X$ is continuous with respect to locally bounded convergence as a map from $\cI^{p_0;p}$ to $\GD_{\scB}^p$.\label{item.MDp0pcont}
    \item If $p_0 \geq 2$, then the bilinear map $I \colon \GD_{\scA}^{p_0} \times \cI^{p_0;p} \to \GD_{\scB}^p$ defined by $(X,H) \mapsto I_X[H]$ satisfies the following additional continuity property:
    If $(X_n)_{n \in \N}$ is a sequence in $\GD_{\scA}^{p_0}$ converging to $X \in \GD_{\scA}^{p_0}$ and $(H_n)_{n \in \N}$ is a sequence in $\cI^{p_0;p}$ converging locally boundedly to $H \in \cI^{p_0;p}$, then $I[X_n,H_n] \to I[X,H]$ in $\GD_{\scB}^p$ as $n \to \infty$.\label{item.bilincont}
\end{enumerate}
\end{theorem}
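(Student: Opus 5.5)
The plan is to derive all three items from the elementary estimate \eqref{eq.elemintbd} of Theorem \ref{thm.MDass}, the completeness of $\GD_{\scB}^p$ (Proposition \ref{prop.GD}\ref{item.Frechet}), and the usual ``closed under locally bounded convergence'' argument on $\cI^{p_0;p}$.

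For \ref{item.MDp0p}, I will work in the $\rho$-completion. By Proposition \ref{prop.IGrhoextension} and Theorem \ref{thm.constructNCstochint}, $I_X = I_X^{\F^{p_0;p},\rho}|_{\cI^{p_0;p}}$ with $\rho = \mu+\nu$. Let $H \in \cI^{p_0;p} \subseteq \cI_{\F^{p_0;p},\rho}$, and choose $H_n \in \EP^{p_0;p}$ with $\norm{H_n - H}_{\rho,t} \to 0$ for every $t$. Apply \eqref{eq.elemintbd2} to $H_n - H_m$; by linearity and uniqueness of the martingale and FV parts, this gives $\norm{I_X^0[H_n]-I_X^0[H_m]}_{p,t} \to 0$. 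Hence $(I_X^0[H_n])_n$ is Cauchy in the Fr\'echet space $\GD_{\scB}^p$ and converges to some $Z \in \GD_{\scB}^p$.

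By Proposition \ref{prop.GD}\ref{item.incl}, convergence in $\GD^p$ implies convergence in $C_{a,p}$. Since $I_X^0[H_n] \to I_X[H]$ in $C_{a,p}$ as well, we get $Z = I_X[H]$. By Proposition \ref{prop.GD}\ref{item.martpartFVpart}, the martingale parts $I_M^0[H_n]$ converge in $\GD^{p,\infty}$. The same argument, applied to $M$ and $A$ separately (each is itself measured decomposable, with measures $\mu$ and $\nu$ respectively), identifies the limits as $I_M[H]$ and $I_A[H]$. Finally, the norm bound passes to the limit: the left side converges because $\norm{\cdot}_{p,t}$ is a continuous seminorm. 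On the right, $\norm{H_n}_{p_0;p} \to \norm{H}_{p_0;p}$ in $L^2(\mu|_{(0,t]})$ and in $L^1(\nu|_{(0,t]})$, because $\nu((0,t])<\infty$ and Cauchy--Schwarz controls the $L^1(\nu)$ norm by the $L^2(\nu)$ norm.

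For \ref{item.MDp0pcont}, let $H_n \to H$ locally boundedly in $\cI^{p_0;p}$. Apply \ref{item.MDp0p} to $H_n - H$, which lies in $\cI^{p_0;p}$ by linearity (Proposition \ref{prop.GpredisGadaptandmeas}) and satisfies $I_X[H_n]-I_X[H] = I_X[H_n-H]$. The right side tends to zero by dominated convergence, since $\norm{H_n - H}_{p_0;p}$ is bounded on $[0,t]$, converges pointwise to $0$, and $\mu,\nu$ are finite on $(0,t]$.

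For \ref{item.bilincont}, I first observe that the map $X \mapsto I_X[H]$ is linear, which follows from the elementary case by the same closure argument. Then I split
\[
I[X_n,H_n]-I[X,H] = I[X_n - X, H_n] + I[X, H_n - H].
\]
The second term goes to zero by \ref{item.MDp0pcont}. For the first term, use \ref{item.MDp0p} with the integrator $X_n - X$, whose measures are $\mu_n$ from $(X_n-X)^{\mathrm{m}}$ and $\nu_n$ from $(X_n-X)^{\mathrm{fv}}$. With $C_t \coloneqq \sup_n \sup_{s \le t}\norm{H_n(s)}_{p_0;p}$, this gives
\[
\norm{I[X_n-X,H_n]}_{p,t} \le \beta_p C_t\,\mu_n((0,t])^{1/2} + C_t\,\nu_n((0,t]) = \beta_p C_t V_{2,L^{p_0}}\big((X_n-X)^{\mathrm{m}}:[0,t]\big) + C_t V_{1,L^{p_0}}\big((X_n-X)^{\mathrm{fv}}:[0,t]\big),
\]
which tends to zero because $X_n \to X$ in $\GD^{p_0}$.

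The main obstacle is making \ref{item.MDp0p} rigorous: identifying the $\GD$-limit with $I_X[H]$, and in particular checking that the martingale and FV parts of the limit equal $I_M[H]$ and $I_A[H]$. This requires that $M$ and $A$ individually satisfy Assumption \ref{ass.integ} with measures dominated by $\rho$, so that the same approximating sequence $H_n$ works for all three integrators. The hypothesis $p_0 \ge 2$ in \ref{item.bilincont} is needed so that $1/p_0 + 1/p_0 \le 1$, which makes $\GD^{p_0}$ and the decomposition $X = X(0) + X^{\mathrm{m}} + X^{\mathrm{fv}}$ well defined.
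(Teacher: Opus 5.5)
Your proposal is correct and takes essentially the paper's approach for all three items: density of $\EP^{p_0;p}$ plus the elementary bound \eqref{eq.elemintbd2}, completeness of $\GD_{\scB}^p$, and identification of the limit with $I_X[H]$ (and of its martingale and FV parts with $I_M[H]$, $I_A[H]$) through the continuous inclusion into $C_{a,p}$, followed by dominated convergence for (ii) and the splitting $I[X_n-X,H_n]+I[X,H_n-H]$ for (iii). The only difference is packaging: the paper extends $I_X^0$ abstractly to the closure of $\EP^{p_0;p}$ under the mixed seminorms $\bigl(\int_{(0,t]}\norm{H}_{p_0;p}^2\,\d\mu\bigr)^{1/2}+\int_{(0,t]}\norm{H}_{p_0;p}\,\d\nu$, whereas you take limits along an approximating sequence in $\cI_{\F^{p_0;p},\rho}$ with $\rho=\mu+\nu$, which works equally well.
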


\begin{proof}
We prove items \ref{item.MDp0p} and \ref{item.MDp0pcont} simultaneously.
Let $\cL_X$ be the space of strongly measurable maps $H \colon \R_+ \to B^{p_0;p}$ such that
\[
\norm{H}_{X,t} \coloneqq \left(\int_{(0,t]} \norm{H}_{p_0;p}^2 \,\d\mu\right)^{\frac{1}{2}} + \int_{(0,t]} \norm{H}_{p_0;p} \,\d\nu < \infty \qquad (t \geq 0),
\]
endowed with the locally convex topology generated by the collection $\{\norm{\cdot}_{X,t} : t \geq 0\}$ of seminorms, and write $\cI_X$ for the closure of $\EP^{p_0;p}$ in $\cL_X$.
Since $\cI_X$ is closed under locally bounded convergence by the dominated convergence theorem, $\cI^{p_0;p} \subseteq \cI_X$.
By the same arguments that proved Proposition \ref{prop.IGrhoextension}, Theorem \ref{thm.MDass} implies that $I_X^0$ extends uniquely to a continuous complex-linear map $J_X \colon \cI_X \to \GD_{\scB}^p$ such that for all $H \in \cI_X$ and $t \geq 0$,
\begin{align*}
    \norm{J_X[H]}_{p,t} & \leq \beta_p\left( \int_{(0,t]} \norm{H}_{p_0;p}^2 \,\d \mu\right)^{\frac12} + \int_{(0,t]} \norm{H}_{p_0;p} \,\d\nu \leq \max\left\{\beta_p,1\right\}  \norm{H}_{X,t} \\
    & \leq \max\left\{\beta_p,1\right\} \left(\mu((0,t])^{\frac12} + \nu((0,t])\right) \sup_{0 < s \leq t} \norm{H(s)}_{p_0;p} \\
    & = \max\left\{\beta_p,1\right\} \left(V_{2,L^{p_0}}(M : [0,t])^{\frac12} + V_{1,L^{p_0}}(A : [0,t])\right) \sup_{0 < s \leq t} \norm{H(s)}_{p_0;p}.
\end{align*}
Consequently, the dominated convergence theorem implies $J_X|_{\cI^{p_0;p}}$ is continuous with respect to locally bounded convergence (as a map from $\cI^{p_0;p}$ to $\GD_{\scB}^p$).
By Proposition \ref{prop.GD}\ref{item.incl} and the uniqueness property of $I_X$, the previous sentence implies $J_X|_{\cI^{p_0;p}} = I_X$.
Also,
\[
\cS \coloneqq \left\{ H \in \cI_X : J_X[H]^{\mathrm{m}} = J_M[H] \text{ and } J_X[H]^{\mathrm{fv}} = J_A[H]\right\}
\]
contains $\EP^{p_0;p}$ by Theorem \ref{thm.MDass} and is closed in $\cI_X$ by the continuity of the $J$ maps and Proposition \ref{prop.GD}\ref{item.martpartFVpart}.
Since $\EP^{p_0;p}$ is dense in $\cI_X$, we obtain $\cS = \cI_X$.
This establishes all the assertions of the first two items.

To prove \ref{item.bilincont}, let $(X_n)_{n \in \N}$ and $(H_n)_{n \in \N}$ be as in the statement, and define
\[
c_t \coloneqq \sup_{n \in \N}\sup_{0 \leq s \leq t}\norm{H_n(s)}_{p_0;p} < \infty \qquad (t \geq 0).
\]
By the bounds above and the dominated convergence theorem, if $t \geq 0$, then
\begin{align*}
    \|I[X_n,H_n]& - I[X,H]\|_{p,t} \leq \norm{I[X_n,H_n] - I[X,H_n]}_{p,t} + \norm{I[X,H_n] - I[X,H]}_{p,t} \\
    & = \norm{I_{X_n-X}[H_n]}_{p,t} + \norm{I_X[H_n - H]}_{p,t} \\
    & \leq \max\left\{\beta_p,1\right\}\left( \norm{H_n}_{X_n-X,t} + \norm{H_n-H}_{X,t}\right) \\
    & \leq \max\left\{\beta_p,1\right\}\left(c_t\left(\norm{X_n-X}_{p_0,t}^{\frac12} + \norm{X_n-X}_{p_0,t}\right) + \norm{H_n-H}_{X,t}\right) \xrightarrow{n \to \infty} 0,
\end{align*}
as desired.
\end{proof}

We end this subsection with the substitution formula, a result similar to \cite[Thm.\ 4.23]{JKN2026} (in which $p_0=p=2$).

\begin{proposition}\label{prop.alg}
Let $p,q,r \in [1,\infty]$ and $(\cC,(\cC_t)_{t \geq 0},\E_{\scC})$ be another conditionable filtered $\mathrm{C}^*$-probability space.
If $H \in \cI^{p;q}(\E_{\scA};\E_{\scB})$ and $K \in \cI^{q;r}(\E_{\scB};\E_{\scC})$, then $KH \in \cI^{p;r}(\E_{\scA};\E_{\scC})$.
\end{proposition}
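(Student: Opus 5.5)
The plan is a two-stage monotone-class argument of the same kind used in the proof of Proposition \ref{prop.GpredisGadaptandmeas}. The product $KH$ is taken pointwise, $(KH)(t) \coloneqq K(t)H(t) \in B^{p;r}$, and the first step is the algebraic fact that the filtrations compose:
\[
\cF_t^{q;r}(\E_{\scB};\E_{\scC})\,\cF_t^{p;q}(\E_{\scA};\E_{\scB}) \subseteq \cF_t^{p;r}(\E_{\scA};\E_{\scC}) \qquad (t \geq 0).
\]
To see this, take $T \in \cF_t^{p;q}$, $S \in \cF_t^{q;r}$, $u \geq t$ and $x \in L^p(\E_{\scA})$. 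Then
\[
\E_{\scC}[STx \mid \cC_u] = S\,\E_{\scB}[Tx \mid \cB_u] = ST\,\E_{\scA}[x \mid \cA_u],
\]
and clearly $\norm{ST}_{p;r} \leq \norm{S}_{q;r}\norm{T}_{p;q}$. Local boundedness of $KH$ follows from the same norm inequality.

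\textbf{Step 1: both factors elementary.} Let $H \in \EP^{p;q}$ and $K \in \EP^{q;r}$. Pass to a common refinement of the two families of intervals, so that both are constant on the pieces of one partition $0 = u_0 < \cdots < u_m$. On $(u_{j-1},u_j]$ the value of $H$ is some $T_i \in \cF_{s_i}^{p;q}$ with $s_i \leq u_{j-1}$. Since the filtration is increasing, $T_i \in \cF_{u_{j-1}}^{p;q}$, and similarly for $K$. The product is therefore
\[
KH = 1_{\{0\}}K(0)H(0) + \sum_j 1_{(u_{j-1},u_j]}K(u_j)H(u_j),
\]
with each $K(u_j)H(u_j) \in \cF_{u_{j-1}}^{p;r}$ by the composition fact. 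Hence $KH \in \EP^{p;r}$. The only care needed here is bookkeeping: the intervals $(s_i,t_i]$ in \eqref{eq.EPG} may overlap, but sums of elementary pieces remain elementary, so overlaps cause no problem.

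\textbf{Step 2: closure argument.} Fix $K \in \EP^{q;r}$ and set
\[
\cS_K \coloneqq \{H \in \cI^{p;q} : KH \in \cI^{p;r}\}.
\]
By Step 1, $\EP^{p;q} \subseteq \cS_K$. Suppose $H_n \to H$ locally boundedly with $H_n \in \cS_K$. Then $KH_n \to KH$ pointwise in $\norm{\cdot}_{p;r}$, because $\norm{K(t)(H_n(t)-H(t))}_{p;r} \leq \norm{K(t)}_{q;r}\norm{H_n(t)-H(t)}_{p;q}$. The sequence is also uniformly bounded near each point, since $K$ is locally bounded. So $KH_n \to KH$ locally boundedly, and $\cS_K$ is closed under locally bounded convergence, which gives $\cS_K = \cI^{p;q}$.

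Next fix $H \in \cI^{p;q}$ and set
\[
\cR_H \coloneqq \{K \in \cI^{q;r} : KH \in \cI^{p;r}\}.
\]
The previous paragraph shows $\EP^{q;r} \subseteq \cR_H$. If $K_n \to K$ locally boundedly, then $K_nH \to KH$ locally boundedly by the estimate $\norm{(K_n-K)(t)H(t)}_{p;r} \leq \norm{(K_n-K)(t)}_{q;r}\norm{H(t)}_{p;q}$ together with local boundedness of $H$. Thus $\cR_H = \cI^{q;r}$, which proves the proposition.

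The main obstacle is slight. The only point needing verification is that local boundedness is preserved under products: the neighborhoods on which $(H_n)$ is uniformly bounded and on which $K$ is bounded must be intersected, and the intersection is still an open neighborhood. With that checked, the rest is the routine two-stage monotone-class argument above.
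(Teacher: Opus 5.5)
Your proof is correct and matches the paper's argument: the composition property $\cF_t^{q;r}\,\cF_t^{p;q} \subseteq \cF_t^{p;r}$, closure of elementary processes under pointwise products, and then a two-stage closure argument under locally bounded convergence. The only difference is the order of the two stages. The paper first fixes an elementary $H$ and lets $K$ range over $\cI^{q;r}$, then fixes an arbitrary $K$ and varies $H$; you fix the factors the other way round, which works equally well by symmetry.
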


\begin{proof}
We leave it to the reader to check that if $T \in \cF_t^{p;q}(\E_{\scA};\E_{\scB})$ and $S \in \cF_t^{q;r}(\E_{\scB};\E_{\scC})$, then $ST \in \cF_t^{p;r}(\E_{\scA};\E_{\scC})$.
It follows that if $H \in \EP^{p;q}(\E_{\scA};\E_{\scB})$ and $K \in \EP^{q;r}(\E_{\scB};\E_{\scC})$, then $KH \in \EP^{p;r}(\E_{\scA};\E_{\scC})$.
In particular, if we define $\cI_H$ to be the set of $K \in \cI^{q;r}(\E_{\scB};\E_{\scC})$ such that $KH \in \cI^{p;r}(\E_{\scA};\E_{\scC})$, then  $\EP^{p;q}(\E_{\scB};\E_{\scC}) \subseteq \cI_H$ for all $H \in \EP^{p;q}(\E_{\scA};\E_{\scB})$.
Since $\cI_H$ is clearly closed under locally bounded convergence, it follows that $\cI^{p;q}(\E_{\scB};\E_{\scC}) = \cI_H$ for all $H \in \EP^{p;q}(\E_{\scA};\E_{\scB})$.
Next, define $\cI^K \coloneqq \{H \in \cI^{p;q}(\E_{\scA};\E_{\scB}) : KH \in \cI^{p;r}(\E_{\scA};\E_{\scC})\}$ for all $K \in \cI^{q;r}(\E_{\scB};\E_{\scC})$.
We just showed that $\EP^{p;q}(\E_{\scA};\E_{\scB}) \subseteq \cI^K$ for all $K \in \cI^{q;r}(\E_{\scB};\E_{\scC})$.
Since $\cI^K$ is closed under locally bounded convergence, we conclude that $\cI^K = \cI^{p;q}(\E_{\scA};\E_{\scB})$ for all $K \in \cI^{q;r}(\E_{\scB};\E_{\scC})$, as desired.
\end{proof}

\begin{theorem}[Substitution formula]\label{thm.subform}
Suppose $(\cC,(\cC_t)_{t \geq 0},\E_{\scC})$ is another conditionable filtered $\mathrm{C}^*$-probability space, $1 \leq p \leq \infty$, and $2 \leq q,r < \infty$.
If $X \colon \R_+ \to L^p(\E_{\scA})$ is an $L^p$-measured decomposable process, $H \in \cI^{p;q}(\E_{\scA};\E_{\scB})$, $Y_H \coloneqq \into H[\d X] \in \GD_{\scB}^q$, and $K \in \cI^{q;r}(\E_{\scB};\E_{\scC})$, then
\begin{equation}
    \into K[\d Y_H] = \into KH[\d X].\label{eq.subform}
\end{equation}
\end{theorem}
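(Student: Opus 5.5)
We will run the same two-stage monotone-class argument used in Proposition \ref{prop.alg}, using Theorem \ref{thm.stochint} for all the continuity needed. First, the two sides of \eqref{eq.subform} make sense. By Theorem \ref{thm.stochint}\ref{item.MDp0p}, $Y_H \in \GD_{\scB}^q$ with $Y_H(0)=0$, so by Theorem \ref{thm.MDass} (with $p_0 = q$ and target exponent $r$) $Y_H$ is an $L^r$-integrator with respect to $\F^{q;r}$, and $\into K[\d Y_H]$ is defined. By Proposition \ref{prop.alg}, $KH \in \cI^{p;r}(\E_{\scA};\E_{\scC})$, and $\into KH[\d X]$ is defined by Theorem \ref{thm.MDass} with $(p_0,p)$ replaced by $(p,r)$. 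We will show the identity as an equality in $C_{a,r}(\E_{\scC})$.

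\emph{Step 1: $H$ and $K$ both elementary.} Refine the time grids so that $H = 1_{\{0\}}H_0 + \sum_i 1_{(t_{i-1},t_i]}H_i$ and $K = 1_{\{0\}}K_0 + \sum_i 1_{(t_{i-1},t_i]}K_i$ on a common partition, with $H_i \in \cF^{p;q}_{t_{i-1}}$ and $K_i \in \cF^{q;r}_{t_{i-1}}$. Then $Y_H(t_i\wedge t) - Y_H(t_{i-1}\wedge t) = H_i[X(t_i\wedge t)-X(t_{i-1}\wedge t)]$. Hence both sides equal $\sum_i K_iH_i[X(t_i\wedge t) - X(t_{i-1}\wedge t)]$, which is the identity.

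\emph{Step 2: $H$ elementary, $K$ general.} Fix $H \in \EP^{p;q}$. Let $\cK_H$ be the set of $K \in \cI^{q;r}$ satisfying \eqref{eq.subform}. Step 1 gives $\EP^{q;r} \subseteq \cK_H$. Suppose $K_n \to K$ locally boundedly. Then $I_{Y_H}[K_n] \to I_{Y_H}[K]$ by Theorem \ref{thm.stochint}\ref{item.MDp0pcont}. Also $K_nH \to KH$ locally boundedly, since $\norm{K_nH - KH}_{p;r} \le \norm{K_n-K}_{q;r}\norm{H}_{p;q}$ pointwise and the norms are locally uniformly bounded. So $I_X[K_nH] \to I_X[KH]$ as well, and $\cK_H$ is closed. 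Hence $\cK_H = \cI^{q;r}$.

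\emph{Step 3: $K$ fixed, $H$ general.} Fix $K \in \cI^{q;r}$. Let $\cH_K$ be the set of $H \in \cI^{p;q}$ satisfying \eqref{eq.subform}. By Step 2, it contains $\EP^{p;q}$. Take $H_n \to H$ locally boundedly. Then $KH_n \to KH$ locally boundedly by the same product estimate, so the right-hand sides converge. For the left-hand sides, $Y_{H_n} \to Y_H$ in $\GD^q_{\scB}$ by Theorem \ref{thm.stochint}\ref{item.MDp0pcont}, with target exponent $q$. Since $q \ge 2$, we can apply the joint continuity of Theorem \ref{thm.stochint}\ref{item.bilincont} with the constant sequence $K_n = K$, which gives $I_{Y_{H_n}}[K] \to I_{Y_H}[K]$ in $\GD^r_{\scC}$. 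By Proposition \ref{prop.GD}\ref{item.incl}, this convergence also holds in $C_{a,r}$, so $\cH_K = \cI^{p;q}$.

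The main obstacle is this last limit. It needs continuity of $I[\cdot,K]$ in the \emph{integrator}, with the integrator converging in the measured-decomposable topology. That is exactly where the hypothesis $q \ge 2$ is used, since it feeds into Theorem \ref{thm.stochint}\ref{item.bilincont}. One should also check that $\GD^q$ convergence of $Y_{H_n}$ controls the measures $\mu,\nu$ attached to $Y_{H_n} - Y_H$, which is what the bound in the proof of Theorem \ref{thm.stochint}\ref{item.bilincont} uses.
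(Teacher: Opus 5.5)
Your proposal is correct and follows essentially the same route as the paper: first $H$ elementary with $K$ varying, then $K$ fixed with $H$ varying, with the second limit handled by the joint continuity in Theorem \ref{thm.stochint}\ref{item.bilincont} applied to $Y_{H_n}\to Y_H$ in $\GD_{\scB}^q$. Your explicit product estimates for $K_nH\to KH$ and $KH_n\to KH$ fill in details the paper leaves implicit, and the measure-control check you flag at the end is already part of the proof of Theorem \ref{thm.stochint}\ref{item.bilincont}, so nothing further is needed there.
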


\begin{proof}
For each $H \in \cI^{p;q}(\E_{\scA};\E_{\scB})$, define $\cS_H \coloneqq \left\{K \in \cI^{q;r}(\E_{\scB};\E_{\scC}) : \text{\eqref{eq.subform} holds}\right\}$.
Since $I_X$ and $I_{Y_H}$ are continuous with respect to locally bounded convergence, $\cS_H$ is closed under locally bounded convergence.
Also, it is a matter of algebra to show that $\EP^{q;r}(\E_{\scB};\E_{\scC}) \subseteq \cS_H$ for all $H \in \EP^{p;q}(\E_{\scA};\E_{\scB})$.
Thus, $\cS_H = \cI^{q;r}(\E_{\scB};\E_{\scC})$ for all $H \in \EP^{p;q}(\E_{\scA};\E_{\scB})$.

Next, let $K \in \cI^{q;r}(\E_{\scB};\E_{\scC})$, and define $\cS^K \coloneqq \left\{H \in \cI^{p;q}(\E_{\scA};\E_{\scB}) : \text{\eqref{eq.subform} holds}\right\}$.
By the previous paragraph, $\EP^{p;q}(\E_{\scA};\E_{\scB}) \subseteq \cS^K$.
We claim that $\cS^K$ is closed under locally bounded convergence.
Indeed, let $(H_n)_{n \in \N}$ be a sequence in $\cS^K$ converging locally boundedly to $H \in \cI^{p;q}(\E_{\scA};\E_{\scB})$.
By Theorem \ref{thm.stochint}\ref{item.MDp0pcont}, $(I_X[KH_n])_{n \in \N}$ converges to $I_X[KH]$ in $\GD_{\scB}^q$.
For the same reason, $(Y_{H_n})_{n \in \N} = (I_X[H_n])_{n \in \N}$ converges to $I_X[H] = Y_H$ in $\GD_{\scB}^q$.
Consequently, by Theorem \ref{thm.stochint}\ref{item.bilincont}, $(I[Y_{H_n},K])_{n \in \N}$ converges to $I[Y_H,K]$ in $\GD_{\scC}^r$.
By definition of $\cS_K$, $I_X[KH_n] = I[Y_{H_n},K]$ for all $n \in \N$.
Thus, \eqref{eq.subform} holds, i.e., $H \in \cS^K$, which proves the claim.
Consequently, $\cS^K = \cI^{p;q}(\E_{\scA};\E_{\scB})$ for all $K \in \cI^{q;r}(\E_{\scB};\E_{\scC})$, which is what we set out to prove.
\end{proof}

\subsection{Operator-norm bound for free Brownian integrator}\label{subsec.fBMintegrator}

Take $(\cB,(\cB_t)_{t \geq 0},\E_{\scB}) = (\cA,(\cA_t)_{t \geq 0},\E)$ for the duration of this subsection.

\begin{notation}\label{nota.finrank}
Fix the following notation.
\begin{enumerate}[label=(\roman*), font=\normalfont]
    \item $\otimes$ is the algebraic tensor product over $\C$, and $\otimes_{\min}$ is the minimal (or spatial) $\mathrm{C}^*$-tensor product (vid.\ \cite[Ch.\ 3]{BO2008}).
    \item $\cA^{\op}$ is the opposite ($\mathrm{C}^*$-)algebra of $\cA$.
    \item $\# \colon \cA \otimes \cA^{\op} \to \B(\cA)$ is the complex-linear map determined by
    \[
    \#(a \otimes b)x = axb \qquad (a,b,x \in \cA).
    \]
    Write\label{item.sh}
    \[
    u \sh x \coloneqq \#(u)x \qquad \big(u \in \cA \otimes \cA^{\op}, \; x \in \cA\big).
    \]
    \item $\mathfrak{t} \colon \cA \otimes \cA^{\op} \to \B(\cA)$ is the complex-linear map determined by
    \[
    \mathfrak{t}(a \otimes b)x = \E[bx]a \qquad (a,b,x \in \cA).
    \]
    Write\label{item.finrank}
    \[
    v[x] \coloneqq \mathfrak{t}(v)x \qquad \big(v \in \cA \otimes \cA^{\op}, \; x \in \cA\big).
    \]
\end{enumerate}

\end{notation}

In \cite{BS1998}, Biane and Speicher proved an operator-norm bound on the free stochastic integral of an \textbf{elementary adapted biprocess}, i.e., a map $U \colon \R_+ \to \cA \otimes \cA^{\op}$ of the form
\begin{equation}\label{eq.elembip}
\begin{split}
    U & = 1_{\{0\}} u_0 +  \sum_{k=1}^{\ell} 1_{(s_k,t_k]} u_k, \; \text{ where} \\
    u_k & \in \cA_{s_k} \otimes \cA_{s_k}^{\op} \quad (k=0,\ldots,\ell; s_0 \coloneqq 0).
\end{split}
\end{equation}
Specifically, \cite[Thm.\ 3.2.1]{BS1998} establishes that if $S \colon \R_+ \to \cA_{\sa}$ is a free Brownian motion and $U \colon \R_+ \to \cA \otimes \cA^{\op}$ is as in \eqref{eq.elembip}, then
\begin{align*}
    \norm{\int_0^{\infty} U(t)\sh \d S(t)}_{\infty} & = \norm{\sum_{k=1}^{\ell} u_k\sh(S(t_k) - S(s_k))}_{\infty} \\
    & \leq 2\sqrt2 \left(\int_0^{\infty} \norm{U(t)}_{\cA \otimes_{\min} \cA^{\op}}^2\,\d t\right)^{\frac12}.  \numberthis\label{eq.BSelembip}
\end{align*}
Converting a multidimensional free Brownian motion into a one-dimensional one in a clever way results in a multidimensional version of this inequality, stated below and proven in appendix \ref{app.multidimBSineq}.

\begin{definition}\label{def.ndimfBM}
An $n$-tuple $(S_1,\ldots,S_n) \colon \R_+ \to \cA_{\sa}^n$ is an \textbf{$\boldsymbol{n}$-dimensional free Brownian motion} if $S_1(0) = \cdots = S_n(0) = 0$, and for all $t > s \geq 0$, $S_i(t) - S_i(s)$ is a semicircular element of variance $t-s$ ($i=1,\ldots, n$), $(S_1(t)-S_1(s),\ldots,S_n(t)-S_n(s))$ is a free family, and $\{S_i(t) - S_i(s) : i=1,\ldots,n\}$ is freely independent of $\cA_s$.
If $n=1$, we drop ``$n$-dimensional'' from the terminology.
\end{definition}

\begin{theorem}\label{thm.multidimBSineq}
Let $(S_1,\ldots,S_n) \colon \R_+ \to \cA_{\sa}^n$ be an $n$-dimensional free Brownian motion.
If $U_j \colon \R_+ \to \cA \otimes \cA^{\op}$ is an elementary biprocess for each $j=1,\ldots,n$, then
\[
\norm{\sum_{j=1}^n \int_0^{\infty} U_j(t)\sh \d S_j(t)}_{\infty} \leq 2\sqrt2 \left(\int_0^{\infty}\sum_{j=1}^n\norm{U_j(t)}_{\cA \otimes_{\min} \cA^{\op}}^2\,\d t\right)^\frac12.
\]
\end{theorem}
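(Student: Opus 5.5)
We reduce to the one-dimensional Biane--Speicher inequality \eqref{eq.BSelembip} by interleaving the $n$ free Brownian motions in time. First, refine a common partition so that all $U_j$ are elementary with respect to the same times: $U_j = 1_{\{0\}}u_{j,0} + \sum_{k=1}^{\ell} 1_{(t_{k-1},t_k]}u_{j,k}$ with $u_{j,k} \in \cA_{t_{k-1}} \otimes \cA_{t_{k-1}}^{\op}$ and $0=t_0<\cdots<t_\ell$. Then
\[
\sum_{j=1}^n \int_0^\infty U_j\sh\d S_j = \sum_{k=1}^{\ell}\sum_{j=1}^n u_{j,k}\sh\big(S_j(t_k)-S_j(t_{k-1})\big).
\]

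Next, build a single free Brownian motion $S$ on a new time scale with filtration $(\cB_r)_{r\ge 0}$. Set $\tau_0 \coloneqq 0$ and $\tau_k \coloneqq n t_k$, and subdivide each $(\tau_{k-1},\tau_k]$ into $n$ consecutive intervals $I_{k,j}$ of length $\Delta_k \coloneqq t_k-t_{k-1}$. Define $S$ piecewise by letting its increment over $I_{k,j}$ equal the appropriately time-shifted restriction of $S_j$ to $[t_{k-1},t_k]$. Concretely, for $r\in I_{k,j}$, $S(r) - S(\inf I_{k,j}) \coloneqq S_j(t_{k-1}+r-\inf I_{k,j}) - S_j(t_{k-1})$. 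Let $\cB_r$ be the $\mathrm{C}^*$-algebra generated by $\cA_{t_{k-1}}$ together with the increments of $S_i$ over $[t_{k-1},t_k]$ for $i<j$ and the initial piece of $S_j$ up to time $t_{k-1}+r-\inf I_{k,j}$. With $\cB_r$ enlarged to all of $\cA_{t_k}$ at $r=\tau_k$, this is increasing in $r$.

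The main thing to check is that $S$ is a free Brownian motion with respect to $(\cB_r)$. The point is that for $r$ in $I_{k,j}$, the future increments of $S$ are generated by later increments of $S_j$ and the $[t_{k-1},t_k]$-increments of $S_i$ for $i>j$, plus increments after time $t_k$. Definition \ref{def.ndimfBM} gives that all $S_i$-increments after $t_{k-1}$ form a free family that is free from $\cA_{t_{k-1}}$. Independence of increments for the single semicircular process $S_j$ handles the splitting within $S_j$. Associativity of free independence then yields freeness of these future increments from $\cB_r$. Since each increment has variance equal to the length of its interval, $S$ is a free Brownian motion. I expect this verification, and making the filtration bookkeeping airtight, to be the main (though essentially combinatorial) obstacle.

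With $S$ in hand, define the elementary adapted biprocess $U \coloneqq \sum_{k,j} 1_{I_{k,j}}u_{j,k}$. Each $u_{j,k}\in \cA_{t_{k-1}}\otimes\cA_{t_{k-1}}^{\op}\subseteq \cB_{\inf I_{k,j}}\otimes\cB^{\op}_{\inf I_{k,j}}$, so $U$ is admissible. By construction,
\[
\int_0^\infty U\sh\d S = \sum_{k,j}u_{j,k}\sh\big(S_j(t_k)-S_j(t_{k-1})\big),
\]
and
\[
\int_0^\infty\norm{U}^2_{\min}\,\d r = \sum_{k,j}\Delta_k\norm{u_{j,k}}^2_{\min} = \int_0^\infty\sum_j\norm{U_j(t)}^2_{\min}\,\d t.
\]
Applying \eqref{eq.BSelembip} in the filtered algebra generated by the $\cB_r$ (a sub-$\mathrm{C}^*$-probability space of $\cA$) then gives the claimed bound. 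Here the minimal tensor norm is unchanged by passing to $\mathrm{C}^*$-subalgebras, since $\otimes_{\min}$ is injective.
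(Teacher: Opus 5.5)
Your proposal is correct and is essentially the paper's proof. The steps match: refine to a common partition, place the $n$ coordinate increments over each $(t_{k-1},t_k]$ in consecutive blocks of length $t_k-t_{k-1}$ inside $(nt_{k-1},nt_k]$ to get a single free Brownian motion, check freeness of its increments by associativity, and apply the one-dimensional Biane--Speicher bound to the rearranged biprocess. The only difference is cosmetic: your filtration jumps to all of $\cA_{t_k}$ at time $nt_k$, whereas the paper uses $\mathrm{C}^*(\cA_{t_{k-1}},(\tilde{S}(s))_{s\le t})$ for $t\in(nt_{k-1},nt_k]$, and either choice works.
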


The remainder of this section is devoted to converting this result into an operator-norm bound on the stochastic integrals of certain multidimensional trace biprocesses against multidimensional free Brownian motion.
To begin, we prove a key estimate relating the tensor norm appearing in Theorem \ref{thm.multidimBSineq} to a linear-map norm.

\begin{proposition}\label{prop.mintensornormbound}
Let $t \geq 0$, and suppose there exists a non-zero $c \in \cA$ such that $\E[c] = 0$ and $c$ is $\ast$-freely independent of $\cA_t$.
If $u \in \cA_t \otimes \cA_t^{\op}$, then
\[
\norm{u}_{\cA \otimes_{\min} \cA^{\op}} \leq \inf \big\{\norm{\#(u) + \mathfrak{t}(v)}_{2;2} : v \in \cA_t \otimes \cA_t^{\op}\big\}.
\]
\end{proposition}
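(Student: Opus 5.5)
The plan is to test the operator $\#(u)+\mathfrak{t}(v)$ only on vectors of the form $y\,c\,z$ with $y,z \in \cA_t$. On these vectors $\mathfrak{t}(v)$ vanishes. On their closed span, $\#(u)$ will turn out to be unitarily equivalent to the spatial action of $u$ on $L^2(\cA_t,\E)\otimes L^2(\cA_t,\E)$, whose norm is the minimal tensor norm.

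Write $u=\sum_i a_i\otimes b_i$ and $v=\sum_j c_j\otimes d_j$ with $a_i,b_i,c_j,d_j\in\cA_t$, and set $\cK\coloneqq\overline{\spn}\{y\,c\,z : y,z\in\cA_t\}\subseteq L^2(\E)$.

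First, $\mathfrak{t}(v)$ vanishes on $\cK$. For $y,z\in\cA_t$ we have $\mathfrak{t}(v)[ycz]=\sum_j\E[d_j\,y\,c\,z]\,c_j=\sum_j\E[(z d_j y)\,c]\,c_j$ by traciality. Since $c$ is centered and free from $\cA_t$, this equals $\sum_j\E[zd_jy]\,\E[c]\,c_j=0$. By $L^2$-continuity, $\mathfrak{t}(v)$ vanishes on all of $\cK$. Hence
\[
\norm{\#(u)+\mathfrak{t}(v)}_{2;2}\ \ge\ \sup\big\{\norm{\#(u)\xi}_2 : \xi\in\cK,\ \norm{\xi}_2\le 1\big\}.
\]

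Second, $\cK$ is (up to the scalar $\norm{c}_2$) a copy of $L^2(\cA_t)\otimes L^2(\cA_t)$. For $y,y',z,z'\in\cA_t$,
\[
\langle ycz,\,y'cz'\rangle=\E[z'^*c^*y'^*y\,c\,z]=\E[(zz'^*)\,c^*\,(y'^*y)\,c].
\]
The free moment formula $\E[w_1b_1w_2b_2]=\E[w_1w_2]\E[b_1]\E[b_2]+\E[w_1]\E[w_2]\E[b_1b_2]-\E[w_1]\E[w_2]\E[b_1]\E[b_2]$ for $\{w_1,w_2\}$ free from $\{b_1,b_2\}$ applies here. Taking $b_1=c^*$, $b_2=c$ and using $\E[c]=0$, it reduces this to $\norm{c}_2^2\,\E[y'^*y]\,\E[zz'^*]$. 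Because $c\ne0$ and $\E$ is faithful, $\norm{c}_2>0$. Therefore $W\colon y\otimes z\mapsto \norm{c}_2^{-1}\,y\,c\,z$ extends to a unitary $W$ from $L^2(\cA_t)\otimes L^2(\cA_t)$ onto $\cK$. The inner product on the second factor is $\langle z,z'\rangle=\E[zz'^*]=\E[z'^*z]$ by traciality.

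Under $W$, $\#(u)(ycz)=\sum_i a_iy\,c\,zb_i$ corresponds to $\sum_i\lambda(a_i)\otimes\rho(b_i)$. Here $\lambda$ is left multiplication on $L^2(\cA_t)$ and $\rho$ is right multiplication, which is a representation of $\cA_t^{\op}$. In particular $\#(u)$ maps $\cK$ into $\cK$, and its norm there equals $\norm{(\lambda\otimes\rho)(u)}$. Both $\lambda$ and $\rho$ are faithful, because $\E$ is a faithful trace. Hence $\lambda\otimes\rho$ is isometric on $\cA_t\otimes_{\min}\cA_t^{\op}$, since the spatial norm does not depend on the choice of faithful representations. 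Finally, injectivity of the minimal tensor product gives $\norm{u}_{\cA_t\otimes_{\min}\cA_t^{\op}}=\norm{u}_{\cA\otimes_{\min}\cA^{\op}}$. Combining the two displays and taking the infimum over $v$ proves the claim.

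The main obstacle is the second step: getting the inner-product computation exactly right, including the order of the factors and the opposite algebra, so that the conjugated operator is genuinely $\lambda\otimes\rho$. The remaining steps use only standard facts about minimal tensor products. Real-linearity of $B^{2;2}$ plays no role, because everything is evaluated on the complex subspace $\cK$.
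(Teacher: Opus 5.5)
Your proposal is correct and is essentially the paper's first proof. There, the GNS space of the state $T \mapsto \langle Tc, c\rangle$ on the closure $\cC_t$ of $\{\#(u)+\mathfrak{t}(v)\}$ is exactly your cyclic subspace $\cK$, and the unitary onto it is your $W$, built from the same two facts $v[c]=0$ and $\norm{u \sh c}_2 = \norm{c}_2\norm{u}_{L^2(\E_{\otimes})}$; the conjugated operator is left multiplication by $u$, i.e., your $(\lambda\otimes\rho)(u)$. Restricting directly to $\cK$ as you do just avoids checking that $\cC_t$ is a $\mathrm{C}^*$-algebra and that the standard representation on $L^2(\E_{\otimes})$ is faithful.
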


\begin{proof}[First proof]
Write $\E_{\otimes} \coloneqq \E \otimes_{\min} \E^{\op} \colon \cA \otimes_{\min} \cA^{\op} \to \C$ for the trace on $\cA \otimes_{\min} \cA^{\op}$ determined by $\E_{\otimes}[a \otimes b] = \E[a]\,\E[b]$ ($a,b \in \cA$).
Also, assume that $\norm{c}_2 = 1$.
By a brief calculation using the fact that $c$ is $\ast$-freely independent of $\cA_t$, if $u,v \in \cA_t \otimes \cA_t^{\op}$, then
\begin{equation}
    v[c] = 0 \; \text{ and } \; \norm{u \sh c}_{L^2(\E)} = \norm{u}_{L^2(\E_{\otimes})}.\label{eq.keyobs}
\end{equation}
Now, define $\cC_t$ to be the closure in $B_{\C}(L^2(\E))$ of $\{\#(u) + \mathfrak{t}(v) : u,v \in \cA_t \otimes \cA_t^{\op}\}$.
It is not difficult to see that $\cC_t$ is a unital $\mathrm{C}^*$-algebra.
Let $\rho \colon \cC_t \to \C$ be the state $\rho(T) \coloneqq \la Tc, c \ra_{L^2(\E)}$.
If $\cN_{\rho} \coloneqq \{T \in \cC_t : \rho(T^*T) = 0\}$, then the completion of the space $H_{\rho} \coloneqq \cC_t/\cN_{\rho}$ with the inner product $\la T+\cN_{\rho}, S+\cN_{\rho} \ra_{\rho} \coloneqq \rho(S^*T)$ is the Hilbert space in the Gelfand--Naimark--Segal (GNS) construction for $\rho$.
Write $\pi \colon \cC_t \to B_{\C}(H_{\rho})$ for the associated GNS representation.
By definition and \eqref{eq.keyobs}, if $u,v \in \cA_t \otimes \cA_t^{\op}$, then
\[
\norm{\#(u) + \mathfrak{t}(v) + \cN_{\rho}}_{\rho} = \norm{u \sh c+v[c]}_{L^2(\E)} = \norm{u}_{L^2(\E_{\otimes})}.
\]
This shows that the map $(\cA_t \otimes \cA_t^{\op},\ip{\cdot,\cdot}_{L^2(\E_{\otimes})}) \to (H_{\rho},\ip{\cdot,\cdot}_{\rho})$ obtained by composing $\#$ with the natural projection $\cC_t \to H_{\rho}$ is an isometry with dense image.
It therefore extends to a unitary $U \colon L^2(\cA_t \otimes_{\min} \cA_t^{\op}, \E_{\otimes}) \to H_{\rho}$.
We claim that
\[
U^*\pi(\#(u) + \mathfrak{t}(v)) U = u \qquad \big(u,v \in \cA_t \otimes \cA_t^{\op}\big),
\]
where $u$ is represented as a left-multiplication operator on $L^2(\cA_t \otimes_{\min} \cA_t^{\op}, \E_{\otimes})$ via the standard representation.
Indeed, if $w \in \cA_t \otimes \cA_t^{\op} \subseteq \cA_t \otimes_{\min} \cA_t^{\op} \subseteq L^2(\cA_t \otimes_{\min} \cA_t^{\op}, \E_{\otimes})$,~then
\begin{align*}
    U^*\pi(\#(u) + \mathfrak{t}(v))Uw & = U^*\pi(\#(u))Uw = U^*\pi(\#(u))(\#(w) + \cN_{\rho}) \\
    & = U^*(\#(u)\#(w) + \cN_{\rho}) = U^*(\#(u w) + \cN_{\rho}) = uw,
\end{align*}
which establishes the claim.
Thus,
\begin{align*}
    \norm{u}_{\cA \otimes_{\min} \cA^{\op}} & = \norm{u}_{\cA_t \otimes_{\min} \cA_t^{\op}} = \norm{U^*\pi(\#(u) + \mathfrak{t}(v))U}_{\cA_t \otimes_{\min} \cA_t^{\op}} \\
    & \leq \norm{\#(u) + \mathfrak{t}(v)}_{\cC_t} = \norm{\#(u) + \mathfrak{t}(v)}_{2;2}
\end{align*}
for all $u,v \in \cA_t \otimes \cA_t^{\op}$, as desired.
\end{proof}

\begin{proof}[Second proof]
Let $\cC_t$ be as in the first proof, and define $\cI_t \subseteq \cC_t$ to be the smallest closed $\ast$-ideal of $\cC_t$ containing $\{\mathfrak{t}(v) : v \in \cA_t \otimes \cA_t^{\op}\}$.
The quotient $\cC_t/\cI_t$ is then a $\mathrm{C}^*$-algebra with the usual quotient norm, and one can use \eqref{eq.keyobs} and basic properties of $\#$ to show that the map $\cA_t \otimes \cA_t^{\op} \ni u \mapsto \#(u) + \cI_t \in \cC_t/\cI_t$ is an injective $\ast$-homomorphism.
In particular,
\[
\norm{u} \coloneqq \norm{\#(u)+\cI_t}_{\cC_t/\cI_t} \qquad \big(u \in \cA_t \otimes \cA_t^{\op}\big)
\]
is a $\mathrm{C}^*$-tensor norm on $\cA_t \otimes \cA_t^{\op}$.
It then follows from Takesaki's theorem on the minimality of $\norm{\cdot}_{\cA_t \otimes_{\min} \cA_t^{\op}}$ (vid.\ \cite[\S3.4]{BO2008}) that $\|u\|_{\cA_t \otimes_{\min} \cA_t^{\op}} \leq \|u\|$ for all $u \in \cA_t \otimes \cA_t^{\op}$.
Since it is clear that $\|u\| \leq \inf\big\{\norm{\#(u)+\mathfrak{t}(v)}_{2;2} : v \in \cA_t \otimes \cA_t^{\op}\big\}$, we are done.
\end{proof}

Next, we recall the direct sum construction for $\mathrm{C}^*$-probability spaces.
If $(\cA_i,\E_i)$ is a $\mathrm{C}^*$-probability space for each $i=1,\ldots,n$, then $\cC \coloneqq \cA_1 \oplus \cdots \oplus \cA_n$ is a unital $\mathrm{C}^*$-algebra with the norm $\norm{\mathbf{a}} = \max\left\{\norm{a_i} : i=1,\ldots,n\right\}$ ($\mathbf{a} = (a_1,\ldots,a_n) \in \cC$) and componentwise operations, and $\cC \ni \mathbf{a} \mapsto \E^{\oplus}[\mathbf{a}] \coloneqq n^{-1}(\E_1[a_1] + \cdots \E_n[a_n]) \in \C$ is a trace.
In this case,
\[
\norm{\mathbf{a}}_{L^p(\E^{\oplus})} = \left( \frac1n \sum_{i=1}^n \norm{a_i}_{L^p(\E_i)}^p\right)^\frac1p \qquad \big( \mathbf{a} = (a_1,\ldots,a_n) \in \cC, \; 1 \leq p < \infty\big).
\]
For the remainder of this subsection, we shall write $\E^{\oplus n}$ for the direct-sum trace on $\cA^n$.
Observe that $(\cA^n,(\cA_t^n)_{t \geq 0},\E^{\oplus n})$ is a conditionable filtered $\mathrm{C}^*$-probability space.

At this time, the reader may wish to review Notation \ref{nota.trpolyfiltration}--Example \ref{ex.trbip}.

\begin{notation}\label{nota.mntrbip}
Suppose $m,n \in \N$ and $1 \leq p,q \leq \infty$.
\begin{enumerate}[label=(\roman*), font=\normalfont]
    \item If $\cV$ and $\cW$ are vector spaces, then $\mathrm{M}_{m \times n}(\cV)$ is the set of $m \times n$ matrices with entries from $\cV$, $\Hom\left(\cV;\cW\right)$ is the space of linear maps from $\cV$ to $\cW$, and we view $\mathrm{M}_{m \times n}(\Hom\left(\cV;\cW\right))$ as $\Hom\left(\cV^n;\cW^m\right)$ in the obvious way.
    \item Let $(\cC_1,\E_1)$ and $(\cC_2,\E_2)$ be $\mathrm{C}^*$-probability spaces.
    If $T \colon L^p(\E_1) \to L^q(\E_2)$ is a real-linear map, define
    \[
    T^{\C}[x] \coloneqq T[\cRe x] + iT[\cIm x] \in L^q(\E_2) \qquad (x \in L^p(\E_1)),
    \]
    where $\cRe x = (x+x^*)/2$ and $\cIm x = -i(x-x^*)/2$.
    Note that $T^{\C}$ is complex linear and $\norm{T^{\C}}_{p;q} \leq 2 \norm{T}_{p;q}$.
\end{enumerate}
\end{notation}

\begin{theorem}[$L^{\infty}$-norm bound on free stochastic integral]\label{thm.Linfbound}
Let $\mathbf{S} \colon \R_+ \to \cA_{\sa}^n$ be an $n$-dimensional free Brownian motion.
If $\mathbf{H} \in \big(\cI_{\cT}^{2;2}\big)^{m \times n}$ and $t \geq 0$, then $\int_0^t \mathbf{H}[\d \mathbf{S}] \in \cA_t^m$,~and
\begin{align*}
    \norm{\int_0^t \mathbf{H}(r)[\d \mathbf{S}(r)]}_{L^{\infty}(\E^{\oplus m})} & \leq 2\sqrt 2\max_{i=1,\ldots,m}\left(\int_0^t \sum_{j=1}^n \big\|H_{ij}^{\C}(s)\big\|_{L^2(\E) \to L^2(\E)}^2\,\d s \right)^\frac12 \\
    & \leq 2\sqrt{2n} \left( \int_0^t \big\|\mathbf{H}^{\C}(s)\big\|_{L^2(\E^{\oplus n}) \to L^2(\E^{\oplus m})}^2 \,\d s \right)^\frac12 \\
    & \leq 4\sqrt{2n} \left( \int_0^t \norm{\mathbf{H}(s)}_{L^2(\E^{\oplus n}) \to L^2(\E^{\oplus m})}^2 \,\d s \right)^\frac12.
\end{align*}
In particular, $\mathbf{S}$ satisfies {\rm Assumption \ref{ass.integ}} with $p_0 = 2$, $p=\infty$, the normed filtration
\[
(\G,\norm{\cdot}) = \big(\big(\big(\cT_t^{2;2}\big)^{m \times n}\big)_{t \geq 0},\norm{\cdot}_{L^2(\E^{\oplus n}) \to L^2(\E^{\oplus m})}\big)
\]
of $B^{2;2}(\E^{\oplus n};\E^{\oplus m})$, $\rho$ equal to the Lebesgue measure, and $a_T = 4\sqrt{2n}$ for all $T \geq 0$.
Consequently, by {\rm Theorem \ref{thm.constructNCstochint}}, $\mathbf{S}$ is an $L^{\infty}$-integrator with respect to $\G$.
\end{theorem}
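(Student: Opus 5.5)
The plan is to prove the inequality first for elementary $\mathbf{H}$, i.e., $H_{ij} \in \EP_{\cT}^{2;2}$, with each value in $\cT_{0,s}^{2;2}$ rather than its closure, and then extend. Once the elementary bound is in hand, Assumption \ref{ass.integ} holds with $\rho$ Lebesgue and $a_T = 4\sqrt{2n}$. The sup over $s \le t$ is harmless because the right-hand side is increasing in $t$ and we can apply the bound to $1_{[0,s]}\mathbf{H}$. Theorem \ref{thm.constructNCstochint} then gives the integrator property and the stated bound for all of $\big(\cI_{\cT}^{2;2}\big)^{m\times n}$. The membership $\int_0^t \mathbf{H}[\d\mathbf{S}] \in \cA_t^m$ persists under the limits because $L^\infty$-convergence and adaptedness pass to limits ($\cA_t$ is norm closed).

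The second and third inequalities are routine. The second follows from $\max_i \sum_j \|H_{ij}^{\C}\|_{2;2}^2 \le n\|\mathbf{H}^{\C}\|^2_{L^2(\E^{\oplus n})\to L^2(\E^{\oplus m})}$, checking the scaling of the direct-sum traces: each entry norm is bounded by the block operator norm. The third follows from $\|T^{\C}\| \le 2\|T\|$.

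For the main (first) inequality with elementary entries, fix a row $i$. It suffices to treat $H_{ij}(s)=T_{ij}$ constant on each interval $(s_k,t_k]$ with $T_{ij} \in \cT^{2;2}_{0,s_k}$, approximating in $\norm{\cdot}_{2;2}$ within $\cT^{2;2}_{s_k}$ and using that the left side is then controlled in $L^2$ and the right side in $L^\infty$ via limits. Since $S_j$ is self-adjoint, $T_{ij}[\Delta S_j]=T^{\C}_{ij}[\Delta S_j]$. Each complex-linear element of $\cT^{2;2}_{0,s}$ applied to a self-adjoint argument is of the form $\#(u)+\mathfrak t(v)$ with $u,v \in \cA_{s}\otimes\cA_{s}^{\op}$ (the $x^*$ terms collapse when $x=x^*$). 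The $\mathfrak t(v)$ part contributes $\E[b\,\Delta S_j]a = 0$, because $\Delta S_j$ is free from $\cA_{s}$ and centered. Hence the integral equals $\sum_j\int U_j\sh\d S_j$, and we are free to choose any $v$. Theorem \ref{thm.multidimBSineq} bounds the $L^\infty$ norm of row $i$ by $2\sqrt2\,(\int\sum_j\|u_j(s)\|^2_{\min}\d s)^{1/2}$.

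The key step, and the main obstacle, is replacing $\|u\|_{\min}$ by $\|H^{\C}_{ij}\|_{2;2} = \|\#(u)+\mathfrak t(v)\|_{2;2}$. This is where Proposition \ref{prop.mintensornormbound} is used: take $c = S_1(s+1)-S_1(s)$, which is nonzero, centered, and $\ast$-free from $\cA_s$. Then $\|u\|_{\min} \le \|\#(u)+\mathfrak t(v)\|_{2;2}$ for the specific $v$ representing $H^{\C}_{ij}$. A subtlety is that $H_{ij}^{\C}$ acting on all of $L^2$ must genuinely equal $\#(u)+\mathfrak t(v)$ with $u,v$ in the algebraic tensor product. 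This requires the entries $a,b,c,d$ in $T_{\e,\delta}$ to lie in $\cA_s$ rather than merely in $L^{r}$ spaces, so I would first reduce, by density in the relevant $L^r$ norms (which gives $\norm{\cdot}_{2;2}$ approximation), to bounded entries. One must also check that complexification turns the $x^*$ terms into genuine $\#/\mathfrak t$ terms. Finally, taking the max over $i$ gives the $L^\infty(\E^{\oplus m})$ norm.
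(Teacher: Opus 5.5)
Your handling of the first inequality follows the paper's route. You complexify using $\mathbf S^*=\mathbf S$, reduce to entries $H_{ij}^{\C}=\#(u)+\mathfrak t(v)$ with $u,v$ in the algebraic tensor product, and discard the $\mathfrak t$-part since $\E[b\,\Delta S_j]=0$. You then apply Theorem~\ref{thm.multidimBSineq} row by row, followed by Proposition~\ref{prop.mintensornormbound}. Your choice $c=S_1(s+1)-S_1(s)$ and the identity $T_{\e,\delta}(a,b,c,d)^{\C}=\#(a\otimes b)+\mathfrak t(d\otimes c)$ usefully make explicit what the paper leaves implicit.

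The step that fails is the one you call routine. The traces $\E^{\oplus n}$ and $\E^{\oplus m}$ are \emph{normalized}. So placing $x$ in the $j$th slot gives $\norm{\mathbf x}_{L^2(\E^{\oplus n})}^2=\norm{x}_2^2/n$, while $\norm{H_{ij}x}_2^2\le m\,\norm{\mathbf H\mathbf x}_{L^2(\E^{\oplus m})}^2$. Hence the correct entry bound is $\norm{H_{ij}}_{2;2}\le\sqrt{m/n}\,\norm{\mathbf H}$, not $\norm{H_{ij}}_{2;2}\le\norm{\mathbf H}$. Summing over $j$ then gives only $\max_i\sum_j\norm{H_{ij}^{\C}}_{2;2}^2\le m\,\norm{\mathbf H^{\C}}^2$.

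For $m>n$ the inequality you assert, and indeed the statement itself, is false. Take $n=1$, $m=9$ and the column $\mathbf H=1_{(0,t]}(\mathrm{id},0,\ldots,0)$. Then $\int_0^t\mathbf H[\d\mathbf S]=(S(t),0,\ldots,0)$, whose $L^\infty(\E^{\oplus 9})$-norm is $\norm{S(t)}_\infty=2\sqrt t$. But $\norm{\mathbf H(s)}_{L^2(\E)\to L^2(\E^{\oplus 9})}=1/3$, which makes the last right-hand side $4\sqrt2\,\sqrt t/3<2\sqrt t$. The same $\mathbf H$ violates Assumption~\ref{ass.integ} with $a_T=4\sqrt{2n}$, and the middle inequality already fails for $n=1$, $m=2$. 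The paper's sketch has the same blind spot: ``breaking into components'' reduces the first inequality to $m=1$, but the block norm dominates the row norms only up to a factor $\sqrt m$.

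The repair is to replace $n$ by $m$ in the last two constants and in $a_T$, so the statement as written does hold when $m\le n$. The first inequality, the membership in $\cA_t^m$, the integrator conclusion, and the later applications (which only use finiteness of $a_T$) are unaffected.

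Three smaller points need tightening.
\begin{enumerate}
\item When passing from $\cT_{0,s}^{2;2}$ to its closure, apply the elementary bound to differences $T_k-T_l$, so that $T_k[\Delta S_j]$ is Cauchy in the norm-closed $\cA$. An $L^2$-limit of norm-bounded elements of $\cA$ need not lie in $\cA$ when $\cA$ is only a $\mathrm{C}^*$-algebra.
\item Theorem~\ref{thm.constructNCstochint} transfers only the $a_T$-form of the bound. The row-wise first inequality for non-elementary $\mathbf H$ therefore needs the density argument in $\cI_{\G,\rho}$ run directly on it; the entries converge because $\norm{H_{ij}}_{2;2}\le\sqrt{m/n}\,\norm{\mathbf H}$.
\item As the paper does, check that your $L^\infty$-valued integral agrees with $I^{2;2}_{\mathbf S}$ from Theorem~\ref{thm.MDass} on $(\cI_{\cT}^{2;2})^{m\times n}$, so that $\int_0^t\mathbf H[\d\mathbf S]$ is unambiguous. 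This follows because the agreement set contains the elementary processes and is closed under locally bounded convergence.
\end{enumerate}
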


\begin{proof}[Sketch of proof]
By breaking into components, it suffices to treat the $m = 1$ case.
In this case, we begin by arguing that if $\mathbf{H} = \begin{bmatrix}
    H_1 & \cdots & H_n
\end{bmatrix} \in \big(\EP_{\cT}^{2;2}\big)^{1 \times n}$, then
\begin{equation}
    \norm{\int_0^{\infty} \mathbf{H}(t)[\d \mathbf{S}(t)]}_{\infty} \leq 2\sqrt2\left(\int_0^{\infty} \sum_{j=1}^n \big\|H_j^{\C}(t)\big\|_{2;2}^2\,\d t\right)^\frac12,\label{eq.semicircineq}
\end{equation}
where $\into \mathbf{H}[\d \mathbf{S}] = \lim_{T \to \infty}\int_0^T \mathbf{H}[\d \mathbf{S}]$.
To this end, let $\mathbf{H} = \begin{bmatrix}
    H_1 & \cdots & H_n
\end{bmatrix} \in \big(\EP_{\cT}^{2;2}\big)^{1 \times n}$, and observe that
\[
\int_0^{\infty} \mathbf{H}(t)[\d \mathbf{S}(t)] = \int_0^{\infty} \mathbf{H}^{\C}(t)[\d \mathbf{S}(t)]
\]
because $\mathbf{S}$ is self-adjoint.
Now, by the definitions of the elementary stochastic integral and $\big(\EP_{\cT}^{2;2}\big)^{1 \times n}$ and a brief limiting argument, to establish \eqref{eq.semicircineq} for all $\mathbf{H} \in \big(\EP_{\cT}^{2;2}\big)^{1 \times n}$, it suffices to consider $\mathbf{H}$ such that $\mathbf{H}^{\C}$ is of the form
\[
\mathbf{H}^{\C} = \sum_{k=1}^{\ell} 1_{(t_{k-1},t_{k}]} \big(\mathbf{H}_{k}^1 + \mathbf{H}_{k}^2\big),
\]
where $0 = t_0 < \cdots < t_{\ell} < \infty$, $\mathbf{H}_{k}^1 = \begin{bmatrix} \#\big(u_1^{k}\big) & \cdots & \#\big(u_n^{k}\big)\end{bmatrix}$, $\mathbf{H}_{k}^2 = \begin{bmatrix} \mathfrak{t}\big(v_1^{k}\big) & \cdots & \mathfrak{t}\big(v_n^{k}\big)\end{bmatrix}$, and $u_j^{k}, v_j^{k} \in \cA_{t_{k-1}} \otimes \cA_{t_{k-1}}^{\op}$ ($k=1,\ldots,\ell$; $j=1,\ldots,n$).
Next, a brief calculation reveals that
\[
\int_0^{\infty} \big(1_{(t_{k-1},t_{k}]} \mathbf{H}_{k}^2\big)[\d \mathbf{S}] = 0 \qquad (k = 1,\ldots,\ell)
\]
because $S_j$ is a martingale for all $j=1,\ldots,n$.
Consequently, if
\[
U_j \coloneqq \sum_{k=1}^{\ell} 1_{(t_{k-1},t_k]} u_j^k \; \text{ and } \; V_j \coloneqq \sum_{k=1}^{\ell} 1_{(t_{k-1},t_k]} v_j^k \qquad (j=1,\ldots,n),
\]
then Theorem \ref{thm.multidimBSineq} and Proposition \ref{prop.mintensornormbound} yield
\begin{align*}
    \norm{\int_0^{\infty} \mathbf{H}(t)[\d \mathbf{S}(t)]}_{\infty}^2 & = \norm{\sum_{j=1}^n\int_0^{\infty} U_j(t) \sh \d S_j(t)}_{\infty}^2 \\
    & \leq 8\int_0^{\infty} \sum_{j=1}^n\norm{U_j(t)}_{\cA \otimes_{\min} \cA^{\op}}^2 \,\d t \\
    & \leq 8\int_0^{\infty} \sum_{j=1}^n\norm{\#(U_j(t)) + \mathfrak{t}(V_j(t))}_{2;2}^2 \,\d t \\
    & = 8\int_0^{\infty} \sum_{j=1}^n\big\|H_j^{\C}(t)\big\|_{2;2}^2 \,\d t,
\end{align*}
which is \eqref{eq.semicircineq}.
\pagebreak

As a result of the previous paragraph and Theorem \ref{thm.constructNCstochint}, we may speak of the stochastic integral $I_{\mathbf{S}} \colon \big(\cI_{\cT}^{2;2}\big)^{1 \times n} \to C_{a,\infty}$.
From its construction (in the proof of Theorem \ref{thm.constructNCstochint}) and \eqref{eq.semicircineq}, it is also easy to see that for all $\mathbf{H} \in \big(\cI_{\cT}^{2;2}\big)^{1 \times n}$ and $t \geq 0$,
\[
\norm{I_{\mathbf{S}}[\mathbf{H}](t)}_{\infty} \leq 2\sqrt{2}\left(\int_0^t \sum_{j=1}^n \big\|H_j^{\C}(s)\big\|_{2;2}^2\,\d s\right)^\frac12,
\]
as desired.
The remaining detail is to check that $I_{\mathbf{S}} \colon $ agrees with the stochastic integral $I_{\mathbf{S}}^{2;2} \colon \big(\cI^{2;2}\big)^{1 \times n} \to \GD_{\scA}^2$ from subsection \ref{subsec.MD} on $\big(\cI_{\cT}^{2;2}\big)^{1 \times n} \subseteq \big(\cI^{2;2}\big)^{1 \times n}$.
Indeed, the set
\[
\cS \coloneqq \big\{\mathbf{H} \in \big(\cI_{\cT}^{2;2}\big)^{1 \times n} : I_{\mathbf{S}}[\mathbf{H}] = I_{\mathbf{S}}^{2;2}[\mathbf{H}]\big\}
\]
contains $\big(\EP_{\cT}^{2;2}\big)^{1 \times n}$ and is closed under locally bounded convergence, so $\cS = \big(\cI_{\cT}^{2;2}\big)^{1 \times n}$.
\end{proof}

\subsection{The classical case}\label{subsec.classical}

Finally, we establish the relationship between our noncommutative notions of predictability and integrator to their classical counterparts.
Doing so requires some rather serious stochastic analysis.
We recommend reviewing subsection \ref{subsec.clintegrators} and consulting \cite{CW1990,RY1999,Bichteler2002,Protter2005} as needed.
Bichteler's perspective and methods from \cite{Bichteler2002} will be particularly important.

For the duration of this subsection, let $(\Om,\sF,(\sF_t)_{t \geq 0},P)$ be a classical filtered probability space satisfying the usual conditions, and take
\[
(\cA,(\cA_t)_{t \geq 0},\E) = (\cB,(\cB_t)_{t \geq 0}, \E_{\scB}) = (L^{\infty}(\Om,\sF,P), (L^{\infty}(\Om,\sF_t,P))_{t \geq 0}, \E_P).
\]
For a measurable space $(S,\sS)$, an $S$-valued stochastic process $X \colon \R_+ \times \Om \to S$ is called (\textbf{jointly}) \textbf{measurable} if $X$ is $\cB_{\R_+} \otimes \sF$--$\sS$ measurable.

At this time, the reader should also review the notation in \eqref{eq.Moperator}, \eqref{eq.HY}, and Example~\ref{ex.clpred}.

\begin{theorem}\label{thm.clpred}
Suppose $1 \leq q \leq p \leq \infty$ and $r = r(p,q) \in [1,\infty]$ satisfies $1/r+1/p = 1/q$.
Let $X \colon \R_+ \times \Om \to \C$ be a stochastic process, and assume that $\sup\left\{\norm{X_s}_r : 0 \leq s \leq t\right\} < \infty$ for all $t \geq 0$.
\begin{enumerate}[font=\normalfont,label=(\roman*)]
    \item If $X$ has a predictable modification and $p > q$ (i.e., $r \neq \infty$), then $H_X$ is an $\M^{p;q}$-predictable linear process.\label{item.Xpred=>HXpred}
    \item If $H_X$ is an $\M^{p;q}$-predictable linear process, then $X$ has a predictable modification.\label{item.HXpred=>Xpred}
\end{enumerate}
\end{theorem}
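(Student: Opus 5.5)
Both items rest on the isometry $M \colon L^r(\E_P) \to B^{p;q}$, $y \mapsto M_y$, together with the fact that $\cM_t^{p;q}$ is the image of $L^r(\Om,\sF_t,P)$. Locally bounded convergence in $\cI_{\M^{p;q}}$ is therefore the same as locally-in-time uniformly $L^r$-bounded, pointwise-in-time $L^r$-convergence of the underlying processes.

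For \ref{item.HXpred=>Xpred}, I would run a monotone-class argument on the $\cI$ side. Let $\cS$ be the set of $H \in \cI_{\M^{p;q}}$ such that $H = H_Y$ for some predictable $Y$ with $\sup_{s \le t}\norm{Y_s}_r < \infty$ for all $t$.
\begin{enumerate}
\item Elementary processes lie in $\cS$: choose $\sF_{s_i}$-measurable representatives of the $y_i$ and write down the corresponding process.
\item $\cS$ is closed under locally bounded convergence. Suppose $H_{Y_n} \to H$ locally boundedly. Then $H(t) = M_{y(t)}$ with $Y_n(t) \to y(t)$ in $L^r$ for each $t$, so one needs a predictable version of $y$. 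Define $Y \coloneqq \limsup_n \cRe Y_n + i \limsup_n \cIm Y_n$ along a subsequence. This fails as stated, because an $L^r$ limit for each fixed $t$ does not give a.s. convergence jointly in $(t,\om)$.
\item The fix is to pass to $L^0$-valued measurable selections. Use the criterion that an $L^0$-valued process $t \mapsto y(t)$ has a predictable modification if and only if it is the pointwise-in-probability limit of processes having predictable modifications. This is the standard ``measurable modification'' argument, as in Doob or Dellacherie--Meyer: for predictable $Y_n$, define $n_k(t)$ as the first index after which $P(|Y_m(t) - Y_{m'}(t)| > 2^{-k}) < 2^{-k}$ for all $m,m' \ge n_k(t)$. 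The map $t \mapsto P(\cdots)$ is Borel, so $n_k$ is a Borel function of $t$. Then $\lim_k Y_{n_k(t)}(t,\om)$ is predictable, since $\{(t,\om) : n_k(t) = j\} = B_j \times \Om$ with $B_j$ Borel, and $1_{B\times\Om}Z$ is predictable for Borel $B$ and predictable $Z$. Borel--Cantelli gives a.s. convergence for each $t$, to a limit equal to $y(t)$.
\end{enumerate}
With this, $\cS$ is closed, so $\cS = \cI_{\M^{p;q}}$.

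For \ref{item.Xpred=>HXpred}, let $\cK$ be the set of predictable $Y$ with $L^r$ norm bounded on compacts such that $H_Y \in \cI_{\M^{p;q}}$. The plan is to show $\cK$ contains all such processes.
\begin{enumerate}
\item Start with bounded predictable $Y$, using the multiplicative system theorem. The class of bounded predictable $Y$ with $H_Y \in \cI$ contains $1_{\{0\}\times F}$ and $1_{(s,t]\times F}$, which form a multiplicative class generating $\sP$. It is a vector space by Proposition \ref{prop.GpredisGadaptandmeas}. It is closed under bounded monotone (or bounded pointwise) convergence: if $Y_n \to Y$ boundedly on $\R_+\times\Om$, then for each $t$, $Y_n(t) \to Y(t)$ in $L^r$ by dominated convergence, and this is exactly where $r < \infty$ is needed. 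The norms $\norm{M_{Y_n(t)}}_{p;q} = \norm{Y_n(t)}_r$ are uniformly bounded, so $H_{Y_n} \to H_Y$ locally boundedly.
\item For general $Y$, truncate with $Y^{(N)} \coloneqq Y 1_{\{|Y| \le N\}}$. Then $\norm{Y^{(N)}(t)}_r \le \norm{Y(t)}_r$, which is locally bounded in $t$ by hypothesis, and $Y^{(N)}(t) \to Y(t)$ in $L^r$ for each $t$, again using $r < \infty$.
\item Modifications do not change $H_X$, so the result holds for $X$.
\end{enumerate}

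I expect the main obstacle to be the Borel selection in step 3 of \ref{item.HXpred=>Xpred}: choosing subsequence indices measurably in $t$ so that the pointwise a.s. limit is jointly predictable. Everything else is routine monotone-class bookkeeping.
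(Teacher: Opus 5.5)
Your proof of (i) is the paper's argument: apply the multiplicative system theorem to bounded predictable processes, with dominated convergence giving closure under bounded convergence (this is exactly where $r<\infty$ enters), then truncate, then use that $H_X$ depends only on the modification class. To apply the theorem verbatim, also record that your class contains the constants, and either work with real-valued processes or note closure under conjugation.

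Your proof of (ii) is correct, and it takes a genuinely different route from the paper.

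\textbf{The paper's route.} It does not run a monotone-class argument on $\cI_{\M^{p;q}}$. Instead it proceeds in four steps:
\begin{enumerate}
\item It reduces to $r<\infty$ via $\cI_{\M^{p;q}}\subseteq\cI_{\M^{\tilde p;\tilde q}}$.
\item It uses Proposition~\ref{prop.GpredisGadaptandmeas} to see that $H_X$ is $\M^{p;q}_-$-adapted and strongly measurable.
\item It converts the first property into $(\sF_{t-})$-adaptedness of $X$, using density of $\bigcup_{s<t}L^r(\Om,\sF_s,P)$ in $L^r(\Om,\sF_{t-},P)$ together with completeness. It converts the second, via Cohn's theorem, into a measurable modification.
\item It takes predictable projections of truncations to obtain a predictable modification.
\end{enumerate}

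\textbf{Your route.} You show directly that $\{H_Y : Y \text{ predictable}\}$ is closed under locally bounded convergence. The tool is a Borel-in-$t$ choice of a fast Cauchy subsequence, followed by Borel--Cantelli. Since each $\{n_k(t)=j\}$ has the form $B_j\times\Om$ with $B_j$ Borel, the diagonal limit stays predictable, not merely jointly measurable. So no projection theorem is needed.

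\textbf{Comparison.} Your argument is self-contained and handles $r=\infty$ without a reduction step. It also uses no hypothesis on the filtration, not even completeness, so it sharpens Remark~\ref{rem.completefiltration}. The paper's route instead relies on off-the-shelf theorems, and along the way it isolates the structural facts in Proposition~\ref{prop.GpredisGadaptandmeas}.

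\textbf{Details to fix when writing it up.}
\begin{itemize}
\item Set $Z_k(t,\om)\coloneqq Y_{n_k(t)}(t,\om)$. Define the limit as $\limsup_k \cRe Z_k + i\limsup_k \cIm Z_k$ on the set where both are finite, and $0$ elsewhere. You cannot simply take $\lim_k Z_k$, because for each fixed $t$ it exists only almost surely.
\item Note that $n_k(t)$ is automatically nondecreasing in $k$, because the defining sets shrink as $k$ grows.
\item If $n_k(t)$ is eventually constant, say equal to $N$, the defining inequalities force $Y_m(t)=Y_N(t)$ a.s.\ for all $m\ge N$. The limit is therefore still $y(t)$.
\end{itemize}
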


\begin{proof}
We take both items one at a time.

\ref{item.Xpred=>HXpred} We begin by showing that if $X$ is bounded and predictable, then $H_X \in \cI_{\M^{p;q}}$.
To this end, let $\cH$ be the set of bounded stochastic processes $X$ such that $H_X \in \cI_{\M^{p;q}}$, and let $\sP_0$ be as in \eqref{eq.P0}.
Clearly, $\cH$ is a complex-linear subspace of $\ell^{\infty}(\R_+ \times \Om)$ containing the constant functions.
Using the fact that $J_q\cM_t^{p;q}J_p = \cM_t^{p;q}$, where $J_{\alpha} \colon L^{\alpha}(\E) \to L^{\alpha}(\E)$ is the complex-conjugation operation, it is easy to check that $\cH$ is closed under complex conjugation.
Most importantly, the dominated convergence theorem and the fact that $\cI_{\M^{p;q}} \subseteq \ell_{\loc}^{\infty}(\R_+;B^{p;q})$ is closed under locally bounded convergence guarantee that $\cH \subseteq \ell^{\infty}(\R_+ \times \Om)$ is closed under bounded convergence.
(This is where we use that $r \neq \infty$.)
Since $\{1_A : A \in \sP_0\} \subseteq \cH$ and $\sP_0$ is a $\pi$-system (closed under intersection), the multiplicative system theorem (vid.\ \cite[Thm.\ I.21]{DM1975}) yields that $\cH$ contains all bounded $\sigma(\sP_0)$-measurable functions.
Since $\sigma(\sP_0) = \sP$, $\cH$ contains all bounded predictable processes.
\pagebreak

Finally, note that $H_X = H_Y$ if $Y$ is a modification of $X$, so it suffices to show that if $X$ is predictable and $\sup\left\{\norm{X_s}_r : 0 \leq s \leq t \right\} < \infty$ for all $t \geq 0$, then $H_X \in \cI_{\M^{p;q}}$.
To this end, let $n \in \N$ and $X^n \coloneqq 1_{\{(t,\om) \in \R_+ \times \Om : |X_t(\om)| \leq n\}} X$.
Since $X^n$ is bounded and predictable, $H_{X^n} \in \cI_{\M^{p;q}}$ by the previous paragraph.
Since $|X^n| \leq |X|$ and $X^n \to X$ pointwise as $n \to \infty$, the dominated convergence theorem and the fact that $\sup\left\{\norm{X_s}_r : 0 \leq s \leq t\right\} < \infty$ for all $t \geq 0$ guarantee that $(H_{X^n})_{n \in \N}$ converges locally boundedly to $H_X$.
(Here, we used again that $r \neq \infty$.)
Since $\cI_{\M^{p;q}}$ is closed under locally bounded convergence, we conclude that $H_X \in \cI_{\M^{p;q}}$, as desired.

\ref{item.HXpred=>Xpred} We break the technical proof of this part into four steps.
Suppose $H_X \in \cI_{\M^{p;q}}$.

\textbf{Step 1:} \emph{Reduce to the $p > q$ (i.e., $r \neq \infty$) case.}
To this end, observe that
\begin{equation}
    1 \leq \tilde{q} \leq q \leq p \leq \tilde{p} \leq \infty \implies \cI_{\M^{p;q}} \subseteq \cI_{\M^{\tilde{p},\tilde{q}}}.\label{eq.Mobs}
\end{equation}
(Note, however, the slight abuse of notation that amounts to viewing $B^{p;q}$ as contained in $B^{\tilde{p};\tilde{q}}$.)
Now, assume we know the desired result when $p > q$, i.e., $r \neq \infty$.
Let $X \colon \R_+ \times \Om \to \C$ be a stochastic process such that $\sup\left\{ \norm{X_s}_{\infty} : 0 \leq s \leq t \right\} < \infty$ for all $t \geq 0$ and $H_X \in \cI_{\M^{p;p}}$.
Choose $p_0$ and $p_1$ such that $1 \leq p_1 \leq p \leq p_0 \leq \infty$ and $p_0 > p_1$, and let $r_0 \coloneqq r(p_0,p_1)$ so that $1/r_0 +1/p_0 = 1/p_1$.
Of course, $1 \leq r_0 < \infty$, $\sup\left\{\norm{X_s}_{r_0} : 0 \leq s \leq t \right\} < \infty$ for all $t \geq 0$, and $H_X \in \cI_{\M^{p_0;p_1}}$ by \eqref{eq.Mobs}.
By assumption, we therefore know that $X$ has a predictable modification, as desired.

Henceforth, assume $p > q$, i.e., $r \neq \infty$.

\textbf{Step 2:} \emph{Show that $X$ is $(\sF_{t-})_{t \geq 0}$-adapted, where $\sF_{0-} \coloneqq \sF_0$ and $\sF_{t-} \coloneqq \sigma\big(\bigcup_{0 \leq s < t} \sF_s\big)$ for $t > 0$.}
To this end, recall from Proposition \ref{prop.GpredisGadaptandmeas} that $H_X$ is $\M_-^{p;q}$-adapted.
By a standard measure theory exercise (using that $r \neq \infty$), $\bigcup_{0 \leq s < t} L^r(\Om,\sF_s,P)$ is dense in $L^r(\Om,\sF_{t-},P)$ for all $t > 0$.
It follows that $\M_-^{p;q} = (\{M_a : a \in L^r(\Om,\sF_{t-},P)\})_{t \geq 0}$.
Unraveling the definitions, we see that $X$ has an $(\sF_{t-})_{t \geq 0}$-adapted modification.
Since $(\sF_t)_{t \geq 0}$ is complete, $(\sF_{t-})_{t \geq 0}$ is complete.
Thus, $X$ itself is $(\sF_{t-})_{t \geq 0}$-adapted, as desired.

\textbf{Step 3:} \emph{Show that $X$ has a measurable modification.}
To begin, we claim that the function $x \colon \R_+ \to L^r \coloneqq L^r(\Om,\sF,P)$ defined by $t \mapsto X_t$ is strongly measurable.
Indeed, by Proposition \ref{prop.GpredisGadaptandmeas}, $H_X \colon \R_+ \to \cM_{\infty}^{p;q}$ is strongly measurable (with respect to $\norm{\cdot}_{p;q}$).
Since $\cM_{\infty}^{p;q} \subseteq \{M_a : a \in L^r\} =\vcentcolon \cM$ and the map $M \colon L^r \to \cM$ defined by $a \mapsto M_a$ is an isometric isomorphism, we conclude that $M^{-1} \circ \iota \circ H_X \colon \R_+ \to L^r$ is strongly measurable, where $\iota$ is the inclusion of $\cM_{\infty}^{p;q}$ into $\cM$.
Unraveling the definitions, we see that $x = M^{-1} \circ \iota \circ H_X$, so the claim is true.

Next, let $L^0$ be the space of $P$-a.e.\ equivalence classes of $\sF$--$\cB_{\C}$-measurable functions from $\Om$ to $\C$, endowed with the topology of convergence in probability.
The inclusion $\iota_r \colon L^r \hookrightarrow L^0$ is continuous and thus Borel measurable.
Also, since $x \colon \R_+ \to L^r$ strongly measurable, it is Borel measurable and has a separable image.
Thus, $\iota_r \circ x \colon \R_+ \to L^0$ is Borel measurable and has a separable image.
Since $\iota_r \circ x$ is just the function $\R_+ \ni t \mapsto X_t \in L^0$, we conclude from \cite[Thm.\ 3]{Cohn1972} that $X$ has a measurable modification, as desired.

\textbf{Step 4:} \emph{Complete the proof by showing that $X$ has a predictable modification.}
By the third step, $X$ has a measurable modification, so we may and do assume that $X$ itself is measurable.
By the second step (and the completeness of the filtration), $X$ is also $(\sF_{t-})_{t \geq 0}$-adapted.
We show that these two conditions, measurability and $(\sF_{t-})_{t \geq 0}$-adaptedness, imply the existence of a predictable modification.
By working with real and imaginary parts, we may and do assume without loss of generality that $X$ is real valued.

Let $n \in \N$, and define $X^n \coloneqq 1_{\{(t,\om) \in \R_+ \times \Om : |X_t(\om)| \leq n\}} X$.
Then $X^n$ is bounded and measurable.
By the predictable projection theorem (vid.\ \cite[Thm.\ V.5.6]{RY1999}), a predictable projection ${}^{\mathrm{p}}X^n$ of $X^n$ exists.
For the purposes of this proof, what is important to know about ${}^{\mathrm{p}}X^n$ is that it is a predictable process and for each $t \geq 0$,
\[
{}^{\mathrm{p}}X_t^n = \E[X_t^n \mid \sF_{t-}]
\]
almost surely.
Since $X^n$ is $(\sF_{t-})_{t \geq 0}$-adapted, $X_t^n =\E[X_t^n \mid \sF_{t-}]$.
Thus, ${}^{\mathrm{p}}X^n$ is a predictable modification of $X^n$.

Finally, let $Z \coloneqq \limsup_{n \to \infty} {}^{\mathrm{p}}X^n$ and $Y \coloneqq 1_{\{(t,\om) \in \R_+ \times \Om : |Z(t,\om)| < \infty\}} Z$.
We claim that $Y$ is a predictable modification of $X$.
Certainly, $Y$ is a predictable process.
Now, let $t \geq 0$.
Almost surely, for all $n \in \N$, ${}^{\mathrm{p}}X_t^n = X_t^n$.
Since $X^n \to X$ pointwise as $n \to \infty$, it follows that for $P$-a.e.\ $\om \in \Om$,
\[
Y_t(\om) = Z_t(\om) = \lim_{n \to \infty} {}^{\mathrm{p}}X_t^n(\om) = \lim_{n \to \infty} X_t^n(\om) = X_t(\om).
\]
Thus, $Y$ is, indeed, a modification of $X$.
Finally, we are done.
\end{proof}

\begin{remark}\label{rem.completefiltration}
The proof of Theorem \ref{thm.clpred}\ref{item.Xpred=>HXpred} did not use either of the usual conditions.
They were used in the proof of Theorem \ref{thm.clpred}\ref{item.HXpred=>Xpred} only to invoke the predictable projection theorem.
Though all standard references with which we are familiar assume the filtration is right-continuous as well as complete in the statement of the predictable projection theorem, only the completeness assumption is necessary.
To our knowledge, the blog post \cite{Lowther2017} is the only place in which this is written down.
Thus, Theorem \ref{thm.clpred}\ref{item.HXpred=>Xpred} is true if $(\sF_t)_{t \geq 0}$ is only assumed to be complete.
\end{remark}

\begin{theorem}\label{thm.clvsncstochint}
Suppose $1 \leq p < \infty$ and $1 \leq q \leq \infty$.
Let $X \colon \R_+ \times \Om \to \C$ be an $L^p$ stochastic process, and write $x \colon \R_+ \to L^p(\E)$ for the map $t \mapsto X_t$.
\begin{enumerate}[font=\normalfont,label=(\roman*)]
    \item If $x$ is a noncommutative $L^p$-integrator with respect to $\M^{q;q}$, then $X$ has a modification that is a classical $L^p$-integrator.\label{item.ncint=>clint}
    \item If $X$ is a classical $L^p$-integrator such that $I_X^c[Y] = I_X^c[Z]$ whenever $Y,Z \in \sP_{\mathrm{bct}}$ are modifications of each other, then $x$ is a noncommutative $L^p$-integrator with respect to $\M^{q;q}$.
    Furthermore, if $H \in \cI_{\M^{q;q}}$, then there exists a $Y \in \sP_{\mathrm{bct}}$ such that $H=H_Y$, and $I_x[H](t) = I_X^c[Y]_t$ for all $t \geq 0$ and any such $Y$.\label{item.clint=>ncint}
\end{enumerate}
\end{theorem}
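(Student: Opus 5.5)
The plan is to pass between the two integrals via the map $Y \mapsto H_Y$, using Theorem \ref{thm.clpred} to compare predictability classes and the multiplicative system theorem to extend identities from elementary integrands.

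\textbf{Item (i).} The classical integral will be built from $I_x$. Assume $x$ is a noncommutative $L^p$-integrator with respect to $\M^{q;q}$, with $q=\infty$ allowed. For a bounded predictable process $Y$, the process $H_Y$ lies in $\cI_{\M^{q;q}}$. I would check this by repeating the multiplicative-system argument from the proof of Theorem \ref{thm.clpred}(i). For bounded $Y$ that argument does not need $r<\infty$ in our setting, because $\cM^{q;q}_t=\{M_y : y\in L^\infty(\sF_t)\}$. The needed closure property is that if $Y_n\to Y$ boundedly pointwise, then $M_{Y_n(t)}\to M_{Y(t)}$ in $\norm{\cdot}_{q;q}$. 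That can fail, so the norm convergence is unavailable.

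To avoid it, I would use closure under \emph{bounded} pointwise limits only. Let $\cH$ be the set of bounded $Y\in\sP_{\mathrm{bct}}$ for which $H_Y\in\cI_{\M^{q;q}}$ and $I_x[H_Y]$ has a representative, in the sense of Bichteler, that $L^p$-defines a classical integral continuous under bounded convergence. Alternatively, I would work directly with Bichteler's criterion. By \cite[Ch.\ 2--3]{Bichteler2002}, $X$ has a modification that is an $L^p$-integrator once the elementary integral map $\sEP\ni Y\mapsto \into Y\,\d X$ is bounded from the uniform norm to $L^p$ on each $[0,t]$, i.e.\ ``$L^p$-bounded,'' together with right-continuity in probability of $X$.

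Boundedness comes from the noncommutative side. $\sEP\ni Y\mapsto H_Y\in\EP_{\M^{q;q}}$ is uniform-to-$\norm{\cdot}_{q;q}$ contractive, and $I_x$ is continuous under locally bounded convergence. A standard argument (suppose not, rescale a bad sequence to converge boundedly to $0$) then gives $\sup\{\norm{\int_0^t Y\,\d X}_p : Y\in\sEP,\ |Y|\le 1\}<\infty$. Right-continuity of $x$ in $L^p$ follows from $x-x(0)=I_x[1_{(0,\cdot]}]\in R_{a,p}$. Bichteler's theory then produces an RCLL modification of $X$ that is a classical $L^p$-integrator. I expect this to be the main obstacle: matching the exact form of Bichteler's sufficient condition, including the RCLL modification supplied by his regularization theorem.

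\textbf{Item (ii).} Define $I_x[H]:=I_X^c[Y]$ (its modification class) for $H=H_Y$. The first task is to show every $H\in\cI_{\M^{q;q}}$ equals $H_Y$ for some $Y\in\sP_{\mathrm{bct}}$. Theorem \ref{thm.clpred}(ii) gives a predictable modification $Y$ of the process represented by $H$, with $r=\infty$. Boundedness on compact time intervals holds because $\sup_{s\le t}\norm{Y_s}_\infty<\infty$. Modifying $Y$ on a predictable null-in-each-fiber set, $Y1_{\{|Y|\le c_t\}}$ on $[0,t]$, keeps it predictable and makes it genuinely bounded.

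The definition is well-posed by the hypothesis that $I_X^c$ depends only on modification classes. Agreement on $\EP$ is immediate. For continuity, let $H_n\to H$ locally boundedly with $H_n=H_{Y_n}$. Then $Y_n(t)\to Y(t)$ in $L^\infty$ for each $t$, hence in probability, and truncation keeps the family uniformly bounded on compacts.

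Pointwise a.s.\ convergence in $(t,\omega)$ is needed for $I_X^c$. I would pass through Bichteler's dominated convergence in his Daniell mean, which permits convergence ``almost everywhere'' in the $\ast$-mean, since the Daniell mean controls convergence along each fixed time. Alternatively, I would argue by contradiction with a subsequence and use Fubini-type selection to get a.e.\ convergence off an evanescent-in-mean set. Once continuity holds, uniqueness from Theorem \ref{thm.abstractstochint} yields the stated formula $I_x[H](t)=I_X^c[Y]_t$.
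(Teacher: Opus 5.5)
Your part (i) follows the paper. You invoke Bichteler's characterization, get $L^p$-boundedness of the elementary integral on $\{Y\in\sEP : |Y|\le1\}$ by rescaling a bad sequence to converge boundedly to $0$, and get right-continuity in probability from $x-x(0)=I_x[1_{(0,\infty)}M_1]\in R_{a,p}$. Right-continuity is what the hypothesis gives and what Bichteler needs; the paper's ``$x$ is continuous'' overstates it.

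Drop the opening detour, because its first sentence is false here, where $r=\infty$. If $\sF_0$ supports a $\tau>0$ with diffuse law and $Y=1_{\{(t,\om):t\le\tau(\om)\}}$, then $t\mapsto Y_t\in L^\infty$ has non-separable range. Hence $H_Y\notin\cI_{\M^{q;q}}$ by Proposition~\ref{prop.GpredisGadaptandmeas}.

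In (ii), building a bounded predictable representative via Theorem~\ref{thm.clpred}(ii) plus truncation also matches the paper, as does well-definedness via the modification hypothesis. Add one line: $\norm{|I_X^c[Y]|_t^*}_p<\infty$ and dominated convergence put $t\mapsto I_X^c[Y]_t$ in $R_{a,p}$, which is all Definition~\ref{def.NCintegrator} requires. Continuity in $t$ can fail under these hypotheses, e.g.\ for deterministic $X=1_{[1,\infty)}$.

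The genuine gap is the last step of (ii): continuity of $I_x$ under locally bounded convergence. You correctly note that $I_X^c$ needs bounded convergence at every $(t,\om)$, while you only know $Z^n_t\to Z_t$ a.s.\ for each fixed $t$. Neither proposed remedy is an argument, however.

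The claim that the Daniell mean ``controls convergence along each fixed time'' is false for general $L^p$-integrators. Take $\tau$ as above and $X_t=1_{\{\tau\le t\}}$. The predictable set $N=\{(s,\om):s=\tau(\om)\}$ has $P$-null time sections, yet $I_X^c[1_N]_t=1_{\{\tau\le t\}}\neq0$, so $N$ is not negligible. Sets with null time sections are invisible to $I_X^c$ only because of the modification hypothesis of (ii), which you never use at this point. The ``Fubini-type selection'' alternative runs into the same issue.

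The missing idea is elementary and needs no Daniell mean. Take predictable $Z^n,Z$ representing $H_n,H$, bounded by a common $D_t$ on $[0,t]$, and let $A\coloneqq\{(t,\om):\lim_kZ^k_t(\om)=Z_t(\om)\}$, which is predictable.

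\begin{enumerate}
\item Since $\norm{Z^n_t-Z_t}_\infty=\norm{H_n(t)-H(t)}_{q;q}\to0$, every time section of $A$ has full probability.
\item So $1_AZ^n$ and $1_AZ$ are modifications of $Z^n$ and $Z$. They still represent $H_n$ and $H$, and by hypothesis they have the same classical integrals.
\item Now $1_AZ^n\to1_AZ$ boundedly on compact time intervals at every point.
\item The defining continuity of $I_X^c$ then gives $\norm{|I_X^c[1_AZ^n]-I_X^c[1_AZ]|_t^*}_p\to0$, i.e.\ $I_x[H_n]\to I_x[H]$ in $R_{a,p}$.
\end{enumerate}

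This is where the hypothesis of (ii) does its real work. So this step---not matching Bichteler's criterion in (i), which you flagged as the main obstacle---is the actual crux.
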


\begin{proof}
We take both items one at a time.

\ref{item.ncint=>clint} By definition of noncommutative integrator, $x \colon \R_+ \to L^p(\E) = L^p(P)$ is continuous.
In other words, if $t \geq 0$, then $\norm{X_t-X_s}_p = \norm{x(t) - x(s)}_p \to 0$ as $s \to t$.
Since $L^p$ convergence implies convergence in probability, $X$ is continuous in probability.

To see that $X$ is a classical $L^p$-integrator, we use Bichteler's characterization of $L^p$-integrators from \cite{Bichteler2002}.
Specifically, it suffices to prove the following contrapositive:
If there is some $t \geq 0$ such that $\big\{I_X^{c,0}[Y]_t : Y \in \sEP \text{ and } |Y| \leq 1\big\}$ is not bounded in $L^p(P)$, then $x$ is not a noncommutative $(\M^{q;q},L^p)$-integrator.
Indeed, if so, there exists a sequence $(Y^n)_{n \in \N}$ in $\sEP$ such that $|Y^n| \leq 1$ and $\big\|I_X^{c,0}[Y^n]_t\big\|_p > n$ for all $n \in \N$.
If $Z^n \coloneqq Y^n/n$ and $H_n \coloneqq H_{Z^n} \in \EP_{\M^{q;q}}$ for all $n \in \N$, then $H_n \to 0$ boundedly as $n \to \infty$, and
\[
\norm{I_x^0[H_n](t)}_p = \big\|I_X^{c,0}[Z^n]_t\big\|_p > 1 \qquad (n \in \N).
\]
Consequently, it is impossible that $I_x^0$ extends to a map defined on $\cI_{\M^{q;q}}$ that is continuous with respect to locally bounded convergence.
In other words, $x$ is not a noncommutative $(\M^{q;q},L^p)$-integrator, as desired.

\ref{item.clint=>ncint} Let $H \in \cI_{\M^{q;q}}$, and define $C_t \coloneqq \sup \big\{ \norm{H(s)}_{q;q} : 0 \leq s \leq t \big\}$ for all $t \geq 0$.
By definition of $\M^{q;q}$, if $t \geq 0$, there exists a $y(t) \in L^{\infty}(\Om,\sF_t,P)$ such that $H(t) = M_{y(t)}$.
Let $U \colon \R_+ \times \Om \to \C$ be a stochastic process such that for all $t \geq 0$, $U_t = y(t)$ in $L^{\infty}(\E) = L^{\infty}(P)$.
Since $H_U = H \in \cI_{\M^{q;q}}$, Theorem \ref{thm.clpred}\ref{item.HXpred=>Xpred} provides a predictable modification $Z$ of $U$.
If
\[
Y \coloneqq Z\,1_{\{(t,\om) \in \R_+ \times \Om : |Z_t(\om)| \leq C_t\}},
\]
then $Y$ is predictable, and $\sup\left\{|Y_s| : 0 \leq s \leq t\right\} \leq C_t$.
We claim that $Y$ is a modification of $Z$.
Indeed, if $t \geq 0$, then $Y_t = Z_t\,1_{\{\om \in \Om : |Z_t(\om)| \leq C_t\}}$.
Since $H_Z = H$, $\norm{Z_t}_{\infty} = \norm{H(t)}_{q;q} \leq C_t$.
Thus, $|Z_t| \leq C_t$ almost surely, so $Y_t = Z_t$ almost surely, as claimed.
In particular, $H = H_Y$.
This establishes that there exists a $Y \in \mathrm{b}\sP_{\loc}$ such that $H = H_Y$ and $|Y_t| \leq C_t$ for all~$t \geq 0$.

Next, for any $Y \in \mathrm{b}\sP_{\loc}$ such that $H=H_Y$, define $I_x[H](t) \coloneqq I_X^c[Y]_t \in L^p(\Om,\sF_t,P)$ for all $t \geq 0$.
This definition is independent of the choice of $Y$ by the assumption that $I_X^c[Y] = I_X^c[Z]$ whenever $Z \in \mathrm{b}\sP_{\loc}$ is a modification of $Y$.
Also, it follows easily from the definitions that if $H \in \EP_{\M^{q;q}}$, then $I_x[H] = I_x^0[H]$.

It remains to show that $I_x[H] \colon \R_+ \to L^p(\E) = L^p(P)$ is continuous for all $H \in \cI_{\M^{q;q}}$ and $I_x \colon \cI_{\M^{q;q}} \to C_a(\R_+;L^p(\E))$ is continuous with respect to locally bounded convergence.
The right-continuity of $I_x[H]$ follows easily from the fact that $I_X^c[Y] \in \cR_{a,p}$, as we encourage the reader to ponder.
To establish the left-continuity of $I_x[H]$, let $t > 0$ and $(s_n)_{n \in \N}$ be a sequence in $\R_+$ such that $s_n \nearrow t$ as $n \to \infty$.
By the bounded convergence theorem for $L^p$-integrators,
\[
I_x[H](t) - I_x[H](s_n) = I_X^c[Y]_t - I_X^c[Y]_{s_n} = I_X^c[Y\,1_{(s_n,t] \times \Om}]_t \xrightarrow{n \to \infty} I_X^c[Y\,1_{\{t\} \times \Om}]_t
\]
in $L^p(\E) = L^p(P)$.
By another bounded convergence theorem argument, the fact that $X$ is continuous in probability implies that for any $Z \in \mathrm{b}\sP_{\loc}$, $I_X^c[Z\,1_{\{t\} \times \Om}]_t = 0$ almost surely.
(Sketch of proof:
First, check this when $Z \in \sEP$.
Then upgrade it to the general case using the multiplicative system theorem and the bounded convergence theorem for $L^p$-integrators.)
Thus, $I_x[H] \colon \R_+ \to L^p(\E)$ is continuous.

Finally, suppose $(H_n)_{n \in \N}$ is a sequence in $\cI_{\M^{q;q}}$ converging locally boundedly to $H$, let $D_t \coloneqq \sup\big\{ \norm{H_n(s)}_{q;q} : 0 \leq s \leq t\big\}$ for all $t \geq 0$, let $(Z^n)_{n \in \N}$ be a sequence in $\mathrm{b}\sP_{\loc}$ such that $|Z_t^n| \leq D_t$ for all $t \geq 0$ and $n \in \N$, and let $Z \in \mathrm{b}\sP_{\loc}$ be such that $|Z_t| \leq D_t$ for all $t \geq 0$.
Define
\[
Y \coloneqq Z\,1_{\left\{(t,\om) \in \R_+ \times \Om : \lim_{k \to \infty} Z_t^k(\om) = Z_t(\om)  \right\}} \; \text{ and } \; Y^n \coloneqq Z^n \,1_{\left\{(t,\om) \in \R_+ \times \Om : \lim_{k \to \infty} Z_t^k(\om) = Z_t(\om)  \right\}}
\]
for all $n \in \N$.
Note that $Y,Y^n \in \mathrm{b}\sP_{\loc}$ and, for each $t \geq 0$, $Y^n\,1_{[0,t] \times \Om} \to Y\,1_{[0,t] \times \Om}$ boundedly as $n \to \infty$ by design.
Next, note that if $t \geq 0$, then
\[
\norm{Z_t^n-Z_t}_{\infty} = \norm{H_{Z^n}(t) - H_Z(t)}_{q;q} = \norm{H_n(t) - H(t)}_{q;q} \xrightarrow{n \to \infty} 0.
\]
In particular, $Z_t^n \to Z_t$ almost surely as $n \to \infty$, i.e.,
\[
\left\{ \om \in \Om : \lim_{n \to \infty} Z_t^n(\om) = Z_t(\om)\right\}
\]
has probability one.
It follows that $Y$ is a modification of $Z$ and $Y^k$ is a modification of $Z^k$.
Consequently, by the bounded convergence theorem for $L^p$-integrators,
\[
\sup_{0 \leq s \leq t}\norm{I_x[H_n](s) - I_x[H](s)}_p = \sup_{0 \leq s \leq t} \norm{I_X^c[Y^n]_s - I_X^c[Y]_s} \leq \norm{I_X^c[Y^n-Y]_t^*}_p \xrightarrow{n \to \infty} 0,
\]
as desired.
This completes the proof.
\end{proof}

\begin{remark}\label{rem.fullconverse?}
We suspect that the full conclusion in \ref{item.ncint=>clint} should be that $X$ has a modification $U$ that is a classical $L^p$-integrator with the property that $I_U^c[Y] = I_U^c[Z]$ whenever $Y,Z \in \sP_{\mathrm{bct}}$ are modifications of each other and that $I_U^c[Y]_t = I_x[H_Y](t)$ for all $t \geq 0$ and $Y \in \sP_{\mathrm{bct}}$ such that $H_Y \in \M^{q;q}$.
However, it seems that proving this would require reworking much of the development in \cite[Ch.\ 3]{Bichteler2002}.
We have opted not to attempt to do so in this paper, as it would take us too far afield.
\end{remark}

\begin{example}\label{ex.Doleanmeasac}
Let $X \colon \R_+ \times \Om \to \R$ be a continuous $L^2$-martingale and $\mu_X$ be its Dol\'eans measure (vid.\ \cite[\S2.4]{CW1990}).
If $\mu_X \ll \lambda \otimes P$, where $\lambda$ is the Lebesgue measure on $\R_+$, then $I_X^c[Y] = I_X^c[Z]$ whenever $Y,Z \in \mathrm{b}\sP_{\loc}$ are modifications of each other.
Equivalently, $I_X^c[Y] = 0$ in $\cR_{a,2}$ whenever $Y \in \mathrm{b}\sP_{\loc}$ is a modification of the zero process.
Indeed, if $Y$ is a modification of the zero process, then $Y=0$ $\lambda\otimes P$-a.e.
By assumption, this means $Y = 0$ $\mu_X$-a.e.
Consequently, by the It\^o isometry,
\[
\norm{I_X^c[Y]_t}_{L^2(\Om,\sF,P)} = \norm{Y\,1_{[0,t] \times \Om}}_{L^2(\R_+ \times \Om,\sP,\mu_X)} = 0.
\]
Since RCLL processes are modifications of each other if and only if they are indistinguishable from each other, it follows that $I_X^c[Y] = 0$ in $\cR_{a,p}$.
Thus, Theorem \ref{thm.clvsncstochint}\ref{item.clint=>ncint} applies to $X$.
In particular, since the Dol\'eans measure of Brownian motion is $\lambda \otimes P$, Theorem \ref{thm.clvsncstochint}\ref{item.clint=>ncint} applies to Brownian motion.
\end{example}

Actually, by choosing a different norm, one can do better in the case of Brownian motion.
To see this, we recall the classical Burkholder--Davis--Gundy (BDG) inequalities.

\begin{theorem}[BDG inequalities {\cite[Thm.\ 1.1]{MR2016}}]\label{thm.BDG}
There exist increasing families $(\gamma_p)_{p \geq 1}$ and $(\delta_p)_{p \geq 1}$ of strictly positive constants such that if $1 < p < \infty$ and $M \colon \R_+ \times \Om \to \C$ is an RCLL local martingale such that $M_0 = 0$, then
\[
\gamma_p^{-1}\norm{\big[\overline{M},M\big]_t}_{\frac{p}{2}}^{\frac12} \leq \norm{\left|M\right|_t^*}_{p} \leq \delta_p\norm{\big[\overline{M},M\big]_t}_{\frac{p}{2}}^{\frac12} \qquad (t \geq 0),
\]
where $\left|M\right|_t^* = \sup\left\{ |M_s| : 0 \leq s \leq t\right\}$ and
\[
\big[\overline{M},M\big]_t = L^0\text{-}\lim_{\Pi \in \cP_{[0,t]}} \sum_{s \in \Pi} \Delta_s\overline{M} \Delta_sM = L^0\text{-}\lim_{\Pi \in \cP_{[0,t]}} \sum_{s \in \Pi} |\Delta_sM|^2 \qquad (t \geq 0)
\]
is the quadratic covariation of $\overline{M}$ with $M$.
(Above, $L^0$\text{-}$\lim$ denotes a limit in probability.)
\end{theorem}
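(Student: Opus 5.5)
The plan is to reduce to the real-valued BDG inequalities for RCLL local martingales, which are standard (e.g., \cite[Thm.\ IV.48]{Protter2005}, valid for all $1 \leq p < \infty$), and then absorb the complex structure into the constants.

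\textbf{Step 1: split into real and imaginary parts.} Write $M = U + iV$ with $U \coloneqq \cRe M$ and $V \coloneqq \cIm M$. Both are real RCLL local martingales with $U_0 = V_0 = 0$. Since $|\Delta_s M|^2 = (\Delta_s U)^2 + (\Delta_s V)^2$, the limit in probability defining $\big[\overline{M},M\big]_t$ exists and equals $[U,U]_t + [V,V]_t$. Existence of $[U,U]$ as such a limit, including for discontinuous $U$, is part of the standard theory: $U^2 - 2\int U_-\,\d U$ is the limit of the Riemann sums.

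\textbf{Step 2: the upper bound.} Use $|M|_t^* \leq |U|_t^* + |V|_t^*$. The real BDG inequality gives $\norm{|U|_t^*}_p \leq \delta_p' \norm{[U,U]_t^{1/2}}_p$, and $[U,U]_t \leq \big[\overline{M},M\big]_t$ pointwise. The same holds for $V$. Summing,
\[
\norm{|M|_t^*}_p \leq 2\delta_p' \norm{\big[\overline{M},M\big]_t}_{p/2}^{1/2}.
\]

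\textbf{Step 3: the lower bound.} When $p<2$, $\norm{\cdot}_{p/2}$ is only a quasi-norm, so I avoid adding inside it. Instead I use the pointwise inequality
\[
\big[\overline{M},M\big]_t^{1/2} = \big([U,U]_t + [V,V]_t\big)^{1/2} \leq [U,U]_t^{1/2} + [V,V]_t^{1/2}.
\]
Taking $L^p$ norms (a genuine norm since $p>1$) and applying the real lower BDG bound to $U$ and $V$ gives
\[
\norm{\big[\overline{M},M\big]_t}_{p/2}^{1/2} \leq \gamma_p' \big(\norm{|U|_t^*}_p + \norm{|V|_t^*}_p\big) \leq 2\gamma_p' \norm{|M|_t^*}_p,
\]
using $|U|_t^*, |V|_t^* \leq |M|_t^*$.

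\textbf{Step 4: the constants.} Set $\gamma_p \coloneqq 2\gamma_p'$ and $\delta_p \coloneqq 2\delta_p'$. If the real constants are not already increasing in $p$, replace them by $\sup_{1 \leq p' \leq p}$ of themselves. This is finite because the standard constants are locally bounded in $p$, for instance $\delta_p' = O(p)$ from Doob's inequality combined with Burkholder's square-function inequality.

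\textbf{Main obstacle.} All of the substance lies in the real-valued inequality for discontinuous local martingales with $1<p<2$. If I had to prove it rather than cite it, I would proceed as follows:
\begin{itemize}
\item Localize so that $M$ is a bounded martingale.
\item Apply Burkholder's discrete-time square-function inequalities, which hold for every $1<p<\infty$, to the sampled martingale along partitions of $[0,t]$.
\item Combine with Doob's maximal inequality to pass from $\norm{M_t}_p$ to $\norm{|M|_t^*}_p$.
\item Let the mesh tend to $0$, using the convergence in probability of the discrete square functions to $[M,M]_t$ together with Fatou's lemma and uniform integrability of the discrete square functions in $L^{p/2}$.
\end{itemize}
The delicate point is this limit for $p<2$, where uniform integrability must be extracted from the discrete inequality itself, applied at a slightly larger exponent after localization.
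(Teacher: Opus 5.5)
Your Steps 1--3 are correct. Splitting $M=U+iV$ and identifying $[\overline{M},M]_t=[U,U]_t+[V,V]_t$ as a sum of two limits in probability is fine. Combining $|M|_t^*\le|U|_t^*+|V|_t^*$, $|U|_t^*,|V|_t^*\le|M|_t^*$, $[U,U]_t\le[\overline{M},M]_t$ and $\sqrt{a+b}\le\sqrt{a}+\sqrt{b}$ with Minkowski's inequality in $L^p$ then gives both inequalities with constants $2\delta_p'$ and $2\gamma_p'$. You were right to avoid the quasi-norm $\|\cdot\|_{p/2}$ when $p<2$. The paper gives no argument of its own here; it takes the complex-valued statement directly from \cite[Thm.~1.1]{MR2016}. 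Your reduction is a legitimate alternative that uses only the textbook real-valued inequality, at the cost of a factor $2$.

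The step that does not work as written is Step 4. The families must be increasing on $[1,\infty)$, so any admissible choice has $\delta_p\le\delta_2$ and $\gamma_p\le\gamma_2$ for $1<p\le2$. You therefore need real constants that stay bounded as $p\downarrow1$; otherwise $\sup_{1\le p'\le p}$ is $+\infty$. Knowing the real inequality holds at each $p\ge1$ separately does not give this.

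The constants you point to do not have this property. Doob contributes $p/(p-1)$, and Burkholder's square-function constant is $p^*-1=1/(p-1)$ for $p<2$. So the $\delta_p'$ and $\gamma_p'$ produced this way blow up at $p=1$; your bound $O(p)$ only describes $p\to\infty$. This is intrinsic to that route, which passes through $\|N_t\|_p$, and both Doob's inequality and the bound of the square function by $\|N_t\|_p$ fail at $p=1$.

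Take the real constants instead from an argument that already covers $p=1$. For a real local martingale $N$ with $N_0=0$, the conditional Davis inequality together with the Garsia--Neveu lemma (applied to $A=|N|^*$ and to $A=[N,N]^{1/2}$) gives $\|\,|N|_t^*\|_p\le p(C_1+1)\|[N,N]_t^{1/2}\|_p$ and $\|[N,N]_t^{1/2}\|_p\le 2p(c_1+1)\|\,|N|_t^*\|_p$ for all $p\ge1$. Here $C_1,c_1$ are Davis's constants, and these bounds are already increasing in $p$.

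Your closing sketch has the same defect. It also begins by localizing $M$ to a bounded martingale, which is not possible in general: an RCLL local martingale with unbounded jumps need not be locally bounded. That is why the discontinuous case is usually handled through the Davis decomposition or the Garsia--Neveu argument, not by sampling a bounded martingale.
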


Now, suppose $2 \leq p  < \infty$ and $B \colon \R_+ \times \Om \to \R$ is a standard real Brownian motion.
If $Y \in \sEP$, then $M \coloneqq I_B^c[Y] = \into Y\,\d B$ is a continuous $L^p$-martingale, and
\[
\big[\overline{M},M\big]_t = \int_0^t |Y_s|^2 \,\d s \qquad (t \geq 0).
\]
Consequently, if $b \colon \R_+ \to L^p(\E)$ is the map $t\mapsto B_t$ and $H_Y \colon \R_+ \to B^{p;1}$ is the map $t \mapsto M_{Y_t} = (x \mapsto Y_tx)$, then BDG inequalities and Minkowski's integral inequality yield
\begin{align*}
    \norm{I_b[H_Y](t)}_p & \leq \norm{\left| \into Y\,\d B \right|_t^*}_p \leq \delta_p\norm{\int_0^t |Y_s|^2\,\d s}_{\frac{p}{2}}^\frac12 \\
    & \leq \delta_p\left(\int_0^t \norm{Y_s}_p^2\,\d s\right)^\frac12 = \delta_p\left(\int_0^t \norm{H_Y(s)}_{\infty;p}^2\,\d s\right)^\frac12
\end{align*}
for all $t \geq 0$.
It follows from Theorem \ref{thm.constructNCstochint} that $b$ is an $L^p$-integrator with respect to $\M^{\infty;p}$, viewed as a compatible normed filtration of $B^{p;1}$ with the norm $\norm{M_y} \coloneqq \norm{y}_p = \norm{M_y}_{\infty;p}$.
(Note that $M_yx = yx \in L^1(P)$ whenever $x,y \in L^p(P)$ because $p \geq 2$, so viewing $\M^{\infty;p}$ as a compatible normed filtration of $B^{p;1}$ is possible.)
Also, write $\sP^p$ for the set of all predictable processes $Y \colon \R_+ \times \Om \to \C$ with the property that $\sup\big\{ \norm{Y_s}_p : 0 \leq s \leq t\big\} < \infty$ for all $t \geq 0$.
Since
\[
\{H_Y : Y \in \mathrm{b}\sP_{\loc}\} \subseteq \left\{ H_Y : Y \in \sP^p\right\}=  \cI_{\M^{\infty;p}}
\]
by Theorem \ref{thm.clpred}, a multiplicative-system-theorem argument yields that $I_b[H_Y](t) = I_B^c[Y]_t$ for all $Y \in \mathrm{b}\sP_{\loc}$ and $t \geq 0$.
Actually, $I_b$ and $I_B^c$ ``agree on all of $\cI_{\M^{\infty;p}}$'' in the sense that, by the above estimates, $I_B^c$ extends uniquely to a linear map $\tilde{I}_B^c \colon \sP^p \to \cR_{a,p}$ that is continuous with respect to locally ($L^p$ norm--)bounded convergence and $I_b[H_Y](t) = I_B^c[Y]_t$ for all $t \geq 0$ and $Y \in \sP^p$.
We leave the details to the reader.

\section{Noncommutative stochastic integral equations in \texorpdfstring{$L^p$}{}}\label{sec.SIEp}

For the duration of this section, fix two indices $p_0,p \in [1,\infty]$, two conditionable filtered $\mathrm{C}^*$-probability spaces $(\cB,(\cB_t)_{t \geq 0},\E_{\scB})$ and $(\cC,(\cC_t)_{t \geq 0},\E_{\scC})$, and a $\beta \in \{\emptyset,\sa\}$ indicating a choice of $L^p(\E_{\scC})_{\emptyset} \coloneqq L^p(\E_{\scC})$ or $L^p(\E_{\scC})_{\sa}$.
Also, we write
\[
L_{\alg}^p(\cC_{\infty},\E_{\scC}) \coloneqq \bigcup_{t \geq 0} L^p(\cC_t,\E_{\scC})
\]
and endow $L_{\alg}^p(\cC_{\infty},\E_{\scC})$ with the topology of the $L^p$ norm.

\subsection{Setup and statement of the main result}\label{subsec.SIEpsetup}

We now set up our main result on noncommutative stochastic integral equations in $L^p$ (Theorem \ref{thm.SIE} below, proven in the next subsection).
In the next section, we study a few large classes of examples, including those discussed in subsection \ref{subsec.NCSDEs}.

To begin, we explain what it means to solve a noncommutative stochastic integral equation (driven by a continuous integrator).
To this end, we extend some notation and terminology from earlier in the paper.
Let $I \subseteq \R_+$ be a sub-interval.
A process $Y \colon I \to L^p(\E_{\scC})_{\beta}$ is \textbf{adapted} if $Y(t) \in L^p(\cC_t,\E_{\scC})_{\beta}$ for all $t \in I$.
Write $C_{a,p}(I) = C_a(I;L^p(\E_{\scC})_{\beta})$ for the space of continuous adapted functions $Y \colon I \to L^p(\E_{\scC})_{\beta}$.
Next, let $(\G,\norm{\cdot})$ be a normed filtration of $B^{p_0;1} = B^{p_0;1}(\E_{\scB};\E_{\scC})$.
A linear process $H \colon I \to \cG_{\infty}$ is \textbf{$\boldsymbol{\G}$-adapted} if $H(t) \in \cG_t$ for all $t \in I$.
Finally, suppose $I = [r,T] \cap \R_+$ or $I = (r,T] \cap \R_+$ with $0 \leq r < T \leq \infty$.
A $\G$-adapted linear process $H \colon I \to \cG_{\infty}$ is \textbf{$\boldsymbol{\G}$-predictable}, written $H \in \cI_{\G}(I)$, if $1_{(r,T]}H \in \cI_{\G}$, in which case we define
\[
\int_s^u H[\d X] \coloneqq \int_s^u (1_{(r,T]}H)[\d X] \qquad (s,u \in I, \; s \leq u)
\]
whenever $X \colon \R_+ \to L^{p_0}(\E_{\scB})$ is an $L^p$-integrator with respect to $\G$.

\begin{definition}[Solutions to NCSIEs]\label{def.SIEsol}
Suppose $(\G,\norm{\cdot})$ is a compatible normed filtration of $B^{p_0;1} = B^{p_0;1}(\E_{\scB};\E_{\scC})$ and $X \colon \R_+ \to L^{p_0}(\E_{\scB})$ is an $L^p$-integrator with respect to $\G$.
Let $F \colon \R_+ \times L_{\alg}^p(\cC_{\infty},\E_{\scC})_{\beta} \to \cG_{\infty}$ be a continuous function with the property that $F(t,y) \in \cG_t$ for all $t \geq 0$ and $y \in L^p(\cC_t,\E_{\scC})_{\beta}$, and let $C \in C_a(\R_+;L^p(\E_{\scC}))$.

A \textbf{local solution} to the \textbf{noncommutative stochastic integral equation} (NCSIE)
\begin{equation}
    Y = C + \into F(t,Y(t))[\d X(t)]\label{eq.SIE}
\end{equation}
is an interval $I \subseteq \R_+$  containing $0$ and a process $Y \in C_a(I;L^p(\E_{\scC})_{\beta})$ such that 
\[
Y(t) = C(t) + \int_0^t F(s,Y(s))[\d X(s)] \qquad (t \in I).
\]
If $I = \R_+$, then $Y$ is a \textbf{global solution} to \eqref{eq.SIE}.

If $C \equiv C(0) =\vcentcolon y_0$, then the NCSIE
\[
Y = C + \into F(t,Y(t))[\d X(t)] = y_0 + \into F(t,Y(t))[\d X(t)]
\]
is called a \textbf{noncommutative stochastic differential equation} (NCSDE) and is written
\[
\begin{cases}
    \d Y(t) = F(t,Y(t))[\d X(t)] & \\
    \;\, Y(0) = y_0
\end{cases}
\]
in ``stochastic differential notation.''
\end{definition}

\begin{remark}\label{rem.dt}
The reader may be disturbed by the lack of a ``$\d t$ term'' in \eqref{eq.SIE}.
There is, however, nothing to worry about.
Under reasonable conditions, such a term may be included in the formalism by considering an integrator of the form $Z(t) \coloneqq (X(t),t)$, which takes values in $L^{p_0}$ of the $\mathrm{C}^*$-probability space $\cB \oplus \C$ with the direct sum trace (and filtration), and a coefficient of the form
\[
H(t,y)[z] \coloneqq F(t,y)[x] + G(t,y)\lambda \quad \big(t \geq  0, \; y \in L_{\alg}^p(\cC_{\infty},\E_{\scC})_{\beta}, \; z = (x,\lambda) \in L^{p_0}(\E_{\scB}) \times \C\big),
\]
where $G \colon \R_+ \times L_{\alg}^p(\cC_{\infty},\E_{\scC})_{\beta} \to L^p(\E_{\scC})$ is a continuous function with the property that $G(t,y) \in L^p(\cC_t,\E_{\scC})$ for all $t \geq 0$ and $y \in L^p(\cC_t,\E_{\scC})_{\beta}$.
In this case, consider the NCSIE
\[
Y = C + \into H(t,Y(t))[\d Z(t)] = C + \into F(t,Y(t))[\d X(t)] + \into G(t,Y(t))\,\d t.
\]
We leave the details to the reader.
\end{remark}

We now lay out the assumptions for our main result on NCSIEs in $L^p$, one set of assumptions on the driving integrator and another on the coefficient of the NCSIE.
They will involve two different filtrations with two different norms;
we discussed the reason this is useful in the paragraph after the statement of Theorem \ref{thm.SDESCBMdriver}.
\pagebreak

\begin{assumption}[Integrator assumptions]\label{ass.integrator}
We have:
\begin{enumerate}[font=\normalfont,label=(\roman*)]
    \item $X \in C_a(\R_+;L^{p_0}(\E_{\scB}))$;\label{item.Xass}
    \item $(\H,\norm{\cdot}_{\H})$ and $(\G,\norm{\cdot}_{\G})$ are compatible normed filtrations of $B^{p_0;1} = B^{p_0;1}(\E_{\scB};\E_{\scC})$ such that $\cG_t \subseteq \cH_t$ for all $t \geq 0$ and $\norm{\cdot}_{\H} \leq \norm{\cdot}_{\G}$ on $\cG_{\infty}$;\label{item.HandGass}
    \item there exists a $q$ such that $1 \leq q \leq p \leq \infty$, a locally finite, atomless Borel measure $\rho$ on $\R_+$, and an increasing family $(a_T)_{T \geq 0}$ of positive real numbers such that
    \[
    \norm{\int_0^t H[\d X]}_q \leq a_T\left(\int_{(0,t]} \norm{H}_{\H}^2\,\d\rho \right)^\frac12 \; \text{ and } \; \norm{\int_0^t G[\d X]}_p \leq a_T\left(\int_{(0,t]} \norm{G}_{\G}^2\,\d\rho \right)^\frac12
    \]
    for all $T \geq t \geq 0$, $H \in \EP_{\H}$, and $G \in \EP_{\G}$;\label{item.integralboundass}
    \item if $p=\infty$ and $q < \infty$, then $(\cC,(\cC_t)_{t \geq 0},\E_{\scC})$ is a filtered $\mathrm{W}^*$-probability space.\label{item.W*ass}
\end{enumerate}
Henceforth, we shall write $\int_s^t \boldsymbol{\cdot} \,\d\rho \coloneqq \int_{(s,t]} \boldsymbol{\cdot} \,\d\rho$ ($0 \leq s \leq t$).
Note that, since $\rho$ is atomless, $\int_s^t \boldsymbol{\cdot}\,\d\rho = \int_{(s,t)} \boldsymbol{\cdot} \,\d\rho = \int_{[s,t)} \boldsymbol{\cdot} \,\d\rho = \int_{[s,t]} \boldsymbol{\cdot} \,\d\rho$ ($0 \leq s \leq t$).
\end{assumption}

Under Assumption \ref{ass.integrator}, $X$ is a continuous $L^q$-integrator with respect to $\H$ and a continuous $L^p$-integrator with respect to $\G$ by Theorem \ref{thm.constructNCstochint}.
Furthermore, using Assumption \ref{ass.integrator}\ref{item.HandGass}, it is easy to see that $\cI_{\G} \subseteq \cI_{\H}$ and $I_X^{\H}[G] = I_X^{\G}[G]$ for all $G \in \cI_{\G}$.
Consequently, we may suppress $\H$ and $\G$ from the notation for stochastic integrals against $X$ without issues.

\begin{assumption}[Coefficient assumptions]\label{ass.coefficient}
Let $(\G,\norm{\cdot})$ and $(\H,\norm{\cdot}_{\H})$ be compatible normed filtrations of $B^{p_0;1} = B^{p_0;1}(\E_{\scB};\E_{\scC})$ and $X \colon \R_+ \to L^{p_0}(\E_{\scB})$ be an $L^p$-integrator with respect to $\G$.
We have:
\begin{enumerate}[font=\normalfont,label=(\roman*)]
    \item $F \colon \R_+ \times L_{\alg}^p(\cC_{\infty},\E_{\scC})_{\beta} \to \cG_{\infty}$ is continuous;
    \item $F(t,y) \in \cG_t$ for all $t \geq 0$ and $y \in L^p(\cC_t,\E_{\scC})_{\beta}$;
    \item if $t \geq r \geq 0$, then $\displaystyle \int_r^t F(s,Y(s))[\d X(s)] \in L^p(\cC_t,\E_{\scC})_{\beta}$ for all $Y \in C_a([r,t];L^p(\E_{\scC})_{\beta})$;\label{item.sareq}
    \item for all $R \geq 0$, there exists an increasing family $(c_{T,R})_{T \geq 0}$ of positive real numbers such that for all $y,z \in L_{\alg}^p(\cC_{\infty},\E_{\scC})_{\beta}$ with $\norm{y}_p \leq R$ and $\norm{z}_p \leq R$,\label{item.Lip}
    \[
    \norm{F(t,y) - F(t,z)}_{\H} \leq c_{T,R}\norm{y-z}_q \qquad (0 \leq t \leq T);
    \]
    \item for all $T,R \geq 0$,\label{item.bddass}
    \[
    M_{T,R} \coloneqq \sup\left\{ \norm{F(t,y)}_{\G} : 0 \leq t \leq T, \, y \in L_{\alg}^p(\cC_{\infty},\E_{\scC})_{\beta},  \,\norm{y}_p \leq R \right\} <\infty.
    \]
\end{enumerate}
\end{assumption}

\begin{remark}\label{rem.ass}
If $\beta = \emptyset$, then Assumption \ref{ass.coefficient}\ref{item.sareq} is devoid of content.
If $\beta = \sa$,  $X^*=X$, and $F(t,y)[x]^* = F(t,y)[x^*]$ for all $t \geq 0$, $y \in L^p(\cC_t,\E_{\scC})_{\sa}$, and $x \in L^{p_0}(\cB,\E_{\scB})$, then Assumption \ref{ass.coefficient}\ref{item.sareq} holds.
Finally, if $q=p$ and $(\H,\norm{\cdot}_{\H}) = (\G,\norm{\cdot}_{\G})$, then Assumption~\ref{ass.coefficient}\ref{item.bddass} is redundant.
Indeed, in this case, if $0 \leq t \leq T$, $y \in L_{\alg}^p(\cC_{\infty},\E_{\scC})_{\beta}$, and $\norm{y}_p \leq R$, then
\begin{align*}
    \norm{F(t,y)}_{\G} & \leq \norm{F(t,y) - F(t,0)}_{\G} + \norm{F(t,0)}_{\G} \\
    & \leq c_{T,R}\norm{y}_p + \norm{F(t,0)}_{\G} \leq c_{T,R}R + \norm{F(t,0)}_{\G}.
\end{align*}
Thus, $M_{T,R} \leq c_{T,R}R + \sup\left\{\norm{F(t,0)}_{\G} : 0 \leq t \leq T\right\} < \infty$.
\end{remark}

\begin{theorem}[Existence and uniqueness of maximal solutions to NCSIEs in $L^p$]\label{thm.SIE}
Under {\rm Assumptions \ref{ass.integrator}} and {\rm \ref{ass.coefficient}}, if $C \in C_a(\R_+;L^p(\E_{\scC})_{\beta})$, then there exists a $T_{\ell} \in (0,\infty]$ and a local solution $Y \colon [0,T_{\ell}) \to L^p(\E_{\scC})_{\beta}$ to NCSIE \eqref{eq.SIE} such that
\begin{enumerate}[label=(\roman*),font=\normalfont]
    \item $Y$ is maximal and unique in the sense that if $Y_0 \colon I \to L^p(\E_{\scC})_{\beta}$ is another local solution to \eqref{eq.SIE}, then $I \subseteq [0,T_{\ell})$, and $Y|_I = Y_0$; and\label{item.maxsol}
    \item if $T_{\ell} < \infty$, then $\norm{Y(t)}_p \to \infty$ as $t \nearrow T_{\ell}$.\label{item.blowup}
\end{enumerate}
Owing to {\rm\ref{item.maxsol}}, $Y$ is called the \textbf{maximal solution} to \eqref{eq.SIE}, and $T_{\ell}$ is its \textbf{lifetime}.
Owing to {\rm\ref{item.blowup}}, when $T_{\ell} < \infty$, we also call $T_{\ell}$ the \textbf{blow-up} or \textbf{explosion time} of $Y$.
\end{theorem}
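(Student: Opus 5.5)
Our plan is the classical Picard-iteration-with-localization argument, adapted to the two-norm structure of Assumptions \ref{ass.integrator} and \ref{ass.coefficient}. The contraction takes place in the weak norm $\norm{\cdot}_q$ (controlled through $\H$), while a priori bounds are kept in the strong norm $\norm{\cdot}_p$ (controlled through $\G$). The first step is a lemma. If $I=[r,u]$ and $Y\in C_a(I;L^p(\E_{\scC})_{\beta})$, then $s\mapsto F(s,Y(s))$ is $\G$-adapted by Assumption \ref{ass.coefficient}(ii). It is also continuous in $\norm{\cdot}_{\G}$, because $F$ is continuous and $Y$ is continuous with values in $L_{\alg}^p$, and it is locally bounded by (v). Hence it is LCLB, so $1_{(r,u]}F(\cdot,Y(\cdot))\in\cI_{\G}(I)$ by Proposition \ref{prop.LCLB}. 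The integral is then continuous and adapted, and it lies in $L^p(\E_{\scC})_{\beta}$ by (iii). By Theorem \ref{thm.constructNCstochint} and its $\H$-analogue, we have
\[
\sup_{r\le s\le t}\Big\|\int_r^s F(v,Y(v))[\d X(v)]\Big\|_p\le a_T\Big(\int_r^t\|F(v,Y(v))\|_{\G}^2\,\d\rho\Big)^{1/2}
\]
together with the corresponding $q/\H$ difference bound.

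Next, we establish local existence and uniqueness on a short window starting from a given time $r$ with given continuous data. Fix $R$ larger than $\sup_{[r,r+1]}\norm{C}_p$ plus a margin. We work on the closed set
\[
\cK=\{Y\in C_a([r,u];L^p_\beta):\ \sup\norm{Y-C}_p\le 1\},
\]
equipped with the metric $\sup_{[r,u]}\norm{\cdot}_q$. On $\cK$, the map $\Phi(Y)=C+\int_r^{\cdot}F(s,Y(s))[\d X(s)]$ (plus the value inherited at $r$) satisfies
\[
\sup\norm{\Phi(Y)-C}_p\le a\,M_{T,R}\,\rho((r,u])^{1/2}\le 1
\]
once $u$ is close to $r$. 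This uses that $\rho$ is atomless and locally finite, so $\rho((r,u])\to0$ as $u\downarrow r$. On the other hand, Assumption \ref{ass.coefficient}(iv) gives
\[
\sup\norm{\Phi(Y)-\Phi(Z)}_q\le a_T c_{T,R}\rho((r,u])^{1/2}\sup\norm{Y-Z}_q,
\]
which is a contraction for small $u$.

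The main obstacle is completeness of $\cK$ in the $q$-metric. A $\norm{\cdot}_q$-Cauchy sequence that is bounded in $\norm{\cdot}_p$ must converge to a limit in $L^p$ that is still continuous in $\norm{\cdot}_p$. We plan to handle this as follows.
\begin{enumerate}
\item Take the $L^q$-limit. It has $\norm{\cdot}_p\le R$ by lower semicontinuity of $\norm{\cdot}_p$ under $L^q$-convergence. For $p=\infty$ and $q<\infty$, this is where the $\mathrm{W}^*$-assumption \ref{ass.integrator}\ref{item.W*ass} enters, via weak$^*$ lower semicontinuity.
\item Adaptedness is preserved because $L^p(\cC_t)\cap$ the ball is closed in $L^q$.
\item To avoid proving $L^p$-continuity of the limit directly, we instead take the fixed point $Y$ in the $L^q$ sense and observe that $Y=\Phi(Y)$. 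By the lemma applied to $Y$ (with $F(\cdot,Y)$ then only $\H$-continuous), we need $L^p$-continuity of the right-hand side. We obtain this by approximating: the Picard iterates $Y_n=\Phi(Y_{n-1})$ are $L^p$-continuous, and $F(\cdot,Y_n)\to F(\cdot,Y)$ locally boundedly in $\H$ while staying bounded in $\G$. Applying the $\G$-bound to differences of $\Phi(Y_n)$ is not directly available, so instead we show $Y$ is $L^p$-continuous by proving that $\Phi(Y)$ is an $L^p$-valued process whose increments satisfy $\norm{\Phi(Y)(t)-\Phi(Y)(s)}_p\le aM\rho((s,t])^{1/2}$. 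This is a uniform modulus inherited by every iterate, so it passes to the limit by lower semicontinuity.
\end{enumerate}
Uniqueness on the window follows from the same $q$-estimate combined with a Gronwall-type iteration over small subintervals.

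Finally, we glue the pieces. Local solutions agree on common intervals, by restarting the uniqueness argument at the first disagreement time and using continuity. Define $T_\ell$ as the supremum of the endpoints of local solution intervals, and let $Y$ be the union of these solutions. This defines a maximal solution and gives \ref{item.maxsol}; a closed-interval solution can always be extended by restarting at its right endpoint, so the maximal interval is of the form $[0,T_\ell)$. For \ref{item.blowup}, suppose $T_\ell<\infty$ but $\liminf_{t\nearrow T_\ell}\norm{Y(t)}_p<\infty$. The window length in step two depends only on $R$, $T$, and the modulus of $C$ on $[0,T_\ell+1]$. So if $\norm{Y(t_n)}_p\le R$ along some $t_n\nearrow T_\ell$, then restarting at $t_n$ close enough to $T_\ell$ extends the solution past $T_\ell$, which is a contradiction. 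This gives $\norm{Y(t)}_p\to\infty$.
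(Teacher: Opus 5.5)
Your proposal is correct and follows essentially the paper's argument: Picard iterates confined to an $L^p$-ball via the $\G$-bound and $M_{T,R}$, convergence in $\sup\norm{\cdot}_q$ via the $\H$-Lipschitz estimate, and a limit that lies in $L^p$ and is $L^p$-continuous by duality/lower semicontinuity (with Assumption \ref{ass.integrator}\ref{item.W*ass} when $p=\infty$). The continuity comes from the uniform modulus $a_TM_{T,R}\rho((s,t])^{1/2}$ of the iterates; strictly, that bound holds for $\Phi(Y)-C$ rather than $\Phi(Y)$, which still suffices. The remaining steps (Gr\"onwall uniqueness, restart-and-glue, blow-up by contradiction) also match the paper. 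The only real difference is that you get $q$-convergence from a contraction on a short window, whereas the paper sums the iterated-integral bound $\rho([r,T])^n/n!$, so its window condition involves only $M_{T,R}$ and not $c_{T,R}$; both work for the restart argument because $\rho$ is atomless.
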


As a consequence of the blow-up phenomenon in Theorem \ref{thm.SIE}\ref{item.blowup}, if one imposes a sublinear growth condition on $F$ in addition to imposing Assumptions \ref{ass.integrator} and \ref{ass.coefficient}, then the maximal solution to \eqref{eq.SIE} is global.

\begin{corollary}\label{cor.sublingrowth}
Under {\rm Assumptions \ref{ass.integrator}} and {\rm\ref{ass.coefficient}}, if for each $T \geq 0$, there exists an $M_T < \infty$ such that
\[
\sup_{0 \leq t \leq T}\norm{F(t,y)}_{\G} \leq M_T\left(\norm{y}_p + 1\right) \qquad \big(y \in L_{\alg}^p(\cC_{\infty},\E_{\scC})_{\beta}\big),
\]
then the maximal solution to \eqref{eq.SIE} is global, i.e., $T_{\ell} = \infty$.
\end{corollary}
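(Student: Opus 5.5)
The plan is to argue by contradiction using the blow-up criterion in Theorem \ref{thm.SIE}\ref{item.blowup}. Let $Y\colon[0,T_\ell)\to L^p(\E_{\scC})_\beta$ be the maximal solution and suppose $T_\ell<\infty$; set $T\coloneqq T_\ell$. Then $\norm{Y(t)}_p\to\infty$ as $t\nearrow T$. I will show instead that $\sup_{0\le t<T}\norm{Y(t)}_p<\infty$, which is the desired contradiction.

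\textbf{Step 1 (localization).} For $R>0$ let $\tau_R\coloneqq\inf\{t\in[0,T):\norm{Y(t)}_p\ge R\}\wedge T$. Continuity of $Y$ gives $\norm{Y(s)}_p\le R$ on $[0,\tau_R)$. For $t<\tau_R$, the process $H\coloneqq 1_{(0,t]}F(\cdot,Y(\cdot))$ is $\G$-adapted. It is also left-continuous and locally bounded: this uses the continuity of $F$ and $Y$ together with Assumption \ref{ass.coefficient}\ref{item.bddass}. Proposition \ref{prop.LCLB} then puts $H$ in $\cI_{\G}$, and Theorem \ref{thm.constructNCstochint} applies with the $p$-bound from Assumption \ref{ass.integrator}\ref{item.integralboundass}.

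\textbf{Step 2 (Gronwall).} Let $\phi(t)\coloneqq\sup_{0\le s\le t}\norm{Y(s)}_p^2$. For $s\le t<\tau_R$,
\[
\norm{Y(s)}_p\le \norm{C(s)}_p + a_T\Big(\int_0^t \norm{F(r,Y(r))}_{\G}^2\,\d\rho(r)\Big)^{\frac12}\le c_0 + a_T M_T\Big(\int_0^t (\norm{Y(r)}_p+1)^2\,\d\rho(r)\Big)^{\frac12},
\]
where $c_0\coloneqq\sup_{[0,T]}\norm{C}_p<\infty$ because $C$ is continuous on $\R_+$. Squaring gives
\[
\phi(t)\le A+B\int_0^t\phi\,\d\rho,
\]
with $A,B$ depending only on $T$, $c_0$, $a_T$, $M_T$ and $\rho((0,T])$, and in particular not on $R$.

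\textbf{Step 3 (conclusion).} Because $\rho$ is atomless, locally finite, and $\phi$ is bounded on $[0,t]$ for $t<\tau_R$, the Gronwall inequality for atomless measures gives
\[
\phi(t)\le A\,e^{B\rho((0,T])}\eqqcolon K^2
\]
for $t<\tau_R$. Take $R>K$. If $\tau_R<T$, continuity gives $\norm{Y(\tau_R)}_p=R>K$, while letting $t\nearrow\tau_R$ in the bound gives $\norm{Y(\tau_R)}_p\le K$, a contradiction. So $\tau_R=T$, and $\norm{Y}_p\le K$ on $[0,T)$, contradicting blow-up. Hence $T_\ell=\infty$.

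The main technical point is the measure-Gronwall step, which the atomlessness of $\rho$ makes standard. One route is to compare with the solution of $u=A+B\int_0^t u\,\d\rho$, namely $u=Ae^{B\rho((0,t])}$, via an iteration argument. The other delicate point is confirming that $F(\cdot,Y(\cdot))$ is integrable on $[0,t]$, which is handled by Step 1.
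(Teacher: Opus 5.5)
Your proposal is correct and follows essentially the same route as the paper: you bound $\|Y(t)\|_p^2$ using the integrator estimate of Assumption \ref{ass.integrator}\ref{item.integralboundass} together with the sublinear growth, apply Gr\"onwall's inequality for the atomless measure $\rho$ to get a bound uniform on $[0,T)$, and conclude via the blow-up alternative in Theorem \ref{thm.SIE}\ref{item.blowup}. The only difference is that your localization through $\tau_R$ is unnecessary: $Y$ is continuous, hence bounded, on each $[0,t]$ with $t<T$, so the paper simply applies Gr\"onwall directly to an arbitrary local solution on $[0,T)$.
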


\begin{proof}
Suppose $0 < T < \infty$ and $Y \colon [0,T) \to L^p(\E_{\scC})_{\beta}$ is a local solution to \eqref{eq.SIE}.
If $0 \leq t < T$, then
\begin{align*}
    \norm{Y(t)}_p^2 & = \norm{C(t) + \int_0^t F(s,Y(s))[\d X(s)]}_p^2 \\
    & \leq 2\norm{C(t)}_p^2 + 2a_T^2\int_0^t \norm{F(s,Y(s))}_{\G}^2\,\rho(\d s) \\
    & \leq 2\norm{C(t)}_p^2 + 2a_T^2M_T^2\int_0^t \left(1+\norm{Y(s)}_p\right)^2\rho(\d s) \\
    & \leq 2\norm{C(t)}_p^2 + 4a_T^2M_T^2\int_0^t \left(1+\norm{Y(s)}_p^2\right)\rho(\d s) \\
    & = 2\norm{C(t)}_p^2 + 4a_T^2M_T^2\rho([0,t])+4a_T^2M_T^2\int_0^t \norm{Y(s)}_p^2\,\rho(\d s).
\end{align*}
Consequently, by Gr\"onwall's inequality (vid.\ \cite[Lem.\ 13]{Seifert2016}), if we write
\[
\alpha(t) \coloneqq 2\norm{C(t)}_p^2 + 4a_T^2M_T^2\rho([0,t]) \qquad (t \geq 0),
\]
then
\begin{align*}
    \norm{Y(t)}_p^2 & \leq \alpha(t)+4a_T^2M_T^2\int_0^t \alpha(s)\,e^{4a_T^2M_T^2\rho([s,t])}\,\rho(\d s) \\
    & \leq \sup_{0 \leq s \leq T}\alpha(s)\left(1+4a_T^2M_T^2e^{4a_T^2M_T^2\rho([0,T])}\rho([0,T])\right)
\end{align*}
whenever $0 \leq t < T$.
Thus,
\[
\sup_{0 \leq t < T}\norm{Y(t)}_p \leq \sqrt{\sup_{0 \leq s \leq T}\alpha(s)\left(1+4a_T^2M_T^2e^{4a_T^2M_T^2\rho([0,T])}\rho([0,T])\right)} < \infty.
\]
In particular, $\norm{Y(t)}_p \not\to \infty$ as $t \nearrow T$.
It follows from item \ref{item.blowup} of Theorem \ref{thm.SIE} that $T_{\ell} = \infty$ in this case, as desired.
\end{proof}

Note that if $q=p$, $(\H,\norm{\cdot}_{\H}) = (\G,\norm{\cdot}_{\G})$, and we can take $c_{T,R} = c_T$ to be independent of $R$ in Assumption \ref{ass.coefficient}\ref{item.Lip}, then the sublinear growth condition in Corollary \ref{cor.sublingrowth} is automatic.
Indeed, in this case, if $0 \leq t \leq T$ and $y \in L_{\alg}^p(\cC_{\infty},\E_{\scC})_{\beta}$, then
\[
\norm{F(t,y)}_{\G} \leq  \norm{F(t,y) - F(t,0)}_{\G} + \norm{F(t,0)}_{\G} \leq c_T \norm{y}_p + \sup_{0 \leq s \leq T}\norm{F(s,0)}_{\G},
\]
so we may take $M_T \coloneqq \max\left\{c_T,\sup\left\{\norm{F(t,0)}_{\G} : 0 \leq t \leq T\right\}\right\}$ in Corollary \ref{cor.sublingrowth}.

\begin{example}
Let $(\cA,(\cA_t)_{t \geq 0},\E)$ be a conditionable filtered $\mathrm{C}^*$-probability space, and take $(\cB,(\cB_t)_{t \geq 0},\E_{\scB}) = (\cC,(\cC_t)_{t \geq 0},\E_{\scC}) = (\cA,(\cA_t)_{t \geq 0},\E)$.
Suppose $G \colon \R_+ \times L^p(\E)_{\beta} \to L^p(\E)_{\beta}$ is continuous; $G(t,y) \in L^p(\cA_t,\E)_{\beta}$ for all $t \geq 0$ and $y \in L^p(\cA_t,\E)_{\beta}$; and for every $R \geq 0$, there exists an increasing family $(c_{T,R})_{T \geq 0}$ of positive real numbers such that for all $y,z \in L^p(\E)_{\beta}$ with $\norm{y}_p \leq R$ and $\norm{z}_p \leq R$,
\[
\norm{G(t,y) - G(t,z)}_p \leq c_{T,R}\norm{y-z}_p \qquad (0 \leq t \leq T).
\]
Let $C \in C_a(\R_+;L^p(\E)_{\beta})$.
As a consequence of Theorem \ref{thm.SIE}, there exists a unique maximal solution to
\[
Y = C + \into G(t,Y(t))\,\d t.
\]
More precisely, there exists a time $T_{\ell} < \infty$ and a process $Y \in C_a([0,T_{\ell});L^p(\E)_{\beta})$ such that $Y(t) = C(t) + \int_0^t G(s,Y(s))\,\d s$ for all $t \in [0,T_{\ell})$;
and if $I \subseteq \R_+$ is another sub-interval containing $0$ and $Z \in C_a(I;L^p(\E)_{\beta})$ satisfies $Z = C+\into G(t,Y(t))\,\d t$ on $I$, then $I \subseteq [0,T_{\ell})$, and $Z = Y|_I$.
Furthermore, if $T_{\ell} < \infty$, then $\norm{Y(t)}_p \to \infty$ as $t \nearrow T_{\ell}$.
Finally, by Corollary \ref{cor.sublingrowth}, if for every $T \geq 0$, there exists an $M_T < \infty$ such that
\[
\sup_{0 \leq t \leq T}\norm{G(t,y)}_p \leq M_T\left(\norm{y}_p+1\right) \qquad \big(y \in L^p(\E)_{\beta}\big),
\]
e.g., if $c_{T,R}$ can be taken to be independent of $R$, then $T_{\ell} = \infty$.
Very special cases of such NCSIEs were considered in \cite{BS2001,Gao2008}.
\end{example}

\subsection{Proof of Theorem \ref{thm.SIE}}\label{sec.pfSIEp}

Throughout this subsection, Assumptions \ref{ass.integrator} and \ref{ass.coefficient} will be in force.

The key to Theorem \ref{thm.SIE} is a quantitative local existence result (Theorem \ref{thm.exist} below).
It will allow us to show that (1) the NCSIE \eqref{eq.SIE} always admits a local solution defined on an interval of non-zero length and (2) a local solution to \eqref{eq.SIE} that does not blow up in finite time can be extended to a local solution on a larger interval.

\begin{theorem}[Quantitative local existence]\label{thm.exist}
Suppose $R > 0$ and $r \geq 0$.
If $T > r$ and $D \in C_a([r,\infty);L^p(\E_{\scC})_{\beta})$ satisfy
\[
a_T^2M_{T,R}^2\rho([r,T]) \leq \frac{R^2}{4} \; \text{ and } \; \sup_{r \leq t \leq T}\norm{D(t)}_p  \leq \frac{R}{2},
\]
then there exists a $Y \in C_a([r,T];L^p(\E_{\scC})_{\beta})$ such that
\begin{equation}
    Y(t) = D(t) + \int_r^t F(s,Y(s))[\d X(s)] \qquad (r \leq t \leq T).\label{eq.SIEfromr}
\end{equation}
\end{theorem}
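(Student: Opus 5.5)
We will construct $Y$ by Picard iteration inside the closed ball
\[
\mathcal{K} \coloneqq \Big\{ Y \in C_a([r,T];L^p(\E_{\scC})_{\beta}) : \sup_{r \leq t \leq T}\norm{Y(t)}_p \leq R \Big\}.
\]
Convergence is measured in the weaker norm $\sup_t \norm{\cdot}_q$, exploiting the two-norm structure of Assumptions \ref{ass.integrator}--\ref{ass.coefficient}. We define $\Phi(Y)(t) \coloneqq D(t) + \int_r^t F(s,Y(s))[\d X(s)]$ for $Y \in \mathcal{K}$.

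The first task is to show $\Phi$ is well defined and maps $\mathcal{K}$ into itself. For $Y \in \mathcal{K}$, $s \mapsto F(s,Y(s))$ is continuous into $\cG_\infty$ (continuity of $F$ and of $Y$), $\G$-adapted by Assumption \ref{ass.coefficient}(ii), and bounded by $M_{T,R}$ in $\norm{\cdot}_{\G}$ on $(r,T]$. Hence $1_{(r,T]}F(\cdot,Y(\cdot))$ is LCLB and $\G$-adapted, so it lies in $\cI_{\G}$ by Proposition \ref{prop.LCLB}. By Theorem \ref{thm.constructNCstochint} together with Theorem \ref{thm.abstractstochint}\ref{item.contint} and atomlessness of $\rho$, $\Phi(Y)$ is continuous and adapted, and by Assumption \ref{ass.coefficient}\ref{item.sareq} it is $\beta$-valued. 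The bound of Theorem \ref{thm.constructNCstochint} (Remark \ref{rem.Assintegstuff} form) gives
\[
\norm{\Phi(Y)(t)}_p \leq \frac R2 + a_T M_{T,R}\rho([r,T])^{1/2} \leq \frac R2 + \frac R2 = R ,
\]
so $\Phi(\mathcal{K})\subseteq\mathcal{K}$.

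For the contraction estimate, we use the $\H$-bound of Assumption \ref{ass.integrator}\ref{item.integralboundass}, extended to $\cI_{\H}$ by Theorem \ref{thm.constructNCstochint}, together with the identification $I^{\H}_X=I^{\G}_X$ on $\cI_{\G}$. Combined with the Lipschitz condition \ref{item.Lip}, this gives for $Y,Z\in\mathcal{K}$:
\[
\norm{\Phi(Y)(t)-\Phi(Z)(t)}_q^2 \leq a_T^2 c_{T,R}^2 \int_r^t \norm{Y(s)-Z(s)}_q^2\,\rho(\d s).
\]
Iterating this inequality, and using that $\rho$ is atomless so that $\int_{r\le s_1\le\dots\le s_n\le t}\d\rho^{\otimes n}=\rho([r,t])^n/n!$, yields
\[
\sup_t\norm{\Phi^n(Y)-\Phi^n(Z)}_q^2 \leq \frac{(a_T c_{T,R})^{2n}\rho([r,T])^n}{n!}(2R)^2 .
\]
So the iterates $Y_n \coloneqq \Phi^n(D)$ (note $D\in\mathcal{K}$) are Cauchy uniformly in $L^q$, and converge to some continuous $L^q$-valued $Y$.

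The main obstacle is that $\mathcal{K}$ is not complete for the $L^q$-metric when $q<p$, so we must show the limit lands back in $\mathcal{K}$. Each $Y(t)$ is the $L^q$-limit of elements of the $L^p$-ball of radius $R$, and we argue that this ball is closed in $L^q$. For $p<\infty$ this follows by reflexivity or lower semicontinuity of $\norm{\cdot}_p$ under $L^q$ convergence (pass to weak limits using duality with $L^{p'}\cap L^{q'}$). For $p=\infty$ with $q<\infty$, this is where the $\mathrm{W}^*$ hypothesis Assumption \ref{ass.integrator}\ref{item.W*ass} is used. Adaptedness and the $\beta$ condition pass to the limit because $L^q(\cC_t)$ and the self-adjoint part are $L^q$-closed, and elements of $L^q(\cC_t)\cap L^p$ lie in $L^p(\cC_t)$, for instance via conditional expectation.

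It remains to verify $L^p$-continuity of $Y$ and the fixed-point identity. We obtain both at once by showing $Y=\Phi(Y)$ in $L^q$: the contraction estimate gives $\Phi(Y_n)\to\Phi(Y)$ in $L^q$, whence $Y=\Phi(Y)$. Since $\Phi(Y)$ is $L^p$-continuous by the first step, so is $Y$. Therefore $Y\in\mathcal{K}$ and satisfies \eqref{eq.SIEfromr}.
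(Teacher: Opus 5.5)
There is a genuine gap, and it sits in your last step. Your architecture matches the paper's: Picard iterates $\Phi^n(D)$ kept in the $L^p$-ball of radius $R$, contraction in $\sup_t\norm{\cdot}_q$ via the $\H$-bound and Assumption \ref{ass.coefficient}\ref{item.Lip}, and closedness of the $L^p$-ball under $L^q$-limits (with Assumption \ref{ass.integrator}\ref{item.W*ass} when $p=\infty$). The problem is that the final step is circular.

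When you reach that step, you only know the following about the limit $Y$: it is adapted, $\beta$-valued, $L^q$-continuous, and satisfies $\sup_t\norm{Y(t)}_p\le R$. You do \emph{not} know $Y\in\mathcal{K}$, because $L^p$-continuity is exactly what is missing.

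Your first step proved $L^p$-continuity of $\Phi(Z)$ only for $Z\in\mathcal{K}$. It relied on $s\mapsto F(s,Z(s))$ being $\norm{\cdot}_{\G}$-continuous. That follows from continuity of $F$ only because $Z$ is $L^p$-continuous, since the second argument of $F$ carries the $L^p$ topology.

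For a merely $L^q$-continuous $Y$, the Lipschitz hypothesis gives only $\norm{\cdot}_{\H}$-continuity of $F(\cdot,Y(\cdot))$. So you can place $1_{(r,T]}F(\cdot,Y(\cdot))$ in $\cI_{\H}$ and define $\Phi(Y)$ as an $L^q$-valued process. Nothing you have shown puts it in $\cI_{\G}$ or makes $\Phi(Y)$ $L^p$-continuous.

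To see this is not a formality, take the free setting of Theorem \ref{thm.SIESCBMdriver} with drift $\mathbf{G}(t,\mathbf{y})=\mathbf{y}$. There the $\G$-norm of $F(t,Y(t))-F(s,Y(s))$ dominates $\norm{Y(t)-Y(s)}_\infty$. So $\G$-continuity of the integrand is equivalent to the very $L^\infty$-continuity you are trying to prove. Hence ``$\Phi(Y)$ is $L^p$-continuous by the first step'' is unjustified, and so is $Y\in C_a([r,T];L^p(\E_{\scC})_{\beta})$.

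The missing ingredient is a uniform-in-$n$ $L^p$-modulus of continuity for the iterates. For $r\le s<t\le T$ and $n\ge 1$, the $\G$-bound together with $\norm{F(u,Y_{n-1}(u))}_{\G}\le M_{T,R}$ gives
\[
\norm{Y_n(t)-Y_n(s)}_p \le \norm{D(t)-D(s)}_p + a_TM_{T,R}\,\rho((s,t])^{\frac12}.
\]
Since $Y_n(t)-Y_n(s)\to Y(t)-Y(s)$ in $L^q$, the same lower semicontinuity of $\norm{\cdot}_p$ under $L^q$-limits that you already use transfers this estimate to $Y$. Atomlessness of $\rho$ and $L^p$-continuity of $D$ then give $L^p$-continuity of $Y$. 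This is how the paper proceeds, via duality against $\cC_t$.

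Only after this is $Y\in\mathcal{K}$, so that $F(\cdot,Y(\cdot))$ is $\norm{\cdot}_{\G}$-LCLB and lies in $\cI_{\G}$. From there your argument works as written. The contraction estimate gives $\Phi(Y_n)\to\Phi(Y)$ uniformly in $L^q$, hence $Y=\Phi(Y)$. The identity $I_X^{\H}=I_X^{\G}$ on $\cI_{\G}$ identifies this integral with the one in \eqref{eq.SIEfromr}.
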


\begin{proof}
Define $\Phi \colon C_a([r,\infty);L^p(\E_{\scC})_{\beta}) \to C_a([r,\infty);L^p(\E_{\scC})_{\beta})$ by
\[
\Phi(Z) \coloneqq D + \int_r^{\boldsymbol{\cdot}} F(t,Z(t))[\d X(t)].
\]
Note that $\Phi(Z)(t) \in L^p(\cC_t,\E_{\scC})_{\beta}$ for all $t \geq r$ by Assumption \ref{ass.integrator}\ref{item.sareq}.

If $Z \in C_a([r,\infty);L^p(\E_{\scC})_{\beta})$ and $\sup\big\{\norm{Z(t)}_p : r \leq t \leq T\big\} \leq R$, then
\begin{align*}
    \sup_{r \leq t \leq T} \norm{\Phi(Z)(t)}_p^2 & \leq 2\sup_{r \leq t \leq T}\norm{D(t)}_p^2 + 2\sup_{r \leq t \leq T}\norm{\int_r^tF(s,Z(s))[\d X(s)]}_p^2 \\
    & \leq \frac{R^2}{2} + 2a_T^2\int_r^T\norm{F(s,Z(s))}_{\G}^2\,\rho(\d s)\\
    & \leq \frac{R^2}{2} + 2a_T^2\rho([r,T])\sup_{r \leq t \leq T}\norm{F(t,Z(t))}_{\G}^2 \\
    & \leq \frac{R^2}{2} + 2a_T^2M_{T,R}^2\rho([r,T]) \leq R^2.
\end{align*}
Consequently, if $n \in \N_0$, then $\sup\big\{\norm{\Phi^{\circ n}(Z)(t)}_p : r \leq t \leq T\big\} \leq R$ as well.

Taking $Z=D$ in the previous paragraph and $(Z_n)_{n \in \N_0} \coloneqq \big(\Phi^{\circ n}(D)\big)_{n \in \N_0}$, we have that
\[
\sup_{r \leq t \leq T} \norm{Z_n(t)}_p \leq R \qquad (n \in \N_0).
\]
We claim that $(Z_n|_{[r,T]})_{n \in \N_0}$ converges in $C_a([r,T];L^q(\E_{\scC})_{\beta})$.
To this end, let $n \in \N$.
If $r \leq t \leq T$, then
\begin{align*}
    \norm{Z_{n+1}(t) - Z_n(t)}_q^2 & = \norm{\Phi(Z_n)(t) - \Phi(Z_{n-1})(t)}_q^2 \\
    & = \norm{\int_r^t(F(s,Z_n(s)) - F(s,Z_{n-1}(s)))[\d X(s)]}_q^2 \\
    & \leq a_T^2\int_r^t \norm{F(s,Z_n(s)) - F(s,Z_{n-1}(s))}_{\H}^2\,\rho(\d s) \\
    & \leq a_T^2\,c_{T,R}^2\int_r^t \norm{Z_n(s) - Z_{n-1}(s)}_q^2\,\rho(\d s),
\end{align*}
where $c_{T,R}$ comes from Assumption \ref{ass.coefficient}\ref{item.Lip}.
Applying this estimate inductively yields
\begin{align*}
    \sup_{r \leq t \leq T} & \norm{Z_{n+1}(t) - Z_n(t)}_q^2 \\
    & \leq a_T^{2n}c_{T,R}^{2n}\int_r^T \int_r^{t_1}\cdots \int_r^{t_{n-1}} \norm{Z_1(t_n) - Z_0(t_n)}_q^2\,\rho(\d t_n)\cdots \rho(\d t_2)\,\rho(\d t_1) \\
    & \leq 4R^2a_T^{2n}c_{T,R}^{2n}\int_r^T \int_r^{t_1}\cdots \int_r^{t_{n-1}} \rho(\d t_n)\cdots \rho(\d t_2)\,\rho(\d t_1) \\
    & \leq 4R^2a_T^{2n}c_{T,R}^{2n}\frac{\rho([r,T])^n}{n!}.
\end{align*}
Please see part (ii) of the proof of \cite[Lem.\ 13]{Seifert2016} for the last inequality (and recall that $\rho$-integrals over $[s,t]$ are the same $\rho$-integrals over $(s,t)$ because $\rho$ is atomless).
Therefore,
\[
\sum_{n=0}^{\infty}\sup_{r \leq t \leq T} \norm{Z_{n+1}(t) - Z_n(t)}_q \leq 2R\sum_{n=0}^{\infty}\frac{\left(a_Tc_{T,R}\sqrt{\rho([r,T])}\right)^n}{\sqrt{n!}} < \infty.
\]
It follows that $(Z_n|_{[r,T]})_{n \in \N}$ is Cauchy and thus convergent in $C_a([r,T];L^q(\E_{\scC})_{\beta})$, as claimed.

Let $Y \colon [r,T] \to L^q(\E_{\scC})_{\beta}$ be the uniform limit of $\big(Z_n|_{[r,T]}\big)_{n \in \N_0}$.
We claim that $Y$ does the job.
To begin to see this, we must first show that $Y \in C_a([r,T];L^p(\E_{\scC})_{\beta})$.
If $p=q$, then there is nothing to prove, so we assume $p > q$.
In this case, we first argue that $Y(t) \in L^p(\cC_t,\E_{\scC})_{\beta}$ whenever $r \leq t \leq T$.
It suffices to show that $Y(t) \in L^p(\cC_t,\E_{\scC})$ whenever $r \leq t \leq T$.
To this end, note that $\sup \big\{ \norm{Z_n(t)}_p : 0 \leq t \leq T, \, n \in \N\big\} \leq R$.
Also, if $r \leq t \leq T$, then $(Z_n(t))_{n \in \N}$ converges to $Y(t)$ in $L^q(\cC_t,\E_{\scC})$.
Consequently, if $a \in \cC_t$ and $p' \in [1,\infty)$ is the H\"older conjugate of $p$, then
\[
\left|\E_{\scC}[ aY(t)] \right| = \lim_{n \to \infty}\left|\E_{\scC}[ aZ_n(t)] \right| \leq \norm{a}_{p'}\liminf_{n \to \infty}\norm{Z_n(t)}_p \leq R\norm{a}_{p'}
\]
by noncommutative H\"older's inequality.
Since $b \mapsto (a \mapsto \E_{\scC}[ab])$ is an isometric isomorphism from $L^p(\cC_t,\E_{\scC})$ onto $L^{p'}(\cC_t,\E_{\scC})^*$, it follows that $Y(t) \in L^p(\cC_t,\E_{\scC})$, as desired.
(Here, we used Assumption \ref{ass.integrator}\ref{item.W*ass} for the $p=\infty$ case.)

Next, we argue that $Y \in C_a([r,T];L^p(\E_{\scC})_{\beta})$.
Indeed, if $r \leq s < t \leq T$ and $a \in \cC_t$, then
\begin{align*}
    | \E_{\scC}[ a(Y(t) - Y(s))] | & = \lim_{n \to \infty} \left| \E_{\scC}[ a(Z_n(t) - Z_n(s))]\right| \\
    & \leq \norm{a}_{p'}\liminf_{n \to \infty} \norm{Z_n(t) - Z_n(s)}_p \\
    & = \norm{a}_{p'}\liminf_{n \to \infty} \norm{\int_s^t F(s_0,Z_{n-1}(s_0))[\d X(s_0)]}_p \\
    & \leq a_T\norm{a}_{p'}\liminf_{n \to \infty}\left(\int_s^t \norm{F(s_0,Z_{n-1}(s_0))}_{\G}^2 \,\rho(\d s_0)\right)^\frac12 \\
    & \leq a_TM_{T,R}\norm{a}_{p'}\rho([s,t])^{\frac12}.
\end{align*}
Thus, by duality again,
\[
\norm{Y(t)-Y(s)}_p \leq a_TM_{T,R}\rho([s,t])^{\frac12},
\]
from which the desired conclusion follows.
\pagebreak

It remains to show that \eqref{eq.SIEfromr} holds.
To this end, let $H(t) \coloneqq F(t,Y(t)) \in \cG_t$ and $H_n(t) \coloneqq F(t,Z_n(t)) \in \cG_t$ for all $t \in [r,T]$ and $n \in \N$.
Of course, $H \in \cI_{\G} \subseteq \cI_{\H}$ and $H_n \in \cI_{\G}([r,T]) \subseteq \cI_{\H}([r,T])$ for all $n \in \N$.
By definition of $Z_n$,
\begin{equation}
    Z_n = D + \int_r^{\boldsymbol{\cdot}} F(t,Z_{n-1}(t))[\d X(t)] = D+I_X^{\H}[1_{(r,T]}H_{n-1}] \; \text{ on } \; [r,T]. \label{eq.ZnSIE}
\end{equation}
Taking $n \to \infty$ on the left-hand side gives $Y$.
Also,
\begin{align*}
    \sup_{r \leq t \leq T} \norm{H_{n-1}(t) - H(t) }_{\H} & = \sup_{r \leq t \leq T}\norm{F(t,Z_{n-1}(t)) - F(t,Y(t))}_{\H} \\
    & \leq c_{T,R} \sup_{r \leq t \leq T}\norm{Z_{n-1}(t) - Y(t)}_q \xrightarrow{n \to \infty} 0.
\end{align*}
In particular, $(1_{(r,T]}H_{n-1})_{n \geq 2}$ converges locally boundedly to $1_{(r,T]}H$.
By the continuity of $I_X^{\H}$ with respect to locally bounded convergence, $\big(I_X^{\H}[1_{(r,T]}H_{n-1}]\big)_{n \geq 2}$ converges to $I_X^{\H}[1_{(r,T]}H]$ in $C_a([r,T];L^q(\E_{\scC})_{\beta})$.
Consequently, after taking $n \to \infty$ in \eqref{eq.ZnSIE}, we obtain that $Y = D+ I_X^{\H}[1_{(r,T]}H]$ on $[r,T]$.
Since $I_X^{\H}$ agrees with $I_X^{\G}$ on $\cI_{\G}$, we conclude that \eqref{eq.SIEfromr} holds.
This completes the proof.
\end{proof}

\begin{corollary}\label{cor.exist}
For any $C \in C_a(\R_+;L^p(\E_{\scC})_{\beta})$, there exists a local solution to \eqref{eq.SIE} defined on an interval of non-zero length.
\end{corollary}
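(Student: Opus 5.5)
The plan is to apply Theorem \ref{thm.exist} with $r = 0$ and $D \coloneqq C$.
Since $C \in C_a(\R_+;L^p(\E_{\scC})_{\beta})$, the quantity $\norm{C(0)}_p$ is finite. We choose
\[
R \coloneqq 2\norm{C(0)}_p + 2,
\]
so that $R > 0$ and $\norm{C(0)}_p \leq R/2 - 1$.

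By continuity of $C$ at $0$ in $L^p$, there is a $T_1 > 0$ such that $\sup_{0 \leq t \leq T_1}\norm{C(t)}_p \leq R/2$. With $R$ now fixed, the constants $a_T$ and $M_{T,R}$ are increasing in $T$. So for every $T \in (0,T_1]$ we have
\[
a_T^2M_{T,R}^2\rho([0,T]) \leq a_{T_1}^2M_{T_1,R}^2\rho([0,T]).
\]
Here $M_{T_1,R} < \infty$ by Assumption \ref{ass.coefficient}\ref{item.bddass}.

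Because $\rho$ is locally finite and atomless, $\rho([0,T]) \to \rho(\{0\}) = 0$ as $T \searrow 0$, by continuity of measure from above. We may therefore choose $T \in (0,T_1]$ small enough that the right-hand side above is at most $R^2/4$.

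Both hypotheses of Theorem \ref{thm.exist} now hold, with $D = C$ restricted to $[0,\infty)$. The theorem gives $Y \in C_a([0,T];L^p(\E_{\scC})_{\beta})$ satisfying
\[
Y(t) = C(t) + \int_0^t F(s,Y(s))[\d X(s)] \qquad (0 \leq t \leq T).
\]
This is exactly a local solution on the interval $I = [0,T]$, which has nonzero length. The only point needing care is that $\rho(\{0\}) = 0$, and this holds because $\rho$ is atomless under Assumption \ref{ass.integrator}\ref{item.integralboundass}. Without it, the smallness condition could fail for every $T$.
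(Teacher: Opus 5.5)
Your proof is correct and follows the paper's argument. Like the paper, you apply Theorem \ref{thm.exist} with $r=0$, $D=C$, and $R$ slightly larger than $2\norm{C(0)}_p$, then take $T$ small using continuity of $C$ at $0$ and $a_T^2M_{T,R}^2\rho([0,T]) \to 0$; your justification of that limit (monotonicity of $a_T$ and $M_{T,R}$ in $T$, plus $\rho(\{0\})=0$) fills in a step the paper states without comment.
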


\begin{proof}
Let $r = 0$ and $R = 2\norm{C(0)}_p+1$.
Since
\[
\lim_{T \searrow 0} a_T^2M_{T,R}^2\rho([0,T]) = 0 \; \text{ and } \;\ \lim_{T \searrow 0}\norm{C(T)}_p = \norm{C(0)}_p,
\]
there exists a $T > 0$ small enough that
\[
a_T^2M_{T,R}^2\rho([0,T]) \leq \frac{R^2}{4} \; \text{ and } \; \sup_{0 \leq t \leq T}\norm{C(t)}_p \leq \frac{R}{2}.
\]
By Theorem \ref{thm.exist}, there exists a $Y \in C_a([0,T];L^p(\E_{\scC})_{\beta})$ such that \eqref{eq.SIE} holds on $[0,T]$.
\end{proof}

Having now established the local existence of solutions, we use Gr\"onwall's inequality in the usual way to establish a uniqueness result.

\begin{proposition}[Uniqueness]\label{prop.uniqueness}
Suppose $0 \leq r \leq T$.
If $C,D,Y,Z \in C_a([r,T];L^p(\E_{\scC})_{\beta})$ are such that $Y = C+\int_r^{\boldsymbol{\cdot}} F(t,Y(t))[\d X(t)]$ and $Z = D+\int_r^{\boldsymbol{\cdot}} F(t,Z(t))[\d X(t)]$ on $[r,T]$, then
\[
\sup_{r \leq s \leq t} \norm{Y(s) - Z(s)}_q \leq \sqrt2\sup_{r \leq s \leq t}\norm{C(s)-D(s)}_qe^{2a_T^2c_{T,R}^2\rho([r,t])} \qquad (r \leq t \leq T),
\]
where
\[
R = \max\left\{ \sup_{r \leq t \leq T}\norm{Y(t)}_p, \sup_{r \leq t \leq T}\norm{Z(t)}_p\right\}.
\]
In particular, if $C\equiv D$, then $Y \equiv Z$.
\end{proposition}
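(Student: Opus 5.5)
We will run the standard Gr\"onwall argument, working in the weaker $L^q$ norm while using the $L^p$ bound $R$ only to activate the local Lipschitz constant of Assumption \ref{ass.coefficient}\ref{item.Lip}. Fix $r \le t \le T$ and $r \le s \le t$, and subtract the two equations to get
\[
Y(s) - Z(s) = C(s) - D(s) + \int_r^s \big(F(u,Y(u)) - F(u,Z(u))\big)[\d X(u)].
\]
The integrand $K(u) \coloneqq F(u,Y(u)) - F(u,Z(u))$ on $(r,T]$ belongs to $\cI_{\G}([r,T]) \subseteq \cI_{\H}([r,T])$. This uses continuity of $F$, adaptedness of $Y,Z$, Assumption \ref{ass.coefficient}(ii), and Proposition \ref{prop.LCLB}, since $u \mapsto F(u,Y(u))$ is $\G$-adapted, continuous, hence LCLB. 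Linearity of $I_X$ (Theorem \ref{thm.abstractstochint}) justifies subtracting the integrals. We then apply the $\H$-bound. Assumption \ref{ass.integrator}\ref{item.integralboundass} is stated for elementary integrands, but Theorem \ref{thm.constructNCstochint}, applied with $(\H,q)$, extends it to all of $\cI_{\H}$. This gives
\[
\norm{Y(s)-Z(s)}_q^2 \le 2\norm{C(s)-D(s)}_q^2 + 2a_T^2\int_r^s \norm{K(u)}_{\H}^2\,\rho(\d u).
\]
Since $\norm{Y(u)}_p,\norm{Z(u)}_p \le R$, Assumption \ref{ass.coefficient}\ref{item.Lip} gives $\norm{K(u)}_{\H} \le c_{T,R}\norm{Y(u)-Z(u)}_q$.

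Next set $\phi(t) \coloneqq \sup_{r\le s\le t}\norm{Y(s)-Z(s)}_q^2$. This is finite because $Y,Z$ are $L^p$-continuous and $q\le p$, so they are $L^q$-continuous on the compact interval. Taking the supremum over $s \le t$ yields
\[
\phi(t) \le 2\sup_{r\le s\le t}\norm{C(s)-D(s)}_q^2 + 2a_T^2c_{T,R}^2\int_r^t \phi(u)\,\rho(\d u).
\]
The first term is nondecreasing in $t$. Gr\"onwall's inequality for the atomless measure $\rho$ (as in \cite[Lem.\ 13]{Seifert2016}, used in Corollary \ref{cor.sublingrowth}) then gives
\[
\phi(t) \le 2\sup_{r\le s\le t}\norm{C(s)-D(s)}_q^2\, e^{2a_T^2c_{T,R}^2\rho([r,t])}.
\]
Taking square roots gives a bound with exponent $a_T^2c_{T,R}^2\rho([r,t])$. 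This is at most the stated exponent, since $e^{x} \le e^{2x}$ for $x \ge 0$, so the stated bound follows. If $C \equiv D$, the right side vanishes, so $\norm{Y-Z}_q \equiv 0$ on $[r,T]$ and hence $Y \equiv Z$.

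The main obstacle is bookkeeping rather than substance. First, we must confirm that the quadratic bound in Assumption \ref{ass.integrator}\ref{item.integralboundass} holds for the non-elementary integrand $K$ under $\H$. This follows from the density/extension argument of Theorem \ref{thm.constructNCstochint} together with the agreement $I_X^{\H} = I_X^{\G}$ on $\cI_{\G}$. Second, we must ensure the Gr\"onwall form used applies to a measure $\rho$ that is atomless but not necessarily Lebesgue. That lemma only needs $\phi$ bounded and measurable, which holds because $\phi$ is monotone.
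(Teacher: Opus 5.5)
Your proof is correct and follows the paper's argument: the same $(a+b)^2\le 2a^2+2b^2$ split, the integral bound from Assumption \ref{ass.integrator}\ref{item.integralboundass} (extended to predictable integrands and transferred between the two filtrations), the local Lipschitz estimate with $R$ controlling the $L^p$ norms, and Gr\"onwall's inequality for the atomless measure $\rho$. The only difference is cosmetic. You apply Gr\"onwall to the running supremum with a nondecreasing forcing term, which gives the sharper exponent $a_T^2c_{T,R}^2\rho([r,t])$; the paper instead applies it pointwise and absorbs the extra term via $1+xe^x\le e^{2x}$, which is where the factor $2$ in the stated exponent comes from.
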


\begin{proof}
If $r \leq t \leq T$, then
\begin{align*}
    \norm{Y(t) - Z(t)}_q^2 & \leq 2\norm{C(t)-D(t)}_q^2 + 2\norm{\int_r^t (F(s,Y(s)) - F(s,Z(s)))[\d X(s)]}_q^2 \\
    & \leq 2\norm{C(t)-D(t)}_q^2 + 2a_T^2\int_r^t\norm{F(s,Y(s)) - F(s,Z(s))}_{\H}^2\,\rho(\d s) \\
    & \leq 2\norm{C(t)-D(t)}_q^2 + 2a_T^2c_{T,R}^2\int_r^t\norm{Y(s) - Z(s)}_q^2\,\rho(\d s).
\end{align*}
Consequently, we obtain from Gr\"onwall's inequality (vid.\ \cite[Lem.\ 13]{Seifert2016}) and the elementary inequality $1+xe^x \leq e^{2x}$ ($x \geq 0$) that
\begin{align*}
    \norm{Y(t) - Z(t)}_q^2 & \leq 2\norm{C(t) - D(t)}_q^2 + 4a_T^2c_{T,R}^2\int_r^t \norm{C(s) - D(s)}_q^2e^{2a_T^2c_{T,R}^2\rho([s,t])} \,\rho(\d s) \\
    & \leq 2\sup_{r \leq s \leq t}\norm{C(s)-D(s)}_q^2\left(1 + 2a_T^2c_{T,R}^2\rho([r,t])e^{2a_T^2c_{T,R}^2\rho([r,t])} \right) \\
    & \leq 2\sup_{r \leq s \leq t}\norm{C(s)-D(s)}_q^2e^{4a_T^2c_{T,R}^2\rho([r,t])}
\end{align*}
whenever $r \leq t \leq T$.
Taking square roots on both sides yields the desired inequality.
\end{proof}

Combining the existence and uniqueness results we have proven thus far enables us to extend solutions that do not blow up in finite time.

\begin{theorem}[Extending solutions that do not blow up]\label{thm.extendbddsol}
Suppose $0 < T_0 < \infty$ and $Y_0 \in C_a( [0,T_0); L^p(\E_{\scC})_{\beta})$ is a local solution to \eqref{eq.SIE}.
If $\norm{Y_0(t)}_p \not\to \infty$ as $t \nearrow T_0$, then there exists a $T > T_0$ and a local solution $Y \in C_a([0,T]; L^p(\E_{\scC})_{\beta})$ to \eqref{eq.SIE} such that $Y|_{[0,T_0)} = Y_0$.
\end{theorem}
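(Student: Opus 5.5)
The plan is to restart the equation at a time $r<T_0$ close to $T_0$ where $\norm{Y_0(r)}_p$ is controlled. Then I apply Theorem \ref{thm.exist} on an interval $[r,T]$ with $T>T_0$, glue the new piece to $Y_0$, and use Proposition \ref{prop.uniqueness} to see that the glued process agrees with $Y_0$ on $[r,T_0)$.

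\textbf{Choice of parameters.} Since $\norm{Y_0(t)}_p\not\to\infty$ as $t\nearrow T_0$, there exist $K<\infty$ and times $t_n\nearrow T_0$ with $\norm{Y_0(t_n)}_p\le K$ for all $n$.
\begin{itemize}
\item Set $R\coloneqq 2K+2$ and $T_1\coloneqq T_0+1$.
\item $C$ is uniformly continuous in $L^p$ on $[0,T_1]$. Also $\rho$ is locally finite and atomless, so $\rho([T_0-\delta,T_0+\delta])\to\rho(\{T_0\})=0$ as $\delta\searrow0$.
\item Hence we can choose $\delta\in(0,\min\{1,T_0\})$ such that $\norm{C(t)-C(s)}_p\le 1$ whenever $s,t\in[T_0-\delta,T_0+\delta]$, and
\[
a_{T_1}^2M_{T_1,R}^2\,\rho([T_0-\delta,T_0+\delta])\le R^2/4 .
\]
\item Put $T\coloneqq T_0+\delta$ and pick $r\coloneqq t_n\in(T_0-\delta,T_0)$.
\end{itemize}
Monotonicity of $T\mapsto a_T$ and $T\mapsto M_{T,R}$ gives $a_T^2M_{T,R}^2\rho([r,T])\le R^2/4$.

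\textbf{Applying Theorem \ref{thm.exist}.} Define $D(t)\coloneqq C(t)-C(r)+Y_0(r)$ for $t\ge r$. Then $D\in C_a([r,\infty);L^p(\E_{\scC})_{\beta})$, since $Y_0(r)\in L^p(\cC_r,\E_{\scC})_\beta\subseteq L^p(\cC_t,\E_{\scC})_\beta$ for $t\ge r$. Moreover $\sup_{r\le t\le T}\norm{D(t)}_p\le 1+K=R/2$. Theorem \ref{thm.exist} therefore gives $Z\in C_a([r,T];L^p(\E_{\scC})_\beta)$ with
\[
Z=D+\int_r^{\boldsymbol{\cdot}}F(s,Z(s))[\d X(s)] \quad\text{on } [r,T].
\]

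\textbf{Gluing.} Define $Y\coloneqq Y_0$ on $[0,r]$ and $Y\coloneqq Z$ on $[r,T]$. These agree at $r$ because $Z(r)=D(r)=Y_0(r)$, so $Y\in C_a([0,T];L^p(\E_{\scC})_\beta)$.
\begin{itemize}
\item \emph{Predictability.} $t\mapsto F(t,Y(t))$ is $\G$-adapted by Assumption \ref{ass.coefficient}. It is continuous as a composition of continuous maps, hence LCLB on $[0,T]$, so Proposition \ref{prop.LCLB} puts $1_{(0,T]}F(\cdot,Y(\cdot))$ in $\cI_{\G}$.
\item \emph{Splitting the integral.} By \eqref{eq.stottoT}, for $t\ge r$ we have $\int_0^t=\int_0^r+\int_r^t$. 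The integrand over $(0,r]$ coincides with that built from $Y_0$, so $\int_0^r F(s,Y(s))[\d X(s)]=Y_0(r)-C(r)$.
\item \emph{Verifying the equation.} For $t\in[r,T]$,
\[
Y(t)=Z(t)=C(t)-C(r)+Y_0(r)+\int_r^tF(s,Y(s))[\d X(s)] = C(t)+\int_0^tF(s,Y(s))[\d X(s)].
\]
On $[0,r]$ the equation holds because $Y_0$ solves it there.
\end{itemize}

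\textbf{Agreement with $Y_0$ and the main obstacle.} It remains to show $Y=Y_0$ on $[r,T_0)$. For any $T'\in(r,T_0)$, both $Z$ and $Y_0|_{[r,T']}$ solve $W=D+\int_r^{\boldsymbol{\cdot}}F(s,W(s))[\d X(s)]$ on $[r,T']$. For $Y_0$ this follows by subtracting its equation at time $r$ from its equation at time $t$. Both processes are continuous, hence bounded in $L^p$ on $[r,T']$, so Proposition \ref{prop.uniqueness} with $C\equiv D$ gives $Z=Y_0$ on $[r,T']$. Letting $T'\nearrow T_0$ yields $Y|_{[0,T_0)}=Y_0$.

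I expect the main care to lie in the predictability and splitting bookkeeping. The integrals of processes defined only on subintervals are defined through $1_{(r,T]}H\in\cI_{\G}$, and one must check that integrals of two integrands agreeing on $(0,r]$ coincide up to time $r$. This follows from \eqref{eq.stottoT}, applied after writing $1_{(0,r]}H$ for each integrand.
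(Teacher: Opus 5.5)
Your proposal is correct and follows essentially the same route as the paper: restart at a time $t_n$ near $T_0$ where $\norm{Y_0(t_n)}_p$ is bounded, apply Theorem~\ref{thm.exist} to $D = C - C(t_n) + Y_0(t_n)$ on $[t_n,T]$ with $T > T_0$, identify the new piece with $Y_0$ via Proposition~\ref{prop.uniqueness}, and glue. The differences are cosmetic. You bound $D$ using continuity of $C$ near $T_0$ rather than by $2M_2+M_1$, and you glue at $r$ rather than at $T_0$. You also apply uniqueness on compact intervals $[r,T']$ with $T'<T_0$, which is slightly more careful than the paper's direct application on $[t_n,T_0)$.
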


\begin{proof}
If $\norm{Y_0(t)}_p \not\to \infty$ as $t \nearrow T_0$, then there exists a sequence $(t_n)_{n \in \N}$ in $[0,T_0)$ such that $t_n \nearrow T_0$ as $n \to \infty$ and $\sup\big\{\norm{Y_0(t_n)}_p : n \in \N\big\} < \infty$.
Now, define
\[
M_1 \coloneqq \sup_{n \in \N}\norm{Y_0(t_n)}_p, \;\;  M_2 \coloneqq \sup_{0 \leq t \leq T_0+1}\norm{C(t)}_p, \; \text{ and } \; R \coloneqq 2(2M_2+M_1)+1.
\]
Also, choose a $T \in (T_0,T_0+1)$ and an $n \in \N$ such that
\[
a_T^2M_{T,R}^2\rho((t_n,T]) \leq \frac{R^2}{4}.
\]
This is possible because $t_n \nearrow T_0$ as $n \to \infty$ and
\[
\lim_{\underset{u \searrow T_0}{t \nearrow T_0}}a_u^2M_{u,R}^2 \rho([t,u]) = 0.
\]
Now, if $D \coloneqq C - C(t_n) + Y_0(t_n) \in C_a([t_n,\infty);L^p(\E_{\scC})_{\beta})$, then
\[
\sup_{t_n \leq t \leq T} \norm{D(t)}_p  \leq 2 \sup_{0 \leq t \leq T_0+1} \norm{C(t)}_p + \sup_{m \in \N} \norm{Y_0(t_m)}_p = 2M_2+M_1 < \frac{R}{2}.
\]
By Theorem \ref{thm.exist} with $r = t_n$, there exists a $\tilde{Y} \in C_a([t_n,T];L^p(\E_{\scC})_{\beta})$ such that
\[
\tilde{Y}(t) = D(t) + \int_{t_n}^t F\big(s,\tilde{Y}(s)\big)[\d X(s)] \qquad (t_n \leq t \leq T).\pagebreak
\]
We claim that $\tilde{Y}|_{[t_n,T_0)} = Y_0|_{[t_n,T_0)}$.
Indeed, if $t_n \leq t < T_0$, then
\begin{align*}
    Y_0(t) & = C(t) + \int_0^t F(s,Y_0(s))[\d X(s)] \\
    & = C(t) - C(t_n) + C(t_n) + \int_0^{t_n} F(s,Y_0(s))[\d X(s)] + \int_{t_n}^t F(s,Y_0(s))[\d X(s)] \\
    & = C(t) - C(t_n) + Y_0(t_n) + \int_{t_n}^t F(s,Y_0(s))[\d X(s)] \\
    & = D(t) + \int_{t_n}^t F(s,Y_0(s))[\d X(s)].
\end{align*}
In other words, $Y_0$ also solves $Z = D+\int_{t_n}^{\boldsymbol{\cdot}} F(t,Z(t))[\d X(t)]$ on $[t_n,T_0)$.
It follows from Proposition \ref{prop.uniqueness} that $Y_0 \equiv \tilde{Y}$ on $[t_n,T_0)$, as claimed.
In particular, if
\[
Y(t) \coloneqq \begin{cases}
    Y_0(t) & \text{if } 0 \leq t < T_0, \\
    \tilde{Y}(t) & \text{if } T_0 \leq t \leq T,
\end{cases}
\]
then $Y \in C_a([0,T];L^p(\E_{\scC})_{\beta})$.

We complete the proof by showing that $Y$ solves \eqref{eq.SIE} on $[0,T]$.
Since $Y|_{[0,T_0)} = Y_0$, we already know $Y$ solves \eqref{eq.SIE} on $[0,T_0)$.
Now, if $T_0 \leq t \leq T$, then
\begin{align*}
    Y(t) & = \tilde{Y}(t) = D(t) + \int_{t_n}^t F\big(s,\tilde{Y}(s)\big)[\d X(s)] \\
    & = C(t) - C(t_n) + Y_0(t_n) + \int_{t_n}^t F(s,Y(s))[\d X(s)] \\
    & = C(t) + \int_0^{t_n} F(s,Y_0(s))[\d X(s)] + \int_{t_n}^t F(s,Y(s))[\d X(s)] \\
    & = C(t) + \int_0^{t_n} F(s,Y(s))[\d X(s)] + \int_{t_n}^t F(s,Y(s))[\d X(s)] \\
    & = C(t) + \int_0^t F(s,Y(s))[\d X(s)],
\end{align*}
as claimed.
\end{proof}

We are finally ready to prove Theorem \ref{thm.SIE}.

\begin{proof}[Proof of Theorem \ref{thm.SIE}]
Let $J$ be the set of all $T \in (0,\infty]$ such that there exists a local solution $Y_T  \colon [0,T) \to L^p(\E_{\scC})_{\beta}$ to \eqref{eq.SIE}.
Note that if $T \in J$ and $0 < S \leq T$, then $S \in J$.
By Proposition \ref{prop.uniqueness}, if $S,T \in J$ and $S < T$, then $Y_S = Y_T|_{[0,S)}$.
By Corollary \ref{cor.exist}, $J$ is not empty.
Define $T_{\ell} \coloneqq \sup J \in (0,\infty]$ and $Y \colon [0,T_{\ell}) \to L^p(\E_{\scC})_{\beta}$ by $Y|_{[0,T)} \coloneqq Y_T$ for all $T \in [0,T_{\ell})$.
By the third sentence of this paragraph, $Y$ is well defined.
It is also clear that $Y \in C_a([0,T_{\ell});L^p(\E_{\scC})_{\beta})$.
We claim that $Y$ solves \eqref{eq.SIE} on $[0,T_{\ell})$.
Indeed, suppose $0 \leq t < T < T_{\ell}$.
Since $Y_T$ solves \eqref{eq.SIE} on $[0,T)$ and $Y|_{[0,t]} = Y_T|_{[0,t]}$, $Y$ solves \eqref{eq.SIE} on $[0,t]$.
Since $t \in [0, T_{\ell})$ was arbitrary, the claim is~proven.

Next, we show that $Y$ is maximal and unique.
Suppose $Z \colon I \to L^p(\E_{\scC})_{\beta}$ is any local solution to \eqref{eq.SIE}.
We begin by arguing that $I \subseteq [0,T_{\ell})$.
If $I = [0,\infty)$, then $T_{\ell} = \infty$, so $I = [0,T_{\ell})$ trivially.
If $I \subsetneq [0,\infty)$ and we define $T \coloneqq \sup I < \infty$, then $I = [0,T)$ or $I = [0,T]$.
If $I = [0,T)$, then $T \in J$, so $T \leq T_{\ell}$, in which case $I = [0,T) \subseteq [0,T_{\ell})$.
If $I = [0,T]$, then by Theorem \ref{thm.extendbddsol}, there exists an $\e > 0$ and a local solution $Z_1 \colon [0,T+\e] \to L^p(\E_{\scC})_{\beta}$ to \eqref{eq.SIE} such that $Z_1 = Z$ on $[0,T)$.
Consequently, $T+\e \in J$, so $T+\e \leq T_{\ell}$;
thus, $I = [0,T] \subseteq [0,T+\e) \subseteq [0,T_{\ell})$, as desired.
In either case, it follows at once from Proposition \ref{prop.uniqueness} that $Z = Y|_I$.

Finally, suppose $T_{\ell} < \infty$.
If it were the case that $\norm{Y(t)}_p \not\to \infty$ as $t \nearrow T_{\ell}$, then Theorem \ref{thm.extendbddsol} would say that $Y$ could be extended to a local solution on $[0,T]$ for some $T > T_{\ell}$, which would contradict the fact that $T_{\ell} = \sup J$.
We conclude that if $T_{\ell} < \infty$, then $\norm{Y(t)}_p \to \infty$ as $t \nearrow T_{\ell}$.
This completes the proof.\vspace{-0.75mm}
\end{proof}

\section{Examples}\label{sec.SIEpexamples}\vspace{-0.75mm}

Retain the objects and notation set at the beginning of section \ref{sec.SIEp}.\vspace{-0.75mm}

\subsection{Measured decomposable driver}\label{subsec.MDSIE}

In this subsection, we take:
\begin{itemize}
    \item $X = X(0) + M + A \colon \R_+ \to L^{p_0}(\E_{\scB})$ to be an $L^{p_0}$-measured decomposable process;
    \item $2 \leq q=p < \infty$;
    \item $(\H,\norm{\cdot}_{\H}) = (\G,\norm{\cdot}_{\G}) = (\F^{p_0;p},\norm{\cdot}_{p_0;p}) = \big(\big(\cF_t^{p_0;p}(\E_{\scB};\E_{\scC})\big)_{t \geq 0},\norm{\cdot}_{p_0;p}\big)$ (Example \ref{ex.LpLq});
    \item $\nu(\d t) \coloneqq \norm{\d A(t)}_{p_0}$, $\mu$ as in Lemma \ref{lem.control} with $(\alpha,F,\cV) = (2,M,L^{p_0}(\E_{\scB}))$, $\rho \coloneqq \mu+\nu$, and $a_T \coloneqq \beta_p+\nu([0,T])^{1/2}$ for all $T \geq 0$, where $\beta_p$ is as in Theorem \ref{thm.NCBDG}.
\end{itemize}
Then Assumption \ref{ass.integrator} is satisfied by definition and Theorem \ref{thm.MDass}.
Theorem \ref{thm.SIE}, Remark \ref{rem.ass}, and Corollary \ref{cor.sublingrowth} then yield the following result.

\begin{theorem}\label{thm.SIEMDdriver}
Suppose $F \colon \R_+ \times L_{\alg}^p(\cC_{\infty},\E_{\scC})_{\beta} \to B^{p_0;p}$ is continuous and satisfies:
\begin{enumerate}[font=\normalfont,label=(\roman*)]
    \item $F(t,y) \in \cF_t^{p_0;p}$ for all $t\geq 0$ and $y \in L^p(\cC_t,\E_{\scC})_{\beta}$;
    \item $\displaystyle \int_r^t F(s,Y(s))[\d X(s)] \in L^p(\cC_t,\E_{\scC})_{\beta}$ for all $t \geq r \geq 0$ and $Y \in C_a([r,t];L^p(\E_{\scC})_{\beta})$;
    \item for all $R > 0$, there exists an increasing family $(c_{T,R})_{T \geq 0}$ of positive real numbers such that for all $y,z \in L_{\alg}^p(\cC_{\infty},\E_{\scC})_{\beta}$ with $\norm{y}_p \leq R$ and $\norm{z}_p \leq R$,
    \[
    \norm{F(t,y) - F(t,z)}_{p_0;p} \leq c_{T,R}\norm{y-z}_p \qquad (0 \leq t \leq T).
    \]
\end{enumerate}
For each $C \in C_a(\R_+;L^p(\E_{\scC})_{\beta})$, there exists a unique maximal solution $Y \colon [0,T_{\ell}) \to L^p(\E_{\scC})_{\beta}$ to \eqref{eq.SIE} such that $\lim_{t \nearrow T_{\ell}}\norm{Y(t)}_p = \infty$ whenever $T_{\ell} < \infty$.
Furthermore, if for each $T \geq 0$, there exists an $M_T <\infty$ such that 
\[
\sup_{0 \leq t \leq T}\norm{F(t,y)}_{p_0;p} \leq M_T\left(\norm{y}_p+1\right) \qquad \big(y \in L_{\alg}^p(\cC_{\infty},\E_{\scC})_{\beta}\big),
\]
e.g., if $c_{T,R}$ may be taken to be independent of $R$, then $T_{\ell}=\infty$.\qed
\end{theorem}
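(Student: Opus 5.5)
The plan is to deduce Theorem \ref{thm.SIEMDdriver} directly from Theorem \ref{thm.SIE}, with Corollary \ref{cor.sublingrowth} handling the global-existence clause. The work is to check Assumptions \ref{ass.integrator} and \ref{ass.coefficient} under the specializations listed at the start of this subsection.

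\textbf{Assumption \ref{ass.integrator}.}
\begin{enumerate}[label=(\roman*)]
\item Item \ref{item.Xass} holds because an $L^{p_0}$-measured decomposable process is continuous and adapted. Indeed, $M \in \M^{p_0}$, $A \in \FV^{p_0}$, and $X(0) \in L^{p_0}(\cB_0,\E_{\scB})$ since $X$ is adapted.
\item Item \ref{item.HandGass} is trivial because $\H=\G$ with the same norm. Compatibility of $(\F^{p_0;p},\norm{\cdot}_{p_0;p})$ was observed in Example \ref{ex.LpLq}. This filtration lives in $B^{p_0;p}\subseteq B^{p_0;1}$, which uses that $p\ge 1$ and that the trace is a state, so $\norm{\cdot}_1\le\norm{\cdot}_p$.
\item Item \ref{item.integralboundass} with $q=p$ follows from Theorem \ref{thm.MDass}. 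Combining \eqref{eq.elemintbd2} with Proposition \ref{prop.GD}\ref{item.incl} gives $\norm{\int_0^t H[\d X]}_p \le (\beta_p+\nu((0,t])^{1/2})(\int_{(0,t]}\norm{H}_{p_0;p}^2\,\d\rho)^{1/2}$ for $H\in\EP^{p_0;p}$. Since $a_T=\beta_p+\nu([0,T])^{1/2}$ is increasing in $T$ and dominates the constant at time $t\le T$, the required bound holds. The measure $\rho=\mu+\nu$ is locally finite. It is also atomless: $\mu$ is atomless by Lemma \ref{lem.control} because $M$ is continuous, and $\nu$ is atomless because $A$ is continuous. One point to be careful about is that $\nu((0,t])$ versus $\nu([0,T])$ causes no issue, since $\nu(\{0\})=0$.
\item Item \ref{item.W*ass} is vacuous because $p<\infty$.
\end{enumerate}

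\textbf{Assumption \ref{ass.coefficient}.} Continuity, adaptedness and the self-adjointness requirement \ref{item.sareq} are exactly hypotheses (i)–(ii) of the theorem, with $\cG_t=\cF_t^{p_0;p}$. The local Lipschitz condition \ref{item.Lip} with $q=p$ and $\norm{\cdot}_{\H}=\norm{\cdot}_{p_0;p}$ is hypothesis (iii). The boundedness condition \ref{item.bddass} is redundant by Remark \ref{rem.ass}, because $q=p$ and $\H=\G$. The remark's argument needs $\sup_{0\le t\le T}\norm{F(t,0)}_{p_0;p}<\infty$, which follows from continuity of $F$ on the compact set $[0,T]\times\{0\}$.

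\textbf{Conclusion.} Theorem \ref{thm.SIE} then yields the unique maximal solution and the blow-up of $\norm{Y(t)}_p$ when $T_\ell<\infty$. Corollary \ref{cor.sublingrowth} yields $T_\ell=\infty$ under the growth hypothesis. When $c_{T,R}=c_T$ is independent of $R$, the discussion following Corollary \ref{cor.sublingrowth} shows the growth hypothesis holds automatically, with $M_T=\max\{c_T,\sup_{t\le T}\norm{F(t,0)}_{p_0;p}\}$.

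The only step needing any care is the verification of the integral bound in Assumption \ref{ass.integrator}\ref{item.integralboundass}, including that $\rho$ is atomless. Everything else is bookkeeping.
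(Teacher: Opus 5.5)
Your proposal is correct and is exactly the paper's argument: the paper notes that Assumption \ref{ass.integrator} holds by Theorem \ref{thm.MDass} (with $\rho=\mu+\nu$ and $a_T=\beta_p+\nu([0,T])^{1/2}$), then invokes Theorem \ref{thm.SIE}, Remark \ref{rem.ass}, and Corollary \ref{cor.sublingrowth}. Your write-up only fills in routine checks the paper leaves implicit, namely Proposition \ref{prop.GD}\ref{item.incl}, the atomlessness of $\rho$, and the finiteness of $\sup_{0\le t\le T}\norm{F(t,0)}_{p_0;p}$.
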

\pagebreak

Let us examine two classes of examples of such coefficient functions $F$ when $p_0 = \infty$.

\begin{example}\label{ex.linpoly}
Let $(\cA,(\cA_t)_{t \geq 0},\E)$ be a conditionable filtered $\mathrm{C}^*$-probability space.
Take $(\cB,(\cB_t)_{t \geq 0},\E_{\scB})$ to be the direct sum $(\cA\oplus \C,(\cA_t\oplus \C)_{t \geq 0},\E_{\mathsmaller{\cA \oplus \C}})$ and $(\cC,(\cC_t)_{t \geq 0},\E_{\scC})$ to be $(\cA,(\cA_t)_{t \geq 0},\E)$.
Also, let $X \colon \R_+ \to \cA$ be an $L^{\infty}$-measured decomposable process, and define $Z(t) \coloneqq (X(t),t)$ for all $t \geq 0$.
Then $Z \colon \R_+ \to \cB$ is an $L^{\infty}$-measured decomposable process.
Finally, take $\beta = \emptyset$.

Consider a function $P \colon \R_+ \times L^p(\E) \times L^{\infty}(\E) \to L^p(\E)$ of the form
\begin{equation}
\begin{split}
    P(t,y,x) = \sum_{\e,\delta \in \{1,\ast\}} \big((&a_{\e\delta}(t)y^{\e}b_{\e\delta}(t) + c_{\e\delta}(t))x^{\delta}d_{\e\delta}(t) \\
    & + e_{\e\delta}(t)x^{\delta}(f_{\e\delta}(t)y^{\e}g_{\e\delta}(t) + h_{\e\delta}(t)) \big),
\end{split}\label{eq.P}
\end{equation}
where $a_{\e\delta},b_{\e\delta},c_{\e\delta},d_{\e\delta},e_{\e\delta},f_{\e\delta},g_{\e\delta},h_{\e\delta} \in C_a(\R_+;L^{\infty}(\E))$ ($\e,\delta \in \{1,\ast\}$).
Consider also a function $Q \colon \R_+ \times L^p(\E) \to L^p(\E)$ of the form
\begin{equation}
    Q(t,y) = \sum_{i=1}^k\left( a_i(t) y b_i(t) + c_i(t) y^* d_i(t) \right),\label{eq.Q}
\end{equation}
where $a_i,b_i,c_i,d_i \in C_a(\R_+;L^{\infty}(\E))$ ($i=1,\ldots,k$).
If $F \colon \R_+ \times L^p(\E) \to B^{\infty;p} = B^{\infty;p}(\E_{\scB};\E)$ is defined by
\[
F(t,y)[z] \coloneqq P(t,y,x) + Q(t,y)\lambda \qquad \big(t \geq 0, \,y \in L^p(\E), \,z = (x,\lambda) \in \cB\big),
\]
then $F$ is continuous, and $F(t,y) \in \cF_t^{\infty;p}$ for all $t \geq 0$ and $y \in L^p(\cA_t,\E_{\scC})$.
Furthermore,
\[
\sup_{0 \leq t \leq T} \norm{F(t,y)-F(t,z)}_{\infty;p} \leq c_T\norm{y-z}_p \qquad (T \geq 0, \; y,z \in L^p(\E)),
\]
where
\begin{align*}
    c_T = \sup_{0 \leq t \leq T}\left\{\sum_{\e,\delta \in \{1,\ast\}} \big(\|a_{\e\delta}\hspace{-1.25mm}\right.&\left.(t)\|_{\infty}\norm{b_{\e\delta}(t)}_{\infty}\norm{d_{\e\delta}(t)}_{\infty} + \norm{e_{\e\delta}(t)}_{\infty}\norm{f_{\e\delta}(t)}_{\infty}\norm{g_{\e\delta}(t)}_{\infty} \big) \right. \\
    & \left. + \sum_{i=1}^k\big(\norm{a_i(t)}_{\infty}\norm{b_i(t)}_{\infty} + \norm{c_i(t)}_{\infty}\norm{d_i(t)}_{\infty}\big) \right\}
\end{align*}
Consequently, if $C \in C_a(\R_+;L^p(\E))$, then Theorem \ref{thm.SIEMDdriver} tells us that the NCSIE
\[
Y = C + \into F(t,Y(t))[\d Z(t)] = C+\into P(t,Y(t),\d X(t)) + \into Q(t,Y(t))\,\d t
\]
has a unique global solution.
In particular, the NCSDE
\[
\begin{cases}
    \d Y(t) = P(t,Y(t),\d X(t)) + Q(t,Y(t))\,\d t & \\
    \;\, Y(0) = y_0
\end{cases}
\]
has a unique global solution for each initial condition $y_0 \in L^p(\cA_0,\E)$.

The example in the previous paragraph generalizes to ``multiple dimensions.''
Indeed, let $n,m \in \N$, and take $(\cB,(\cB_t)_{t \geq 0},\E_{\scB})$ to be the direct sum $(\cA^n \oplus \C,(\cA_t^n \oplus \C)_{t \geq 0},\E_{\mathsmaller{\cA^n \oplus \C}})$ and $(\cC,(\cC_t)_{t \geq 0},\E_{\scC})$ to be the direct sum $(\cA^m,(\cA_t^m)_{t \geq 0},\E_{\mathsmaller{\cA^m}})$.
Define
\[
\mathbf{P}(t,\mathbf{y},\mathbf{x}) \coloneqq \begin{bmatrix}
    \displaystyle \sum_{j=1}^m\sum_{k=1}^n P_{1jk}(t,y_j,x_k) \\
    \vdots \\
    \displaystyle \sum_{j=1}^m\sum_{k=1}^n P_{mjk}(t,y_j,x_k)
\end{bmatrix} \in L^p(\E)^m = L^p(\E_{\scC})
\]
for all $t \geq 0$, $\mathbf{y} = (y_1,\ldots,y_m) \in L^p(\E)^m$, and $\mathbf{x} = (x_1,\ldots,x_n) \in L^{\infty}(\E)^n$, where $P_{ijk}$ is a function of the form \eqref{eq.P} for each $i,j=1,\ldots,m$ and $k=1,\ldots,n$.
Also, define
\[
\mathbf{Q}(t,\mathbf{y}) \coloneqq \begin{bmatrix}
    \displaystyle \sum_{j=1}^mQ_{1j}(t,y_j) \\
    \vdots \\
    \displaystyle \sum_{j=1}^mQ_{mj}(t,y_j)
\end{bmatrix} \in L^p(\E)^m = L^p(\E_{\scC})
\]
for all $t \geq 0$ and $\mathbf{y} = (y_1,\ldots,y_m) \in L^p(\E)^m$, where $Q_{ij}$ is a function of the form \eqref{eq.Q} for each $i,j=1,\ldots,m$.
If $\mathbf{C} \in C_a(\R_+;L^p(\E)^m)$ and $\mathbf{X} \colon \R_+ \to \cA^n$ is an $L^{\infty}$-measured decomposable process, then the NCSIE
\[
\mathbf{Y} = \mathbf{C} + \into \mathbf{P}(t,\mathbf{Y}(t),\d \mathbf{X}(t)) + \into \mathbf{Q}(t,\mathbf{Y}(t))\,\d t
\]
has a unique global solution.
Writing $\mathbf{C} = (C_1,\ldots,C_m)$ and $\mathbf{X} = (X_1,\ldots,X_n)$, this means that the coupled system of NCSIEs
\[
Y_i = C_i + \sum_{j=1}^m\left(\sum_{k=1}^n \into P_{ijk}(t,Y_j(t),\d X_k(t)) + \into Q_{ij}(t,Y_j(t))\,\d t\right) \qquad (i=1,\ldots,m)
\]
has a unique global solution.
In particular, the coupled system of NCSDEs
\[
\begin{cases}
    \d Y_i(t) = \displaystyle\sum_{j=1}^m\left(\sum_{k=1}^n P_{ijk}(t,Y_j(t),\d X_k(t)) + Q_{ij}(t,Y_j(t))\,\d t\right) & \\
    \;\, Y_i(0) = y_{0,i}
\end{cases} \qquad (i=1,\ldots,m)
\]
has a unique global solution for each initial condition $\mathbf{y}_0 = (y_{0,1},\ldots,y_{0,m}) \in L^p(\cA_0,\E)^m$.
\end{example}

The second class of examples involves NCSIEs with coefficients arising from scalar functions via the continuous functional calculus;
please see \cite[Ch.\ VIII]{Conway1990} for the relevant background.
To treat such NCSIEs, we need the following (special case of an) important result of Potapov and Sukochev.

\begin{theorem}[Potapov--Sukochev \cite{PS2011}]\label{thm.funkycalcLpLip}
If $(\cA,\E)$ is a $\mathrm{W}^*$-probability space and $1 < p <  \infty$, then there exists a $c_p < \infty$ such that for all Lipschitz functions $f \colon \R \to \C$,
\[
\norm{f(a) - f(b)}_p \leq c_p[f]_{\mathrm{Lip}}\norm{a-b}_p \qquad (a,b \in \cA_{\sa}),
\]
where
\[
[f]_{\mathrm{Lip}} \coloneqq \sup\left\{ \frac{|f(\lambda) - f(\mu)|}{|\lambda-\mu|} : \lambda,\mu \in \R \text{ with } \lambda \neq \mu \right\}
\]
is the Lipschitz seminorm of $f$.
In particular, the function $\cA_{\sa} \ni a \mapsto f(a) \in \cA \hookrightarrow L^p(\E)$ extends uniquely to a Lipschitz function from $L^p(\E)_{\sa}$ to $L^p(\E)$, which we notate similarly.
\end{theorem}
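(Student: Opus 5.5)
The main inequality is the deep part, and I would not reprove it. I would cite it from \cite{PS2011}, whose statement covers self-adjoint operators in a semifinite von Neumann algebra with a normal semifinite faithful trace; a $\mathrm{W}^*$-probability space $(\cA,\E)$ is the special case of a finite trace. If a sketch of why it holds were wanted, I would indicate the standard route. One writes $f(a)-f(b)$ as a double operator integral $T_{f^{[1]}}^{a,b}(a-b)$ with the divided difference $f^{[1]}(\lambda,\mu)=\frac{f(\lambda)-f(\mu)}{\lambda-\mu}$ as symbol. One then bounds this transformer on $L^p$ by a constant multiple of $[f]_{\mathrm{Lip}}$.

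That bound is the genuine obstacle, and it is precisely what Potapov and Sukochev overcome. For $1<p<\infty$, they decompose the divided difference into pieces that are controlled by triangular truncation and by operator-valued Fourier multipliers. Their boundedness on $L^p$ uses the UMD property of noncommutative $L^p$ spaces for $1<p<\infty$, which is also why the endpoints $p=1,\infty$ are excluded. Note that $c_p$ depends only on $p$ and not on $(\cA,\E)$, since the argument passes through a universal transference estimate.

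The part I would actually prove is the ``in particular''. Let $f$ be Lipschitz and set $\Phi(a)\coloneqq f(a)$ for $a\in\cA_{\sa}$. By the main inequality, $\Phi\colon(\cA_{\sa},\norm{\cdot}_p)\to L^p(\E)$ is Lipschitz with constant $c_p[f]_{\mathrm{Lip}}$, hence uniformly continuous.

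Next I would check that $\cA_{\sa}$ is $\norm{\cdot}_p$-dense in $L^p(\E)_{\sa}$. Take $x\in L^p(\E)_{\sa}$ and let $e_n\coloneqq 1_{[-n,n]}(x)$ be spectral projections. Then $x e_n\in\cA_{\sa}$, and $\norm{x-xe_n}_p^p=\E[|x|^p1_{\{|x|>n\}}(x)]\to 0$ by normality of the trace, equivalently by dominated convergence for the spectral measure of $x$ with respect to $\E$. Equivalently, one may take $\cRe$ of a bounded approximant, using that $\cRe$ is $L^p$-contractive.

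Since $L^p(\E)$ is complete, a uniformly continuous map on a dense subset of the metric space $L^p(\E)_{\sa}$ extends uniquely to a continuous map on all of $L^p(\E)_{\sa}$. The extension is again Lipschitz with the same constant, because the inequality passes to limits. Uniqueness holds because any two continuous extensions agree on a dense set.
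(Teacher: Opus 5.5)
Your proposal is correct and follows the paper's route: the paper gives no proof of this theorem, importing the inequality directly from \cite{PS2011} and treating the extension to $L^p(\E)_{\sa}$ as routine, which is exactly the density of $\cA_{\sa}$ plus completeness argument you write out. One detail worth adding: if you only have the cited inequality for real-valued $f$, applying it separately to the real and imaginary parts of $f$ gives the complex case with $c_p$ replaced by $2c_p$.
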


We now use Theorem \ref{thm.funkycalcLpLip} to establish some preparatory results for the aforementioned second class of examples of NCSIEs.

\begin{definition}\label{def.Lip,bddspacelocuniftime}
Let $f \colon \R_+ \times \R \to \C$ be a function, and write $f_t \coloneqq f(t,\cdot) \colon \R \to \C$ for all $t \geq 0$.
If $\sup \left\{ [f_t]_{\mathrm{Lip}} : 0 \leq t \leq T\right\} < \infty$ for all $T \geq 0$, then $f$ is said to be \textbf{Lipschitz in space, locally uniformly in time}.
Now, write $\norm{g}_{\ell^{\infty}} \coloneqq \sup\left\{|g(x)| : x \in \R\right\}$ for a function $g \colon \R \to \C$.
If $\sup\left\{\norm{f_t}_{\ell^{\infty}} : 0 \leq t \leq T\right\} < \infty$ for all $T \geq 0$, then $f$ is said to be \textbf{bounded in space, locally uniformly in time}.
\end{definition}

\begin{proposition}\label{prop.funkycalcLpLip}
Let $(\cA,\E)$ be a $\mathrm{W}^*$-probability space and $f \colon \R_+ \times \R \to \C$ be a continuous function that is Lipschitz in space, locally uniformly in time.
Also, suppose $1 < p < \infty$.
If $f_{L^p(\E)} \colon \R_+ \times L^p(\E)_{\sa} \to L^p(\E)$ is the function $(t,a) \mapsto f_t(a)$, where $f_t(a)$ is defined as in {\rm Theorem \ref{thm.funkycalcLpLip}}, then $f_{L^p(\E)}$ is continuous, and $f(t,a) \coloneqq f_{L^p(\E)}(t,a) \in L^p(\cA_s,\E)$ whenever $s,t \geq 0$ and $a \in L^p(\cA_s,\E)_{\sa}$.
If, in addition, $f_t \colon \R \to \C$ is bounded for some $t \geq 0$, then $f(t,a) \in \cA$, and $\norm{f(t,a)}_{\infty} \leq \norm{f_t}_{\ell^{\infty}}$ for all $a \in L^p(\E)_{\sa}$.
\end{proposition}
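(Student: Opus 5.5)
The plan is to prove the three assertions in turn: joint continuity of $f_{L^p(\E)}$, adaptedness, and the bounded case. Throughout, write $L_T \coloneqq \sup\{[f_t]_{\mathrm{Lip}} : 0 \leq t \leq T\} < \infty$, and let $c_p$ be the Potapov--Sukochev constant from Theorem \ref{thm.funkycalcLpLip}.

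\textbf{Continuity.} Fix $(t,a)$ and let $(s,b) \to (t,a)$ with $s \leq T$. Split
\[
\norm{f_s(b) - f_t(a)}_p \leq \norm{f_s(b) - f_s(a)}_p + \norm{f_s(a) - f_t(a)}_p .
\]
The first term is at most $c_pL_T\norm{b-a}_p$ by Theorem \ref{thm.funkycalcLpLip}, uniformly in $s \le T$.

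For the second term, approximate $a$ in $L^p$ by some $a_0 \in \cA_{\sa}$. Theorem \ref{thm.funkycalcLpLip}, applied to $f_s$ and to $f_t$, gives
\[
\norm{f_s(a) - f_t(a)}_p \leq 2c_pL_T\norm{a-a_0}_p + \norm{f_s(a_0) - f_t(a_0)}_{\infty}.
\]
The spectrum of $a_0$ lies in the compact set $[-\norm{a_0}_\infty,\norm{a_0}_\infty]$. Therefore
\[
\norm{f_s(a_0)-f_t(a_0)}_\infty \leq \sup_{|\lambda|\le \norm{a_0}_\infty}|f(s,\lambda)-f(t,\lambda)|,
\]
and this tends to $0$ as $s \to t$ by uniform continuity of $f$ on compacts. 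A standard $\varepsilon/3$ argument then finishes continuity. I expect this to be the only step requiring care, because $f_s(a)$ for unbounded $a$ is defined only by extension. That is exactly why the approximation by bounded $a_0$ is needed.

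\textbf{Adaptedness.} Let $a \in L^p(\cA_s,\E)_{\sa}$. Choose $a_n \in (\cA_s)_{\sa}$ with $a_n \to a$ in $L^p$; for instance, truncate via spectral projections, or take self-adjoint parts of a dense sequence. Since $a_n$ lies in the von Neumann algebra $\cA_s$, the continuous functional calculus gives $f_t(a_n) \in \cA_s$. By Theorem \ref{thm.funkycalcLpLip}, $f_t(a_n) \to f_t(a)$ in $L^p$. Since $L^p(\cA_s,\E)$ is closed in $L^p(\E)$, it follows that $f_t(a) \in L^p(\cA_s,\E)$.

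\textbf{Bounded case.} Suppose $f_t$ is bounded. For $a_n \in \cA_{\sa}$ with $a_n \to a$ in $L^p$, we have $\norm{f_t(a_n)}_\infty \leq \norm{f_t}_{\ell^\infty}$. Pass to the limit by duality, as in the proof of Theorem \ref{thm.exist}. For $x \in L^1(\E)$,
\[
|\E[x f_t(a)]| = \lim_n |\E[x f_t(a_n)]| \leq \norm{f_t}_{\ell^\infty}\norm{x}_1 .
\]
This limit is legitimate for $x \in L^{p'}$, and the bound then extends to all $x \in L^1$ by density. Since $L^\infty(\E) = \cA = L^1(\E)^*$ in the $\mathrm{W}^*$ setting, we conclude that $f_t(a) \in \cA$ with $\norm{f_t(a)}_\infty \leq \norm{f_t}_{\ell^\infty}$.
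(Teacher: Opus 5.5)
Your proof is correct and follows the paper's argument. It uses the same $\varepsilon/3$ approximation of $a$ by a bounded self-adjoint element together with the Potapov--Sukochev Lipschitz bound for continuity, approximation from $(\cA_s)_{\sa}$ for adaptedness, and $L^1$--$L^\infty$ duality for the bounded case. The only difference is cosmetic: you get continuity in time at a bounded point directly from $\norm{g(a_0)}_{\infty} \leq \sup_{|\lambda| \leq \norm{a_0}_{\infty}}|g(\lambda)|$ and uniform continuity of $f$ on compacts, whereas the paper invokes joint continuity of $(s,c) \mapsto f_s(c)$ on $\R_+ \times \cA_{\sa}$ via Stone--Weierstrass.
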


\begin{proof}
Let $(t,a) \in \R_+ \times L^p(\E)_{\sa}$.
By definition, if $(a_n)_{n \in \N}$ is any sequence in $\cA_{\sa}$ converging in $L^p(\E)$ to $a$, then $f(t,a) = L^p\text{-}\lim_{n \to \infty} f(t,a_n)$, where $f(t,a_n) = f_t(a_n)$ ($n \in \N$) is defined via the continuous functional calculus in $\cA$.
If $f_t$ is bounded, then by basic properties of the functional calculus $\norm{f(t,a_n)}_{\infty} = \norm{f_t(a_n)}_{\infty} \leq \norm{f_t}_{\ell^{\infty}}$ for all $n \in \N$.
Since $f(t,a_n) \to f(t,a)$ in $L^p(\E)$ as $n \to \infty$, we conclude from basic properties of the noncommutative $L^p$ spaces (as in the fourth paragraph of the proof of Theorem \ref{thm.exist}) that $f(t,a) \in L^{\infty}(\E) = \cA$ and
\[
\norm{f(t,a)}_{\infty} \leq \liminf_{n \to \infty}\norm{f(t,a_n)}_{\infty} \leq \norm{f_t}_{\ell^{\infty}},
\]
as asserted in the last sentence of the statement.

Next, if $s \geq 0$ and $a \in L^p(\cA_s,\E)_{\sa}$, then there is a sequence $(a_n)_{n \in \N}$ in $(\cA_s)_{\sa}$ converging to $a$ in $L^p(\cA_s,\E) \subseteq L^p(\cA,\E)$.
In this case, $f_t(a_n) \in \cA_s \subseteq L^p(\cA_s,\E)$ for all $n \in \N$, so $f(t,a) = L^p\text{-}\lim_{n \to \infty}f_t(a_n) \in L^p(\cA_s,\E)$, as desired.

Finally, we prove that $f_{L^p(\E)} \colon \R_+ \times L^p(\E)_{\sa} \to L^p(\E)$ is continuous at $(t,a)$.
To begin, note that the function $\R_+ \times \cA_{\sa} \ni (s,c) \mapsto f(s,c) = f_s(c) \in \cA$ is continuous, as can be seen by locally uniformly approximating $f$ by polynomials in two variables (via the Stone--Weierstrass theorem).
Now, let $\e > 0$.
Choose a $c \in \cA_{\sa}$ such that
\[
c_p\norm{a-c}_p \sup_{0 \leq r \leq t+1}[f_r]_{\mathrm{Lip}} \leq \frac\e6,
\]
where $c_p$ is as in Theorem \ref{thm.funkycalcLpLip}.
Also, choose a $\delta > 0$ small enough that
\[
c_p\delta\sup_{0 \leq r \leq t+1}[f_r]_{\mathrm{Lip}} \leq \frac\e3
\]
and $s \geq 0$ and $|t-s| \leq \delta$ imply $s \leq t+1$ and $\norm{f(t,c) - f(s,c)}_{\infty} \leq \e/3$ (achievable by the second sentence of this paragraph).
By Theorem \ref{thm.funkycalcLpLip}, if $(s,b) \in \R_+ \times L^p(\E)_{\sa}$ and $\max\big\{|t-s|,\norm{a-b}_p\big\} \leq \delta$, then
\begin{align*}
    \norm{f(t,a) - f(s,b)}_p & \leq \norm{f(t,a) - f(t,c)}_p + \norm{f(t,c) - f(s,c)}_p + \norm{f(s,c) - f(s,b)}_p \\
    & \leq c_p[f_t]_{\mathrm{Lip}}\norm{a-c}_p + \norm{f(t,c) - f(s,c)}_{\infty} + c_p[f_s]_{\mathrm{Lip}}\norm{c-b}_p \\
    & \leq c_p\left(2\norm{a-c}_p + \norm{a-b}_p\right)\sup_{0 \leq r \leq t+1}[f_r]_{\mathrm{Lip}} + \frac\e3 \\
    & \leq c_p\left(2\norm{a-c}_p + \delta\right)\sup_{0 \leq r \leq t+1}[f_r]_{\mathrm{Lip}} + \frac\e3 \leq \frac\e3 + \frac\e3 + \frac\e3 = \e.
\end{align*}
Thus, $f_{L^p(\E)}$ is continuous at $(t,a)$.
This completes the proof.
\end{proof}

\begin{proposition}\label{prop.funkycoeffLp}
Let $(\cA,\E)$ be a $\mathrm{W}^*$-probability space and $f,g \colon \R_+ \times \R \to \C$ be continuous functions that are bounded and Lipschitz in space, locally uniformly in time.
Also, suppose $1 < p < \infty$.
If
\[
F(t,y)[x] \coloneqq f(t,y)\,x\,g(t,y) \in L^p(\E) \qquad \big(t \geq 0, \, y \in L^p(\E)_{\sa}, \, x \in L^{\infty}(\E) = \cA\big),
\]
then $F \colon \R_+ \times L^p(\E)_{\sa} \to B^{\infty;p}$ is continuous,
\[
\norm{F(t,y) - F(t,z)}_{\infty;p} \leq c_p\left( [f_t]_{\mathrm{Lip}}\norm{g_t}_{\ell^{\infty}} + [g_t]_{\mathrm{Lip}}\norm{f_t}_{\ell^{\infty}}\right)\norm{y-z}_p \quad (t \geq 0, \; y,z \in L^p(\E)_{\sa}),
\]
and $F(t,y) \in  \cT_t^{\infty;p} \subseteq \cF_t^{\infty;p}$ for all $t \geq 0$ and $y \in L^p(\cA_t,\E)_{\sa}$.
\end{proposition}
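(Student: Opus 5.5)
The plan is to reduce everything to Proposition \ref{prop.funkycalcLpLip}, to Theorem \ref{thm.funkycalcLpLip}, and to H\"older's inequality. By Proposition \ref{prop.funkycalcLpLip}, for fixed $t$ and $y \in L^p(\E)_{\sa}$, both $f(t,y)$ and $g(t,y)$ lie in $\cA$, with
\[
\norm{f(t,y)}_{\infty} \leq \norm{f_t}_{\ell^{\infty}} \quad \text{and} \quad \norm{g(t,y)}_{\infty} \leq \norm{g_t}_{\ell^{\infty}}.
\]
Hence $F(t,y)$ is a bounded map $\cA \to \cA \subseteq L^p(\E)$. Moreover, it is exactly $T_{1,1}(f(t,y),g(t,y),0,0)$ in the notation of Notation \ref{nota.trpolyfiltration}, with $r_1=r_2=\infty$ and $p$ replaced by $\infty$, $q$ by $p$. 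The constraint $1/r_1+1/\infty+1/r_2=1/p$ is not literally satisfied by $r_1=r_2=\infty$, so I will instead split the exponent. I take $r_1=\infty$ and $r_2=p$, and use $g(t,y)\in\cA\subseteq L^p(\cA,\E)$. This choice also gives $F(t,y)\in\cT_{0,t}^{\infty;p}$.

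For adaptedness: if $y \in L^p(\cA_t,\E)_{\sa}$, Proposition \ref{prop.funkycalcLpLip} gives $f(t,y),g(t,y) \in L^p(\cA_t,\E)\cap\cA$. Then $f(t,y) \in \cA_t$, since $L^p(\cA_t,\E)\cap \cA = \cA_t$ in the $\mathrm{W}^*$ setting; this follows because the conditional expectation onto $\cA_t$ fixes the element, or by the duality argument used in the proof of Theorem \ref{thm.exist}. Hence $F(t,y) \in \cT_t^{\infty;p}$. The inclusion $\cT_t^{\infty;p}\subseteq\cF_t^{\infty;p}$ was already noted after Notation \ref{nota.trpolyfiltration}.

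For the Lipschitz bound, fix $x\in\cA$ with $\norm{x}_\infty\le1$ and write
\[
f(t,y)xg(t,y)-f(t,z)xg(t,z) = (f(t,y)-f(t,z))\,x\,g(t,y) + f(t,z)\,x\,(g(t,y)-g(t,z)).
\]
Applying H\"older's inequality in the form $\norm{abc}_p \le \norm{a}_p\norm{b}_\infty\norm{c}_\infty$ and its mirror image, then bounding the $L^\infty$ norms by $\norm{g_t}_{\ell^\infty}$ and $\norm{f_t}_{\ell^\infty}$, and finally applying Theorem \ref{thm.funkycalcLpLip} to $f_t$ and $g_t$, yields the stated estimate.

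Continuity of $F$ in $(t,y)$ is the step I expect to need the most care. For the $y$-direction, the Lipschitz bound above suffices, since the constants are locally uniform in $t$. For the $t$-direction at fixed $y$, the $L^\infty$ norm of $f(t,y)-f(s,y)$ is not directly controlled. I will use a mixed bound instead:
\[
\norm{F(t,y)-F(s,y)}_{\infty;p} \le \norm{f(t,y)-f(s,y)}_p\,\norm{g_t}_{\ell^\infty} + \norm{f_s}_{\ell^\infty}\,\norm{g(t,y)-g(s,y)}_p.
\]
Here H\"older's inequality is applied with the $L^p$ norm placed on the difference factor. Continuity of $(t,y)\mapsto f(t,y)$ into $L^p(\E)$ is provided by Proposition \ref{prop.funkycalcLpLip}, and the sup-norms $\norm{f_s}_{\ell^\infty}$, $\norm{g_t}_{\ell^\infty}$ are locally bounded in time. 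Combining the $y$-estimate and the $t$-estimate through the triangle inequality then gives joint continuity of $F$.
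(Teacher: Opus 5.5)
Your proposal is correct and takes essentially the same approach as the paper. Both use the splitting $(f(t,y)-f(s,z))\,x\,g(t,y)+f(s,z)\,x\,(g(t,y)-g(s,z))$, H\"older's inequality with the $L^p$ norm on the difference factor, the $L^\infty$ bounds and $L^p$-continuity from Proposition~\ref{prop.funkycalcLpLip}, and Theorem~\ref{thm.funkycalcLpLip} for the Lipschitz estimate. The only differences are cosmetic: the paper handles $(s,z)\to(t,y)$ in a single estimate and then sets $s=t$ to get the Lipschitz bound, rather than splitting into time and space directions. The paper also treats $F(t,y)\in\cT_t^{\infty;p}$ as immediate, whereas you correctly spell out the exponent choice $r_1=\infty$, $r_2=p$ and why $f(t,y),g(t,y)\in\cA_t$.
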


\begin{proof}
Suppose $s,t \geq 0$ and $y,z \in L^p(\E)_{\sa}$.
If $x \in \cA$, then
\[
F(t,y)[x] - F(s,z)[x] = (f(t,y) - f(s,z))\,x\,g(t,y) + f(s,z)\,x\,(g(t,y) - g(s,z)).
\]
Consequently, by Proposition \ref{prop.funkycalcLpLip},
\begin{align*}
    \norm{F(t,y)  - F(s,z)}_{\infty;p} & \leq \norm{f(t,y) - f(s,z)}_p \norm{g(t,y)}_{\infty} + \norm{g(t,y) - g(s,z)}_p \norm{f(s,z)}_{\infty} \\
    & \leq \norm{f(t,y) - f(s,z)}_p \norm{g_t}_{\ell^{\infty}} + \norm{g(t,y) - g(s,z)}_p \norm{f_s}_{\ell^{\infty}}.
\end{align*}
Since $f$ is bounded in space, locally uniformly in time, $\norm{f_s}_{\ell^{\infty}}$ is bounded in $s$ as $s \to t$.
Thus, since Proposition \ref{prop.funkycalcLpLip} says that $f_{L^p(\E)}$ and $g_{L^p(\E)}$ are continuous, we conclude that $F(s,z) \to F(t,y)$ in $B^{p;\infty}$ as $(s,z) \to (t,y)$ in $\R_+ \times L^p(\E)_{\sa}$.
Also, if we take $s=t$ above and apply Theorem \ref{thm.funkycalcLpLip}, then we obtain
\begin{align*}
    \norm{F(t,y)  - F(s,z)}_{\infty;p} & \leq \norm{f(t,y) - f(t,z)}_p \norm{g_t}_{\ell^{\infty}} + \norm{g(t,y) - g(t,z)}_p \norm{f_t}_{\ell^{\infty}} \\
    & \leq c_p\left( [f_t]_{\mathrm{Lip}}\norm{g_t}_{\ell^{\infty}} + [g_t]_{\mathrm{Lip}}\norm{f_t}_{\ell^{\infty}}\right)\norm{y-z}_p,
\end{align*}
as asserted in the statement.
Finally, that $F(t,y) \in \cT_t^{\infty;p}$ whenever $y \in L^p(\cA_t,\E)_{\sa}$ is immediate from the fact that $f(t,y), g(t,y) \in \cA_t$ whenever $y \in L^p(\cA_t,\E)_{\sa}$.
\end{proof}

Finally, here is the aforementioned second class of NCSIEs with $p_0 = \infty$.

\begin{example}\label{ex.funkycalc}
Let $(\cA,(\cA_t)_{t \geq 0},\E)$ be a filtered $\mathrm{W}^*$-probability space.
Take $(\cB,(\cB_t)_{t \geq 0},\E_{\scB})$ to be the direct sum $(\cA \oplus \C, (\cA_t \oplus \C)_{t \geq 0}, \E_{\mathsmaller{\cA\oplus \C}})$, $(\cC,(\cC_t)_{t \geq 0},\E_{\scC}) = (\cA,(\cA_t)_{t \geq 0},\E)$, and $\beta = \sa$.
Also, we return now to the standing assumption that $2 \leq p < \infty$, unlike in the statements of Theorem \ref{thm.funkycalcLpLip} and Propositions \ref{prop.funkycalcLpLip} and \ref{prop.funkycoeffLp}.

Suppose $f_i,g_i \colon \R_+ \times \R \to \C$ ($i=1,\ldots,k$) and $h \colon \R_+ \times \R \to \R$ are continuous functions that are bounded and Lipschitz in space, locally uniformly in time.
Define
\begin{align*}
    F_0(t,y)[x] & \coloneqq \sum_{i=1}^k \left(f_i(t,y)\,x\,g_i(t,y) + \overline{g_i}(t,y)\,x^*\,\overline{f_i}(t,y)\right) \\
    & = \sum_{i=1}^k \left(f_i(t,y)\,x\,g_i(t,y) + g_i(t,y)^*\,x^*\, f_i(t,y)^*\right) \\
    F(t,y)[z] & \coloneqq F_0(t,y)[x] + h(t,y) \cRe \lambda
\end{align*}
for all $t \geq 0$, $y \in L^p(\E)_{\sa}$, $x \in L^p(\E)$, and $z = (x,\lambda) \in L^p(\E) \times \C$.
By Theorem \ref{thm.funkycalcLpLip} and Propositions \ref{prop.funkycalcLpLip} and \ref{prop.funkycoeffLp}, $F \colon \R_+ \times L^p(\E)_{\sa} \to B^{\infty;p} = B^{\infty;p}(\E_{\scB};\E)$ is continuous, $F(t,y) \in \cF_t^{\infty;p}(\E_{\scB};\E_{\scA})$ for all $t \geq 0$ and $y \in L^p(\cA_t,\E_{\scA})_{\sa}$, and
\[
\sup_{0 \leq t \leq T} \norm{F(t,y)-F(t,z)}_{\infty;p} \leq c_T\norm{y-z}_p \qquad (T \geq 0, \; y,z \in L^p(\E)_{\sa}),
\]
where
\[
c_T = c_p\sup_{0 \leq t \leq T}\left\{2\sum_{i=1}^k \left([(f_i)_t]_{\mathrm{Lip}}\norm{(g_i)_t}_{\ell^{\infty}}+ [(g_i)_t]_{\mathrm{Lip}}\norm{(f_i)_t}_{\ell^{\infty}}\right)  + [h_t]_{\mathrm{Lip}}\right\} \qquad (T \geq 0).
\]
Also,
\[
F(t,y)[z]^* = F(t,y)[z] \qquad \big(t \geq 0, \; y \in L^p(\E)_{\sa}, \; z \in \cB\big).
\]
Consequently, by Theorem \ref{thm.SIEMDdriver}, if $X \colon \R_+ \to \cA$ is an $L^{\infty}$-measured decomposable process, $Z(t) \coloneqq (X(t),t)$ for all $t \geq 0$, and $C \in C_a(\R_+;L^p(\E_{\scA})_{\sa})$, then the NCSIE
\begin{align*}
    & Y = C + \into F(t,Y(t))[\d Z(t)] \\
    & = C + \sum_{i=1}^k \into \left(f_i(t,Y(t))\d X(t)g_i(t,Y(t)) + \overline{g_i}(t,Y(t))\d X^*(t)\overline{f_i}(t,Y(t))\right) + \into h(t,Y(t))\,\d t
\end{align*}
has a unique global solution.
In particular, the NCSDE
\[
\begin{cases}
    \displaystyle\d Y(t) = \sum_{i=1}^k \left( f_i(t,Y(t))\,\d X(t)\,g_i(t,Y(t)) + \overline{g_i}(t,Y(t))\,\d X^*(t)\,\overline{f_i}(t,Y(t)) \right) + h(t,Y(t))\,\d t & \\
    \;\, Y(0) = y_0
\end{cases}
\]
has a unique global solution for each initial condition $y_0 \in L^p(\cA_0,\E)_{\sa}$.
Taking $X$ to be a $q$-Brownian motion with $-1 \leq q < 1$, this is Corollary \ref{cor.funkycalcqSDE}.

As in Example \ref{ex.linpoly}, the NCSIEs in the previous paragraph have ``multidimensional'' generalizations.
We leave the details to the reader.
\end{example}

\subsection{Free Brownian driver}\label{subsec.SCBMSIE}

For the duration of this subsection, let $(\cA,(\cA_t)_{t \geq 0},\E)$ be a $\mathrm{W}^*$-probability space, $m,n \in \N$, and $\mathbf{X} \colon \R_+ \to \cA_{\sa}^n$ be an $n$-dimensional free Brownian motion.

\begin{theorem}\label{thm.SIESCBMdriver}
Suppose $\mathbf{F} \colon\R_+ \times \cA_{\beta}^m \to B^{2;2} = B^{2;2}(\E^{\oplus n};\E^{\oplus m})$ and $\mathbf{G} \colon \R_+ \times \cA_{\beta}^m \to \cA_{\beta}^m$ are continuous and satisfy:
\begin{enumerate}[font=\normalfont,label=(\roman*)]
    \item $\mathbf{F}(t,\mathbf{y}) \in (\cT_t^{2;2})^{m \times n}$ and $\mathbf{G}(t,\mathbf{y}) \in (\cA_t)_{\beta}^m$ for all $t\geq 0$ and $\mathbf{y} \in (\cA_t)_{\beta}^m$;\label{item.boldFGadapted}
    \item $\displaystyle \int_r^t \mathbf{F}(s,\mathbf{Y}(s))[\d \mathbf{X}(s)] \in (\cA_t)_{\beta}^m$ for all $t \geq r \geq 0$ and $\mathbf{Y} \in C_a([r,t];\cA_{\beta}^m)$;\label{item.intFbeta}
    \item for all $R > 0$, there exists an increasing family $(c_{T,R})_{T \geq 0}$ of positive real numbers such that for all $\mathbf{y},\mathbf{z} \in \cA_{\beta}^m$ with $\norm{\mathbf{y}}_{\infty} \leq R$ and $\norm{\mathbf{z}}_{\infty} \leq R$,\label{item.boldFGLip}
    \[
    \norm{\mathbf{F}(t,\mathbf{y}) - \mathbf{F}(t,\mathbf{z})}_{\infty;2}+\norm{\mathbf{G}(t,\mathbf{y})-\mathbf{G}(t,\mathbf{z})}_2 \leq c_{T,R}\norm{\mathbf{y} - \mathbf{z}}_2 \qquad (0 \leq t \leq T);
    \]
    \item for all $T,R \geq 0$,\label{item.boldFGbdd}
    \[
    \sup\left\{\norm{\mathbf{F}(t,\mathbf{y})}_{2;2} + \norm{\mathbf{G}(t,\mathbf{y})}_{\infty} : 0 \leq t \leq T, \, \mathbf{y} \in \cA_{\beta}^m, \, \norm{\mathbf{y}}_{\infty} \leq R  \right\} < \infty.
    \]
\end{enumerate}
For each $\mathbf{C} \in C_a(\R_+;\cA_{\beta}^m)$, there exists a unique maximal solution $\mathbf{Y} \colon [0,T_{\ell}) \to \cA_{\beta}^m$ to the~NCSIE
\[
\mathbf{Y} = \mathbf{C} + \into \mathbf{F}(t,\mathbf{Y}(t))[\d \mathbf{X}(t)] + \into \mathbf{G}(t,\mathbf{Y}(t))\,\d t.
\]
In addition, if $T_{\ell} < \infty$, then $\norm{\mathbf{Y}(t)}_{\infty} \to \infty$ as $t \nearrow T_{\ell}$.
Finally, if for each $T \geq 0$, there exists an $M_T <\infty$ such that 
\[
\sup_{0 \leq t \leq T}\left\{\norm{\mathbf{F}(t,\mathbf{y})}_{2;2}+\norm{\mathbf{G}(t,\mathbf{y})}_{\infty}\right\} \leq M_T\left(\norm{\mathbf{y}}_{\infty}+1\right) \qquad \big(\mathbf{y} \in \cA_{\beta}^m\big),
\]
then $T_{\ell}=\infty$.
\end{theorem}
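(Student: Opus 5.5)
The plan is to deduce Theorem \ref{thm.SIESCBMdriver} from Theorem \ref{thm.SIE} and Corollary \ref{cor.sublingrowth}. First, absorb the drift using Remark \ref{rem.dt}. Take $\cB \coloneqq \cA^n \oplus \C$ with the direct-sum trace and filtration $(\cA_t^n\oplus\C)_{t \geq 0}$, and take $\cC \coloneqq \cA^m$ with $\E^{\oplus m}$. Set $p_0 = 2$, $p = \infty$, $q = 2$. The integrator is $\mathbf{Z}(t) \coloneqq (\mathbf{X}(t),t)$. The coefficient is
\[
\mathbf{K}(t,\mathbf{y})[(\mathbf{x},\lambda)] \coloneqq \mathbf{F}(t,\mathbf{y})[\mathbf{x}] + \mathbf{G}(t,\mathbf{y})\,\cRe\lambda .
\]
Since $L^\infty_{\alg}(\cC_\infty,\E_{\scC}) \subseteq \cA^m$, I restrict attention to the relevant domain. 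Continuity of $\mathbf{K}$ comes from that of $\mathbf{F},\mathbf{G}$.

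Second, build the two normed filtrations needed in Assumption \ref{ass.integrator}. Let $\cG_t$ consist of maps $(\mathbf{x},\lambda) \mapsto T\mathbf{x} + \mathbf{g}\cRe\lambda$ with $T \in (\cT_t^{2;2})^{m\times n}$ and $\mathbf{g} \in \cA_t^m$. Give it the norm $\norm{T}_{L^2\to L^2} + \norm{\mathbf{g}}_\infty$, under which each $\cG_t$ is Banach. Let $\cH_t$ be the same kind of maps with $T \in (\cF_t^{\infty;2})^{m\times n}$ and $\mathbf{g} \in L^2(\cA_t)^m$, normed by $\norm{T}_{\infty;2} + \norm{\mathbf{g}}_2$. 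Then $\cG_t \subseteq \cH_t$, and $\norm{\cdot}_{\H} \le \norm{\cdot}_{\G}$ holds up to harmless constants that I absorb by rescaling, using $\norm{T}_{\infty;2}\lesssim\norm{T}_{2;2}$ and $\norm{\cdot}_2 \le \norm{\cdot}_\infty$. Compatibility follows from compatibility of $\F^{p;q}$ and the fact that $\mathbf{g}\in\cA_t^m$ is fixed under conditioning. Regarding these as sub-filtrations of $B^{2;1}$ is legitimate since $\mathbf{X}$ takes values in $\cA$.

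Third, verify the integral bounds in Assumption \ref{ass.integrator}\ref{item.integralboundass} with $\rho$ equal to Lebesgue measure. For $\G$, split an elementary integrand into its $\mathbf{F}$-part and its $\d t$-part. Theorem \ref{thm.Linfbound} bounds the $\mathbf{F}$-part by $4\sqrt{2n}(\int\norm{T}_{2;2}^2)^{1/2}$. The $\d t$-part is bounded in $L^\infty$ by $\int\norm{\mathbf{g}}_\infty \le t^{1/2}(\int\norm{\mathbf{g}}_\infty^2)^{1/2}$ via Cauchy--Schwarz. Together these give $a_T = 4\sqrt{2n}+T^{1/2}$. For $\H$, I view $\mathbf{X}$ as an $L^\infty$-measured martingale (Example \ref{ex.qBm}). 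Theorem \ref{thm.MDass}, with $p_0=\infty$ and $p=2$ (so $\beta_2=1$) and $\mu$ equal to a multiple of Lebesgue measure, controls the martingale part in $L^2$ by $(\int\norm{T}_{\infty;2}^2\,\d\mu)^{1/2}$. The drift part is handled as before in $L^2$. The measure $\rho$ is atomless. Assumption \ref{ass.integrator}\ref{item.W*ass} holds because $\cA$ is a $\mathrm{W}^*$-algebra.

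Fourth, check Assumption \ref{ass.coefficient}. Adaptedness comes from \ref{item.boldFGadapted}. Self-adjointness of the integral comes from \ref{item.intFbeta} together with $\mathbf{G}$ being $\beta$-valued. The Lipschitz bound in $\norm{\cdot}_{\H}$ against $\norm{\cdot}_2$ on $L^\infty$-balls is exactly \ref{item.boldFGLip}. The boundedness of $M_{T,R}$ in $\norm{\cdot}_{\G}$ is \ref{item.boldFGbdd}.

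Theorem \ref{thm.SIE} then gives the unique maximal solution and blow-up of $\norm{\mathbf{Y}}_\infty$. Corollary \ref{cor.sublingrowth} gives $T_\ell=\infty$ under linear growth. The step I expect to be most delicate is the bookkeeping in the second and third steps: making $\cG_t$ genuinely Banach and compatible, and making sure the real-linear $\cRe\lambda$ coordinate and the complexification in Theorem \ref{thm.Linfbound} do not break the norm comparison $\norm{\cdot}_{\H}\le\norm{\cdot}_{\G}$. If exact inequality fails, I would multiply $\norm{\cdot}_{\G}$ by a constant.
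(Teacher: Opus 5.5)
Your reduction is the paper's own: the same $\cB=\cA^n\oplus\C$ and $\cC=\cA^m$, $p=\infty$, $q=2$, integrator $(\mathbf{X}(t),t)$, the same $\G$ with norm $\norm{\mathbf{H}}_{2;2}+\norm{\mathbf{a}}_\infty$, Lebesgue $\rho$, $a_T=4\sqrt{2n}+\sqrt{T}$, and the same verification via Example \ref{ex.qBm} and Theorems \ref{thm.MDass} and \ref{thm.Linfbound}. Your $\H$, with norm $\norm{T}_{\infty;2}+\norm{\mathbf{g}}_2$, differs from the paper's choice $\F^{\infty;2}(\E_{\scB};\E_{\scC})$ with the operator norm $\norm{\cdot}_{\infty;2}$. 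Both choices work. In either case $\norm{\cdot}_{\H}\le\norm{\cdot}_{\G}$ holds exactly, because $\norm{\mathbf{x}}_2\le\norm{\mathbf{x}}_\infty$ for a tracial state, so no rescaling is needed.

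There is one concrete error: you must take $p_0=\infty$, as the paper does, not $p_0=2$.

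\begin{itemize}
\item Assumption \ref{ass.integrator}\ref{item.HandGass} requires both $\H$ and $\G$ to be compatible normed filtrations of the same $B^{p_0;1}$. Compatibility is tested on all of $L^{p_0}(\E_{\scB})$.
\item Your $\cH_t$ is built from $\cF_t^{\infty;2}$, whose elements are only defined and bounded on $L^\infty$. They need not extend to $L^2$. For example, take $c\in L^1(\cA_t,\E)\setminus L^2(\cA_t,\E)$ and $d\in L^2(\cA_t,\E)$; such $c$ exist once $\cA_t$ is diffuse. Then $x\mapsto\E[cx]\,d$ lies in $\cF_t^{\infty;2}$ but is not bounded from $L^2$ to $L^1$.
\item Hence $\cH_t\not\subseteq B^{2;1}$. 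Your remark that $\mathbf{X}$ is $\cA$-valued does not repair this.
\item Your third step already applies Theorem \ref{thm.MDass} with $p_0=\infty$, which is inconsistent with having set $p_0=2$.
\end{itemize}

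The $\cA$-valuedness of $\mathbf{X}$ should be used the other way round. Each element of $\cG_t$ is bounded $L^2\to L^2$. So it restricts to an element of $B^{\infty;1}$, and the restriction is injective because $\cA^n\oplus\C$ is dense in $L^2$. The compatibility identities survive this restriction. The bound of Theorem \ref{thm.Linfbound} still applies, since elementary integrals only evaluate integrands on $\cA$-valued increments. With $p_0=\infty$, the rest of your argument goes through as written.
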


\begin{proof}[Sketch of proof]
We apply Theorem \ref{thm.SIE} and Corollary \ref{cor.sublingrowth}.
Therein, take $(\cB,(\cB_t)_{t \geq 0},\E_{\scB})$ equal to $(\cA^n\oplus \C, (\cA_t^n\oplus\C)_{t \geq 0},\E_{\mathsmaller{\cA^n\oplus \C}})$, $(\cC,(\cC_t)_{t \geq 0},\E_{\scC})$ equal to $(\cA^m, (\cA_t^m)_{t \geq 0},\E_{\mathsmaller{\cA^m}})$, $p_0 = \infty$, $q=2$, $p=\infty$, $X(t) \coloneqq (\mathbf{X}(t),t) \in (\cB_t)_{\sa}$ for all $t \geq 0$, and 
\[
(\H,\norm{\cdot}_{\H}) \coloneqq \big(\big(\cF_t^{\infty;2}(\E_{\scB};\E_{\scC})\big)_{t \geq 0},\norm{\cdot}_{\infty;2}\big).
\]
Also, for each $t \geq 0$, take $\cG_t$ to be the set of $T \colon L^2(\E_{\scB}) \to L^2(\E_{\scC})$ of the form
\[
T[\mathbf{z}] = \mathbf{H}[\mathbf{x}] + \mathbf{a}\lambda \in L^2(\E_{\scC}) = L^2(\E)^n \qquad \big(\mathbf{z} = (\mathbf{x},\lambda) \in L^2(\E_{\scB}) = L^2(\E)^n\oplus \C\big),
\]
where $\mathbf{H} \in (\cT_t^{2;2})^{m \times n}$ and $\mathbf{a} \in \cA_t^m$.
For such $T$, define $\norm{T}_{\G} \coloneqq \norm{\mathbf{H}}_{2;2}+\norm{\mathbf{a}}_{\infty}$.

After observing from Example \ref{ex.qBm} that $X$ is an $L^{\infty}$-measured decomposable process, it follows from Theorems \ref{thm.MDass} and \ref{thm.Linfbound}, the usual triangle inequality for vector-valued integrals, and the Cauchy--Schwarz inequality that Assumption \ref{ass.integrator} is satisfied with $\rho$ equal to the Lebesgue measure and $a_T = 4\sqrt{2n}+\sqrt{T}$ for all $T \geq 0$.

Finally, if $F \colon \R_+ \times L_{\alg}^{\infty}(\E_{\scB})_{\beta} \to \cG_{\infty}$ is defined by
\[
F(t,\mathbf{y})[\mathbf{z}] \coloneqq \mathbf{F}(t,\mathbf{y})[\mathbf{x}] + \mathbf{G}(t,\mathbf{y})\lambda \qquad  \big(t \geq0, \, \mathbf{y} \in L_{\alg}^{\infty}(\E_{\scB})_{\beta} , \,\mathbf{z} = (\mathbf{x},\lambda) \in L^2(\E_{\scB})\big),
\]
then it is simply a matter of unraveling notation to see that $F$ satisfies Assumption \ref{ass.coefficient}.
\end{proof}

\begin{example}\label{ex.trpoly}
Let $k \in \N_0$, let
\[
\mathbf{P}(\mathbf{z},\mathbf{y},\mathbf{x}) = \begin{bmatrix}
    P_1(z_1,\ldots,z_k,y_1,\ldots,y_m,x_1,\ldots,x_n) \\
    \vdots \\
    P_m(z_1,\ldots,z_k,y_1,\ldots,y_m,x_1,\ldots,x_n)
\end{bmatrix}
\]
be an $m$-tuple of trace $\ast$-polynomials in $k+m+n$ indeterminates that are real linear in $\mathbf{x}$ (vid.\ \cite[\S2.3]{JKN2026}), and let
\[
\mathbf{Q}(\mathbf{z},\mathbf{y}) = \begin{bmatrix}
    Q_1(z_1,\ldots,z_k,y_1,\ldots,y_m) \\
    \vdots \\
    Q_m(z_1,\ldots,z_k,y_1,\ldots,y_m)
\end{bmatrix}
\]
be an $m$-tuple of trace $\ast$-polynomials in $k+m$ indeterminates.
For $\mathbf{Z} \in C_a(\R_+;\cA^k)$, define
\[
\mathbf{F}(t,\mathbf{y}) \coloneqq \mathbf{P}(\mathbf{Z}(t),\mathbf{y},\cdot) \text{ and } \mathbf{G}(t,\mathbf{y}) \coloneqq \mathbf{Q}(\mathbf{Z}(t),\mathbf{y}) \qquad \big(t \geq 0, \, \mathbf{y} \in \cA^m\big).
\]
We leave it to the reader to check that $\mathbf{F}$ and $\mathbf{G}$ satisfy the hypotheses of Theorem \ref{thm.SIESCBMdriver}.
Consequently, if $\mathbf{C} \in C_a(\R_+;\cA^m)$, then there exists a unique maximal solution to the~NCSIE
\[
\mathbf{Y} = \mathbf{C} + \into \mathbf{F}(t,\mathbf{Y}(t))[\d \mathbf{X}(t)] + \into \mathbf{G}(t,\mathbf{Y}(t))\,\d t.
\]
If the $\mathbf{y}$-degree of $\mathbf{P}$ and $\mathbf{Q}$ is at most one, then the maximal solution is global;
otherwise, the $L^{\infty}$ norm of the maximal solution may blow up in finite time.
(Consider the Riccati ODE $\dot{y} = y^2$.)
This vastly generalizes and improves Example \ref{ex.linpoly} when the driver is free Brownian motion.
\end{example}

Next, we consider NCSIEs with functional-calculus coefficients as in Example \ref{ex.funkycalc}.
This time, however, we are able to establish a substantially more general result.

\begin{definition}\label{def.locLiploctim}
For $R \geq 0$ and a function $g \colon \R \to \C$, write
\[
[g]_{\mathrm{Lip},R} \coloneqq \sup\left\{\frac{|g(\lambda) - g(\mu)|}{|\lambda - \mu|} :  \lambda,\mu \in [-R,R], \, \lambda \neq \mu \right\} \in [0,\infty].
\]
A function $f \colon \R_+ \times \R \to \C$ is \textbf{locally Lipschitz in space, locally uniformly in time} if $\sup \left\{ [f_t]_{\mathrm{Lip},R} : 0 \leq t \leq T\right\} < \infty$ for all $R,T \geq 0$.
\end{definition}

\begin{lemma}\label{lem.funkycalccont}
If $f \colon \R_+ \times \R \to \C$ is continuous, then the function $f_{\scA} \colon \R_+ \times \cA_{\sa} \to \cA$ defined by $(t,a) \mapsto f(t,a) \coloneqq f_t(a)$ is continuous.
Furthermore, $f(t,a) \in \cA_s$ for all $s,t \geq 0$ and $a \in (\cA_{\sa})_s$.
\end{lemma}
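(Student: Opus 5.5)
The plan is to prove both assertions by polynomial approximation and the Stone--Weierstrass theorem, exactly as was sketched in the proof of Proposition \ref{prop.funkycalcLpLip}.

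For the continuity claim, fix $(t,a) \in \R_+ \times \cA_{\sa}$ and $\e > 0$. Choose $T > t+1$ and $R > \norm{a}_{\infty}+1$, and set $K \coloneqq [0,T] \times [-R,R]$.
\begin{itemize}
\item By the Stone--Weierstrass theorem, pick a polynomial $P(s,\lambda) = \sum_{j,k} c_{jk} s^j \lambda^k$ with complex coefficients such that $\sup_K |f - P| \leq \e/3$.
\item For $(s,b) \in [0,T] \times \cA_{\sa}$ with $\norm{b}_{\infty} \leq R$, we have $\sigma(b) \subseteq [-R,R]$. The continuous functional calculus is isometric from $C(\sigma(b))$ into $\cA$, so
\[
\norm{f_s(b) - P_s(b)}_{\infty} \leq \sup_{\lambda \in [-R,R]} |f(s,\lambda) - P(s,\lambda)| \leq \e/3 .
\]
\item The map $(s,b) \mapsto P(s,b) = \sum_{j,k} c_{jk} s^j b^k$ is jointly norm-continuous, because multiplication in $\cA$ is jointly continuous.
\item Choose $\delta \in (0,1)$ so that $|s-t| \leq \delta$ and $\norm{b-a}_{\infty} \leq \delta$ imply $\norm{P(s,b) - P(t,a)}_{\infty} \leq \e/3$. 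Such pairs automatically satisfy $s \leq T$ and $\norm{b}_{\infty} \leq R$.
\item The triangle inequality now gives $\norm{f_s(b) - f_t(a)}_{\infty} \leq \e$, which is continuity at $(t,a)$.
\end{itemize}

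For the adaptedness claim, fix $a \in (\cA_s)_{\sa}$ and $t \geq 0$, and let $R \geq \norm{a}_{\infty}$.
\begin{itemize}
\item Choose one-variable polynomials $p_n$ with $\sup_{[-R,R]} |f_t - p_n| \to 0$.
\item Each $p_n(a)$ is a polynomial in $a$ and $1$, so it lies in $\cA_s$, since $\cA_s$ is a unital $\mathrm{C}^*$-subalgebra.
\item By the same spectral estimate, $p_n(a) \to f_t(a)$ in operator norm.
\item Since $\cA_s$ is norm-closed, it follows that $f_t(a) \in \cA_s$.
\end{itemize}

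No step is a real obstacle. The only points needing care are the uniform control of the approximation error through the spectral inclusion $\sigma(b) \subseteq [-R,R]$, and the choice of $\delta$ keeping $(s,b)$ inside the compact set $K$, which is why $\delta$ is taken less than $1$.
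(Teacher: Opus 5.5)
Your proposal is correct and follows essentially the same route as the paper: you use Stone--Weierstrass polynomial approximation together with the isometry of the continuous functional calculus on $\sigma(b) \subseteq [-R,R]$. The paper instead takes a single sequence of two-variable polynomials converging locally uniformly and cites the uniform limit on bounded sets; you write this out as an explicit $\varepsilon/3$ estimate, and you use one-variable approximants of $f_t$ together with the norm-closedness of $\cA_s$ for the adaptedness claim.
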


\begin{proof}
By the Stone--Weierstrass theorem, there exists a sequence $(p_n)_{n \in \N}$ of complex polynomials in two variables converging locally uniformly on $\R_+ \times \R$ to $f$.
Since both conclusions are obvious for $(p_n)_{\scA}$ and $(p_n)_{\scA} \to f_{\scA}$ uniformly on bounded sets as $n \to \infty$ by basic properties of the continuous functional calculus, we obtain both conclusions for~$f_{\scA}$.
\end{proof}

\begin{example}\label{ex.funkycalcSCBMSIE}
Assume $m=n=1$, and write $X \coloneqq \mathbf{X}$.
Suppose $f_i,g_i \colon \R_+ \times \R \to \C$ ($i=1,\ldots,k$) and $h \colon \R_+ \times \R \to \R$ are continuous functions that are locally Lipschitz in space, locally uniformly in time.
Define
\[
F(t,y)[x] \coloneqq \sum_{i=1}^k \left(f_i(t,y) \, x\, g_i(t,y) +\overline{g_i}(t,y) \, x^* \, \overline{f_i}(t,y)\right) \in L^2(\E)\pagebreak
\]
for all $t \geq 0$, $y \in \cA_{\sa}$, and $x \in L^2(\E)$.
By Lemma \ref{lem.funkycalccont} and the definitions, the functions $\mathbf{F} \coloneqq F \colon \R_+ \times \cA_{\sa} \to B^{2;2} = B^{2;2}(\E;\E)$ and $\mathbf{G} \coloneqq G \coloneqq h_{\scA} \colon \R_+ \times \cA_{\sa} \to \cA_{\sa}$ are continuous and satisfy hypotheses \ref{item.boldFGadapted} and \ref{item.intFbeta} of Theorem~\ref{thm.SIESCBMdriver}.
Since $\norm{f(a)}_{\infty} = \norm{f}_{\ell^{\infty}(\sigma(a))}$ for all continuous $f \colon \R \to \C$ and all $a \in \cA_{\sa}$, we have that
\begin{align*}
    \sup &\left\{\|F(t,y)\|_{2;2} + \norm{G(t,y)}_{\infty} : 0 \leq t \leq T, \; y \in \cA_{\sa}, \; \norm{y}_{\infty} \leq R\right\} \\
    & \leq 2\sum_{i=1}^k\norm{f_i}_{\ell^{\infty}([0,T] \times [-R,R])}\norm{g_i}_{\ell^{\infty}([0,T] \times [-R,R])} + \norm{h}_{\ell^{\infty}([0,T] \times [-R,R])} < \infty
\end{align*}
for all $R,T \geq 0$, which verifies hypothesis \ref{item.boldFGbdd}.
To verify hypothesis \ref{item.boldFGLip}, let $R \geq 0$, and suppose $\varphi_R \colon \R \to \R$ is a compactly supported smooth function such that $\varphi_R \equiv 1$ on $[-R,R]$.
If $f^R(t,\lambda) \coloneqq \varphi_R(\lambda) \, f(t,\lambda)$ for all $t \geq 0$, $\lambda \in \R$, and $f \in \{f_1,\ldots,f_k,g_1,\ldots,g_k,h\}$, then $f_i^R$, $g_i^R$, and $h^R$ are Lipschitz and bounded in space, locally uniformly in time.
Furthermore, $f_i^R = f_i$, $g_i^R =g_i$, and $h^R = h$ on $\R_+ \times [-R,R]$.
Now, define $F_R$ and $G_R$ in the same way as $F$ and $G$ were defined, except with $f_i^R$ in place of $f_i$, $g_i^R$ in place of $g_i$, and $h^R$ in place of $h$.
By Proposition \ref{prop.funkycoeffLp} and Theorem \ref{thm.funkycalcLpLip}, if $0 \leq t \leq T$ and $y,z \in \cA_{\sa}$ are such that $\norm{y}_{\infty} \leq R$ and $\norm{z}_{\infty} \leq R$, then
\begin{align*}
    \|F(t&,y)- F(t,z)\|_{\infty;2} + \norm{G(t,y) - G(t,z)}_2\\
    & = \norm{F_R(t,y) - F_R(t,z)}_{\infty;2} + \norm{G_R(t,y) - G_R(t,z)}_2 \leq c_{T,R}\norm{y-z}_2,
\end{align*}
where
\[
c_{T,R} = c_2\sup_{0 \leq t \leq T}\left\{2\sum_{i=1}^k\big(\big[\big(f_i^R\big)_t\big]_{\mathrm{Lip}}\norm{\big(g_i^R\big)_t}_{\ell^{\infty}} + \big[\big(g_i^R\big)_t\big]_{\mathrm{Lip}}\norm{\big(f_i^R\big)_t}_{\ell^{\infty}} \big) + \big[h_t^R\big]_{\mathrm{Lip}}  \right\}.
\]
This verifies hypothesis \ref{item.boldFGLip}.
Consequently, if $C \in C_a(\R_+;\cA_{\sa})$, then there exists a unique maximal solution to the NCSIE
\begin{equation}\label{eq.funkycalcSCBM}
\begin{split}
    Y = C + \sum_{i=1}^k \into &\big(f_i(t,Y(t))\,\d X(t)\,g_i(t,Y(t)) \\
    & + \overline{g_i}(t,Y(t))\,\d X(t)\,\overline{f_i}(t,Y(t))\big) + \into h(t,Y(t))\,\d t.
\end{split}
\end{equation}
In general, this maximal solution may blow up in finite time.
However, assume, in addition, that for each $T \geq 0$, there exists an $M_T < \infty$ such that
\[
2\sum_{i=1}^k\norm{f_i}_{\ell^{\infty}([0,T] \times [-R,R])}\norm{g_i}_{\ell^{\infty}([0,T] \times [-R,R])} + \norm{h}_{\ell^{\infty}([0,T] \times [-R,R])} \leq M_T (R+1) \quad (R \geq 0).
\]
If $y \in \cA_{\sa}$, then
\begin{align*}
    \sup_{0 \leq t \leq T}\left\{\norm{F(t,y)}_{2;2}+\norm{G(t,y)}_{\infty}\right\} & \leq \sup_{0 \leq t \leq T}\left\{2\sum_{i=1}^k \norm{f_i(t,y)}_{\infty}\norm{g_i(t,y)}_{\infty}+\norm{h(t,y)}_{\infty}\right\} \\
    & \leq M_T\left(\norm{y}_{\infty}+1\right).
\end{align*}
Thus, the maximal solution to \eqref{eq.funkycalcSCBM} is global in this case.
This proves Corollary \ref{cor.funkycalcFSDE}.
\end{example}

\subsection{Classical Brownian driver}\label{subsec.clBMSIE}

We wrap up this section by explaining briefly how one of the standard results on classical SDEs driven by Brownian motion follows from Theorem \ref{thm.SIE}.
Let $(\Om,\sF,(\sF_t)_{t \geq 0},P)$ be a complete filtered probability space and $B \colon \R_+ \times \Om \to \R$ be a standard real Brownian motion.
Take
\begin{itemize}
    \item $(\cC,(\cC_t)_{t \geq 0},\E_{\scC}) = (L^{\infty}(\Om,\sF,P),(L^{\infty}(\Om,\sF_t,P))_{t \geq 0},\E_P)$ and $(\cB,(\cB_t)_{t \geq 0},\E_{\scB})$ equal to the direct sum of $(\cC,(\cC_t)_{t \geq 0},\E_{\scC})$ and $(\C,(\C)_{t \geq 0},\id_{\C})$;
    \item $2 \leq q=p=p_0 < \infty$;
    \item $X(t) = (B_t,t) \in L^{p_0}(\cB_t,\E_{\scB})$ for all $t \geq 0$;
    \item $(\H,\norm{\cdot}_{\H})$ and $(\G,\norm{\cdot}_{\G})$ equal to $(\M,\norm{\cdot}_{\infty;p}) = ((\cM_t)_{t \geq 0},\norm{\cdot}_{\infty;p})$, where $\cM_t$ is the set of all $M \in B^{p;1} = B^{p;1}(\E_{\scB};\E_{\scC})$ of the form
    \[
    Mz = M_yx+ \lambda c = yx+\lambda c \qquad \big(z=(x,\lambda) \in L^p(\E_P) \times \C = L^p(\E_{\scB})\big),
    \]
    for $y \in L^p(\Om,\sF_t,P)$ and $c \in \C$;
    \item $\rho$ equal to the Lebesgue measure on $\R_+$ and $a_T \coloneqq \delta_p+T^{1/2}$ for all $T \geq 0$, where $\delta_p$ is as in the classical BDG inequalities (Theorem \ref{thm.BDG}).
\end{itemize}
It follows from the discussion after Example \ref{ex.Doleanmeasac} that Assumption \ref{ass.integrator} is satisfied in this~case.

Now, suppose $f,g \colon \R_+ \times \R \to \R$ are continuous functions that are Lipschitz in space, locally uniformly in time.
Define $F \colon \R_+ \times L_{\alg}^p(\cC_{\infty},\E_{\scC})_{\sa} \to \cM_{\infty}$ by
\[
F(t,y)[z] \coloneqq (f_t \circ y) \,x +(g_t \circ y)\,\lambda = f(t,y)\,x + g(t,y)\,\lambda \qquad \big( z = (x,\lambda) \in L^p(\E_{\scB})\big).
\]
Note that $f_t \circ y = f(t,y), g_t \circ y = g(t,y) \in L^p(\E_P)$ whenever $y \in L^p(\E_P)$ because $f_t$ and $g_t$ are Lipschitz and thus grow sublinearly.
Assumption \ref{ass.coefficient} is satisfied for this $F$ with $c_{T,R}$ independent of $R$.
Thus, by Theorem \ref{thm.SIE} and Corollary \ref{cor.sublingrowth}, if $C \in C_a(\R_+;L^p(\E_{\scC})_{\sa})$, then
\[
Y = C + \into F(t,Y(t))[\d X(t)]
\]
has a unique global solution.
Taking $C \equiv y_0 \in L^p(\cC_0,\E_{\scC})_{\sa}$, this means that there exists a unique $Y \in C_a(\R_+;L^p(\E_{\scC})_{\sa})$ satisfying $\d Y(t) = F(t,Y(t))[\d X(t)]$ and $Y(0) = y_0$, i.e.,
\begin{equation}
    Y(t) = y_0 + \int_0^t F(s,Y(s))[\d X(s)] \qquad (t \geq 0).\label{eq.ncItoSDE}
\end{equation}
By classical It\^o NCSDE theory, there exists a unique-up-to-indistinguishability continuous adapted process $Z \colon \R_+ \times \Om \to \R$ such that $\||Z|_t^*\|_p < \infty$ for all $t \geq 0$ and
\[
Z_t = y_0 + \int_0^t f(s,Z_s)\,\d B_s + \int_0^t g(s,Z_s)\,\d s\qquad (t \geq 0),
\]
where $\into \boldsymbol{\cdot}\,\d B_s$ denotes the classical stochastic integral against Brownian motion.
It follows from the discussion after Example \ref{ex.Doleanmeasac} and the definition of $X$ that if $U,V \colon \R_+ \times \Om \to \C$ are continuous adapted processes satisfying
\[
\norm{|U|_t^*}_p+\norm{|V|_t^*}_p < \infty \qquad (t \geq 0),\pagebreak
\]then
\[
\int_0^t U_s\,\d B_s + \int_0^t V_s\,\d s = \int_0^t H(s)[\d X(s)] \qquad (t \geq 0),
\]
where $H(s)[z] = U_sx + V_s\lambda$ for $z = (x,\lambda) \in L^p(\E_{\scC})$ and $s \geq 0$.
Consequently, the ($L^p$-continuous adapted) noncommutative process
\[
\R_+ \ni t  \mapsto Y(t) \coloneqq Z_t \in L^p(\E_{\scC})_{\sa}
\]
satisfies \eqref{eq.ncItoSDE}.
Thus, our NCSDE result produces the modification classes of the solutions to classical It\^o SDEs.

The same reasoning but more complicated notation yields the conclusion that our NCSDE result produces the modification classes of the solutions to multidimensional classical It\^o SDEs of the form
\begin{equation}\label{eq.ItoSDE}
    \begin{cases}
    \d \mathbf{Y}_t = \mathbf{F}(t,\mathbf{Y}_t)\, \d \mathbf{B}_t +  \mathbf{G}(t,\mathbf{Y}_t)\,\d t & \\
    \;\,\mathbf{Y}_0 = \mathbf{y}_0, &
\end{cases}
\end{equation}
where $\mathbf{B} \colon \R_+ \times \Om \to \R^n$ is a Brownian motion in $\R^n$, and $\mathbf{F} \colon \R_+ \times \R^m \to \mathrm{M}_{m \times n}(\R)$ and $\mathbf{G} \colon \R_+ \times \R^m \to \R^m$ are continuous functions that are Lipschitz in space, locally uniformly in time.
We leave the details to the interested reader.

It is worth noting that the classical result on It\^o SDEs deduced above from our NCSIE result is only the global version of the classical result even though our NCSIE result covers local existence as well.
The reason is that the local-Lipschitz assumption in our NCSIE result is fundamentally different from that in the classical setting, and for that matter, so is the notion of local existence.
Indeed, in the notation of the previous paragraph, the standard assumption on $\mathbf{F}$ and $\mathbf{G}$ guaranteeing the ``local'' existence and uniqueness of solutions to \eqref{eq.ItoSDE} is:
For all $T,R > 0$, there exists a constant $c_{T,R} < \infty$ such that for all $\mathbf{y},\mathbf{z} \in \R^m$ with $\norm{\mathbf{y}}_{\R^m} \leq R$ and $\norm{\mathbf{z}}_{\R^m} \leq R$,
\[
\norm{\mathbf{F}(t,\mathbf{y}) - \mathbf{F}(t,\mathbf{z})}_{\mathrm{M}_{m \times n}(\R)} + \norm{\mathbf{G}(t,\mathbf{y}) - \mathbf{G}(t,\mathbf{z})}_{\R^m} \leq c_{T,R}\norm{\mathbf{y}-\mathbf{z}}_{\R^m} \qquad (0 \leq t \leq T).
\]
Under this assumption, a solution to \eqref{eq.ItoSDE} exists \emph{up to a random time}---a stopping time, to be precise.
Furthermore, the essential infimum of this stopping time can be zero, in which case \eqref{eq.ItoSDE} does not admit a solution up to any strictly positive deterministic time.
The reader might be concerned that this contradicts our NCSIE result.
It does not, precisely because the classical local-Lipschitz assumption above does \emph{not} generally imply the local-Lipschitz assumption in our NCSIE result.
The ``local'' assumption in the NCSIE result enforces estimates on balls in $L^p$ while the (classical) local-Lipschitz assumption above only guarantees estimates on balls in $L^{\infty}$ (and classical Brownian motion does not satisfy Assumption \ref{ass.integrator} with $p=\infty$).
We encourage the reader to think through the details.

There \emph{is} a notion of a noncommutative stopping time, introduced in \cite{BGW1996}.
Consequently, it may be possible to formulate a fruitful notion of existence up to a noncommutative stopping time for a solution to an NCSIE.
It would be interesting to know if there is a version of our result (Theorem \ref{thm.SIE}), at least for a subclass of the NCSIEs we consider, that establishes the existence-up-to-a-noncommutative-stopping-time of solutions to NCSIEs that captures the classical result described in the previous paragraph as well as Theorems \ref{thm.SDESCBMdriver} and \ref{thm.SDEqBM}.
Such a result would require the formulation of a different kind of local-Lipschitz assumption.

\appendix
\section{Proof of Theorem \ref{thm.multidimBSineq}}\label{app.multidimBSineq}

\begin{figure}
    \centering
    \begin{tikzpicture}
        \node at (-0.5,-1) {$S_1$};
        \node at (-0.5,-2) {$S_2$};
        \node at (-0.5,-3) {$S_3$};
        
        \node at (0,-0.4) {$t_0$};
        \node at (3,-0.4) {$t_1$};
        \node at (4,-0.4) {$t_2$};
        \node at (6,-0.4) {$t_3$};
        
        \draw[blue] (0,-1) -- (6,-1);
        \draw[blue] (0,-0.8) -- (0,-1.2);
        \draw[blue] (3,-0.8) -- (3,-1.2);
        \draw[blue] (4,-0.8) -- (4,-1.2);
        \draw[blue] (6,-0.8) -- (6,-1.2);
        \draw[red] (0,-2) -- (6,-2);
        \draw[red] (0,-1.8) -- (0,-2.2);
        \draw[red] (3,-1.8) -- (3,-2.2);
        \draw[red] (4,-1.8) -- (4,-2.2);
        \draw[red] (6,-1.8) -- (6,-2.2);
        \draw[green!50!black!50] (0,-3) -- (6,-3);
        \draw[green!50!black!50] (0,-2.8) -- (0,-3.2);
        \draw[green!50!black!50] (3,-2.8) -- (3,-3.2);
        \draw[green!50!black!50] (4,-2.8) -- (4,-3.2);
        \draw[green!50!black!50] (6,-2.8) -- (6,-3.2);

        \node at (-0.5,-5) {$\tilde{S}$};

        \node at (0,-4.4) {$3t_0$};
        \node at (4.5,-4.4) {$3t_1$};
        \node at (6,-4.4) {$3t_2$};
        \node at (9,-4.4) {$3t_3$};

        \draw[black] (0,-4.8) -- (0,-5.2);
        \draw[blue] (0,-5) -- (1.5,-5);
        \draw[blue] (1.5,-4.8) -- (1.5,-5.2);
        \draw[red] (1.5,-5) -- (3,-5);
        \draw[red] (3,-4.8) -- (3,-5.2);        
        \draw[green!50!black!50] (3,-5) -- (4.5,-5);
        \draw[green!50!black!50] (4.5,-4.8) -- (4.5,-5.2);
        \draw[blue] (4.5,-5) -- (5,-5);
        \draw[blue] (5,-4.8) -- (5,-5.2);
        \draw[red] (5,-5) -- (5.5,-5);
        \draw[red] (5.5,-4.8) -- (5.5,-5.2);
        \draw[green!50!black!50] (5.5,-5) -- (6,-5);
        \draw[green!50!black!50] (6,-4.8) -- (6,-5.2);
        \draw[blue] (6,-5) -- (7,-5);
        \draw[blue] (7,-4.8) -- (7,-5.2);
        \draw[red] (7,-5) -- (8,-5);
        \draw[red] (8,-4.8) -- (8,-5.2);
        \draw[green!50!black!50] (8,-5) -- (9,-5);
        \draw[green!50!black!50] (9,-4.8) -- (9,-5.2);
        
    \end{tikzpicture}
    \caption{Schematic representation of transformation from $\mathbf{S}$ to $\tilde{S}$ for $n = 3$ and $\ell = 3$}
    \label{fig:interval.rearrangement}
\end{figure}

Write $\mathbf{S} \coloneqq (S_1,\ldots,S_n)$.
Assume $U_1,\ldots,U_n \colon \R_+ \to \cA \otimes \cA^{\op}$ are all supported in a common interval $[0,T]$, and fix a partition $0 = t_0 < t_1 < \dots < t_\ell = T$ such that $U_j$ is constant on $(t_{k-1},t_k]$ for each $j=1,\ldots,n$ and $k=1,\ldots,\ell$, which is clearly possible by taking a common refinement.
In particular,
\begin{align*}
    U_j & = 1_{\{0\}} u_{j,0} +  \sum_{k=1}^{\ell} 1_{(t_{k-1},t_k]} u_{j,k}, \; \text{ where} \\
    u_{j,k} & \in \cA_{t_{k-1}} \otimes \cA_{t_{k-1}}^{\op} \quad (j=1,\dots,n; \;k=0,\ldots,\ell).
\end{align*}
In order to apply the one-variable inequality, we will create $n$ copies of each sub-interval $(t_{k-1},t_k]$, each associated to one of the coordinates of $\d S_j(t)$ on the interval $(t_{k-1},t_k]$ and arranging these intervals as depicted in Figure \ref{fig:interval.rearrangement}, which results in a one-dimensional free Brownian motion $\tilde{S}$ on $[0,nT]$.
For convenience of notation, let
\[
\delta_k \coloneqq t_k - t_{k-1} \qquad (k=1,\ldots,\ell).
\]
Define for $k = 1, \ldots, \ell$, $j = 1, \ldots, n$, and $s \in (0,\delta_k]$,
\[
\tilde{S}(nt_{k-1} + (j-1)\delta_k + s) \coloneqq \sum_{i=1}^{j-1} S_i(t_k) + S_j(t_{k-1} + s) + \sum_{i=j+1}^n S_i(t_{k-1}).
\]
We invite the reader to check that the left and right limiting values of $\tilde{S}$ agree at the gluing points $nt_{k-1} + j \delta_k$ for $k = 1$, \dots, $\ell$ and $j = 0$, \dots, $n$, so that $\tilde{S}$ is continuous.  Now $\tilde{S}$ is constructed so that for $r,s \in (0,\delta_k]$,
\[
\tilde{S}(nt_{k-1} + (j-1)\delta_k + r) - \tilde{S}(nt_{k-1} + (j-1)\delta_k + s) = S_j(t_{k-1} + r) - S_j(t_{k-1} + s),
\]
or in other words, the increments of $\tilde{S}$ on $(nt_{k-1} + (j-1) \delta_k,nt_{k-1} + j \delta_k]$ correspond to the increments of $S_j$ on $(t_{k-1},t_k]$.
Of course, define $\tilde{S}(0) \coloneqq 0$ as well.
Now, define a filtration $(\tilde{\cA}_t)_{t \in [0,nT]}$ as follows:
\[
\text{For } t \in (nt_{k-1},nt_k], \quad \tilde{\cA}_t \coloneqq \mathrm{C}^*(\cA_{t_{k-1}}, (\tilde{S}(s))_{s \leq t}).
\]
Then one can check that $\tilde{S}$ is a free Brownian motion on $[0,nT]$ with respect to $(\tilde{\cA}_t)_{t \in [0,nT]}$.
For instance, to demonstrate that $\tilde{S}(t) - \tilde{S}(s)$ is freely independent of $\tilde{\cA}_s$ for $s < t$, we can argue as follows.
Fix $k$ and $j$ so that $s \in (nt_{k-1} + (j-1)\delta_k, nt_{k-1} + j \delta_k]$, and write $s = nt_{k-1} + (j-1)\delta_k + r$ where $r \in (0,\delta_k]$.
The properties of $\mathbf{S}$ imply that the following list of $\mathrm{C}^*$-subalgebras is freely independent:
\begin{align*}
&\mathcal{A}_{t_{k-1}}, \\
&\mathrm{C}^*(S_i(u) - S_i(t_{k-1}): u \in (t_{k-1},t_k], i = 1, \dots, j - 1), \\
&\mathrm{C}^*(S_j(u) - S_j(t_{k-1}): u \in (t_{k-1},t_{k-1}+r]), \\
&\mathrm{C}^*(S_j(u) - S_j(t_{k-1} +r): u \in (t_{k-1}+r,t_k]), \\
&\mathrm{C}^*(S_i(u) - S_i(t_{k-1}): u \in (t_{k-1},t_k], i = j+1, \dots, n) \\
&\mathrm{C}^*(S_i(u) - S_i(t_k): u \in (t_k,T], i = 1, \dots, n).
\end{align*}
Translating this in terms of $\tilde{\cA}_s$ and $\tilde{S}$, we obtain that the following list of $\mathrm{C}^*$-subalgebras is freely independent:
\begin{align*}
&\tilde{\cA}_{nt_{k-1}}, \\
&\mathrm{C}^*(\tilde{S}(u) - \tilde{S}(nt_{k-1})): u \in (nt_{k-1},nt_{k-1}+(j-1) \delta_k]), \\
&\mathrm{C}^*(\tilde{S}(u) - \tilde{S}(nt_{k-1}+(j-1)\delta_k): u \in (nt_{k-1} + (j-1) \delta_k,s]), \\
&\mathrm{C}^*(\tilde{S}(u) - \tilde{S}(s): u \in (s,nt_{k-1}+j\delta_k]), \\
&\mathrm{C}^*(\tilde{S}(u) - \tilde{S}(nt_{k-1} + j \delta_k): u \in (nt_{k-1}+j\delta_k,nt_k]) \\
&\mathrm{C}^*(\tilde{S}(u) - \tilde{S}(nt_k): u \in (nt_k,nT]).
\end{align*}
Now, $\tilde{\cA}_s$ is generated by the first three algebras in the list, and $\tilde{S}(t) - \tilde{S}(s)$ is in the $\mathrm{C}^*$-subalgebra generated by the last three; therefore, they are freely independent.
The other properties of the Brownian motion are similarly checked by direct casework using the free independence of the algebras listed above.

We then define a biprocess $\tilde{U}$ on $[0,nT]$ by
\[
\tilde{U} \coloneqq \sum_{j=1}^n \mathbf{1}_{\{0\}} u_{j,0} + \sum_{k=1}^\ell \sum_{j=1}^n \mathbf{1}_{(nt_{k-1}+(j-1)\delta_k,nt_{k-1}+j\delta_k]} u_{j,k},
\]
or in other words, the values of $\tilde{U}$ on $(nt_{k-1} + (j-1) \delta_k,nt_{k-1} + j \delta_k]$ correspond to the values of $U_j$ on $(t_{k-1},t_k]$.
Thus, by construction,
\begin{align*}
\sum_{j=1}^n \int_0^T U_j(t) \# \d S_j(t) &= \sum_{j=1}^n \sum_{k=1}^\ell \int_{t_{k-1}}^{t_k} U_j(t) \# \d S_j(t) \\
&= \sum_{k=1}^\ell \sum_{j=1}^n \int_{nt_{k-1} + (j-1) \delta_k}^{nt_{k-1} + j \delta_k} \tilde{U}(t) \# \d \tilde{S}(t) \\
&= \int_0^{nT} \tilde{U}(t) \# \d \tilde{S}(t).
\end{align*}
Now we can apply the single-variable result \cite[Thm.\ 3.2.1]{BS1998}, i.e., \eqref{eq.BSelembip} above, to $\tilde{U}$ and $\tilde{S}$ to conclude that
\begin{align*}
\norm{ \sum_{j=1}^n \int_0^T U_j(t) \# \d S_j(t) }_\infty & = \norm{\int_0^{nT} \tilde{U}(t) \# \d \tilde{S}(t)}_{\infty} \\
&\leq 2 \sqrt{2} \left( \int_0^{nT} \big\|\tilde{U}(t)\big\|_{\cA \otimes_{\min} \cA^{\op}}^2\, \d t \right)^\frac12 \\
&= 2 \sqrt{2} \left( \sum_{k=1}^\ell \sum_{j=1}^n \int_{nt_{k-1} + (j-1) \delta_k}^{nt_{k-1} + j \delta_k} \big\|\tilde{U}(t)\big\|_{\cA \otimes_{\min} \cA^{\op}}^2\,\d t \right)^\frac12 \\
&= 2 \sqrt{2} \left( \sum_{j=1}^n \sum_{k=1}^\ell \int_{t_{k-1}}^{t_k} \norm{U_j(t)}_{\cA \otimes_{\min} \cA^{\op}}^2\, \d t \right)^\frac12 \\
&= 2 \sqrt{2} \left( \int_0^T \sum_{j=1}^n \norm{U_j(t)}_{\cA \otimes_{\min} \cA^{\op}}^2 \,\d t \right)^\frac12,
\end{align*}
which proves the assertion of the theorem. \qed

\begin{acknowledgments}
\phantomsection
\addcontentsline{toc}{section}{Acknowledgments}
D.\ A.\ Jekel acknowledges support from NSF grant DMS-2002826 (during the initial conception of the project) and the EU Horizon Marie Sk{\l}odowska Curie Action, FREEINFOGEOM, grant 101209517 (when the paper was written).
T.\ A.\ Kemp acknowledges support from NSF grants DMS-2400246, DMS-2055340, and DMS-1800733.
E.\ A.\ Nikitopoulos acknowledges support from NSF grant DGE-2038238 and partial support from NSF grant DMS-2055340.

E.\ A.\ Nikitopoulos is grateful to Guillaume C\'ebron and Roland Speicher for stimulating conversations and their hospitality when they hosted him at their respective institutions, the Institut de Math\'ematiques de Toulouse and the Universit\"at des Saarlandes, in 2023.

All the creative research, formulation, proof development, and draft writing of this work were accomplished by the three authors without assistance from any AI tools.  We acknowledge the use of ChatGPT and Gemini only for proofreading at various stages of the drafting of the manuscript.  
\end{acknowledgments}

\phantomsection
\small
\addcontentsline{toc}{section}{References}
\bibliographystyle{amsplain}
\bibliography{NCSDE.bib}
\end{document}